\newtheorem{theorem}{Theorem}[section]
\newtheorem{lemma}[theorem]{Lemma}
\newtheorem{remark}[theorem]{Remark}
\newcommand{\norm}[1]{\left\| #1 \right\|}
 \newcommand{\lsup}[1]{\underset{#1\to\infty}{\overline{\lim}}}
\newcommand{\Exp}{\mathbb{E}} 
\begin{document}

\title{An Emergent Autonomous Flow for Mean-Field Spin Glasses }

\author{James MacLaurin\footnote{New Jersey Institute of Technology. james.n.maclaurin@njit.edu}}

\maketitle
\begin{abstract}
We study the dynamics of symmetric and asymmetric spin-glass models of size $N$. The analysis is in terms of the double empirical process: this contains both the spins, and the field felt by each spin, at a particular time (without any knowledge of the correlation history). It is demonstrated that in the large $N$ limit, the dynamics of the double empirical process becomes deterministic and autonomous over finite time intervals. This does not contradict the well-known fact that SK spin-glass dynamics is non-Markovian (in the large $N$ limit) because the empirical process has a topology that does not discern correlations in individual spins at different times. In the large $N$ limit, the evolution of the density of the double empirical process approaches a nonlocal autonomous PDE operator $\Phi_t$. Because the emergent dynamics is autonomous, in future work one will be able to apply PDE techniques to analyze bifurcations in $\Phi_t$. Preliminary numerical results for the SK Glauber dynamics suggest that the `glassy dynamical phase transition' occurs when a stable fixed point of the flow operator $\Phi_t$ destabilizes. 
\end{abstract}

\section{Introduction}
This paper studies the emergent dynamics of mean-field non-spherical spin-glasses. At low temperature, spin-glass systems are characterized by slow emergent timescales that typically diverge with the system size (see \cite{Guionnet2007,Stein2013,Jagannath2019} for good surveys of known results). Probably the most famous mean-field spin glass model is that of Sherrington and Kirkpatrick \cite{Sherrington1975}. It is widely known in the physics community that the SK spin glass undergoes a `dynamical phase transition' as the temperature is lowered \cite{Arous2001,Arous2003,Guionnet2007,Stein2013}. Essentially what this means is that the average correlation-in-time of spins does not go to zero as time progresses: that is, some spins get locked into particular states and flip extremely rarely. Although there has been much progress in the study of spin glass dynamics \cite{BenArous1995,Guionnet1997a,BenArous2006}, a rigorous proof of a dynamical phase transition in the original SK spin glass model remains elusive. More precisely, although it is known that the time to equilibrium is $O(1)$ when the temperature $\beta^{-1}$ is high \cite{Bauerschmidt2019,Gheissari2019}, there is lacking a proof that the time-to-equilibrium diverges with $N$ when $\beta$ is large (to the best of this author's knowledge). Furthermore it is well-established that the equilibrium SK Spin-Glass system undergoes a `Replica Symmetry Breaking' phase transition as $\beta$ increases \cite{Guerra2003,Talagrand2006,Panchenko2013}, and this leads many scholars to expect that a phase transition should also be manifest in the initial dynamics. The equilibrium `Replica Symmetry Breaking' transition is characterized by the distribution of the overlap between two independent replica not concentrating at $0$, but possessing a continuous density over an interval away from zero \cite{Mezard1987,Talagrand2011}. A major reason for the lack of a rigorous characterization of the dynamical phase transition (as emphasized by Ben Arous \cite{Arous2003} and Guionnet \cite{Guionnet2007}) is that the existing large $N$ emergent equations are not autonomous and very difficult to analyze rigorously. This paper takes steps towards this goal by deriving an autonomous PDE for the emergent (large $N$) dynamics: this PDE should be more amenable to a bifurcation analysis (to be performed in future work) than the existing nonautonomous delay equations \cite{BenArous1995,Grunwald1996}. These results are also of great relevance to the dynamics of asymmetric spin glass models, which have seen a resurgence of interest in neuroscience in recent years \cite{Faugeras2015a,Fasoli2015,Fasoli2019,Kadmon2015,Doiron2016,Dembo2019,Faugeras2019a}.  

This paper determines the emergent dynamics of $M$ `replica' spin glass systems started at initial conditions that are independent of the connections. `Replicas' means that we take identical copies of the same static connection topology $\mathbf{J}$, and conditionally on $\mathbf{J}$, run independent and identically-distributed jump-Markov stochastic processes on each replica.  As noted above, Replicas are known to shed a lot of insight into the rich tree-like structure of `pure states' that emerge in the static SK spin glass at low temperature \cite{Mezard1987,Guerra2003,Talagrand2006,Talagrand2011,Panchenko2013}, and it is thus reasonable to conjecture that replicas will shed much insight into the dynamical phase transition. Indeed Ben Arous and Jagannath \cite{Arous2018} use the overlap of two replicas to determine bounds on the spectral gap determining the rate of convergence to equilibrium of mean-field spin glasses. Writing $\mathcal{E} = \lbrace -1,1 \rbrace$, the spins flip between $-1$ and $1$ at rate $c(\sigma^{i,j}_t,G_t^{i,j})$ for some general function $c: \lbrace -1,1\rbrace \times \mathbb{R} \to \mathbb{R}^+$, where the field felt by the spin is written as
\begin{equation}\label{eq: field definition}
G^{i,j}_t = N^{-\frac{1}{2}}\sum_{k=1}^N J^{jk}\sigma^{i,k}_{t},
\end{equation}
and $\mathbf{J} = \lbrace J^{jk} \rbrace_{1\leq j \leq k \leq N}$ are i.i.d. centered Gaussian variables with a specified level of symmetry. For Glauber dynamics for the SK spin glass \cite{Mezard1987}, the connections are symmetric (i.e. $J^{jk} = J^{kj}$) and the dynamics is reversible, with $c$ taking the form \cite{Grunwald1998},
\begin{equation}\label{eq: C Glauber}
c(\sigma,g) = \big(1+ \exp\big\lbrace 2\beta\sigma (g+h)\big\rbrace\big)^{-1},
\end{equation}
where $h$ is a constant known as the magnetization, and $\beta^{-1}$ is the temperature. In this case, the spin-glass dynamics are reversible with respect to the following Gibbs Measure
\begin{equation}\label{eq: equilibrium}
\mu^N_{\beta,\mathbf{J}}(  \boldsymbol\sigma  ) =\exp\bigg(\frac{\beta}{2}\sum_{p=1}^M\sum_{j,k=1}^N J^{jk}\sigma^{p,j}\sigma^{p,k} + h\sum_{p=1}^M \sum_{j=1}^N\tilde{\sigma}^{p,j}- NM\rho^N_{\mathbf{J}} \bigg),
\end{equation} 
where $\rho^N_{\mathbf{J}}$ is a normalizing factor, often called the free energy, given by
\begin{equation}\label{eq: rho N J}
\rho^N_{\mathbf{J}} = N^{-1}\log \sum_{\boldsymbol\sigma \in \mathcal{E}^N} \big[ \exp\big(\frac{\beta}{2}\sum_{j,k=1}^N J^{jk}\sigma^{j}\sigma^{k} + h\sum_{j=1}^N\sigma^j \big)\big].
\end{equation}
For further details on the equilibrium Gibbs measure, see the reviews in \cite{Bolthausen2007,Talagrand2011,Panchenko2013}. It is known that as $\beta$ increases from $0$, a sharp transition occurs, where the convergence to equilibrium bifurcates from being $O(1)$ in time, to timescales that diverge in $N$ \cite{Arous2003,Jagannath2019}.

One of the novelties of this paper is to study the emergent properties of the \textit{double empirical process} $(\hat{\mu}^N_t(\boldsymbol\sigma,\mathbf{G}))_{t\geq 0}$, which contains information on the distribution of the spins and fields, without knowledge of the `history' of each spin and field. Formally, $\hat{\mu}^N(\boldsymbol\sigma,\mathbf{G})$ is a c\`adl\`ag $\mathcal{P}$-valued process (where $\mathcal{P}=\mathcal{M}^+_1( \mathcal{E}^M \times \mathbb{R}^M)$), i.e.
\begin{align}
\hat{\mu}^N(\boldsymbol\sigma,\mathbf{G}): &\mathcal{D}\big([0,\infty),\mathcal{E}\big)^{MN} \times \mathcal{D}\big([0,\infty),\mathbb{R}\big)^{MN} \to \mathcal{D}\big([0,\infty),\mathcal{P} \big), \label{eq: emp process 0}\\
\hat{\mu}^N(\boldsymbol\sigma,\mathbf{G}) :=& \big\lbrace \hat{\mu}^N(\boldsymbol\sigma_t,\mathbf{G}_t) \big\rbrace_{t\in [0,\infty)}\text{ where }\\
\hat{\mu}^N(\boldsymbol\sigma_t,\mathbf{G}_t) =& N^{-1}\sum_{j\in I_N} \delta_{(\sigma^{1,j}_t,,\ldots,\sigma_{t}^{M,j}),(G^{1,j}_t,\ldots,G^{M,j}_t)}, \label{eq: emp process}
\end{align}
where $\lbrace \sigma^{i,j}_t \rbrace$ is the solution of  the jump Markov Process, and the fields are defined in \eqref{eq: field definition}.

We now overview some of the existing literature on the dynamics of the SK spin glass. In the physics literature, averaging over quenched disorder been used to derive limiting equations for the correlation functions \cite{Sompolinsky1981,Sompolinsky1981a,Houghton1983,Sommers1987,Mezard1987,Cugliandolo1994,Laughton1996,Laughton1996a}. The first rigorous mathematical results were obtained in the seminal work of Ben Arous and Guionnet \cite{BenArous1995,BenArous1998} (these results were for a similar `soft-spin' model driven by Brownian Motions). Guionnet \cite{Guionnet1997a} expanded on this work to prove that in the soft SK spin glass started at i.i.d initial conditions, the dynamics of the empirical measure converges to a unique limit, with no restriction on time or temperature. Grunwald \cite{Grunwald1996,Grunwald1998} obtained analogous equations for the limiting dynamics of the pathwise empirical measure for the jump-Markov system studied in this paper. More recent work by Ben Arous, Dembo and Guionnet has rigorously established the Cugliandolo-Kurchan \cite{Cugliandolo1994} / Crisanti-Horner-Sommers \cite{Crisanti1993} equations for spherical spin glasses using Gaussian concentration inequalities \cite{BenArous2006}. A recent preprint of Dembo, Lubetzky and Zeitouni has established universality for asymmetric spin glass dynamics, extending the work of Ben Arous and Guionnet to non-Gaussian connections, with no restriction on time or temperature \cite{Dembo2019}.

In the papers cited above, the emergent large $N$ dynamics is non-autonomous: that is, one needs to know the full history of the emergent variable (either the empirical measure, or correlation / response functions) upto time $t$ to predict the dynamics upto time $t+\delta t$. In the early work of Ben Arous, Guionnet and Grunwald \cite{BenArous1995,Guionnet1997a,BenArous1998,Grunwald1996}, the emergent variable is the pathwise empirical measure. This is an extremely rich object because it `knows' about average correlations in individual spins at different times. Ben Arous and Guionnet \cite{BenArous1995} demonstrated that the limiting dynamics of the pathwise empirical measure is the law of a complicated implicit delayed stochastic differential equation. In the later work of Ben Arous, Dembo and Guionnet on spherical spin glasses, a simpler set of emergent variables was used: the correlation and response functions \cite{Arous2001,BenArous2006} (this formalism is frequently used by physicists \cite{Houghton1983,Mezard1987,Cugliandolo1994}). In the $p=2$ case, the resultant equations are autonomous, and this allowed them to rigorously prove that there is a dynamical phase transition \cite{Arous2001}.

There is still lacking a rigorous characterization of the dynamical phase transition in the non-spherical SK model. As has been emphasized by Ben Arous \cite{Arous2003} and Guionnet \cite{Guionnet2007}, a fundamental difficulty is that all of the known emergent equations are non-autonomous (that is, they are either delay integro-differential equations, or an implicit delayed SDE \cite{BenArous1995,Grunwald1996}). A major reason that the emergent equations are not autonomous is that the emergent object studied by \cite{BenArous1995,Grunwald1996} - the pathwise empirical measure - carries too much information, because it knows about the \textit{history} of the spin-flipping. This is why  this paper focuses on determining the limiting dynamics of a different order parameter: the double empirical process (as defined in \eqref{eq: emp process 0}-\eqref{eq: emp process}) that cannot discern time-correlations in individual spins. The empirical process carries more information about the system than that of Ben Arous, Guionnet \cite{BenArous1995,Guionnet1997a,BenArous1998} and Grunwald \cite{Grunwald1996} insofar as it contains information about overlaps between different replicas, but less information insofar as it does not know about correlations-in-time of individual spins.
 The chief advantage of working with this order parameter is that the dynamics becomes autonomous in the large $N$ limit, just as in classical methods for studying the empirical process in interacting particle systems \cite{Dawson1989,Sznitman1989}. One can now apply the apparatus of PDEs to the limiting equations to study the bifurcation of the fixed points. Indeed preliminary analytic work has identified that there is a bifurcation in the fixed point of the flow \eqref{eq: p plus evolution} for SK Glauber dynamics, and 2 replica (see Remark \ref{Remark Bifurcation}). 

Many recent applications of dynamical spin glass theory have been in neuroscience, being referred to as networks of balanced excitation and inhibition. Typically the connections in these networks are almost asymmetric, unlike in the original SK model. These applications include networks driven by white noise \cite{Cessac1994,Brunel2003,Wainrib2013,Cabana2013,Faugeras2015a,Fasoli2015,Fasoli2019,Faugeras2019} and also deterministic disordered networks \cite{Aljadeff2015,Kadmon2015,Doiron2016,Crisanti2018}\footnote{One should be able to adapt the methods of this paper to this setting.}; the common element to all of these papers being the random connectivity of mean zero and high variance.   It has been argued that the highly variable connectivity in the brain is a vital component to the emergent gamma rhythm \cite{Brunel2003}. Another important application of spin-glass theory has been the study of stochastic gradient descent algorithms \cite{Baity-Jesi2018,Montanari2019}.

Our fundamental result is to show that as $N\to\infty$, the empirical process converges to have a density given by a Mckean-Vlasov-type PDE \footnote{See \cite{Dawson1989,Sznitman1989} for further discussion of such PDEs.} of the form, for $\boldsymbol\alpha \in \mathcal{E}^M$ and $\mathbf{x} \in \mathbb{R}^M$,
\begin{equation}
\frac{\partial p_t}{\partial t}(\boldsymbol\alpha,\mathbf{x}) = \sum_{i=1}^M\bigg\lbrace c(-\alpha^i,x^i) p_t(\boldsymbol\alpha[i],\mathbf{x})  - c(\alpha^i,x^i)p_t(\boldsymbol\alpha ,\mathbf{x} )+ 2L^{\xi_t}_{ii}   \frac{\partial^2 p_t }{\partial x_i^2 }(\boldsymbol\alpha,\mathbf{x})\bigg\rbrace
- \nabla\cdot \big\lbrace \mathbf{m}^{\xi_t}(\boldsymbol\alpha,\mathbf{x}) p_t(\boldsymbol\alpha,\mathbf{x})\big\rbrace, \label{eq: p plus evolution start}
\end{equation}
where $\xi_t \in \mathcal{M}^+_1\big(\mathcal{E}^{M} \times \mathbb{R}^M\big)$ is the probability measure with density $p_t$, $\boldsymbol\alpha[i]$ is the same as $\boldsymbol\alpha$, except that the $i^{th}$ spin has a flipped sign. $\mathbf{m}^{\xi_t}$ and $\mathbf{L}^{\xi_t}$ are functions defined in Section 2. 

In broad outline, our method of proof resembles that of Ben Arous and Guionnet \cite{BenArous1995} and Grunwald \cite{Grunwald1996}, insofar as (i) we freeze the interaction and (ii) study the Gaussian properties of the field variables. However our approach is different insofar as, after freezing the interaction, we do not use Girsanov's Theorem to study a tilted system, but instead study the pathwise evolution of the empirical process over small time increments. This \textit{pathwise} approach to the Large Deviations of interacting particle systems has been popular in recent years: being employed in the work of Budhiraja, Dupuis and colleagues \cite{Budhiraja2012}, in this author's work on  interacting particle systems with a sparse random topology \cite{maclaurin2016large}, and subsequent work in \cite{Coghi2018,Faugeras2019,Coppini2019}.  More precisely, we study the evolution over small time intervals of the expectation of test functions with respect to the double empirical measure: a method that has been applied to interacting particle systems in, for example, \cite{Jourdain1998} and \cite{Lucon2014}. To understand the change in the fields $\lbrace G^j_t\rbrace$ over a small increment in time, we use the law $\gamma$ of the connections, conditioned on the value of the fields at that time step. It is fundamental to our proof that - essentially due to the Woodbury formula for the inverse of a matrix with a finite-rank perturbation - the conditional Gaussian density can be written as a function of the empirical measure $\hat{\mu}^N_t(\boldsymbol\sigma, \mathbf{G}) = N^{-1}\sum_{j\in I_N} \delta_{(\boldsymbol\sigma^j_t , \mathbf{G}^j_t)}$ and the local spin and field variables (see the analysis in Section \ref{Section Gaussian}). 

\textit{Notation}: Let $\mathcal{E} = \lbrace -1 ,1 \rbrace$. For any Polish Space $\mathcal{X}$, we let $\mathcal{M}^+_1(\mathcal{X})$ denote all probability measures on $\mathcal{X}$, and $\mathcal{D}\big( [0,T], \mathcal{X} \big)$ the Skorohod space of all $\mathcal{X}$-valued c\`adl\`ag functions \cite{Billingsley1999}. We always endow $\mathcal{M}^+_1(\mathcal{X})$ with the topology of weak convergence.  Let $\mathcal{P} := \mathcal{M}^+_1\big(\mathcal{E}^M \times \mathbb{R}^M\big)$ denote the set of all probability measures on $\mathcal{E}^M \times \mathbb{R}^M$, and define the subset
\begin{equation}
\tilde{\mathcal{P}} = \big\lbrace \mu \in \mathcal{P} \; : \mathbb{E}^{\mu}\big[\norm{x}^2\big] < \infty \big\rbrace .
\end{equation}
For any vector $\mathbf{g} \in \mathbb{R}^M$, $\norm{\mathbf{g}}$ is the Euclidean norm, and $\norm{\mathbf{g}}_{\infty}$ is the supremum norm. For any square matrix $\mathbf{K} \in \mathbb{R}^{m\times m}$, $\norm{\mathbf{K}}$ is the operator norm, i.e.
 \[
 \norm{\mathbf{K}} = \sup_{\mathbf{x} \in \mathbb{R}^m: \norm{\mathbf{x}} = 1}\big\lbrace\norm{\mathbf{K}\mathbf{x}}\big\rbrace.
 \]
Let $d_W$ be the Wasserstein Metric \cite{Sznitman1989,Gibbs2002} on $\tilde{\mathcal{P}}$, i.e.
\begin{align}\label{eq: Wasserstein}
d_W(\beta , \zeta ) = \inf_{\eta}\big\lbrace \mathbb{E}^\eta\big[ \norm{\mathbf{x}-\mathbf{g}}+\norm{\boldsymbol\alpha-\boldsymbol\sigma}  \big]\big\rbrace .
\end{align}
where the infimum is over all measures $\eta \in \mathcal{M}^+_1\big( \mathcal{E}^M\times \mathbb{R}^M \times\mathcal{E}^M\times\mathbb{R}^M\big)$ with marginals $\beta$ (over the first two variables), and $\zeta$ (over the second two variables). We let $\mathcal{C}([0,T],\mathcal{X})$ denote the space of all continuous functions from $[0,T]$ to $\mathcal{X}$. $\mathcal{B}(\mathcal{X})$ denotes the Borelian subsets. 

The spins are indexed by $I_N := \lbrace 1,2,\ldots, N\rbrace$, and the replicas by $I_M := \lbrace 1,2,\ldots,M\rbrace$.  The typical indexing convention that we follow is $\boldsymbol\sigma^j_t = (\sigma^{1,j}_t,\ldots,\sigma^{M,j}_t)^T \in \mathcal{E}^M$, and $\boldsymbol\sigma_t = (\sigma^{i,j}_t)_{i \in I_M, j\in I_N} \in \mathcal{E}^{NM}$.
\section{Outline of model and main result}

Let $\big(\Omega ,{\mathcal {F}}, (\mathcal{F}_t) , \mathbb{P}\big)$ be a filtered probability space containing the following random variables. The connections $( J^{jk})_{j,k \in\mathbb{Z}}$ are centered Gaussian random variables, with joint law $\gamma \in \mathcal{M}^+_1\big(\mathbb{R}^{\mathbb{Z}^+\times\mathbb{Z}^+}\big)$. To lighten the notation we assume that there are self-connections (one could easily extend the results of this paper to the case where there are no self-connections). Their covariance is  taken to be of the form
\begin{equation}\label{eq: covariance gamma}
\Exp^{\gamma}\big[J^{jk} J^{lm} \big] =  \delta(j-l)\delta(k-m) + \mathfrak{s}\delta(j-m)\delta(k-l) .
\end{equation}
The parameter $\mathfrak{s} \in [0,1]$ is a constant indicating the level of symmetry in the connections. In the case that $\mathfrak{s} = 1$, $J^{jk} = J^{kj}$ identically, and in the case that $\mathfrak{s}=0$, $J^{jk}$ is probabilistically independent of $J^{kj}$. (One could easily extend these results to the case that $\mathfrak{s} \in [-1,0)$). $\lbrace J^{jk} \rbrace_{j,k \in \mathbb{Z}^+}$ are assumed to be $\mathcal{F}_0$-measurable.

We take $M$ replicas of the spins: this means that the connections $\mathbf{J}$ are the same across the different systems, but (conditionally on $\mathbf{J}$) the spin-jumps in different systems are independent. Our reason for working with replicas is that, as discussed in the introduction, in the case of reversible dynamics, replicas are known to shed much light on the rich `tree-like' structure of pure states in the equilibrium Gibbs measure \cite{Parisi1979,Mezard1987,Guerra2003,Talagrand2006,Panchenko2013}. If one wishes to avoid replicas, one could just take $M=1$. The spins $ \big\lbrace \sigma^{i,j}_{t} \big\rbrace_{j\in I_N , i \in I_M, t\geq 0 }$ constitute a system of jump Markov processes: $i$ being the replica index, and $j$ being the spin index. Spin $(i,j)$ flips between states in $\mathcal{E} = \lbrace -1 , 1 \rbrace$ with intensity $c(\sigma^{i,j}_t,G^{i,j}_t)$ (where $G^{i,j}_t = N^{-\frac{1}{2}}\sum_{k=1}^N J^{jk}\sigma^{i,k}_t$) for a function $c: \mathcal{E}\times \mathbb{R} \to [0,\infty)$ for which we make the following assumptions:
\begin{itemize} 
\item $c$ is strictly positive and uniformly bounded, i.e. for some constant $c_1 > 0$,
\begin{equation}\label{eq: uniform bound c sigma g}
\sup_{\sigma \in \mathcal{E}}\sup_{g\in \mathbb{R}}\big| \big(c(\sigma,g)  \big| \leq c_1 \text{ and }c(\sigma,g) > 0.
\end{equation}
\item The following Lipschitz condition is assumed: for a constant $c_L > 0$, for all $\sigma \in \mathcal{E}$ and $g_1,g_2 \in \mathbb{R}$,
\begin{align}
\big| c\big(\sigma,g_1\big)-  c\big(\sigma,g_2\big) \big| &\leq c_L \big| g_1 - g_2 \big| \label{eq: c Lipschitz 1}\\
\big| \log c\big(\sigma,g_1\big)- \log c\big(\sigma,g_2\big) \big| &\leq c_L \big| g_1 - g_2 \big| .\label{eq: c Lipschitz 2}
\end{align}
\item The following limits exist for $\sigma = \pm 1$,
\begin{equation}\label{eq: c limit}
\lim_{g\to-\infty} c(\sigma,g) \; \; , \; \; \lim_{g\to \infty} c(\sigma,g).
\end{equation}
\item The log of $c$ is bounded in the following way: there exists a constant $C_g > 0$ such that
\begin{equation}
\sup_{\alpha \in \mathcal{E}}\big| \log c(\alpha,g) \big| \leq C_g \big| g \big|. \label{eq: bound log c}
\end{equation}
\end{itemize}
We note that the Glauber Dynamics for the reversible dynamics in \eqref{eq: C Glauber} satisfy the above assumptions \cite{Glauber1963,Grunwald1998}.

To facilitate the proofs, we represent the stochasticity as a time-rescaled system of Poisson counting processes of unit intensity \cite{Ethier1986}. We thus define $\lbrace Y^{i,j}(t) \rbrace_{i\in I_M , j \in \mathbb{Z}^+}$ to be independent Poisson processes, which are also independent of the disorder variables $\lbrace J^{jk} \rbrace_{j,k \in \mathbb{Z}^+}$. We define the spin system $\lbrace \sigma^{i,j}_t \rbrace$ to be the unique solution of the following system of SDEs
\begin{equation}\label{eq: Yij definition}
\sigma^{i,j}_t = \sigma^{i,j}_0 \times A\cdot Y^{i,j}\bigg(\int_0^t c(\sigma^{i,j}_s , G^{i,j}_s)ds \bigg),
\end{equation}
where $A\cdot x := (-1)^x$. Clearly $\sigma^{i,j}_t$ depends on $N$ (for convenience this is omitted from the notation). The law of the initial condition $\boldsymbol\sigma_0$ is written as $\mu_{0} \in \mathcal{M}^+_1\big(\mathcal{E}^{MN}\big)$. $\mu_0$ is assumed to be independent of the disorder. Note that the forward Komolgorov equation describing the dynamics of the law $P^N_{\mathbf{J}}(t) \in \mathcal{M}^+_1\big(\mathcal{E}^{MN}\big)$ of the spins at time $t$ (conditioned on a realization $\mathbf{J}$ of the disorder) is \cite{Ethier1986}
\begin{align}
\frac{dP^N_{\mathbf{J}}(\boldsymbol\sigma)}{dt} = \sum_{i\in I_M,j\in I_N}\big\lbrace c(-\sigma^{i,j}_t, \hat{G}^{i,j}_t)P^N_{\mathbf{J}}(\boldsymbol\sigma[i,j]) - c(\sigma^{i,j}_t, G^{i,j}_t)P^N_{\mathbf{J}}(\boldsymbol\sigma)\big\rbrace, \label{eq: ODE for P J sigma}
\end{align}
where $\boldsymbol\sigma[i,j] \in \mathcal{E}^{MN}$ is the same as $\boldsymbol\sigma$, except that the spin with indices $(i,j)$ has a flipped sign, and $\hat{G}^{i,j}_t = N^{-1/2}\sum_{k\in I_N, k\neq j}J^{jk}\sigma^{i,k}_t - N^{-1/2}J^{jj}\sigma^{i,j}_t$.

For some fixed constant $\mathfrak{c} > 0$, define the set
\begin{equation}
\mathcal{X}^N = \big\lbrace \boldsymbol\eta \in \mathcal{E}^{NM} \; : \inf_{\mathfrak{b} \in \mathbb{R}^M \; : \norm{\mathfrak{b}}=1} \sum_{p,q\in I_M , j\in I_N} \eta^{p,j}\eta^{q,j}\mathfrak{b}^p \mathfrak{b}^q > N\mathfrak{c}\big\rbrace .\label{eq: mathcal X N assumption}
\end{equation}
We assume that the initial condition is such that
\begin{align}\label{eq: det K condition}
\lsup{N}N^{-1}\log \mathbb{P}\big( \boldsymbol\sigma_{0}\notin \mathcal{X}^N \big) < 0 .
\end{align}
Note that \eqref{eq: det K condition} is satisfied if $\lbrace \boldsymbol\sigma^j_0 \rbrace_{j\in I_N}$ are iid samples from some probability law $\tilde{\mu}_0 \in \mathcal{M}^+_1(\mathcal{E}^M)$ that is such that  
\[
 \inf_{\mathfrak{b} \in \mathbb{R}^M \; : \norm{\mathfrak{b}}=1} \mathbb{E}^{\tilde{\mu}_0}\big[\langle \mathfrak{b}, \boldsymbol\sigma \rangle^2\big] > \mathfrak{c}.
\]
One would then find that \eqref{eq: det K condition} follows from Sanov's Theorem \cite{Dembo1998}. For an arbitrary positive constant $T>0$, we define
\begin{equation}\label{eq: tilde tau N stopping}
\tau_N =T\wedge \inf\big\lbrace t: t\in [0,  T] \text{ and }\boldsymbol\sigma_t \notin \mathcal{X}^N\big\rbrace .
\end{equation}
If $\tau_N < T$, then the smallest eigenvalue of the overlap matrix $\mathbf{K}^{\hat{\mu}^N_{\tau_N}}$ (as defined in \eqref{eq: K xi defn}) is $\mathfrak{c}$. Intuitively, the stopping time is reached when the spins in different replicas are too similar. One expects that this is an extremely rare event, even on timescales diverging in $N$. See Remark \ref{Remark Overlap}. The main result of this paper is the following. We emphasize that these are `quenched' results. `Annealing' methods are not used in this paper.

\begin{theorem}\label{Theorem 1}
Fix $T > 0$. There exists a flow operator $\Phi : \mathcal{P} \to \mathcal{C}\big([0,T],\mathcal{P} \big)$ written $\Phi \cdot \mu := \lbrace\Phi_t\cdot \mu \rbrace_{t\geq 0}$ such that $\Phi_0\cdot \mu = \mu$ and for any $\epsilon > 0$
\begin{equation}\label{Theorem 1 Result}
\lsup{N}N^{-1}\log \mathbb{P}\big( \sup_{t \leq \tau_N} d_W\big( \Phi_t\cdot \hat{\mu}^N(\boldsymbol\sigma_0,\mathbf{G}_0)  , \hat{\mu}^N(\boldsymbol\sigma_t,\mathbf{G}_t) \big) \geq \epsilon \big) < 0. 
\end{equation}
The flow $\Phi$ is specified in Section \ref{Section Phi}. It follows immediately from the Borel-Cantelli Theorem that $\mathbb{P}$ almost surely
\begin{equation}
\lim_{N\to \infty} \sup_{t \leq \tau_N} d_W\big( \Phi_t\cdot \hat{\mu}^N(\boldsymbol\sigma_0,\mathbf{G}_0)  , \hat{\mu}^N(\boldsymbol\sigma_t,\mathbf{G}_t) \big)=0.
\end{equation}
\end{theorem}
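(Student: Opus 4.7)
The plan is to combine a time-discretization argument with the pathwise approach of Budhiraja--Dupuis and of the author, the main novelty being that the conditional Gaussian law of the connections $\mathbf{J}$ given the observed fields allows one to close the dynamics of the empirical measure autonomously. First, partition $[0,T\wedge \tau_N]$ into $K = T/\delta$ subintervals $[t_m,t_{m+1}]$. For a suitable class of smooth, bounded test functions $f:\mathcal{E}^M\times\mathbb{R}^M \to \mathbb{R}$ with controlled growth at infinity, the quantity of interest is the telescoping discrepancy
\[
\big\langle \hat{\mu}^N(\boldsymbol\sigma_{t_{m+1}},\mathbf{G}_{t_{m+1}}), f\big\rangle - \big\langle \hat{\mu}^N(\boldsymbol\sigma_{t_m},\mathbf{G}_{t_m}), f\big\rangle - \int_{t_m}^{t_{m+1}} \big\langle \hat{\mu}^N(\boldsymbol\sigma_s,\mathbf{G}_s), \mathcal{L}^{\hat{\mu}^N_s} f\big\rangle ds,
\]
where $\mathcal{L}^{\xi}$ is the formal adjoint of the PDE operator appearing in \eqref{eq: p plus evolution start}. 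The goal is to bound the probability that this discrepancy exceeds $\delta\epsilon/T$ by an expression exponentially small in $N$; summing over $m$ and then taking $\delta\to 0$ controls all $(f,\hat{\mu}^N)$-pairings, and a dual characterization then upgrades this to the Wasserstein distance $d_W$.

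Second, the essential technical ingredient is the conditional law of $\mathbf{J}$ given $\mathcal{F}_{t_m}$ together with the fields $\mathbf{G}_{t_m}$. Since $\mathbf{J}$ is jointly Gaussian with covariance \eqref{eq: covariance gamma} and the fields are linear functionals of $\mathbf{J}$, this conditional law is itself Gaussian. By the Woodbury matrix identity its mean and covariance admit an explicit representation in terms of the empirical overlap matrix $\mathbf{K}^{\hat{\mu}^N_{t_m}}$ together with the instantaneous spin and field values. This is precisely where the stopping time $\tau_N$ enters the argument: on $\{t_m \leq \tau_N\}$, the overlap matrix has smallest eigenvalue at least $\mathfrak{c}$, so the Woodbury inverse is uniformly bounded and Lipschitz as a function of the empirical measure, and all subsequent bounds are uniform in $N$.

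Third, using this conditional law, the field increment
\[
G^{i,j}_{t_{m+1}} - G^{i,j}_{t_m} = N^{-1/2}\sum_{k=1}^N J^{jk}\big(\sigma^{i,k}_{t_{m+1}}-\sigma^{i,k}_{t_m}\big)
\]
can be written, up to terms negligible at the exponential scale, as a Gaussian random variable whose mean is $\delta\,\mathbf{m}^{\hat{\mu}^N_{t_m}}(\boldsymbol\sigma^j_{t_m}, \mathbf{G}^j_{t_m}) + O(\delta^2)$ and whose variance is $2\delta\,\mathbf{L}^{\hat{\mu}^N_{t_m}} + O(\delta^2)$, reproducing the drift and diffusion coefficients in \eqref{eq: p plus evolution start}. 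Meanwhile the spin-flip contributions to $\langle f,\hat{\mu}^N\rangle$ are governed by a compensated sum of Poisson integrals of rate $c(\sigma^{i,j}_s,G^{i,j}_s)$, to which a standard exponential martingale / Chernoff bound applies to produce tails that are exponentially small in $N\delta$.

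Finally, combining the Poisson exponential bound with the Gaussian conditional estimate on each subinterval, taking a union bound over the $K$ subintervals, and invoking a Gronwall-type argument in Wasserstein distance to propagate errors through the iteration yields \eqref{Theorem 1 Result}. An exponential tightness estimate is needed to pass from control at the grid points $t_m$ to the supremum over all $t\leq \tau_N$. The \emph{main obstacle} will be the second and third steps: one must quantify the Woodbury expansion sharply enough that higher-order corrections vanish at the exponential scale, uniformly on $\{t\leq \tau_N\}$, and one must establish a quantitative local CLT for sums of the form $N^{-1/2}\sum_k J^{jk}\Delta\sigma^{i,k}$ in which only a small (but random) subset of the $\Delta\sigma^{i,k}$ are nonzero. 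Both tasks require careful matrix-perturbation estimates; controlling the growth of the field coordinate (so that the Wasserstein coupling on $\tilde{\mathcal{P}}$ is well-behaved) and establishing Lipschitz regularity of the flow operator $\Phi_t$ are necessary preliminaries to closing the Gronwall step.
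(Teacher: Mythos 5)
Your overall architecture (time discretization, Taylor/telescoping expansion of test functions, Kantorovich duality, Poisson concentration for the jump part, conditional Gaussian law of $\mathbf{J}$ for the field part, Gronwall propagation, and the role of $\tau_N$ in keeping the overlap matrix invertible) matches the paper's strategy. But there is a genuine gap at the heart of your second and third steps. You claim that, conditionally on $\mathcal{F}_{t_m}$ together with the fields $\mathbf{G}_{t_m}$, the law of $\mathbf{J}$ is Gaussian "since $\mathbf{J}$ is jointly Gaussian and the fields are linear functionals of $\mathbf{J}$." This is false for the original dynamics: the spin trajectory up to $t_m$ is itself a functional of $\mathbf{J}$ (the flip intensities $c(\sigma^{i,j}_s,G^{i,j}_s)$ depend on the fields), so conditioning on $\mathcal{F}_{t_m}$ tilts the Gaussian law of $\mathbf{J}$ by a non-Gaussian likelihood factor coming from the jump history; the fields $G^{i,j}_{t_m}$ are linear in $\mathbf{J}$ only with \emph{random} coefficients $\sigma^{i,k}_{t_m}$ that are correlated with $\mathbf{J}$. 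The same problem recurs in your treatment of the increment $G^{i,j}_{t_{m+1}}-G^{i,j}_{t_m}=N^{-1/2}\sum_k J^{jk}(\sigma^{i,k}_{t_{m+1}}-\sigma^{i,k}_{t_m})$: the increments $\sigma^{i,k}_{t_{m+1}}-\sigma^{i,k}_{t_m}$ depend on $\mathbf{J}$ over $[t_m,t_{m+1}]$, so this is not a Gaussian variable with the stated mean and variance conditionally on the past, and the claimed identification of the drift $\mathbf{m}^{\hat\mu^N}$ and diffusion $\mathbf{L}^{\hat\mu^N}$ does not follow.

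This is precisely the obstruction the paper removes before any Gaussian computation is made: it performs a Girsanov change of measure (Section 4) to auxiliary processes $\tilde{\boldsymbol\sigma}_{i,t}$ whose flip intensities are frozen, piecewise-constant functions $\mathfrak{G}_{i}$ chosen independently of $\mathbf{J}$, so that conditionally on the spin path the fields $\tilde{G}^{q,j}$ genuinely are Gaussian and the conditional mean/covariance calculus of Section 7 (your Woodbury step) is legitimate. Making this rigorous requires two ingredients absent from your proposal: a partition of the path space into polynomially many sets $\mathcal{V}^N_i$ (so that the frozen intensities can be matched to the empirical path measure and the union over the partition does not destroy the exponential estimates), and a finer time mesh to control the Girsanov exponent $\Gamma^N_i$, showing it is $o(N)$ uniformly on each partition element. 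Without this change of measure (or some substitute argument decoupling the spin randomness from $\mathbf{J}$), the conditional-Gaussian step on which your whole estimate rests is unjustified, and the proof does not close.
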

\subsection{Existence and Uniqueness of the Flow $\Phi_t$}\label{Section Phi}
In this section we define $\Phi \cdot \mu \in \mathcal{C}\big([0,T],\mathcal{P} \big)$, for any $\mu \in \mathcal{P}$ such that $\Exp^{\mu(\sigma,g)}\big[g^2\big] <\infty $. We write $\Phi\cdot\mu := \lbrace \Phi_t\cdot\mu \rbrace_{t\in [0,T]}$, where $\Phi_t: \mathcal{P} \to \mathcal{P}$, and in the following we write $\xi_t =\Phi_t \cdot \mu$. 
\begin{lemma}\label{Lemma xi t}
Fix $T > 0$. For any $\mu \in \mathcal{P} := \mathcal{M}^+_1\big(\mathcal{E}^M \times \mathbb{R}^M\big)$ such that $\Exp^{\mu(\sigma,g)}\big[g^2\big] <\infty $, there exists a unique set of measures $\lbrace \xi_{t}\rbrace_{t\in [0,T]} \subset \mathcal{P}$ with the following characteristics
\begin{enumerate}
\item For all $t  \in (0, T]$, $\xi_t$ has a density in its second variable, i.e. $d\xi_t(\boldsymbol\sigma,\mathbf{x}) = p_t(\boldsymbol\sigma,\mathbf{x})d\mathbf{x}$. $p_t(\boldsymbol\sigma,\mathbf{x})$ is continuously differentiable in $t$, twice continuously differentiable in $\mathbf{x}$, and satisfies the system of equations \eqref{eq: K xi defn}- \eqref{eq: p plus evolution}.
\item $\xi_0 = \mu$, and for all $t\in [0,T]$, $t \to \xi_t$ is continuous.
\end{enumerate}
\end{lemma}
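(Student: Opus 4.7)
I will construct $\{\xi_t\}$ via a McKean--Vlasov representation and a Picard fixed-point scheme on curves of measures, then upgrade to classical regularity via parabolic theory. Define a map $\Psi$ on $\mathcal{C}([0,T], \tilde{\mathcal{P}})$ by sending a candidate curve $(\tilde\xi_t)_{t\in[0,T]}$ to the flow of one-dimensional marginal laws of the jump-diffusion $(\boldsymbol\alpha_t, \mathbf{X}_t)$ driven by
\[
 d X^i_t = m_i^{\tilde\xi_t}(\boldsymbol\alpha_t, \mathbf{X}_t)\, dt + 2\bigl(L^{\tilde\xi_t}_{ii}\bigr)^{1/2}\, dB^i_t,
\]
together with independent Poisson clocks that flip $\alpha^i$ at rate $c(\alpha^i_{t-},X^i_t)$, started from $(\boldsymbol\alpha_0, \mathbf{X}_0)\sim\mu$. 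An It\^o/Dynkin expansion on the generator shows that $\xi_t := \mathrm{Law}(\boldsymbol\alpha_t,\mathbf{X}_t)$ satisfies the linear PDE obtained from \eqref{eq: p plus evolution start} by freezing the measure-dependence of the coefficients at $\tilde\xi$; a fixed point of $\Psi$ is thus a solution to the full nonlinear system. For $\tilde\xi$ fixed the coefficients are (by the Gaussian conditioning of Section \ref{Section Gaussian}) affine in $\mathbf{x}$ and bounded together with their derivatives, while $c$ is bounded and Lipschitz by \eqref{eq: uniform bound c sigma g}--\eqref{eq: c Lipschitz 1}, so classical jump-diffusion results \cite{Ethier1986} yield a unique strong solution with $\Exp[\norm{\mathbf{X}_t}^2]<\infty$ continuous in $t$, i.e.\ $\Psi(\tilde\xi)\in \mathcal{C}([0,T],\tilde{\mathcal{P}})$.

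\textbf{Contraction and uniqueness.} Given two candidates $\tilde\xi^{(1)},\tilde\xi^{(2)}$, I would build the two SDE solutions on a common probability space with the same Brownian motions, the same initial condition, and a shared Poisson random measure driving the spin-flips. The $d_W$-Lipschitz dependence of $(\mathbf{m}^\xi,\mathbf{L}^\xi)$ on $\xi$ (available from the Woodbury/conditional-Gaussian formulae, which express them as smooth functions of a few low-order moments of $\xi$), combined with the Lipschitz estimate \eqref{eq: c Lipschitz 1} for the jump rates, yields via a synchronous coupling and Gronwall
\[
\sup_{t\le T_0} d_W\bigl(\Psi(\tilde\xi^{(1)})_t,\Psi(\tilde\xi^{(2)})_t\bigr) \le \kappa(T_0)\,\sup_{t\le T_0} d_W\bigl(\tilde\xi^{(1)}_t,\tilde\xi^{(2)}_t\bigr),
\]
with $\kappa(T_0)\to 0$ as $T_0\to 0$. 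This produces a unique fixed point on a short interval $[0,T_0]$; the Markov structure then allows one to restart at $\xi_{T_0}$ and cover $[0,T]$ in finitely many iterations, yielding the required unique curve $\{\xi_t\}_{t\in[0,T]}$ and giving continuity in $t$ from the SDE construction.

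\textbf{Density and regularity.} For the smoothness claim, conditional on the discrete trajectory $(\boldsymbol\alpha_s)_{s\le t}$ the process $\mathbf{X}_\cdot$ is a diffusion with strictly positive diffusion coefficient on each coordinate (since $L^{\xi_t}_{ii}$ is bounded below uniformly, owing to the coercivity of $\mathbf{K}^{\xi_t}$). Standard parabolic regularity applied to the system of coupled Fokker--Planck equations indexed by $\boldsymbol\alpha\in\mathcal{E}^M$ (coupled only through the bounded jump operator) then produces a density $p_t(\boldsymbol\alpha,\mathbf{x})$ for every $t>0$ that is $C^1$ in $t$ and $C^2$ in $\mathbf{x}$ and solves \eqref{eq: p plus evolution start} in the classical sense, as required by item~1 of the lemma.

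\textbf{Main obstacle.} The genuinely hard step is establishing the $d_W$-Lipschitz continuity of $\xi\mapsto(\mathbf{m}^\xi,\mathbf{L}^\xi)$ on the admissible set: the Woodbury-type expressions involve the inverse of the overlap matrix $\mathbf{K}^\xi$, which is well-behaved only when the smallest eigenvalue of $\mathbf{K}^\xi$ stays bounded away from zero. To close the fixed-point argument I must therefore first propagate the spectral lower bound $\mathbf{K}^{\xi_t}\succeq \mathfrak{c}\mathbf{I}$ from \eqref{eq: mathcal X N assumption} through the PDE \eqref{eq: p plus evolution start} on $[0,T]$ (consistent with the stopped process used in \eqref{eq: tilde tau N stopping}), and simultaneously produce a uniform second-moment bound $\sup_{t\le T}\Exp^{\xi_t}[\norm{\mathbf{x}}^2]<\infty$ to control the linear-in-$\mathbf{x}$ growth of $\mathbf{m}^\xi$ before Gronwall is applied. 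Both estimates are purely a priori and do not use the fixed-point construction, so they can be established first by working with the SDE at the level of each Picard iterate.
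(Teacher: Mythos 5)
Your overall architecture (freeze the measure-dependence, solve the linearized jump-diffusion, run a Picard/coupling contraction in $d_W$, and support it with a priori second-moment bounds before identifying the nonlinear fixed point) is essentially the same strategy as the paper, which outsources the uniformly-Lipschitz case to \cite{Jourdain2007} and handles the local Lipschitzness by replacing the coefficients with globally Lipschitz ones that agree with $(\mathbf{m}^{\xi},\sqrt{L^{\xi}_{ii}})$ on a good set $\mathcal{D}_{\epsilon}$, then showing the solution never leaves $\mathcal{D}_{\epsilon}$. However, your plan contains a genuine gap in the step you yourself flag as the ``main obstacle.'' The flow in the lemma is \emph{not} defined with $(\mathbf{K}^{\xi})^{-1}$: the drift \eqref{eq: m xi} uses $\mathbf{H}^{\xi}$ from \eqref{eq: H xi definition}, which is a regularized inverse chosen precisely so that $\xi\mapsto\mathbf{H}^{\xi}$ is uniformly Lipschitz and bounded (Lemma \ref{Lemma Lipschitz Matrices}) \emph{without} any spectral lower bound on $\mathbf{K}^{\xi_t}$. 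Your proposal instead requires propagating $\mathbf{K}^{\xi_t}\succeq\mathfrak{c}\mathbf{I}$ through the PDE on all of $[0,T]$ and offers no argument for it beyond calling it an a priori estimate; this is not routine --- the smallest eigenvalue of the overlap matrix has no obvious monotonicity or Gronwall structure, and the paper explicitly records (Remark \ref{Remark Overlap}) that it has no easy proof of such a statement, which is exactly why the finite-$N$ analysis uses the stopping time and why the limit flow uses the regularized $\mathbf{H}^{\xi}$. As written, your contraction argument therefore cannot be closed; the repair is simply to work with $\mathbf{H}^{\xi}$ as defined, after which the only measure-dependence you must control is through the moments in \eqref{eq: K xi defn}--\eqref{eq: upsilon xi defn}.

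A second, smaller error: you justify the uniform lower bound on the diffusion coefficient by ``the coercivity of $\mathbf{K}^{\xi_t}$,'' but $L^{\xi}_{ii}=\Exp^{\xi}\big[c(\sigma^{i},x^{i})\big]$ (see \eqref{eq: L xi defn} with $(\sigma^i)^2=1$) has nothing to do with the spectrum of $\mathbf{K}^{\xi}$. The correct route --- and the one the paper takes --- is to use the propagated bound on $\Exp^{\xi_t}[(x^i)^2]$ together with Chebyshev's inequality and the strict positivity and continuity of $c$ to get $\inf_{t\le T} L^{\xi_t}_{ii}>0$. This bound is needed twice in your scheme: to make $\xi\mapsto\sqrt{L^{\xi}_{ii}}$ Lipschitz in the synchronous-coupling/Gronwall step, and to have non-degenerate parabolicity for the $C^{1,2}$ density regularity; so it must be derived correctly rather than attributed to the overlap matrix. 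With these two repairs your argument aligns with the paper's proof.
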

For any $\xi \in \mathcal{P}$ such that $\Exp^{\xi(\sigma,g)}\big[\norm{g}^2\big] <\infty $, define the $M\times M$ coefficient matrices $\lbrace \mathbf{L}^{\xi},\boldsymbol\kappa^{\xi},\boldsymbol\upsilon^{\xi},\mathbf{K}^{\xi}\rbrace\in\mathbb{R}^{M^2}$ to have the following elements,
\begin{align}
K^{\xi}_{jk} =& \Exp^{\xi(\boldsymbol\sigma,\mathbf{x})}\big[ \sigma^j \sigma^k \big] \label{eq: K xi defn}\\
L_{jk}^{\xi} =& \Exp^{\xi(\boldsymbol\sigma,\mathbf{x})}\big[ \sigma^{k}\sigma^j c(\sigma^j,x^j)\big]\label{eq: L xi defn} \\ 
\kappa_{jk}^{\xi} =& \Exp^{\xi(\boldsymbol\sigma,\mathbf{x})}\big[ x^k \sigma^j c(\sigma^j,x^j)\big]\label{eq: kappa xi defn} \\ 
\upsilon^{\xi}_{jk} =&\Exp^{\xi(\boldsymbol\sigma,\mathbf{x})}\big[ \sigma^k x^j \big] .\label{eq: upsilon xi defn} 
\end{align}
For any $\mu \in \mathcal{P}$, define $\Lambda^{\mu}$ to be the smallest eigenvalue of $\mathbf{K}^{\mu}$, i.e.
\begin{align}\label{eq: Lambda mu definition}
\Lambda^{\mu} = \inf_{\mathfrak{a} \in \mathbb{R}^M:\norm{\mathfrak{a}}=1}\sum_{j,k=1}^M K^{\mu}_{jk}\mathfrak{a}^j \mathfrak{a}^k = \inf_{\mathfrak{a} \in \mathbb{R}^M:\norm{\mathfrak{a}}=1} \Exp^{\mu}\big[\big(\sum_{j=1}^M \mathfrak{a}^j \sigma^j\big)^2\big] ,
\end{align}
noting that the eigenvalues of $\mathbf{K}^{\mu}$ are real (since it is symmetric) and non-negative. To facilitate the following proofs (in particular, the existence and uniqueness of the solution to the PDE), we want the following functions $\mathbf{m}^{\xi}(\boldsymbol\sigma,\mathbf{x})$ and $\mathbf{L}^{\xi}$ to be uniformly Lipschitz for all $\xi \in \mathcal{P}$. Indeed thanks to our definition of the stopping time $\tau_N$, it does not matter how $\mathbf{m}^{\xi}$ is defined for $\xi$ such that $\Lambda^{\xi} < \mathfrak{c}/2$, as long as $\epsilon$ is sufficiently small. To this end, we choose a definition that ensures that $\xi \to \mathbf{H}^{\xi}$ is uniformly Lipschitz, i.e.
\begin{equation}\label{eq: H xi definition}
\mathbf{H}^{\xi} = 
\begin{cases}
 \big(\mathbf{K}^{\xi}\big)^{-1} \text{ if }\Lambda^{\xi} \geq \mathfrak{c} / 2\\
 \big( \mathbf{I}(\mathfrak{c}/2 - \Lambda^{\xi})  + \mathbf{K}^{\xi} \big)^{-1} \text{ otherwise. }
\end{cases}
\end{equation}
Now define the vector field $\mathbf{m}^{\xi}(\boldsymbol\sigma,\mathbf{x}) : \mathcal{P}\times \mathcal{E}^M \times \mathbb{R}^M \to \mathbb{R}^M$ as follows
\begin{equation}
	\mathbf{m}^{\xi}(\boldsymbol\sigma,\mathbf{x}) = -2 \mathbf{L}^{\xi}\mathbf{H}^{\xi}\mathbf{x} - 2\mathfrak{s}\boldsymbol\kappa^{\xi}\mathbf{H}^{\xi} \boldsymbol\sigma +2\mathfrak{s} \mathbf{L}^{\xi}\mathbf{H}^{\xi}\boldsymbol\upsilon^{\xi}\mathbf{H}^{\xi}\boldsymbol\sigma .\label{eq: m xi}
\end{equation}

We can now write down the PDE that defines the density of $\xi_t := \Phi_t(\mu)$. For some $\boldsymbol\alpha \in \mathcal{E}^M$ and $\mathbf{x}\in \mathbb{R}^M$, we write $p_t(\boldsymbol\alpha,\mathbf{x})$ to be the density of $\xi_t$ in its second variable, i.e. $\xi_t\big(\boldsymbol\sigma=\boldsymbol\alpha, g^i \in [x^i,x^i+dx^i]\big) := p_t(\boldsymbol\alpha,\mathbf{x}) dx^1 \ldots dx^M$. Write $\boldsymbol\alpha[i]\in\mathcal{E}^M$ to be almost identical to $\boldsymbol\alpha$, except that the $i^{th}$ spin has a flipped sign.  
The evolution of the densities is governed by the following system of partial differential equations
\begin{equation}
\frac{\partial p_t}{\partial t}(\boldsymbol\alpha,\mathbf{x}) = \sum_{i\in I_M}\big\lbrace c(-\alpha^i,x^i) p_t(\boldsymbol\alpha[i],\mathbf{x})  - c(\alpha^i,x^i)p_t(\boldsymbol\alpha ,\mathbf{x} )+2 L^{\xi_t}_{ii}   \frac{\partial^2 p_t }{\partial (x^i)^2 }(\boldsymbol\alpha,\mathbf{x})\big\rbrace
- \nabla\cdot \big\lbrace \mathbf{m}^{\xi_t}(\boldsymbol\alpha,\mathbf{x}) p_t(\boldsymbol\alpha,\mathbf{x})\big\rbrace. \label{eq: p plus evolution}
\end{equation}

\begin{remark}
We emphasize that the convergence result in Theorem \ref{Theorem 1} does not hold for the path-wise empirical measure, i.e.
\[
\tilde{\mu}^N = N^{-1}\sum_{j\in I_N} \delta_{(\sigma^j_{[0,T]}, G^j_{[0,T]})}\in \mathcal{M}^+_1\big( \mathcal{D}([0,T],\mathcal{E}^M\times\mathbb{R}^M)\big),
\]
endowed with the Skorohod topology on the set of c\`adl\`ag paths $\mathcal{D}\big([0,T],\mathcal{E}^{M}\big)$ \cite{Ethier1986}. Indeed it is known that the limit of the pathwise empirical measure is non-Markovian, so the Markovian stochastic hybrid system with Fokker-Planck equation given by \eqref{eq: p plus evolution} is almost certainly not the limiting law for the pathwise empirical measure \cite{BenArous1995}.  This does not mean that our result in Theorem \ref{Theorem 1} is inconsistent with the non-Markovian results in the work of Ben Arous, Guionnet and Grunwald \cite{BenArous1995,Grunwald1996}, since the topology in our theorem cannot discern correlations in particular spins at different times. 
\end{remark}
\begin{remark}\label{Remark Overlap}
 It seems plausible that for any temperature $\beta> 0$ and any $T > 0$, there exists $\mathfrak{c}$ such that
\[
\lsup{N}N^{-1}\log \mathbb{P}\big(\tau_N <T \big) < 0.
\]
Perhaps one could prove this by demonstrating that the attracting manifold of the flow $\Phi_t$ is such that all eigenvalues of $\mathbf{K}^{\xi_t}$ are strictly positive. One expects this to be true because of the presence of the diffusions in the PDE. However the author has not yet seen an easy proof of this.
\end{remark}
\begin{remark}\label{Remark Bifurcation}
Suppose that the dynamics is reversible, with spin-flipping intensity given by \eqref{eq: C Glauber}, $h = 0$ and the symmetry $\mathfrak{s}=1$. Preliminary numerical work by C.MacLaurin\footnote{Private communication.} has identified a family of fixed point solutions to \eqref{eq: p plus evolution start} with two replicas (i.e. $M=2$). Let $q \geq 0$ satisfy the implicit relationship
\begin{align}
\frac{1+q}{1-q} -\cosh(2\beta h)\exp\big( 2\beta^2 q\big) =& 0 \text{ and define the matrix elements }\label{eq: overlap equation} \\
K^{\xi}_{11} = K^{\xi}_{22}=& 1 \text{ and } K^{\xi}_{12} = K^{\xi}_{21} = q\nonumber \\
\upsilon^{\xi}_{11} = \upsilon^{\xi}_{22} =& \frac{\beta}{2}(1+q^2) \text{ and }\upsilon^{\xi}_{12} = \upsilon^{\xi}_{21} = \beta q \text{ and }
\kappa^{\xi}_{ij} =0.\nonumber
\end{align}
With the above definitions, the field distributions $p(\boldsymbol\alpha, \cdot)$ in the fixed point solution to \eqref{eq: p plus evolution start}
are weighted Gaussians. For $h=0$, there is a bifurcation as $\beta$ increases through $1$ in the solutions to \eqref{eq: overlap equation}: for $\beta \leq 1$, $q=0$ is the unique solution, but for $\beta > 1$, it is no longer unique. 
\end{remark}

\subsection{Proof Outline}

We discretize time into $(n+1)$ timesteps $\lbrace t^{(n)}_a \rbrace_{0\leq a \leq n}$: writing $\Delta = t^{(n)}_{a+1} - t^{(n)}_a = Tn^{-1}$. In Section \ref{Section Groundwork} we use an argument that is reminiscent of Gronwall's Inequality to demonstrate that if the action of the flow operator over the time interval $[t^{(n)}_a,t^{(n)}_{a+1}]$ corresponds to the dynamics of the empirical process to within an error of $o(\Delta)$, then the supremum of the difference between the empirical process and the flow over the entire interval $[0,T]$ must be small. We also introduce an approximate flow $\Psi_t$, obtained by evaluating the coefficients in the PDE at $\hat{\mu}^N_t$ rather than $\xi_t$. In subsequent sections it will be easier to compare $\Psi_t$ to $\hat{\mu}^N_t$ than to compare $\Phi_t$ to $\hat{\mu}^N_t$. \\

To accurately estimate the `average' change in the fields $G^{q,j}_{t^{(n)}_a} \to G^{q,j}_{t^{(n)}_{a+1}}$ we must perform a change-of-measure to a stochastic process $\tilde{\sigma}^{q,j}_{i,t}$ whose spin-flipping is independent of the connections. The reason for this change of measure is that now the changed fields $\tilde{G}^{i,j}_t := N^{-1/2}\sum_{k\in I_N}J^{jk}\tilde{\sigma}^{i,k}_t$ are Gaussian, and their incremental behavior can be accurately predicted by studying their covariance structure. In Section \ref{Section Change of Measure} we define $C^N_{\mathfrak{n}}$ such processes $\lbrace \tilde{\boldsymbol\sigma}_{i,t} \rbrace_{1\leq i \leq C^N_{\mathfrak{n}}}$, and we demonstrate that the probability law of the original $\mathcal{E}^{MN}$-valued process $\boldsymbol\sigma_t$ must be close to at least one of them using Girsanov's Theorem. The partition of the path space $\mathcal{D}([0,T],\mathcal{E}^M)^N$ is implemented using a second, finer, discretization of time into $\lbrace t^{(m)}_a \rbrace_{0\leq a \leq m}$, for some $m$ which is an integer multiple of $n$. This finer partition of time is needed to ensure that the Girsanov exponent is sufficiently close to unity.  \\
In Section \ref{Expectation Test Functions} we demonstrate that the Wasserstein distance can be approximated arbitrarily well by taking the supremum of the difference in expectation of a finite set of smooth functions. Working now exclusively with the processes $\tilde{\boldsymbol\sigma}_{i,t}$, we Taylor expand the change in expectation of such functions from $t^{(n)}_a$ to $t^{(n)}_{a+1}$, for both the empirical measure and the flow operator $\Phi_t$. The Taylor expansion implies that only the first two moments of the empirical measure and flow operator need to match in order that the change in the Wasserstein Distance is $o(\Delta)$. There are two basic types of term in the difference of the Taylor Expansions: (i) terms that can be bounded using concentration inequalities for Poisson Processes $\lbrace Y^{q,j}(t) \rbrace_{q\in I_M,j\in I_N}$, and (ii) terms that require the law $\gamma$ of the Gaussian connections $\lbrace J^{jk} \rbrace_{j,k \in I_N}$ to be accurately bounded.\\
In Section \ref{Section Stochastic Estimates}, we bound the terms (i) whose dynamics can be accurately predicted using the Law of Large numbers for Poisson Processes. These bounds typically involve concentration inequalities for compensated Poisson Processes (which are Martingales \cite{Anderson2015}). In Section \ref{Section Field}, we bound the terms (ii), using the conditional Gaussian probability law $\gamma_{\tilde{\boldsymbol\sigma},\tilde{\mathbf{G}}}$ - obtained by taking the law $\gamma \in \mathcal{M}^+_1(\mathbb{R}^{N^2})$ of the connections $\lbrace J^{jk} \rbrace_{j,k\in I_N}$ and conditioning on the values of the NM field variables $\lbrace \tilde{G}^{q,j}_{t^{(n)}_a} \rbrace_{q\in I_M, j\in I_N}$. We demonstrate that the average change in the field terms $\tilde{G}^{q,j}_{t^{(n)}_{a+1}} - \tilde{G}^{q,j}_{t^{(n)}_a}$ is governed by the first and second moments of $\gamma_{\boldsymbol\sigma,\mathbf{G}}$. The first moment ultimately leads to the term $\mathbf{m}^{\xi_t}$ in \eqref{eq: p plus evolution}, and the second moment ultimately leads to the diffusion coefficient $\sqrt{L^{\xi_t}_{ii}}$. \\
Before we commence the above plan, we require that the flow operator $\Phi_t$ is well defined.
\begin{proof}[Proof of Lemma \ref{Lemma xi t}]
We can interpret $p_t$ as the marginal probability law of the solution of a nonlinear SDE driven by a Levy Process.  \cite{Jourdain2007} proved the existence and uniqueness of a solution to such an SDE in the case that the coefficients are uniformly Lipschitz functions of the probability law (with respect to the Wasserstein distance). By contrast, our coefficients $\mathbf{m}^{\xi_t}$ and $\big(\mathbf{L}^{\xi_t}\big)^{1/2}$ (one must take the square root of the diffusion coefficient to obtain the coefficient of the stochastic integral) are only locally Lipschitz (see Lemma \ref{Lemma Lipschitz Matrices}).

To get around this, one first uses \cite{Jourdain2007} to show existence and uniqueness for an analogous system driven by uniformly Lipschitz coefficients $\hat{\mathbf{m}}^{\xi_t}$ and $\big\lbrace \big(\hat{L}_{ii}^{\xi_t}\big)^{1/2} \big\rbrace_{i\in I_M}$. These coefficients are taken to be identical to $\mathbf{m}^{\xi_t}$ and $(L_{ii}^{\xi_t})^{1/2} $ when $\xi_t \in \mathcal{D}_{\epsilon}$, where
\[
\mathcal{D}_{\epsilon} = \big\lbrace \mu \in \mathcal{P} \; : \; \sup_{i\in I_M}\mathbb{E}^{\mu}[ (x^i)^2] \leq \epsilon^{-1} \text{ and } \inf_{i\in I_M}\mathbb{E}^{\mu}[c(\alpha^i,x^i)] \geq \epsilon \big\rbrace .
\]
The solution is written as $\xi_{\epsilon,t}$. One then shows that for small enough $\epsilon$, $\xi_{\epsilon,t} \in \mathcal{D}_{\epsilon}$ for all $t\in [0,T]$. Once one has shown this, it must be that $\xi_t := \xi_{\epsilon,t}$ is the unique solution.

To do this, one can easily show (analogously to Lemma \ref{eq: compact Psi}) that for all $\epsilon > 0$, there exist constants $C_1,C_2 > 0$ such that
\[
\frac{d}{dt}\mathbb{E}^{\xi_{\epsilon,t}}[ (x^i)^2] \leq C_1 \mathbb{E}^{\xi_{\epsilon,t}}[ (x^i)^2]  + C_2.
\]
The boundedness of $\mathbb{E}^{\xi_{\epsilon,t}}[ (x^i)^2]$ then implies a lower bound for $\hat{L}^{\xi_{\epsilon,t}}_{ii}$, since for any $u > 0$, thanks to Chebyshev's Inequality, $\xi_{\epsilon,t}(| x^i|  \leq u ) \geq 1 - \mathbb{E}^{\xi_{\epsilon,t}}[(x^i)^2] u^{-2}$, and the continuity of $c$ implies that $\inf_{|x| \leq u , \sigma \in \mathcal{E}}c(\sigma,x) > 0$. Since $\mu \to L^{\mu}_{ii}$ is uniformly Lipschitz, it must be that $\mu \to D^{\mu}_{ii}$ is uniformly Lipschitz over $\mathcal{D}_{\epsilon}$, since $L^{\mu}_{ii}$ is bounded away from zero.
\end{proof}
The above existence and uniqueness proof requires that the coefficients of the PDE in \eqref{eq: p plus evolution} are Lipschitz. This is noted in the follow Lemma.
\begin{lemma}\label{Lemma Lipschitz Matrices}
(i) There exists a constant $C_1>0$ such that for any $\beta,\zeta\in \tilde{\mathcal{P}}$,
\begin{align}
\sup_{1\leq p,q \leq M} \big| L_{pq}^{\beta} - L^{\zeta}_{pq}\big| ,  \big| K_{pq}^{\beta} - K^{\zeta}_{pq}\big| &\leq  C_1 d_W(\beta,\zeta) \\
\sup_{1\leq p,q \leq M} \big| \upsilon_{pq}^{\beta} - \upsilon^{\zeta}_{pq}\big| ,  \big| \kappa_{pq}^{\beta} - \kappa^{\zeta}_{pq}\big| &\leq  C_1\big(1 + \mathbb{E}^{\beta}\big[\norm{\mathbf{x}}^2\big]^{\frac{1}{2}}\big)d_W(\beta,\zeta).
\end{align}
(ii) There is a constant $C>0$ such that for all $\beta,\zeta \in \tilde{\mathcal{P}}$ such that $\Lambda^{\beta},\Lambda^{\zeta} \geq \mathfrak{c}/2$, all $\boldsymbol\alpha,\boldsymbol\sigma \in \mathcal{E}^M$ and all $\mathbf{x},\mathbf{g} \in \mathbb{R}^M$, 
\begin{align}
\norm{\mathbf{m}^{\beta}(\boldsymbol\alpha,\mathbf{x}) - \mathbf{m}^{\zeta}(\boldsymbol\sigma,\mathbf{g})} &\leq Cd_W(\beta,\zeta)\big\lbrace 1 + \norm{\mathbf{g}} + \mathbb{E}^{\zeta}\big[\norm{\mathbf{g}}^2\big]^{\frac{1}{2}}\big\rbrace + C\norm{\mathbf{x-\mathbf{g}}} +C\big\lbrace 1 +  \mathbb{E}^{\zeta}\big[\norm{\mathbf{g}}^2\big]^{\frac{1}{2}}\big\rbrace\norm{\boldsymbol\alpha-\boldsymbol\sigma} \\
\norm{\mathbf{m}^{\beta}(\boldsymbol\alpha,\mathbf{g})}&\leq C \norm{\mathbf{g}} + C  \big(1 + \mathbb{E}^{\beta}\big[\norm{\mathbf{g}}^2\big]^{\frac{1}{2}} \big).\label{Lemma Bound m}
\end{align}
\end{lemma}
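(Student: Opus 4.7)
The plan for part (i) is to exploit the Kantorovich--Rubinstein dual characterization of $d_W$. For any coupling $\eta$ with marginals $\beta$ and $\zeta$, we have $L_{pq}^{\beta} - L_{pq}^{\zeta} = \mathbb{E}^{\eta}[\sigma_1^p \sigma_1^q c(\sigma_1^q, x_1^q) - \sigma_2^p \sigma_2^q c(\sigma_2^q, x_2^q)]$. The uniform bound \eqref{eq: uniform bound c sigma g} together with the Lipschitz hypothesis \eqref{eq: c Lipschitz 1} shows that the integrand is Lipschitz in $((\sigma_1, x_1),(\sigma_2, x_2))$ with respect to the metric $\norm{x_1-x_2} + \norm{\sigma_1-\sigma_2}$, with constant depending only on $c_1$ and $c_L$ (using $\norm{\sigma_1-\sigma_2} \geq 2$ whenever $\sigma_1\neq\sigma_2$). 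Taking the infimum over couplings yields the bound for $L_{pq}$, and the bound for $K_{pq}$ is strictly easier since its integrand is Lipschitz in $\boldsymbol\sigma$ alone. For $\upsilon_{pq}$ and $\kappa_{pq}$, whose integrands are linear in $x$, use the decomposition
\[
\sigma_1^k x_1^j - \sigma_2^k x_2^j = \sigma_2^k(x_1^j - x_2^j) + (\sigma_1^k - \sigma_2^k) x_1^j,
\]
bound the first piece by $\norm{x_1-x_2}$, and bound the second via Cauchy--Schwarz in $\eta$; the factor $\mathbb{E}^{\beta}[\norm{\mathbf{x}}^2]^{1/2}$ enters through the $L^2(\eta)$-norm of $x_1^j$, the residual factor being a power of $\mathbb{E}^{\eta}[\norm{\sigma_1-\sigma_2}]$ that is dominated by $d_W(\beta,\zeta)$. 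The argument for $\kappa_{pq}$ is analogous with an additional use of \eqref{eq: uniform bound c sigma g}--\eqref{eq: c Lipschitz 1} to absorb the $c(\sigma,x)$ factor.

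For part (ii), begin by establishing Lipschitz continuity of $\xi\mapsto \mathbf{H}^{\xi}$ on $\{\Lambda^{\xi}\geq\mathfrak{c}/2\}$. There, definition \eqref{eq: H xi definition} gives $\norm{\mathbf{H}^{\xi}} \leq 2/\mathfrak{c}$, and the resolvent identity
\[
\mathbf{H}^{\beta} - \mathbf{H}^{\zeta} = \mathbf{H}^{\beta}(\mathbf{K}^{\zeta} - \mathbf{K}^{\beta})\mathbf{H}^{\zeta}
\]
combined with part (i) gives $\norm{\mathbf{H}^{\beta} - \mathbf{H}^{\zeta}} \leq Cd_W(\beta,\zeta)$. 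Record also the direct bounds $\norm{\mathbf{L}^{\xi}}, \norm{\mathbf{K}^{\xi}} \leq C$ and $\norm{\boldsymbol\upsilon^{\xi}}, \norm{\boldsymbol\kappa^{\xi}} \leq C(1 + \mathbb{E}^{\xi}[\norm{\mathbf{x}}^2]^{1/2})$, obtained straight from \eqref{eq: K xi defn}--\eqref{eq: upsilon xi defn}, \eqref{eq: uniform bound c sigma g} and Cauchy--Schwarz. Then decompose $\mathbf{m}^{\beta}(\boldsymbol\alpha,\mathbf{x}) - \mathbf{m}^{\zeta}(\boldsymbol\sigma,\mathbf{g})$ summand by summand via the identity $AB - A'B' = (A-A')B + A'(B-B')$. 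The first summand of \eqref{eq: m xi} contributes $\leq Cd_W(\beta,\zeta)\norm{\mathbf{g}} + C\norm{\mathbf{x}-\mathbf{g}}$ (split the product around $\mathbf{g}$ rather than $\mathbf{x}$ so that $\norm{\mathbf{g}}$ rather than $\norm{\mathbf{x}}$ is the prefactor of $d_W$). The second summand $-2\mathfrak{s}\boldsymbol\kappa^{\xi}\mathbf{H}^{\xi}\boldsymbol\sigma$ contributes $\leq C(1 + \mathbb{E}^{\zeta}[\norm{\mathbf{g}}^2]^{1/2})(d_W(\beta,\zeta) + \norm{\boldsymbol\alpha-\boldsymbol\sigma})$. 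The third summand $+2\mathfrak{s}\mathbf{L}^{\xi}\mathbf{H}^{\xi}\boldsymbol\upsilon^{\xi}\mathbf{H}^{\xi}\boldsymbol\sigma$ is handled by a telescoping expansion of the four-matrix product, again picking up the factor $(1+\mathbb{E}^{\zeta}[\norm{\mathbf{g}}^2]^{1/2})$ through $\norm{\boldsymbol\upsilon^{\zeta}}$. Summing the three contributions yields the stated Lipschitz inequality. The growth estimate \eqref{Lemma Bound m} follows immediately by applying the triangle inequality to \eqref{eq: m xi}, using the uniform norm bounds above and $\norm{\boldsymbol\sigma} \leq \sqrt{M}$.

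The main obstacle is the careful bookkeeping of the potentially unbounded factor $\mathbb{E}^{\zeta}[\norm{\mathbf{g}}^2]^{1/2}$: the lemma places it only as a coefficient of $d_W(\beta,\zeta)$ and of $\norm{\boldsymbol\alpha-\boldsymbol\sigma}$, never of $\norm{\mathbf{x}-\mathbf{g}}$. One must therefore organize every telescoping expansion so that terms containing $\mathbf{x}-\mathbf{g}$ pick up only uniformly bounded matrix factors (such as $\mathbf{L}^{\beta}\mathbf{H}^{\beta}$), while the growth factor from $\boldsymbol\upsilon$ or $\boldsymbol\kappa$ appears only alongside the bounded vector $\boldsymbol\sigma$ or as a coefficient of $d_W$. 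The order-of-operations in each telescoping identity (splitting around $\mathbf{g}$ vs.\ $\mathbf{x}$, around $\boldsymbol\sigma$ vs.\ $\boldsymbol\alpha$) has to be chosen deliberately to produce exactly the asymmetric form of the stated bound.
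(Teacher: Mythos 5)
Your handling of the first display of (i) (bounded, Lipschitz integrands plus the observation $\norm{\boldsymbol\sigma_1-\boldsymbol\sigma_2}\geq 2$ on the event of disagreement), of $\mathbf{H}^{\xi}$ via the resolvent identity on $\{\Lambda^{\xi}\geq\mathfrak{c}/2\}$, and of the growth bound \eqref{Lemma Bound m} is sound, and it is essentially the route the paper intends (the paper's own proof is a two-line ``follows almost immediately from the definitions''). The gap is in your Cauchy--Schwarz step for $\upsilon_{pq}$ and $\kappa_{pq}$. From $\mathbb{E}^{\eta}\big[|\sigma_1^k-\sigma_2^k|\,|x_1^j|\big]\leq \mathbb{E}^{\eta}\big[(\sigma_1^k-\sigma_2^k)^2\big]^{1/2}\mathbb{E}^{\beta}\big[(x^j)^2\big]^{1/2}$ and $(\sigma_1^k-\sigma_2^k)^2=2|\sigma_1^k-\sigma_2^k|$ you obtain, after optimizing over couplings, a factor of order $d_W(\beta,\zeta)^{1/2}$, not $d_W(\beta,\zeta)$. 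Your claim that this ``power of $\mathbb{E}^{\eta}[\norm{\boldsymbol\sigma_1-\boldsymbol\sigma_2}]$ is dominated by $d_W(\beta,\zeta)$'' is false exactly in the regime that matters: for $d_W<1$ one has $d_W^{1/2}>d_W$. Consequently the second display of (i) is not established, and since in (ii) the second and third summands of \eqref{eq: m xi} are controlled through the Lipschitz continuity of $\boldsymbol\kappa^{\xi}$ and $\boldsymbol\upsilon^{\xi}$, the Lipschitz estimate for $\mathbf{m}$ inherits the same defect; the parts of (ii) that use only $\mathbf{L}$, $\mathbf{K}$, $\mathbf{H}$ and the triangle inequality are unaffected.

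The obstruction is structural rather than a matter of reorganizing the telescoping: the test function $(\boldsymbol\sigma,\mathbf{x})\mapsto \sigma^k x^j$ has a Lipschitz constant in the spin variable that grows like $|x^j|$, so it is not uniformly Lipschitz and linear $W_1$-continuity should not be expected. Concretely, with $M=1$ take $\beta=\epsilon\,\delta_{(-1,R)}+(1-\epsilon)\,\delta_{(+1,0)}$ and $\zeta=\epsilon\,\delta_{(+1,R)}+(1-\epsilon)\,\delta_{(+1,0)}$: then $d_W(\beta,\zeta)\leq 2\epsilon$, $\mathbb{E}^{\beta}[x^2]^{1/2}=\sqrt{\epsilon}\,R$, while $|\upsilon^{\beta}_{11}-\upsilon^{\zeta}_{11}|=2\epsilon R$; choosing $\epsilon=R^{-2}$ (so the second moment is even uniformly bounded, as in $\mathcal{W}_2$) and letting $R\to\infty$ violates the claimed inequality for every fixed $C_1$, and shows that only a H\"older-$\tfrac12$ modulus in $d_W$ (or a Lipschitz bound for a transport cost in which the spin distance is weighted by $1+|x_1|+|x_2|$) can come out of this decomposition. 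So this is not a slip you can patch by splitting the products differently; either the statement for $\boldsymbol\upsilon,\boldsymbol\kappa$ (and the corresponding terms in the $\mathbf{m}$ estimate) must be weakened to a square-root or weighted form, with the downstream uses adjusted, or additional structure beyond $\beta,\zeta\in\tilde{\mathcal{P}}$ must be invoked --- a point the paper's own proof glosses over as well.
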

\begin{proof}
Both results follow almost immediately from the definitions, since $|c(\cdot,\cdot)|$ is uniformly bounded, and $|c(\alpha,x) - c(\alpha,g)| \leq c_L |x-g|$. It follows from the definition in \eqref{eq: H xi definition} that $\xi \to H_{jk}^{\xi}$ is uniformly Lipschitz (for all indices $j,k\in I_M$), since (as noted in (i) of this lemma) $\xi \to K_{jk}^{\xi}$ is uniformly Lipschitz. Furthermore $\big| H_{jk}^{\xi} \big|$ is uniformly bounded, because $| K^{\xi}_{jk}| \leq 1$.
\end{proof}
\section{Organization of Proof of Theorem 2.1}\label{Section Groundwork}

This section lays the groundwork for the proof of Theorem \ref{Theorem 1}, using an argument that is reminiscent of Gronwall's Inequality. The ultimate aim of this section is to demonstrate that, if the change in the empirical process over a small increment $\Delta$ in time is similar to the incremental change induced by the flow operator $\Phi_\Delta \cdot \hat{\mu}^N_t$, then the distance $\sup_{t\in [0,T]}d_W(\hat{\mu}^N_t , \Phi_t\cdot \hat{\mu}^N)$ is $O(\Delta^2)$. Thus this section reduces the proof of Theorem \ref{Theorem 1}, to the sufficient condition in Lemma \ref{Lemma Main Lemma}. The rest of the paper is then oriented towards proving Lemma \ref{Lemma Main Lemma}. The proofs of the lemmas stated just below are deferred to later in the section.


We will express the event in the statement of Theorem \ref{Theorem 1} as a union of $a_N$ subevents, i.e.
\[
\big\lbrace \sup_{t \leq \tau_N} d_W\big( \Phi_t\cdot \hat{\mu}^N(\boldsymbol\sigma_0,\mathbf{G}_0)  , \hat{\mu}^N(\boldsymbol\sigma_t,\mathbf{G}_t) \big) \geq \epsilon \big\rbrace \subseteq \bigcup_{j=1}^{a_N}\mathcal{A}^N_j. 
\]
As is noted in the following lemma, it will then suffice to show that the probability of each of the subevents $\lbrace \mathcal{A}^N_j\rbrace$ is exponentially decaying.
\begin{lemma}\label{Lemma Many Events}
Suppose that events $\lbrace \mathcal{A}^N_j \rbrace_{j=1}^{a_N}$ are such that $\lsup{N}N^{-1}\log a_N = 0$. Then
 \[
\lsup{N}N^{-1}\log \mathbb{P}\big(\bigcup_{j=1}^a\mathcal{A}^N_j \big)\leq  \lsup{N}\sup_{1\leq j \leq a_N} \big\lbrace N^{-1}\log \mathbb{P}\big(\mathcal{A}^N_j \big)\big\rbrace
\]
\end{lemma}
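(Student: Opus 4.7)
The plan is to combine the standard union bound with subadditivity of $\limsup$. First, I would apply the finite union bound to obtain
\[
\mathbb{P}\Big(\bigcup_{j=1}^{a_N}\mathcal{A}^N_j\Big) \;\leq\; \sum_{j=1}^{a_N} \mathbb{P}\big(\mathcal{A}^N_j\big) \;\leq\; a_N \sup_{1\leq j\leq a_N} \mathbb{P}\big(\mathcal{A}^N_j\big).
\]
Taking $N^{-1}\log$ of both sides (which is valid because all quantities are positive, and if the supremum probability is $0$ the claim is trivial) gives
\[
N^{-1}\log\mathbb{P}\Big(\bigcup_{j=1}^{a_N}\mathcal{A}^N_j\Big) \;\leq\; N^{-1}\log a_N \;+\; \sup_{1\leq j \leq a_N} N^{-1}\log\mathbb{P}\big(\mathcal{A}^N_j\big).
\]

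Next I would take $\limsup_{N\to\infty}$ of both sides and use the subadditivity property $\limsup(f_N + g_N) \leq \limsup f_N + \limsup g_N$ (valid provided neither summand is $+\infty$, which holds since $N^{-1}\log a_N$ is assumed to converge to $0$, and the other term is bounded above by $0$). This yields
\[
\lsup{N} N^{-1}\log\mathbb{P}\Big(\bigcup_{j=1}^{a_N}\mathcal{A}^N_j\Big) \;\leq\; \lsup{N} N^{-1}\log a_N \;+\; \lsup{N}\sup_{1\leq j \leq a_N} N^{-1}\log\mathbb{P}\big(\mathcal{A}^N_j\big).
\]
By hypothesis the first term on the right vanishes, giving exactly the stated inequality.

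There is no substantive obstacle here; the only delicate point is to ensure that the $\limsup$ subadditivity step is legitimate, which one handles by noting that the subexponential growth hypothesis $\lsup{N} N^{-1}\log a_N = 0$ guarantees the first summand is finite (indeed zero in the limit), while the second is bounded above by $0$ since each $\mathbb{P}(\mathcal{A}^N_j) \leq 1$. Thus this lemma is essentially the quantitative form of the observation that a subexponential number of events contributes negligibly to exponential decay rates, and it is the standard tool used throughout the paper to reduce the global estimate in Theorem~\ref{Theorem 1} to the estimate of a single ``generic'' subevent.
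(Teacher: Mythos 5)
Your argument is correct and is precisely the computation the paper summarizes with ``Immediate from the definitions'': union bound, $\log$ of the product, and $\limsup$ subadditivity using the subexponential hypothesis on $a_N$. Nothing further is needed.
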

\begin{proof}
Immediate from the definitions.
\end{proof}
We now outline more precisely what these events are. First, we require that the matrix of connections is sufficiently regular. Let $\mathbf{J}_N$ be the $N\times N$ matrix with $(j,k)$ element equal to $N^{-\frac{1}{2}}J^{jk}$. Define $\mathcal{J}_N$ to be the event
\begin{align}
\mathcal{J}_N &= \big\lbrace \norm{\mathbf{J}_N} \leq 3\big\rbrace \text{ and }\label{eq: event JN} \\
\mathcal{W}_2 &= \big\lbrace \mu \in \mathcal{P} : \sup_{1\leq p \leq M}\Exp^{\mu (\boldsymbol\sigma,\mathbf{g})}\big[ (g^p)^2 \big] \leq 3\big\rbrace \text{ and }
\mathcal{W}_{2,\mathfrak{c}} = \big\lbrace \mu \in \mathcal{W}_{2} : \inf_{\mathfrak{a} \in \mathbb{R}^M:\norm{\mathfrak{a}}=1}\sum_{j,k=1}^M K^{\mu}_{jk}a^j a^k \geq \mathfrak{c} \big\rbrace . \label{eq: W2 definition}
\end{align}
The following lemma notes that $\mathcal{J}_N$ is overwhelmingly likely.
\begin{lemma}\label{bound connections lemma}
\begin{enumerate}
\item \begin{equation}
\lsup{N}N^{-1}\log \gamma\big( \mathcal{J}_N^c \big) := \Lambda_J < 0.
\end{equation}
\item Also,
\begin{equation}
 \mathcal{J}_N \subseteq \big\lbrace  \text{For all }t\geq 0,\; \hat{\mu}^N_t \in \mathcal{W}_2 \big\rbrace .
\end{equation}
\end{enumerate}
\end{lemma}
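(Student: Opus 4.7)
For part~(i), the plan is to apply Gaussian concentration of measure to the operator norm. First, the map $A \mapsto \norm{A}$ is $1$-Lipschitz in the Frobenius norm, so in view of the $N^{-1/2}$ normalization and the covariance structure \eqref{eq: covariance gamma}, the map $\{J^{jk}\} \mapsto \norm{\mathbf{J}_N}$ is Lipschitz with constant $(1+\mathfrak{s})^{1/2}N^{-1/2}$ when the connections are written as a linear function of standard iid Gaussians. The Borell--Tsirelson--Ibragimov--Sudakov inequality then yields
\begin{equation*}
\mathbb{P}\bigl(\bigl|\norm{\mathbf{J}_N} - \mathbb{E}\norm{\mathbf{J}_N}\bigr| > t\bigr) \leq 2\exp\!\Bigl(-\frac{Nt^2}{2(1+\mathfrak{s})}\Bigr).
\end{equation*}
The second step is to invoke the classical estimate $\limsup_{N\to\infty}\mathbb{E}\norm{\mathbf{J}_N} \leq 2$, which holds uniformly in $\mathfrak{s}\in[0,1]$: for $\mathfrak{s}=1$ by Wigner's semicircle theorem, for $\mathfrak{s}=0$ by the known limit for the spectral norm of a normalized Ginibre matrix, and for intermediate $\mathfrak{s}$ by writing $\mathbf{J}_N$ as an appropriate linear combination of an independent symmetric GOE and an independent antisymmetric Gaussian matrix and applying subadditivity of the operator norm. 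Choosing $t=1/2$ (say) makes $\mathbb{E}\norm{\mathbf{J}_N}+t<3$ for all large $N$, so the concentration bound gives $\gamma(\mathcal{J}_N^c) \leq \exp(-cN)$ for some $c>0$, i.e.\ $\Lambda_J < 0$.

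For part~(ii), the argument is a direct deterministic consequence of the operator norm bound on $\mathcal{J}_N$. Setting $\boldsymbol\sigma^{p,\cdot}_t := (\sigma^{p,1}_t,\ldots,\sigma^{p,N}_t)^T \in \{-1,+1\}^N$, the field definition \eqref{eq: field definition} reads $(G^{p,1}_t,\ldots,G^{p,N}_t)^T = \mathbf{J}_N\,\boldsymbol\sigma^{p,\cdot}_t$, and therefore
\begin{equation*}
\mathbb{E}^{\hat{\mu}^N_t}\bigl[(g^p)^2\bigr] = N^{-1}\sum_{j=1}^N (G^{p,j}_t)^2 = N^{-1}\norm{\mathbf{J}_N\,\boldsymbol\sigma^{p,\cdot}_t}^2 \leq N^{-1}\norm{\mathbf{J}_N}^2\norm{\boldsymbol\sigma^{p,\cdot}_t}^2 = \norm{\mathbf{J}_N}^2,
\end{equation*}
since $\norm{\boldsymbol\sigma^{p,\cdot}_t}^2 = N$. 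On $\mathcal{J}_N$ this bound is uniform in $t$, $p$ and in the spin configuration, giving the desired inclusion (up to the numerical constant in the definition of $\mathcal{W}_2$).

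The only nontrivial input is the uniform bound $\limsup_N\mathbb{E}\norm{\mathbf{J}_N}\leq 2$. For $\mathfrak{s}\in\{0,1\}$ this is textbook random matrix theory; the mild novelty is handling the interpolation $\mathfrak{s}\in(0,1)$, which the symmetric/antisymmetric decomposition reduces to the boundary cases. Everything else in the lemma---the Gaussian Lipschitz concentration and the Cauchy--Schwarz/operator-norm bound for the fields---is essentially immediate and will constitute the easy part of the proof.
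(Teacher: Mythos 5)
For part (i) your route is genuinely different from the paper's. The paper decomposes $\mathbf{J}_N = N^{-1/2}\sqrt{\mathfrak{s}}\hat{\mathbf{J}}_N + N^{-1/2}\sqrt{1-\mathfrak{s}}\tilde{\mathbf{J}}_N + N^{-1/2}\mathbf{D}_N$ (symmetric part, i.i.d.\ part, diagonal), takes a union bound, and quotes large-deviation estimates for the top eigenvalue of the GOE and for the norm of the real Ginibre ensemble; you instead use Borell--TIS concentration for the operator norm as a Lipschitz functional of the underlying i.i.d.\ Gaussians (your Lipschitz constant $\sqrt{1+\mathfrak{s}}\,N^{-1/2}$ is correct) together with a bound on $\mathbb{E}\norm{\mathbf{J}_N}$. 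Your approach is more self-contained, since it needs only classical norm asymptotics rather than edge large-deviation results. One quantitative slip: the uniform bound $\lsup{N}\mathbb{E}\norm{\mathbf{J}_N}\leq 2$ is not what your symmetric/antisymmetric decomposition actually delivers. Writing $\mathbf{J}_N$ in law as $\sqrt{(1+\mathfrak{s})/2}\,N^{-1/2}\mathbf{S} + \sqrt{(1-\mathfrak{s})/2}\,N^{-1/2}\mathbf{A}$ with $\mathbf{S}$ GOE-type and $\mathbf{A}$ antisymmetric Wigner-type, subadditivity of the norm only gives $\lsup{N}\mathbb{E}\norm{\mathbf{J}_N}\leq 2\big(\sqrt{(1+\mathfrak{s})/2}+\sqrt{(1-\mathfrak{s})/2}\big)\leq 2\sqrt{2}\approx 2.83$, which exceeds $2$ for $\mathfrak{s}<1$ (pinning down the true edge of the interpolating elliptic ensemble would need a separate argument). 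This is harmless for the lemma because $2\sqrt{2}<3$: replace $t=1/2$ by any fixed $t<3-2\sqrt{2}$ and the concentration bound still yields $\gamma(\mathcal{J}_N^c)\leq e^{-cN}$, hence $\Lambda_J<0$; but you should not assert the constant $2$, nor the choice $t=1/2$, as stated.

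For part (ii) your argument is the same operator-norm/Cauchy--Schwarz computation as the paper's, and in fact more careful: the correct bound is $N^{-1}\sum_{j\in I_N}(G^{p,j}_t)^2 \leq \norm{\mathbf{J}_N}^2 \leq 9$ on $\mathcal{J}_N$, whereas the paper's displayed chain drops the square and lands on $3$. The mismatch with the constant $3$ in the definition of $\mathcal{W}_2$ that you flag is therefore present in the paper itself; it is immaterial to how $\mathcal{W}_2$ is used later (only some uniform constant is needed), but as literally stated the inclusion requires adjusting the constant in $\mathcal{W}_2$ (or requiring $\norm{\mathbf{J}_N}\leq\sqrt{3}$ in $\mathcal{J}_N$), a point worth making explicit rather than leaving as a parenthetical.
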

Define the spaces of measures
\begin{align} \label{eq: W 0 T definition}
\mathcal{W}_{[0,T]} &= \big\lbrace \mu_{[0,T]} \in \mathcal{D}\big( [0,T] , \mathcal{P}\big) \; : \; \mu_t \in \mathcal{W}_{2} \text{ and }t\to \mu_t \text{ has finite }\# \text{ discontinuities}\big\rbrace  \\
\hat{\mathcal{W}}_{[0,T]} &= \big\lbrace \mu_{[0,T]} \in \mathcal{W}_{[0,T]} : \sup_{t\in [0,T] , p\in I_M} \mathbb{E}^{\mu_t}\big[ \norm{\mathbf{x}^p}^2 \big] \leq 3 \big\rbrace. \label{eq: hat W 0 T}
\end{align} 
Next we define a map $\Psi: \mathcal{W}_{[0,T]} \to \mathcal{C}\big( [0,T],\mathcal{P}\big)$, $\Psi := (\Psi_t)_{t\in [0,T]}$, that is an approximation of the flow $\Phi_t$, such that the coefficients of the PDE are evaluated at $\hat{\mu}^N_t$, rather than $\xi_t$. More precisely, it is such that  $\Psi\cdot \mu_{[0,T]}  := \eta_{[0,T]}$, and for $t > 0$, $\eta_t$ has density $p_t$ satisfying the PDE 
\begin{equation}
\frac{\partial p_t }{\partial t}(\boldsymbol\alpha,\mathbf{x}) = \sum_{i\in I_M}\big\lbrace c(-\alpha^i,x^i) p_t(\boldsymbol\alpha[i],\mathbf{x})  - c(\alpha^i,x^i)p_t(\boldsymbol\alpha ,\mathbf{x} )+2 L^{\mu_t}_{ii}   \frac{\partial^2 p_t }{\partial x_i^2 }(\boldsymbol\alpha,\mathbf{x})\big\rbrace
- \nabla\cdot \big\lbrace \mathbf{m}^{\mu_t} p_t(\boldsymbol\alpha,\mathbf{x})\big\rbrace. \label{eq: p plus evolution 00}
\end{equation}
We insist that $\eta_0 = \mu_0$, and that $t \to \eta_t$ is continuous. Write $\Psi_t \cdot \mu_{[0,T]} := \eta_t$. One can easily check that $\Psi$ is uniquely defined.

The following lemma states that $\Psi$ is a good approximation of $\Phi$. The second result in the lemma is necessary for us to be sure that we avoid the pathological situation of $\Lambda^{\hat{\mu}^N_t} \to 0$, which would mean that the coefficients in the PDE blowup (see the definition in \eqref{eq: H xi definition}). Incidentally, this is precisely the reason that we require the stopping time $\tau_N$ in \eqref{eq: tilde tau N stopping}.
\begin{lemma}\label{Lemma Psi Close}
Define $\tilde{d}_T: \mathcal{D}\big( [0,T], \mathcal{P}\big) \times \mathcal{D}\big( [0,T], \mathcal{P}\big) \to \mathbb{R}^+$ to be 
\begin{equation}\label{eq: supremum metric}
\tilde{d}_T( \mu_{[0,T]} , \nu_{[0,T]} ) = \sup_{t\in [0,T]} d_W( \mu_t , \nu_t),
\end{equation}
noting that $\tilde{d}_T$ does not metrize the Skorohod topology. For any $\epsilon > 0$, there exists $\delta > 0$ such that
\begin{equation}\label{eq: epsilon delta equivalence}
\big\lbrace \mu \in \hat{\mathcal{W}}_{[0,T]}  \; : \tilde{d}_T ( \Psi\cdot \mu , \mu ) < \delta \big\rbrace \subseteq \big\lbrace \mu \in \hat{\mathcal{W}}_{[0,T]} \; : \tilde{d}_T( \Phi\cdot \mu_0 , \mu ) < \epsilon \big\rbrace
\end{equation}
Furthermore, there exists $\delta_{\mathfrak{c}}$ such that for all $\delta \leq \delta_{\mathfrak{c}}$,
\begin{equation}\label{eq: equivalent epsilon 2}
\mathbf{H}^{\Psi_t \cdot \hat{\mu}^N_b} = (\mathbf{K}^{\Psi_t\cdot \hat{\mu}^N_b})^{-1} \text{ as long as } t< \tau_N \text{ and } d_W\big( \Psi_t\cdot \hat{\mu}^N  , \hat{\mu}^N(\boldsymbol\sigma_t,\mathbf{G}_t) \big) \leq \delta.
\end{equation}
\end{lemma}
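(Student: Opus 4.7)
The strategy is to treat \eqref{eq: epsilon delta equivalence} as a stability (Gr\"onwall) result, exploiting the fact that $\Phi\cdot\mu_0$ and $\Psi\cdot\mu$ solve structurally identical McKean--Vlasov PDEs differing only in which measure is used to evaluate the coefficients. Writing $\phi_t := \Phi_t\cdot\mu_0$ and $\psi_t := \Psi_t\cdot\mu$, the trajectory $\phi$ uses coefficients evaluated at $\phi_t$ itself while $\psi$ uses coefficients evaluated at $\mu_t$; both begin at $\mu_0$. The triangle inequality reduces the problem to controlling $d_W(\phi_t,\psi_t)$, since
\[
d_W(\phi_t,\mu_t) \;\leq\; d_W(\phi_t,\psi_t) + d_W(\psi_t,\mu_t) \;\leq\; d_W(\phi_t,\psi_t) + \delta.
\]

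To bound $d_W(\phi_t,\psi_t)$, I would use the nonlinear jump-diffusion SDE representation from the proof of Lemma \ref{Lemma xi t}: realize $\phi_t$ and $\psi_t$ as marginal laws of two coupled processes $(\boldsymbol\Sigma^\phi_t,\mathbf{X}^\phi_t)$ and $(\boldsymbol\Sigma^\psi_t,\mathbf{X}^\psi_t)$ driven by a common $M$-dimensional Brownian motion in the field coordinates, and coupled through thinning of a shared Poisson random measure for the spin flips (so that jumps in the two systems coincide up to the Lipschitz discrepancy \eqref{eq: c Lipschitz 1}). The truncation \eqref{eq: H xi definition} together with Lemma \ref{Lemma Lipschitz Matrices} makes $\mathbf{m}^{\xi}$ and $(L^{\xi}_{ii})^{1/2}$ globally Lipschitz (the square root being Lipschitz once one establishes, as in the proof of Lemma \ref{Lemma xi t}, that $L^{\xi_t}_{ii}$ stays uniformly bounded below on $[0,T]$), with constants depending only on the second-moment bound enforced by $\hat{\mathcal{W}}_{[0,T]}$. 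Taking It\^o/Dynkin expectations gives, for $v(t) := \mathbb{E}\bigl[\|\mathbf{X}^\phi_t - \mathbf{X}^\psi_t\|^2 + \|\boldsymbol\Sigma^\phi_t - \boldsymbol\Sigma^\psi_t\|^2\bigr]^{1/2}$,
\[
v(t) \;\leq\; C\int_0^t \bigl(v(s) + d_W(\phi_s,\mu_s)\bigr)\,ds \;\leq\; C\int_0^t \bigl(v(s) + \delta\bigr)\,ds,
\]
since Cauchy--Schwarz yields $d_W(\phi_s,\psi_s) \leq v(s)$ and hence $d_W(\phi_s,\mu_s) \leq v(s) + \delta$. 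Gr\"onwall's inequality then delivers $v(T) \leq C\delta(e^{CT}-1)$, so $d_W(\phi_t,\mu_t) \leq (1+C(e^{CT}-1))\delta$; choosing $\delta$ small in terms of $\epsilon$ establishes \eqref{eq: epsilon delta equivalence}.

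For \eqref{eq: equivalent epsilon 2}, note that $t<\tau_N$ forces $\boldsymbol\sigma_t\in\mathcal{X}^N$ via \eqref{eq: tilde tau N stopping}, which by \eqref{eq: mathcal X N assumption} gives $\Lambda^{\hat{\mu}^N(\boldsymbol\sigma_t,\mathbf{G}_t)} \geq \mathfrak{c}$. Since each entry of $\mathbf{K}^{\xi}$ is $d_W$-Lipschitz by Lemma \ref{Lemma Lipschitz Matrices}(i), the operator-norm difference $\|\mathbf{K}^{\Psi_t\cdot\hat{\mu}^N} - \mathbf{K}^{\hat{\mu}^N(\boldsymbol\sigma_t,\mathbf{G}_t)}\|$ is at most $MC_1\delta$, and Weyl's inequality gives $\Lambda^{\Psi_t\cdot\hat{\mu}^N} \geq \mathfrak{c} - MC_1\delta$. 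Setting $\delta_{\mathfrak{c}} := \mathfrak{c}/(2MC_1)$ forces $\Lambda^{\Psi_t\cdot\hat{\mu}^N} \geq \mathfrak{c}/2$, so the first branch of \eqref{eq: H xi definition} yields $\mathbf{H}^{\Psi_t\cdot\hat{\mu}^N} = (\mathbf{K}^{\Psi_t\cdot\hat{\mu}^N})^{-1}$.

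The principal technical obstacle will be the spin-jump coupling: because the intensity $c(\sigma^i,x^i)$ depends on the field, $\boldsymbol\Sigma^\phi$ and $\boldsymbol\Sigma^\psi$ flip at different rates, and the thinning construction combined with \eqref{eq: c Lipschitz 1} must be used to absorb the rate discrepancy into a term proportional to $\|\mathbf{X}^\phi_t - \mathbf{X}^\psi_t\|$ within the Gr\"onwall bound. A subsidiary issue is that both $\phi_t$ and $\psi_t$ must retain uniformly bounded second moments on $[0,T]$ in order for the Lipschitz constants in Lemma \ref{Lemma Lipschitz Matrices} to remain useful; this follows from a scalar Gr\"onwall inequality applied to $\mathbb{E}[(X^i_t)^2]$ exactly as in the proof of Lemma \ref{Lemma xi t}, using the growth bound \eqref{Lemma Bound m} and the uniform second-moment control built into $\hat{\mathcal{W}}_{[0,T]}$.
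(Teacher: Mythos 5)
Your argument is correct in substance, but it is a genuinely different route from the paper's. The paper proves \eqref{eq: epsilon delta equivalence} softly, by contradiction: Lemma \ref{eq: compact Psi} gives uniform continuity and uniformly bounded second moments for $\Psi\cdot\mathcal{W}_{[0,T]}$, hence (Prokhorov plus the generalized Arzel\`a--Ascoli theorem) precompactness of $\Psi\cdot\hat{\mathcal{W}}_{[0,T]}$ in $\big(\mathcal{C}([0,T],\mathcal{P}),\tilde d_T\big)$; a sequence $\mu^n$ with $\tilde d_T(\Psi\cdot\mu^n,\mu^n)<n^{-1}$ but $\tilde d_T(\Phi\cdot\mu^n_0,\mu^n)\geq\tilde\epsilon$ then yields a limit $\phi$ with $\Psi\cdot\phi=\phi$, contradicting the uniqueness of the fixed point in Lemma \ref{Lemma xi t}. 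You instead run a synchronous coupling of the two nonlinear jump--diffusions (common Brownian motion, thinned common Poisson clock) and close a Gr\"onwall estimate, using that $\Psi$ evaluates its coefficients at $\mu_s$ while $\Phi$ evaluates them at its own law, and that $d_W(\phi_s,\mu_s)\leq d_W(\phi_s,\psi_s)+\delta$. This buys an explicit, quantitative $\delta(\epsilon)\sim \epsilon e^{-CT}$ and avoids invoking continuity of the operator $\Psi$ (which the paper asserts without proof), at the price of the coupling bookkeeping you already flag: the lower bound on $L^{\xi}_{ii}$ needed for Lipschitzness of $\sqrt{L^{\xi}_{ii}}$ (available uniformly on $\hat{\mathcal{W}}_{[0,T]}$ and along $\phi_t$ exactly as in Lemma \ref{Lemma xi t}), and the fact that the jump part naturally produces first-moment (probability-of-desynchronization) terms while the diffusion part produces second-moment terms, so the displayed inequality for $v(t)$ should really be closed as a mixed $L^1/L^2$ estimate (as the paper itself does in Lemma \ref{Lemma Last Lemma 1}); this is routine since both marginals have uniformly bounded second moments. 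For \eqref{eq: equivalent epsilon 2}, your Lipschitz-plus-Weyl bound $\Lambda^{\Psi_t\cdot\hat\mu^N}\geq \mathfrak{c}-MC_1\delta$ with $\delta_{\mathfrak{c}}=\mathfrak{c}/(2MC_1)$ is a clean quantitative version of the paper's argument via the continuity of $\mu\mapsto\Lambda^\mu$ and the constant $\epsilon_{\mathfrak{c}}$ of \eqref{eq: epsilon c definition}, and it correctly uses that $t<\tau_N$ forces $\boldsymbol\sigma_t\in\mathcal{X}^N$, hence $\Lambda^{\hat\mu^N_t}\geq\mathfrak{c}$.
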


Next we discretize time, and also the flow $\Psi_t$. We partition the time interval $[ 0 , T ]$ into $ \lbrace t^{(n)}_b \rbrace_{b=0}^{n-1}$, with $t^{(n)}_b = b\Delta $ and $\Delta = T/n$. For any $t\in [0,T]$, define $t^{(n)} := \sup\lbrace t^{(n)}_b \; : t^{(n)}_b \leq t \rbrace$. We write $\Psi_b := \Psi_{t^{(n)}_b}$, $\hat{\mu}^N_b(\boldsymbol\sigma,\mathbf{G}) := \hat{\mu}^N_{t^{(n)}_b}$, $\boldsymbol\sigma_b := \boldsymbol\sigma_{t^{(n)}_b}$.

We can now decompose the event in the statement of Theorem \ref{Theorem 1} into the following events. It follows from Lemma \ref{Lemma Psi Close} that for any $\tilde{\epsilon} > 0$, there must exist $\epsilon > 0$ such that
\begin{align*}
 \big\lbrace \sup_{t \leq \tau_N} d_W\big( \Phi_t\cdot \hat{\mu}^N_0  , \hat{\mu}^N_t\big) \geq \tilde{\epsilon} \big\rbrace \subseteq & \big\lbrace \sup_{t \leq \tau_N} d_W\big( \Psi_t\cdot \hat{\mu}^N , \hat{\mu}^N_t \big) \geq \epsilon \big\rbrace \\
 \subseteq &\mathcal{J}_N^c \cup \bigcup_{0\leq b \leq n}\big\lbrace \mathcal{J}_N \text{ and } \sup_{t\in [t^{(n)}_b \wedge \tau_N, t^{(n)}_{b+1}\wedge \tau_N ]} d_W\big(\Psi_t\cdot \hat{\mu}^N , \Psi_b \cdot \hat{\mu}^N\big) \geq \epsilon / 3 \big\rbrace\cup \\
&\bigcup_{0\leq b \leq n}\ \big\lbrace  \mathcal{J}_N \text{ and } \sup_{t\in [t^{(n)}_b \wedge \tau_N, t^{(n)}_{b+1} \wedge \tau_N]} d_W\big( \hat{\mu}^N_t ,  \hat{\mu}^N_b\big) \geq \epsilon / 3 \big\rbrace \cup \\
&\bigcup_{0\leq b \leq n}\big\lbrace  \mathcal{J}_N \text{ and for some }b \text{ such that }\tau_N > t^{(n)}_b,\;  d_W\big(\hat{\mu}^N_b , \Psi_b \cdot \hat{\mu}^N\big) \geq \epsilon / 3 \big\rbrace.
 \end{align*}
It is assumed that $\epsilon \leq \delta_{\mathfrak{c}}$, as defined in Lemma \ref{Lemma Psi Close}. Thanks to Lemma \ref{Lemma Many Events}, for Theorem \ref{Theorem 1}, to hold, it thus suffices to prove that some $n\in \mathbb{Z}^+$,
\begin{align}
\sup_{0\leq b < n} \lsup{N}N^{-1}\log \mathbb{P}\big( \mathcal{J}_N \text{ and } \sup_{t\in [t^{(n)}_b \wedge \tau_N, t^{(n)}_{b+1}\wedge \tau_N ]} d_W(\Psi_t\cdot \hat{\mu}^N , \Psi_b \cdot \hat{\mu}^N) \geq \epsilon / 3 \big) < 0\label{eq: to demonstrate fluctuations 0} \\
\sup_{0\leq b < n} \lsup{N}N^{-1}\log \mathbb{P}\big( \mathcal{J}_N \text{ and } \sup_{t\in [t^{(n)}_b \wedge \tau_N, t^{(n)}_{b+1} \wedge \tau_N]} d_W( \hat{\mu}^N_t ,  \hat{\mu}^N_b) \geq \epsilon / 3 \big) < 0\label{eq: to demonstrate fluctuations} \\
\lsup{N} N^{-1}\log \mathbb{P}\big( \mathcal{J}_N \text{ and for some }b \text{ such that }\tau_N > t^{(n)}_b,\;  d_W(\hat{\mu}^N_b , \Psi_b \cdot \hat{\mu}^N) \geq \epsilon / 3 \big) < 0\label{eq: to establish final} \\
\lsup{N}N^{-1}\log \mathbb{P}\big( \mathcal{J}_N^c  \big) < 0 .\label{eq: to establish JcN}
\end{align}
 \eqref{eq: to demonstrate fluctuations 0} is demonstrated in Lemma \ref{eq: compact Psi},  \eqref{eq: to demonstrate fluctuations} is established in Lemma \ref{Lemma bound fluctuations} and
\eqref{eq: to establish JcN} is a consequence of Lemma \ref{bound connections lemma}.

In order that Theorem \ref{Theorem 1} is true, it thus only remains to prove \eqref{eq: to establish final}. Define the events $\lbrace \mathcal{U}^N_b \rbrace_{b=0}^{n-1}$, for a positive constant $\mathfrak{u} > 0$ (to be specified more precisely below - for the moment we note that $\mathfrak{u}$ will be chosen independently of $n$ and $N$), and writing $\tilde{\epsilon} = \epsilon / 3$,
\begin{equation}\label{Eq: U N b event}
 \mathcal{U}^N_b = \big\lbrace \mathcal{J}_N ,d_{W}\big( \Psi_{b+1}\cdot \hat{\mu}^N  , \hat{\mu}^N_{b+1} \big) > \tilde{\epsilon}\exp\big( \mathfrak{u} t^{(n)}_{b+1} / T -\mathfrak{u} \big), d_W\big( \Psi_b\cdot \hat{\mu}^N , \hat{\mu}^N_b \big) \leq \tilde{\epsilon}\exp\big( \mathfrak{u} t^{(n)}_{b} / T -\mathfrak{u} \big)  \text{ and }\tau_N > t^{(n)}_{b} \big\rbrace ,
\end{equation}
and observe that
\[
\big\lbrace\mathcal{J}_N \text{ and for some }b \text{ such that }\tau_N > t^{(n)}_b,\;  d_W\big(\hat{\mu}^N_b , \Psi_b \cdot \hat{\mu}^N\big) > \tilde{\epsilon} \big\rbrace \subseteq \bigcup_{b=0}^{n-1} \mathcal{U}^N_b.
\]
We thus find from Lemma \ref{Lemma Many Events} that, in order that \eqref{eq: to establish final} holds, it suffices to prove that
\begin{equation}\label{eq: to demonstrate U N b 0}
\sup_{0\leq b < n}\lsup{N} N^{-1}\log \mathbb{P}\big( \mathcal{U}^N_b \big) < 0.
\end{equation}
We now make a further approximation to the operator $\Psi_t$. For any $\boldsymbol\sigma \in \mathcal{E}^{MN}$ and $\mathbf{G} \in \mathbb{R}^{MN}$, define the random measure $\xi_b(\boldsymbol\sigma,\mathbf{G}) \in \mathcal{P}$, which is such that $\xi_b(\boldsymbol\sigma,\mathbf{G}) \simeq \Psi_{b+1}\cdot \hat{\mu}^N(\boldsymbol\sigma,\mathbf{G})$, as follows. Let $\lbrace \tilde{Y}^p(t) \rbrace_{p\in I_M}$ be independent Poisson Counting Processes, and $\lbrace \tilde{W}^p_t \rbrace_{p=1}^M$ independent Wiener Processes (they are also independent of the proceses $Y^{p,j}(t)$ and connections $\mathbf{J}$ used to define the original system). Writing $\hat{\mu}^N_{b}(\boldsymbol\sigma,\mathbf{G})$ to be the law of random variables $(\boldsymbol\zeta_0,\mathbf{x}_0)$, define $\xi_b(\boldsymbol\sigma,\mathbf{G})$ to be the law of $(\boldsymbol\zeta_{\delta t} ,\mathbf{x}_{\delta t})$, where, recalling that $A\cdot x := (-1)^x$, for each $p\in I_M$,
\begin{align}
\zeta^p_{\delta t} =& \zeta^p_0 A\cdot \tilde{Y}^p\big( \Delta c(\zeta^p_0,x^p_0) \big) \label{eq: hybrid 17} \\
\mathbf{x}_{\delta t} =& \mathbf{x}_0 + \delta t\mathbf{m}^{\hat{\mu}^N_b}(\boldsymbol\zeta_0 , \mathbf{x}_0)+\mathbf{D}^{\hat{\mu}^N_b}\tilde{\mathbf{W}}_{\delta t}\text{ , where }
D^{\hat{\mu}^N_b}_{ij} = 2\sqrt{L^{\hat{\mu}^N_b}_{ii}}\delta(i,j),\label{eq: hybrid 18} 
\end{align} 
and $\Delta = T/n$. When the context is clear, we omit the argument of $\xi_b$.

It follows from the facts that (i) $ d_{W}\big( \Psi_{b+1}\cdot \hat{\mu}^N  , \hat{\mu}^N_{b+1} \big) \leq d_{W}\big(\xi_b ,  \hat{\mu}^N_{b+1} \big) +d_{W}\big( \Psi_{b+1}\cdot \hat{\mu}^N, \xi_b\big) $ and (ii)  $\exp( \mathfrak{u} t^{(n)}_{b+1} / T  -\mathfrak{u} ) \geq \exp( \mathfrak{u} t^{(n)}_{b} / T + \mathfrak{u}\Delta / 2T -\mathfrak{u} ) + \exp( \mathfrak{u} t^{(n)}_{b} / T-\mathfrak{u} )\mathfrak{u} \Delta / 2T$ (recalling that $\Delta = t^{(n)}_{b+1} - t^{(n)}_b$), that
\begin{multline}
\big\lbrace d_{W}\big( \Psi_{b+1}\cdot \hat{\mu}^N  , \hat{\mu}^N_{b+1} \big) > \tilde{\epsilon}\exp\big( \mathfrak{u} t^{(n)}_{b+1} / T -\mathfrak{u} \big)\big\rbrace \subseteq  \big\lbrace d_{W}\big(\xi_b ,  \hat{\mu}^N_{b+1} \big) > \exp( \mathfrak{u} t^{(n)}_{b} / T-\mathfrak{u} )\tilde{\epsilon} \mathfrak{u} \Delta / 2T \big\rbrace \\
\cup \big\lbrace d_{W}\big( \Psi_{b+1}\cdot \hat{\mu}^N, \xi_b\big)>  \exp\big( \mathfrak{u} t^{(n)}_{b} / T+\mathfrak{u}\Delta / 2T -\mathfrak{u} \big) \tilde{\epsilon} \text{ and }d_{W}\big(\xi_b ,  \hat{\mu}^N_{b+1} \big) \leq \exp( \mathfrak{u} t^{(n)}_{b} / T-\mathfrak{u} )\tilde{\epsilon} \mathfrak{u} \Delta / 2T\big\rbrace \\
\subseteq \big\lbrace d_{W}\big(\xi_b ,  \hat{\mu}^N_{b+1} \big) > \exp( \mathfrak{u} t^{(n)}_{b} / T-\mathfrak{u} )\tilde{\epsilon} \mathfrak{u} \Delta / 2T \big\rbrace 
\cup \big\lbrace d_{W}\big( \Psi_{b+1}\cdot \hat{\mu}^N, \xi_b\big)>  \exp\big( \mathfrak{u} t^{(n)}_{b} / T+\mathfrak{u}\Delta / 2T -\mathfrak{u} \big) \tilde{\epsilon}\big\rbrace \label{eq: temporary u bound 1}
\end{multline}
We thus find that
\begin{multline*}
\mathcal{U}^N_b \subseteq  \big\lbrace \mathcal{J}_N \text{ and } d_{W}\big(\xi_b ,  \hat{\mu}^N_{b+1} \big) > \exp( \mathfrak{u} t^{(n)}_{b} / T-\mathfrak{u} )\tilde{\epsilon} \mathfrak{u} \Delta / (2T) \big\rbrace \bigcup \\ \big\lbrace \mathcal{J}_N \text{ and }d_{W}\big( \Psi_{ b+1}\cdot \hat{\mu}^N,\xi_b \big) >  \tilde{\epsilon}\exp\big( \mathfrak{u} t^{(n)}_{b} / T + \mathfrak{u}\Delta / (2T) -\mathfrak{u} \big) \text{ and } d_W\big( \Psi_{b}\cdot \hat{\mu}^N, \hat{\mu}^N_b\big)\leq  \exp\big( \mathfrak{u} t^{(n)}_{b} / T -\mathfrak{u} \big) \tilde{\epsilon}\big\rbrace .
\end{multline*}
Therefore \eqref{eq: to demonstrate U N b 0} will be seen to be true once we demonstrate Lemmas \ref{Lemma Last Lemma 1} and \ref{Lemma Main Lemma}.
\begin{lemma}\label{Lemma Last Lemma 1}
For any $\tilde{\epsilon} > 0$, for all sufficiently large $n$, and all $b$ such that $0\leq b < n$,
\begin{multline}
\lsup{N}N^{-1}\log \mathbb{P}\big(  \mathcal{J}_N\text{ and }\tau_N > t^{(n)}_{b+1} \text{ and }d_{W}( \Psi_{ b+1}\cdot \hat{\mu}^N, \xi_b ) > \tilde{\epsilon}\exp( \mathfrak{u} t^{(n)}_{b} / T + \mathfrak{u}\Delta / 2T -\mathfrak{u} )\\ \text{ and }d_W( \Psi_{b}\cdot \hat{\mu}^N, \hat{\mu}^N_b) \leq \tilde{\epsilon}\exp( \mathfrak{u} t^{(n)}_{b} / T -\mathfrak{u} ) \big) < 0.
\end{multline}
\end{lemma}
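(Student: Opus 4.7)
The plan is to split the event of the lemma into a sub-event where the fluctuations of $\hat{\mu}^N$ over the short interval $[t^{(n)}_b, t^{(n)}_{b+1}]$ are controlled, and its complement whose probability decays exponentially. Fix $\delta' > 0$ (to be chosen below, possibly depending on $n$) and define
\[
\mathcal{F}^N_b = \big\lbrace \sup_{t\in [t^{(n)}_b\wedge\tau_N,\, t^{(n)}_{b+1}\wedge\tau_N]} d_W(\hat{\mu}^N_t,\hat{\mu}^N_b) \leq \delta' \big\rbrace .
\]
By the same argument that establishes \eqref{eq: to demonstrate fluctuations} in Lemma \ref{Lemma bound fluctuations}, applied with threshold $\delta'$ in place of $\epsilon/3$, we have $\lsup{N} N^{-1}\log\mathbb{P}(\mathcal{J}_N \setminus \mathcal{F}^N_b) < 0$ for any fixed $\delta' > 0$. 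It therefore suffices to show that, for suitable choices of $\mathfrak{u}, n, \delta'$, the event in the lemma intersected with $\mathcal{F}^N_b$ is empty.

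For the deterministic bound, represent $\eta_t := \Psi_t \cdot \hat{\mu}^N$ for $t\in[t^{(n)}_b,t^{(n)}_{b+1}]$ as the marginal of the hybrid SDE/jump process whose drift is $\mathbf{m}^{\hat{\mu}^N_t}(\boldsymbol\zeta^1_t, \mathbf{x}^1_t)$, diffusion matrix is $\mathbf{D}^{\hat{\mu}^N_t}$, and spin-flip rate is $c(\zeta^{1,p}_t, x^{1,p}_t)$, and represent $\xi_b$ via the one-step Euler scheme \eqref{eq: hybrid 17}-\eqref{eq: hybrid 18} with coefficients frozen at $\hat{\mu}^N_b$ and arguments frozen at $(\boldsymbol\zeta^2_{t_b}, \mathbf{x}^2_{t_b})$. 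Couple the initial laws $\Psi_b\cdot\hat{\mu}^N$ and $\hat{\mu}^N_b$ according to the optimal $d_W$-coupling, and drive both processes by the same Brownian motion and Poisson processes. On $\mathcal{J}_N \cap \{\tau_N > t^{(n)}_{b+1}\}$, the condition $\Lambda^{\hat{\mu}^N_t} \geq \mathfrak{c}$ together with \eqref{eq: H xi definition} makes $\mathbf{m}^\xi, \mathbf{L}^\xi$ uniformly Lipschitz by Lemma \ref{Lemma Lipschitz Matrices}, and the positivity of $c$ combined with $\mathcal{W}_2$-integrability (via Chebyshev, as in the proof of Lemma \ref{Lemma xi t}) makes $\sqrt{L^\mu_{ii}}$ uniformly Lipschitz. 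A standard Gronwall-type SDE estimate on the expected coupled distance then yields
\[
d_W(\Psi_{b+1}\cdot\hat{\mu}^N,\xi_b) \leq (1+C\Delta)\,d_W(\Psi_b\cdot\hat{\mu}^N,\hat{\mu}^N_b) + C\delta'\sqrt{\Delta} + C\Delta^{3/2},
\]
for a constant $C$ depending only on $c_1, c_L, \mathfrak{c}, T$. The three terms come respectively from: Lipschitz propagation of the initial mismatch through the drift; the Lipschitz-in-$\xi$ discrepancy between time-varying and frozen diffusion coefficients (whose stochastic-integral $L^1$ contribution is $\lesssim \delta'\sqrt{\Delta}$); and the Euler discretization error, dominated by $\int_{t_b}^{t_{b+1}}\mathbb{E}\|\mathbf{x}^1_s - \mathbf{x}^1_{t_b}\|ds = O(\Delta^{3/2})$.

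Picking $\mathfrak{u} > 2TC$ yields $1 + C\Delta < \exp(\mathfrak{u}\Delta/(2T))$ for all $\Delta > 0$, so $(1+C\Delta)\, d_W(\Psi_b\cdot\hat{\mu}^N,\hat{\mu}^N_b)$ is bounded by $\tilde{\epsilon}\exp(\mathfrak{u}(t^{(n)}_b + \Delta/2)/T - \mathfrak{u})$ with positive slack of order $\tilde{\epsilon} e^{-\mathfrak{u}}\Delta$. The slack absorbs $C\delta'\sqrt{\Delta} + C\Delta^{3/2}$ provided $\delta' \leq c_*\sqrt{\Delta}$ and $\sqrt{\Delta} \leq c_*$, where $c_* = \tilde{\epsilon} e^{-\mathfrak{u}}(\mathfrak{u}/(2T) - C)/(2C)$. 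Taking $\delta' := c_*\sqrt{\Delta}$ and $n$ large enough that $\sqrt{T/n} \leq c_*$, the deterministic bound $d_W(\Psi_{b+1}\cdot\hat{\mu}^N,\xi_b) \leq \tilde{\epsilon}\exp(\mathfrak{u}(t^{(n)}_b + \Delta/2)/T - \mathfrak{u})$ holds on $\mathcal{F}^N_b$, so the event of the lemma intersected with $\mathcal{F}^N_b$ is empty; the probability is thus at most $\mathbb{P}(\mathcal{J}_N \setminus \mathcal{F}^N_b)$, which decays exponentially.

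The main obstacle is the careful accounting of error terms in the Gronwall-type SDE estimate: the diffusion-coefficient mismatch contributes $O(\delta'\sqrt{\Delta})$ rather than $O(\delta'\Delta)$ because of the $\sqrt{\Delta}$-scaling of Brownian integrals, and the drift Euler error is $O(\Delta^{3/2})$ from the $O(\sqrt{s-t_b})$-scale diffusive displacement of the true path. Both are still dominated by the order-$\Delta$ Gronwall slack for $\mathfrak{u} > 2TC$, but they force $\delta'$ to scale as $\sqrt{\Delta}$ rather than as a fixed constant, which in turn requires that Lemma \ref{Lemma bound fluctuations} be applicable to shrinking thresholds -- a straightforward extension of its proof. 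A secondary technicality is verifying that the frozen-intensity spin-jump dynamics in \eqref{eq: hybrid 17} contribute only $O(\Delta^2)$ errors via a second-order Taylor expansion of the Poisson jump probabilities, and hence are subdominant.
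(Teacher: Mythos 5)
Your deterministic coupling estimate is essentially the computation the paper itself performs: the paper routes it through the intermediate law $\eta_b$ of \eqref{eq: hybrid 27}--\eqref{eq: hybrid 28} and splits the factor $e^{\mathfrak{u}\Delta/2T}$ in two, but the Gronwall bookkeeping, the common-noise coupling, and the absorption of the $(1+C\Delta)$ factor by taking $\mathfrak{u}$ large are the same (your treatment of the diffusion mismatch, using the lower bound on $L^{\mu}_{ii}$ to make $\sqrt{L^{\mu}_{ii}}$ Lipschitz, is if anything slightly sharper). The genuine gap is the step you dismiss as ``a straightforward extension of its proof'': the claim that Lemma \ref{Lemma bound fluctuations} extends to the shrinking threshold $\delta' = c_*\sqrt{\Delta}$, where $c_*\propto \tilde{\epsilon}e^{-\mathfrak{u}}$ can be arbitrarily small. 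The proof of Lemma \ref{Lemma bound fluctuations} controls $d_W(\hat{\mu}^N_s,\hat{\mu}^N_b)$ only through the number of spin flips: it embeds the fluctuation event in $\lbrace Q \geq \bar{\epsilon}\rbrace$ with $Q := N^{-1}\sum_{i\in I_M,j\in I_N}Y^{i,j}_b(c_1\Delta)$ and $\sqrt{12\bar{\epsilon}}+2\bar{\epsilon}=\delta'$, and the exponential bound \eqref{eq: Sanov to show} requires the threshold $\bar{\epsilon}\approx \delta'^2/12$ to exceed the mean $Mc_1\Delta$ of $Q$. With $\delta'=c_*\sqrt{\Delta}$ this becomes the $n$-independent condition $c_*^2\gtrsim 12Mc_1$, which fails for small $c_*$; enlarging $n$ does not help because both sides scale like $\Delta$. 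Worse, $Q$ concentrates at $Mc_1\Delta$, so along this route the event you need to be exponentially rare in fact has probability tending to one.

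Nor is this a technicality that a more careful rerun of the same counting argument can repair. Over a window of length $\Delta$ the individual fields genuinely move on the scale $\sqrt{\Delta}$ (on $\mathcal{J}_N$ one only gets $N^{-1}\sum_{j}\norm{\tilde{\mathbf{G}}^j_{b+1}-\tilde{\mathbf{G}}^j_b}^2 = O(\Delta)$, and this diffusive scaling is precisely what generates the $L^{\xi_t}_{ii}$ second-derivative term in \eqref{eq: p plus evolution}); hence the self-coupling of each atom with itself yields $d_W(\hat{\mu}^N_s,\hat{\mu}^N_b)=O(\sqrt{\Delta})$ with a fixed, model-dependent constant, and a bound $c_*\sqrt{\Delta}$ with $c_*$ arbitrarily small can only come from cancellation at the level of the measure: the empirical law of the fields must move by $O(\Delta)$ plus sampling error even though its atoms move by $\sqrt{\Delta}$. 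Establishing that is essentially the content of the test-function expansion and conditional-Gaussian estimates of Sections \ref{Expectation Test Functions}--\ref{Section Field} (the terms $\beta^2,\beta^3,\beta^6,\beta^9$), i.e.\ the core of the paper, and it is not available at this stage of the argument; your proposal therefore smuggles the hard part of the theorem into a throwaway remark. To repair it you must either prove the $c_*\sqrt{\Delta}$ fluctuation estimate as a substantial separate lemma, or restructure the step so that only a fixed-threshold fluctuation bound is invoked, as the paper does via the intermediate measure $\eta_b$ (and even there the interplay between the threshold $\epsilon_0$, the additive error of order $\sqrt{\epsilon_0\Delta}$, and the target of order $\Delta$ is delicate and deserves explicit care).
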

Lemma \ref{Lemma Last Lemma 1} is proved later in this section.
\begin{lemma}\label{Lemma Main Lemma}
Suppose that for any $\bar{\epsilon} > 0$, for all sufficiently large $n$ and all $0\leq b \leq n-1$,
\begin{align}\label{Main Lemma Condition}
\lsup{N}N^{-1}\log \mathbb{P}\big(\mathcal{J}_N\text{ and }\tau_N > t^{(n)}_{b}  \text{ and }  d_W\big(\xi_b ,  \hat{\mu}^N_{b+1}\big)&\geq \bar{\epsilon}\Delta \big) < 0.
\end{align}
Then Theorem \ref{Theorem 1} must be true.
\end{lemma}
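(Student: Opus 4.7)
The plan is to chain together the reductions already carried out in this section. Recall that the authors have shown that Theorem \ref{Theorem 1} follows from the four bounds \eqref{eq: to demonstrate fluctuations 0}--\eqref{eq: to establish JcN}, with the first, second, and fourth already established elsewhere. Only \eqref{eq: to establish final} remains, which in turn was reduced via the decomposition $\{\,\cdots\,\}\subseteq\bigcup_{b=0}^{n-1}\mathcal{U}^N_b$ and Lemma \ref{Lemma Many Events} (note that the index set has cardinality $n$, which is fixed independently of $N$) to establishing \eqref{eq: to demonstrate U N b 0}, namely $\sup_{0\leq b<n}\lsup{N}N^{-1}\log\mathbb{P}(\mathcal{U}^N_b)<0$.

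Next I would invoke the decomposition of $\mathcal{U}^N_b$ obtained in \eqref{eq: temporary u bound 1}, which expresses $\mathcal{U}^N_b$ as a union of two events: an ``empirical vs.\ hybrid'' event
\[
\mathcal{V}^N_b := \big\{\mathcal{J}_N\text{ and }d_W(\xi_b,\hat{\mu}^N_{b+1})>\tilde\epsilon\,\mathfrak{u}\Delta/(2T)\cdot\exp(\mathfrak{u}t^{(n)}_{b}/T-\mathfrak{u})\big\},
\]
and a ``hybrid vs.\ flow'' event which, together with the indicator $\{d_W(\Psi_b\cdot\hat\mu^N,\hat\mu^N_b)\leq\tilde\epsilon\exp(\mathfrak{u}t^{(n)}_b/T-\mathfrak{u})\}$ inherited from $\mathcal{U}^N_b$, is precisely the event controlled by Lemma \ref{Lemma Last Lemma 1}. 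Applying Lemma \ref{Lemma Many Events} once more, it thus suffices to exhibit, for each $b$, exponential decay of $\mathbb{P}(\mathcal{V}^N_b)$.

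To handle $\mathcal{V}^N_b$, I would choose the constant
\[
\bar\epsilon \;=\; \tilde\epsilon\,\mathfrak{u}\,e^{-\mathfrak{u}}/(2T),
\]
which depends only on $\tilde\epsilon$, $\mathfrak{u}$ and $T$, and is in particular independent of $N$, $n$ and $b$. Since $t^{(n)}_b\geq 0$, we have $\exp(\mathfrak{u}t^{(n)}_b/T-\mathfrak{u})\geq e^{-\mathfrak{u}}$, so $\bar\epsilon\Delta\leq\tilde\epsilon\,\mathfrak{u}\Delta/(2T)\cdot\exp(\mathfrak{u}t^{(n)}_b/T-\mathfrak{u})$, giving the inclusion $\mathcal{V}^N_b\subseteq\{d_W(\xi_b,\hat{\mu}^N_{b+1})\geq\bar\epsilon\Delta\}\cap\mathcal{J}_N$. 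Moreover, since $\mathcal{U}^N_b$ already carries the constraint $\tau_N>t^{(n)}_b$, the event we must control is the very one appearing in the hypothesis \eqref{Main Lemma Condition}. Thus for all sufficiently large $n$, the hypothesis gives $\lsup{N}N^{-1}\log\mathbb{P}(\mathcal{V}^N_b)<0$ uniformly in $b$.

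The combinatorics here is entirely routine; the only point that requires mild care is the choice of the Gronwall-type constant $\mathfrak{u}$. It must be fixed first (independently of $n$ and $N$) so that the elementary inequality $\exp(\mathfrak{u}\Delta/T)\geq\exp(\mathfrak{u}\Delta/(2T))+\mathfrak{u}\Delta/(2T)$ used in deriving \eqref{eq: temporary u bound 1} is valid for all $n$ large, and so that $\bar\epsilon$ defined above is strictly positive. There is no genuine obstacle: once $\mathfrak{u}$ is fixed, the two decay statements combine by Lemma \ref{Lemma Many Events} to yield \eqref{eq: to demonstrate U N b 0}, and through the chain of reductions already recorded in this section this establishes Theorem \ref{Theorem 1}.
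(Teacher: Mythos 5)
Your proposal is correct and follows essentially the same route as the paper, which leaves the proof of this lemma implicit in the chain of reductions preceding its statement (the decomposition of $\mathcal{U}^N_b$ via \eqref{eq: temporary u bound 1}, Lemma \ref{Lemma Last Lemma 1} for the flow-versus-hybrid term, and Lemma \ref{Lemma Many Events} to combine the finitely many events). Your only addition is to make the bookkeeping explicit by choosing $\bar\epsilon = \tilde\epsilon\,\mathfrak{u}e^{-\mathfrak{u}}/(2T)$ so that the hypothesis \eqref{Main Lemma Condition} dominates the empirical-versus-hybrid event uniformly in $b$, which is exactly what the paper intends.
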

The rest of this paper is devoted to establishing Lemma \ref{Lemma Main Lemma}. In the next section, Lemma \ref{Lemma Change of Measure} determines a sufficient condition for Lemma \ref{Lemma Main Lemma} to hold, in terms of processes $\lbrace \tilde{\boldsymbol\sigma}_{i,t} \rbrace$ whose spin-flipping is independent of the connections. The rest of the sections then prove that the condition of Lemma \ref{Lemma Change of Measure} must be satisfied.

\subsection{Regularity of the Connections: Proof of Lemma \ref{bound connections lemma}}
\begin{proof}
We decompose $\mathbf{J}_N$ into a symmetric matrix and an i.i.d. matrix, i.e.  $\mathbf{J}_N = N^{-1/2}\sqrt{\mathfrak{s}}\hat{\mathbf{J}}_N + N^{-1/2}\sqrt{1-\mathfrak{s}}\tilde{\mathbf{J}}_N +N^{-1/2}\mathbf{D}_N$. Here $\mathbf{D}_N$ is diagonal, $\hat{\mathbf{J}}_N$ is symmetric and $\tilde{\mathbf{J}}_N$ is neither symmetric nor anti-symmetric. The entries in all three matrices can be taken to be i.i.d of zero mean and unit variance (in the symmetric matrix the entries are i.i.d. apart from the symmetry $J^{jk} = J^{kj}$). A union-of-events bound implies that
\begin{multline*}
\lsup{N}N^{-1}\log \mathbb{P}\big( \norm{\mathbf{J}}_N > 3 \big) \leq \max\big\lbrace \lsup{N}N^{-1}\log \mathbb{P}\big( N^{-1/2}\sqrt{\mathfrak{s}}\norm{\hat{\mathbf{J}}_N}> 4/3 \big) , \\  \lsup{N}N^{-1}\log \mathbb{P}\big( N^{-1/2}\sqrt{1-\mathfrak{s}}\norm{\tilde{\mathbf{J}}_N}> 4/3 \big) , \lsup{N}N^{-1}\log \mathbb{P}\big( N^{-1/2}\norm{\mathbf{D}_N}> 1/3 \big) \big\rbrace .
\end{multline*}
For the last term, using Lemma \ref{Lemma Many Events}
\begin{align*}
 \lsup{N}N^{-1}\log \mathbb{P}\big( N^{-1/2}\norm{\mathbf{D}_N}> 1/3 \big) =  \lsup{N}\sup_{1\leq p \leq N}N^{-1}\log \mathbb{P}\big( N^{-1/2}|D_{N,pp}| > 1/3 \big) < 0.
\end{align*}
It is a standard result from random matrix theory \cite{Anderson2011} that
\[
\lsup{N}N^{-1}\log \mathbb{P}\big( N^{-1/2}\sqrt{\mathfrak{s}}\norm{\hat{\mathbf{J}}_N}> 4/3 \big) < 0.
\]
The last bound follows from recent results on the maximum eigenvalue of the Ginibre ensemble \cite{Poplavskyi2017} ,
\[
 \lsup{N}N^{-1}\log \mathbb{P}\big( N^{-1/2}\sqrt{1-\mathfrak{s}}\norm{\tilde{\mathbf{J}}_N}> 4/3 \big) < 0.
\]
 For (2), it may be observed that
\begin{align*}
\Exp^{\hat{\mu}^N_t}\big[(g^p)^2 \big] = N^{-1}\sum_{j\in I_N} (G^{p,j}_t)^2 
\leq N^{-1}\norm{\mathbf{J}_N}\sum_{j\in I_N}(\sigma^{p,j}_t)^2 
= \norm{\mathbf{J}_N}
\leq 3,
\end{align*}
as long as $\mathcal{J}_N$ holds. 
\end{proof}

\subsection{Approximating Flow $\Psi_t$}\label{Section Psi t Definition}
This section proves that $\Psi_t$ is a good approximation to the flow $\Phi_t$. We now prove Lemma \ref{eq: compact Psi}, which implies that the operator $\Psi$ is compact.
\begin{lemma}\label{eq: compact Psi}
There exists a constant $\bar{C} > 0$ such that for all $\mu_{[0,T]} \in \hat{\mathcal{W}}_{[0,T]}$, and writing $\eta_t = \Psi_t \cdot \mu_{[0,T]}$,
\begin{align}
\sup_{0\leq t \leq T}\mathbb{E}^{\eta_t}\big[ \norm{\mathbf{x}}^2 \big] &\leq \bar{C} \\
d_W(\eta_t , \eta_u ) &\leq \bar{C}\sqrt{t-u} \text{ for all }t \geq u. \label{eq: eta t eta u}
\end{align}
\end{lemma}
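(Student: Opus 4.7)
The plan is to exploit the probabilistic representation of $\eta_t$ introduced in the proof of Lemma \ref{Lemma xi t}: view $\eta_t$ as the time-$t$ marginal law of a hybrid jump-diffusion $(\boldsymbol\zeta_t,\mathbf{x}_t)\in\mathcal{E}^M\times\mathbb{R}^M$, where each spin $\zeta^i$ flips with intensity $c(\zeta^i_t,x^i_t)$ and the field vector satisfies
\[
d\mathbf{x}_t=\mathbf{m}^{\mu_t}(\boldsymbol\zeta_t,\mathbf{x}_t)\,dt+\mathbf{D}^{\mu_t}\,d\mathbf{W}_t,\qquad D^{\mu_t}_{ij}=2\sqrt{L^{\mu_t}_{ii}}\,\delta_{ij},
\]
started in $\eta_0=\mu_0$ (exactly the drivers appearing in \eqref{eq: hybrid 17}--\eqref{eq: hybrid 18}). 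Because $\mu_{[0,T]}\in\hat{\mathcal{W}}_{[0,T]}$ forces $\mathbb{E}^{\mu_t}\bigl[\norm{\mathbf{x}}^2\bigr]\leq 3M$, the bound \eqref{Lemma Bound m} produces the uniform linear growth $\norm{\mathbf{m}^{\mu_t}(\boldsymbol\zeta,\mathbf{x})}\leq C\norm{\mathbf{x}}+C'$, while \eqref{eq: uniform bound c sigma g} gives $L^{\mu_t}_{ii}\leq c_1$; crucially $C,C',c_1$ do not depend on the driving path $\mu_{[0,T]}$.

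For the moment bound I localize at $\tau_R=\inf\{t:\norm{\mathbf{x}_t}\geq R\}$ and apply Ito's formula to $\norm{\mathbf{x}_{t\wedge\tau_R}}^2$. The jumps of $\boldsymbol\zeta$ leave $\mathbf{x}$ unchanged, so after taking expectation the stochastic integral vanishes and
\[
\frac{d}{dt}\mathbb{E}\bigl[\norm{\mathbf{x}_{t\wedge\tau_R}}^2\bigr]\leq 2\,\mathbb{E}\bigl[\langle\mathbf{x}_t,\mathbf{m}^{\mu_t}(\boldsymbol\zeta_t,\mathbf{x}_t)\rangle\bigr]+4Mc_1\leq C_1\mathbb{E}\bigl[\norm{\mathbf{x}_{t\wedge\tau_R}}^2\bigr]+C_2,
\]
with $C_1,C_2$ independent of $R$ and of $\mu_{[0,T]}$. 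Gronwall's inequality followed by Fatou as $R\to\infty$ yields $\sup_{t\in[0,T]}\mathbb{E}^{\eta_t}[\norm{\mathbf{x}}^2]\leq\bar{C}$.

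For the continuity estimate I realize $(\boldsymbol\zeta_u,\mathbf{x}_u)$ and $(\boldsymbol\zeta_t,\mathbf{x}_t)$ along a common trajectory; this supplies a legitimate coupling and gives
\[
d_W(\eta_t,\eta_u)\leq \mathbb{E}[\norm{\mathbf{x}_t-\mathbf{x}_u}]+\mathbb{E}[\norm{\boldsymbol\zeta_t-\boldsymbol\zeta_u}].
\]
Standard SDE increment estimates (Jensen applied to the drift integral, Ito isometry for the diffusion), combined with the second-moment bound just established, yield $\mathbb{E}[\norm{\mathbf{x}_t-\mathbf{x}_u}^2]\leq C(t-u)+C(t-u)^2$, so Cauchy--Schwarz returns the desired $\sqrt{t-u}$ rate for the continuous part. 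For the spin part, $\norm{\boldsymbol\zeta_t-\boldsymbol\zeta_u}\leq 2\sum_i\mathbf{1}_{\{\zeta^i \text{ flips in }[u,t]\}}$; since each flipping rate is bounded by $c_1$, dominating the jump process by a rate-$c_1$ Poisson process gives expected contribution at most $2Mc_1(t-u)$. Both pieces are dominated by $\bar{C}\sqrt{t-u}$ on $[0,T]$ after enlarging $\bar{C}$.

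The main technical point is the rigorous application of Ito's formula to the a priori possibly non-integrable $\norm{\mathbf{x}_t}^2$, handled by the localization and monotone-convergence step above. The secondary subtlety is uniformity of all constants across $\mu_{[0,T]}\in\hat{\mathcal{W}}_{[0,T]}$; this works because (i) the regularized definition \eqref{eq: H xi definition} of $\mathbf{H}^{\xi}$ makes the bounds in Lemma \ref{Lemma Lipschitz Matrices} genuinely uniform (no requirement $\Lambda^{\mu_t}\geq\mathfrak{c}/2$ is needed once the modified coefficients are used), and (ii) the second-moment hypothesis built into $\hat{\mathcal{W}}_{[0,T]}$ absorbs the otherwise $\mu_t$-dependent factor $\mathbb{E}^{\mu_t}[\norm{\mathbf{g}}^2]^{1/2}$ appearing in \eqref{Lemma Bound m}.
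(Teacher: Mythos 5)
Your proposal is correct and follows essentially the same route as the paper: both represent $\eta_t$ as the marginal law of the hybrid jump--diffusion \eqref{eq: hybrid 19}--\eqref{eq: hybrid 20}, obtain the uniform second-moment bound via It\^o's formula, the growth bound \eqref{Lemma Bound m} and Gronwall, and then couple the process at times $u$ and $t$ along a common trajectory to get the $\sqrt{t-u}$ Wasserstein estimate, with the spin contribution controlled by the uniform jump-rate bound $c_1$. Your localization at $\tau_R$ and the Fatou step are a harmless technical refinement of the same argument.
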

\begin{proof}
 To implement the Wasserstein distance, we require a common probability space, and it is easiest to use the stochastic process with marginal probability laws given by \eqref{eq: p plus evolution 00}. That is, $\eta_t \in \mathcal{P}$ is the marginal law of the solution $(\boldsymbol\alpha_t,\mathbf{z}_t)$ of the following stochastic hybrid system. Let $\lbrace \tilde{Y}^p(t) \rbrace_{p\in I_M}$ be independent Poisson Counting Processes, and $\lbrace \tilde{W}^p_t \rbrace_{p \in I_M}$ independent Wiener Processes (these processses are also independent of the Poisson processes $\lbrace Y^{p,j}(t)\rbrace_{p \in I_M,j \in I_N}$ and connections $\lbrace J^{jk} \rbrace_{j,k\in I_N}$ used to define the original system) and define for $p\in I_M$,
\begin{align}
\alpha^p_{t} =& \alpha^p_0 A\cdot \tilde{Y}^p\bigg( \int_0^t c(\alpha^p_s,x^p_s)ds \bigg) \label{eq: hybrid 19} \\
\mathbf{x}_{t} =& \mathbf{x}_0 + \int_0^t \mathbf{m}^{\mu_s}(\boldsymbol\alpha_s , \mathbf{x}_s)ds+\int_0^t\mathbf{D}^{\mu_s}d\tilde{\mathbf{W}}_{s}\text{ , where }
D^{\mu_s}_{ij} =2 \sqrt{L^{\mu_s}_{ii}}\delta(i,j),\label{eq: hybrid 20} 
\end{align}
and the initial random variables $(\boldsymbol\alpha_0 , \mathbf{x}_0)$ are distributed according to $\mu_0$.  One easily checks that a unique solution exists to the above equation.

We first establish that there exists a constant $\tilde{C}$ such that
\begin{equation}\label{eq: to establish eta t expectation}
\frac{d}{dt}\mathbb{E}^{\eta_t}\big[ \norm{\mathbf{x}}^2 \big] \leq \tilde{C}\big\lbrace 1 + \mathbb{E}^{\eta_t}\big[ \norm{\mathbf{x}}^2 \big]  \big\rbrace.
\end{equation}
Thanks to Ito's Lemma,
\begin{align}
\norm{\mathbf{x}_{t}}^2 =& \norm{\mathbf{x}_0}^2 + \int_0^t\big\lbrace \sum_{p\in I_M}4 L^{\mu_s}_{pp} + 2\big\langle \mathbf{x}_s, \mathbf{m}^{\mu_s}(\boldsymbol\alpha_s , \mathbf{x}_s) \big\rangle \big\rbrace ds+2\int_0^t\big\langle \mathbf{x}_s,\mathbf{D}^{\mu_s}d\tilde{\mathbf{W}}_{s}\big\rangle , \label{eq: temporary Ito xt expectation}
\end{align}
where $D^{\mu_s}_{ij} =2 \sqrt{L^{\mu_s}_{ii}}\delta(i,j)$. It follows from \eqref{Lemma Bound m} (and the Cauchy-Schwarz Inequality) that 
\[
\big\langle \mathbf{x}_s, \mathbf{m}^{\mu_s}(\boldsymbol\alpha_s , \mathbf{x}_s) \big\rangle \leq C \norm{\mathbf{x}_s}^2 + C\norm{\mathbf{x}_s}(1 +\sup_{s\in [0,T]} \mathbb{E}^{\mu_s}[\norm{\mathbf{x}}^2]^{1/2}).
\]
The definition of $\hat{\mathcal{W}}_{[0,T]}$ implies that $\sup_{s\in [0,T]} \mathbb{E}^{\mu_s}[\norm{\mathbf{g}}^2] \leq 3$, and it is immediate from the definition that $|L_{ii}^{\mu_s}| \leq c_1$. Thus taking expectations of both sides of \eqref{eq: temporary Ito xt expectation}, we obtain \eqref{eq: to establish eta t expectation} as required.

An application of Gronwall's Inequality to \eqref{eq: to establish eta t expectation} implies that
\begin{equation}\label{eq: upperbound eta t expect}
\sup_{0\leq t \leq T}\mathbb{E}^{\eta_t}\big[ \norm{\mathbf{x}}^2 \big] \leq \big( \tilde{C}T + \mathbb{E}^{\mu_0}\big[\norm{\mathbf{x}}^2\big] \big)\exp\big( \tilde{C}T \big),
\end{equation}
which establishes the first identity, since (by definition) $ \mathbb{E}^{\mu_0}\big[\norm{\mathbf{x}}^2\big] \leq 3$. It remains to demonstrate uniform continuity. It follows from Ito's Lemma that for all $t > u$,
\begin{equation}
\norm{\mathbf{x}_t - \mathbf{x}_u}^2 =2 \int_u^t \big\langle \mathbf{x}_s - \mathbf{x}_u ,  \mathbf{m}^{\mu_s}(\boldsymbol\alpha_s , \mathbf{x}_s) \big\rangle ds+ 4\int_u^t \sum_{i\in I_M}L^{\mu_s}_{ii} ds + 2\int_u^t  \big\langle \mathbf{x}_s - \mathbf{x}_u , \mathbf{D}^{\mu_s}d\mathbf{W}_s \big\rangle.
\end{equation}
We thus find that, using the Cauchy-Schwarz inequality,
\begin{align*}
\frac{d}{dt}\mathbb{E}\big[ \norm{\mathbf{x}_t - \mathbf{x}_u}^2 \big]
\leq 2\mathbb{E}\big[ \norm{\mathbf{x}_t - \mathbf{x}_u}^2\big]^{1/2}\mathbb{E}\big[ \norm{ \mathbf{m}^{\mu_t}(\boldsymbol\alpha_t , \mathbf{x}_t)}^2 \big]^{1/2} + 2Mc_1,
\end{align*}
since $|L_{ii}^{\mu_t}|$ is uniformly upperbounded by $c_1$ (the uniform upperbound for the jump intensity). It follows from \eqref{Lemma Bound m} that, using the inequality $(a+b)^2 \leq 2a^2 + 2b^2$,
\begin{equation}
\mathbb{E}\big[ \norm{ \mathbf{m}^{\mu_t}(\boldsymbol\alpha_t , \mathbf{x}_t)}^2 \big] \leq 2C^2\mathbb{E}\big[\norm{\mathbf{x}_t}^2 \big] + 2C^2\big\lbrace 1+ \mathbb{E}^{\mu_t}\big[ \norm{\mathbf{x}}^2 \big]^{1/2} \big\rbrace^2.
\end{equation}
Thanks to the definition of $\hat{\mathcal{W}}_{[0,T]}$, $ \mathbb{E}^{\mu_t}[ \norm{\mathbf{x}}^2 ] \leq 3$. It therefore follows from \eqref{eq: upperbound eta t expect} that there exists a constant $\hat{C}$ such that 
\begin{align*}
\frac{d}{dt}\mathbb{E}\big[ \norm{\mathbf{x}_t - \mathbf{x}_u}^2 \big]
&\leq \hat{C}\mathbb{E}\big[ \norm{\mathbf{x}_t - \mathbf{x}_u}^2\big]^{1/2}+ 2Mc_1 \\
&\leq \hat{C}\mathbb{E}\big[ \norm{\mathbf{x}_t - \mathbf{x}_u}^2\big] + \hat{C}+ 2Mc_1.
\end{align*}
Gronwall's Inequality now implies that
\begin{equation}
\mathbb{E}\big[ \norm{\mathbf{x}_t - \mathbf{x}_u}^2 \big] \leq (t-u)\exp\big\lbrace (t-u) \hat{C} \big\rbrace  \big( 2Mc_1 + \hat{C} \big),
\end{equation}
and Jensen's Inequality therefore implies that
\begin{equation}
\mathbb{E}\big[ \norm{\mathbf{x}_t - \mathbf{x}_u} \big] \leq \big( (t-u)\exp\big\lbrace (t-u) \hat{C} \big\rbrace  ( 2Mc_1 + \hat{C} ) \big)^{1/2}.
\end{equation}
The uniform bound $c_1$ for the intensity of the spin-flipping implies that
\begin{equation}
\mathbb{E}\big[ \norm{\boldsymbol\alpha_t - \boldsymbol\alpha_u}^2 \big] \leq 4M(t-u)c_1.
\end{equation}
The above two identities imply \eqref{eq: eta t eta u}.
\end{proof}
We now prove Lemma \ref{Lemma Psi Close}.
\begin{proof}
The second result in Lemma \ref{eq: compact Psi} implies that all elements of $\Psi \cdot \mathcal{W}_{[0,T]}$ are uniformly continuous. The first result in Lemma \ref{eq: compact Psi} implies that the individual marginals $\lbrace \eta_t \rbrace$ belong to the compact space of measures 
\begin{equation}\label{eq: bar P definition}
\bar{\mathcal{P}} = \big\lbrace \mu \in \mathcal{P} \; :\mathbb{E}^{\mu}\big[ \norm{\mathbf{g}}^2 \big] \leq \bar{C}
\big\rbrace.
\end{equation}
(This space is compact thanks to Prokhorov's Theorem). 
It thus follows from the generalized Arzela-Ascoli Theorem \cite{Green1961} that $\Psi \cdot \mathcal{W}_{[0,T]}$ is compact in $\mathcal{C}([0,T],\mathcal{P})$ (this space being endowed with the supremum metric \eqref{eq: supremum metric}).

Suppose for a contradiction that the lemma were not true. Then there would have to exist some $\tilde{\epsilon} > 0$ and some sequence $\mu^n \in \hat{\mathcal{W}}_2$ such that $\tilde{d}_T( \Psi\cdot \mu^n , \mu^n ) < n^{-1}$ and $\tilde{d}_T( \Phi\cdot \mu^n_0 , \mu^n ) \geq \tilde{\epsilon}$. The compactness of the space $\Psi \cdot \hat{\mathcal{W}}_{[0,T]}$ means that $\big( \Psi\cdot \mu^n \big)_{n\in \mathbb{Z}^+}$ must have a convergent subsequence $\big( \Psi\cdot \mu^{p_n} \big)_{n\in \mathbb{Z}^+}$, converging to some $\phi = (\phi_t)_{t\in [0,T]}$. Since $\tilde{d}_T( \Psi\cdot \mu^n , \mu^n ) < n^{-1}$, it must be that $\mu^{p_n} \to \phi$ as well. Since $\Psi$ is continuous, $\Psi\cdot\mu^{p_n}$ also converges to $\Psi\cdot \phi$. We thus find that $\Psi\cdot \phi = \phi$. This contradicts the uniqueness of the fixed point $\xi_t$ established in Lemma \ref{Lemma xi t}.

It remains to prove \eqref{eq: equivalent epsilon 2}. First we note that for small enough $\epsilon$, we are certain to avoid the pathological situation of $\Lambda^{\Phi_t\cdot \hat{\mu}^N} \to 0$ for $t \leq \tau_N$. This event would imply that $\norm{(\mathbf{K}^{\xi_t})^{-1}} \to \infty$ (and the PDE in \eqref{eq: p plus evolution} would no longer be accurate). 
Let $\epsilon_{\mathfrak{c}}>0$ be the largest number such that
\begin{equation}\label{eq: epsilon c definition}
\big\lbrace \mu \in \mathcal{P} \; : \Lambda^{\mu} \geq \mathfrak{c} \big\rbrace = \big\lbrace \mu \in \mathcal{P} \; :  \Lambda^{\mu} \geq \mathfrak{c} \text{ and } \Lambda^{\nu} \geq \mathfrak{c}/2 \text{ for all }\nu \text{ such that }d_W(\mu,\nu) \leq \epsilon_{\mathfrak{c}}  \big\rbrace.
\end{equation}
Such an $\epsilon_{\mathfrak{c}}$ always exists because the map $\Lambda^{\mu}$ is continuous. We will thus assume (throughout the rest of this paper) that $\epsilon \leq \epsilon_{\mathfrak{c}}$, because in any case if the RHS of the following inequality is less than zero, then the LHS must be less than zero too, i.e.
\begin{multline}
\lsup{N}N^{-1}\log \mathbb{P}\big( \sup_{t \leq \tau_N} d_W\big( \Phi_t\cdot \hat{\mu}^N(\boldsymbol\sigma_0,\mathbf{G}_0)  , \hat{\mu}^N(\boldsymbol\sigma_t,\mathbf{G}_t) \big) \geq \epsilon \big) \leq\\
\lsup{N}N^{-1}\log \mathbb{P}\big( \sup_{t \leq \tau_N} d_W\big( \Phi_t\cdot \hat{\mu}^N(\boldsymbol\sigma_0,\mathbf{G}_0)  , \hat{\mu}^N(\boldsymbol\sigma_t,\mathbf{G}_t) \big) \geq \min \lbrace \epsilon , \epsilon_{\mathfrak{c}} \rbrace \big).
\end{multline}
With this choice of $\epsilon$, we are assured that $\mathbb{P}\big(\mathcal{Q}_N^c\big) = 0$ where 
\begin{equation}\label{eq: equivalent epsilon}
\mathcal{Q}_N = \big\lbrace \mathbf{H}^{\Phi_t \cdot \hat{\mu}^N_0} = (\mathbf{K}^{\Phi_t\cdot \hat{\mu}^N_0})^{-1} \text{ as long as } t< \tau_N \text{ and } d_W\big( \Phi_t\cdot \hat{\mu}^N(\boldsymbol\sigma_0,\mathbf{G}_0)  , \hat{\mu}^N(\boldsymbol\sigma_t,\mathbf{G}_t) \big) \leq \epsilon \big\rbrace.
\end{equation}
 As long as $\epsilon \leq \epsilon_{\mathfrak{c}}$ (defined just above \eqref{eq: epsilon c definition}), and $\delta$ is chosen such that \eqref{eq: epsilon delta equivalence} is satisfied, then \eqref{eq: equivalent epsilon 2} must hold.
\end{proof}

\subsection{Proofs of the Remaining Lemmas}\label{Section Growth in Distance}


\begin{lemma}\label{Lemma bound fluctuations}
For any $\epsilon > 0$, for all sufficiently large $n$,
\begin{align}
\sup_{0\leq b < n}\lsup{N}N^{-1}\log  \mathbb{P}\big(\mathcal{J}_N \text{ and }\sup_{s\in [t^{(n)}_b, t^{(n)}_{b+1}]}d_W\big(\hat{\mu}^N(\boldsymbol\sigma_b,\mathbf{G}_b) , \hat{\mu}^N(\boldsymbol\sigma_s,\mathbf{G}_s)\big) \geq \epsilon  \big) < 0.\label{eq: first empirical measure fluctuation} 
\end{align}
\end{lemma}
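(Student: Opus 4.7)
The plan is to bound the Wasserstein fluctuation by the total number of spin flips in the interval $[t^{(n)}_b,t^{(n)}_{b+1}]$, and then to control that number via an exponential Chernoff/martingale estimate.

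First I use the natural coupling that pairs particle $j$ with itself, so that for any $s\in [t^{(n)}_b,t^{(n)}_{b+1}]$
\[
d_W(\hat{\mu}^N_b,\hat{\mu}^N_s)\ \leq\ \frac{1}{N}\sum_{j\in I_N}\bigl(\norm{\boldsymbol\sigma^j_s-\boldsymbol\sigma^j_b}+\norm{\mathbf{G}^j_s-\mathbf{G}^j_b}\bigr).
\]
Let $\mathcal{N}_{b,s}$ denote the total number of spin-flips across all $MN$ spins in $[t^{(n)}_b,s]$, and observe that $\sum_{i,j}(\sigma^{i,j}_s-\sigma^{i,j}_b)^2\leq 4\mathcal{N}_{b,s}$. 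Writing $\mathbf{G}^i_\cdot = \mathbf{J}_N\boldsymbol\sigma^i_\cdot$ (where $\mathbf{J}_N$ is the matrix in \eqref{eq: event JN}) and restricting to $\mathcal{J}_N$ where $\norm{\mathbf{J}_N}\leq 3$, I obtain
\[
\sum_{i,j}(G^{i,j}_s-G^{i,j}_b)^2\ =\ \sum_{i\in I_M}\norm{\mathbf{J}_N(\boldsymbol\sigma^i_s-\boldsymbol\sigma^i_b)}^2\ \leq\ 9\sum_{i,j}(\sigma^{i,j}_s-\sigma^{i,j}_b)^2\ \leq\ 36\mathcal{N}_{b,s}.
\]
Two applications of Cauchy--Schwarz then yield a deterministic bound of the form
\[
d_W(\hat{\mu}^N_b,\hat{\mu}^N_s)\ \leq\ C\sqrt{\mathcal{N}_{b,s}/N}
\]
on $\mathcal{J}_N$, for a universal constant $C$. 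Since $s\mapsto\mathcal{N}_{b,s}$ is non-decreasing, the supremum over $s\in[t^{(n)}_b,t^{(n)}_{b+1}]$ is attained at the right endpoint, so it suffices to bound $\mathbb{P}(\mathcal{N}_{b,t^{(n)}_{b+1}}/N\geq (\epsilon/C)^2)$.

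Next I control $\mathcal{N}_{b,t^{(n)}_{b+1}}$ by the standard exponential-martingale method. Since $\mathcal{N}_{b,t}$ is a counting process whose stochastic intensity is $\sum_{i,j}c(\sigma^{i,j}_t,G^{i,j}_t)\leq c_1 MN$ by assumption \eqref{eq: uniform bound c sigma g}, the process $\exp\{r\,\mathcal{N}_{b,t}-\int_{t^{(n)}_b}^t\!\sum_{i,j}c(\sigma^{i,j}_u,G^{i,j}_u)(e^r-1)\,du\}$ is a nonnegative local (super)martingale for any $r>0$, giving
\[
\mathbb{E}\bigl[\exp(r\mathcal{N}_{b,t^{(n)}_{b+1}})\bigr]\ \leq\ \exp\bigl(c_1 MN\Delta(e^r-1)\bigr).
\]
A Chernoff bound followed by optimisation in $r$ yields, for each $x>c_1 M\Delta$,
\[
\frac{1}{N}\log\mathbb{P}\bigl(\mathcal{N}_{b,t^{(n)}_{b+1}}\geq xN\bigr)\ \leq\ -\bigl[x\log\!\bigl(x/(c_1 M\Delta)\bigr)-x+c_1 M\Delta\bigr],
\]
and the bracketed expression is strictly positive. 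Applying this with $x=(\epsilon/C)^2$ and choosing $n$ so large that $c_1 MT/n<(\epsilon/C)^2$ makes the right-hand side uniformly negative in $b$, which gives \eqref{eq: first empirical measure fluctuation}.

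The one technical subtlety is that the jump-intensity is itself random (it depends on the fields and hence on the connections), so a crude domination by a fixed Poisson process is not automatic; however the uniform bound $c_1$ is exactly what makes the exponential martingale argument above go through without any concentration input about $\mathbf{J}$, and the event $\mathcal{J}_N$ is only needed in the deterministic first step to turn the field-differences into spin-differences. I expect the main obstacle to be bookkeeping rather than anything structural: taking care that the supremum over $s$ is handled by monotonicity, and that the constants work out so that for each fixed $\epsilon$ a single large enough $n$ works uniformly in $b$ and $N$.
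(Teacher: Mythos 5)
Your proof is correct and follows essentially the same route as the paper: bound the Wasserstein fluctuation on $\mathcal{J}_N$ by the number of spin flips (using $\norm{\mathbf{J}_N}\leq 3$ to convert field increments into spin increments, and monotonicity in $s$ to handle the supremum), then apply a standard large-deviations estimate for the jump count with $\Delta$ chosen small relative to $\epsilon$. The only difference is cosmetic: the paper dominates the flips by renewed i.i.d.\ unit-rate Poisson processes evaluated at $c_1\Delta$ and invokes Sanov/Cram\'er, whereas you run the exponential-martingale Chernoff bound directly on the counting process with intensity bounded by $c_1MN$ — both hinge on the same uniform bound $c_1$ and give the same rate.
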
\begin{proof}
It follows from the definition that
\begin{align*}
d_W\big(\hat{\mu}^N(\boldsymbol\sigma_b,\mathbf{G}_b) , \hat{\mu}^N(\boldsymbol\sigma_s,\mathbf{G}_s)\big) &\leq \big(N^{-1}\sum_{j \in I_{N},i\in I_M}  \big| G^{i,j}_{b} -G^{i,j}_{s}\big|^2 \big)^{\frac{1}{2}} + N^{-1}\sum_{j\in I_N, i\in I_M} \big| \sigma^{i,j}_{s} - \sigma^{i,j}_{b}\big|.
\end{align*}
The renewal property of Poisson Processes implies that the following processes $\lbrace Y^{q,j}_a(t) \rbrace_{q\in I_M, j \in I_N}$ are Poissonian:
\begin{align}
Y_b^{q,j}(t) :=& Y^{q,j}\big(t+ \int_0^{t^{(n)}_b} c(\sigma^{q,j}_s , G^{q,j}_s) ds \big) - Y^{q,j}\big( \int_0^{t^{(n)}_b} c(\sigma^{q,j}_s , G^{q,j}_s) ds \big) \label{eq: Y a q j t definition}
\end{align}
Now as long as the event $\mathcal{J}_N$ holds,
\begin{align*}
N^{-1}\sum_{j \in I_{N},i\in I_M}  \big| G^{i,j}_{b} -G^{i,j}_{s}\big|^2 &\leq  \frac{3}{N}\sum_{j\in I_N, i \in I_M} \lbrace \sigma^{i,j}_{s} - \sigma^{i,j}_{b}\rbrace^2 \\
&\leq 12 N^{-1}\sum_{j\in I_N, i \in I_M} Y_b^{i,j}\big(c_1\lbrace s- t^{(n)}_b\rbrace \big) \leq  12N^{-1}\sum_{j\in I_N, i \in I_M} Y_b^{i,j}\big(c_1\Delta\big).
\end{align*}
Similarly, $N^{-1}\sum_{j\in I_N, i\in I_M} \big| \sigma^{i,j}_{s} - \sigma^{i,j}_{b}\big| \leq 2N^{-1}\sum_{j\in I_N, i\in I_M} Y_b^{i,j}(c_1 \Delta)$.
Writing $\bar{\epsilon}$ to be such that $\sqrt{12\bar{\epsilon}} + 2\bar{\epsilon} = \epsilon$, and noting that $Y_b^{i,j}$ is non-decreasing, it thus suffices to prove that for any $\bar{\epsilon} >0 $,
\begin{equation}\label{eq: Sanov to show}
\lsup{N}N^{-1}\log  \mathbb{P}\big(\mathcal{J}_N \text{ and }N^{-1}\sum_{j\in I_N, i\in I_M} Y_b^{i,j}(c_1 \Delta) \geq \bar{\epsilon}  \big) < 0.
\end{equation}
Taking $\Delta$ to be such that $c_1\Delta\leq \bar{\epsilon}/2$, it suffices to prove that
\begin{equation}
\lsup{N}N^{-1}\log  \mathbb{P}\big(\mathcal{J}_N \text{ and }N^{-1}\sum_{j\in I_N, i\in I_M} Y_b^{i,j}(c_1 \Delta) - c_1\Delta M \geq \bar{\epsilon}/2  \big) < 0.
\end{equation}
Since the $\lbrace Y^{i,j} \rbrace_{i\in I_M, j\in I_N}$ are independent, and $\mathbb{E}\big[ N^{-1}\sum_{j\in I_N, i\in I_M} Y_b^{i,j}(c_1 \Delta )\big] = Mc_1\Delta$, Sanov's Theorem implies \eqref{eq: Sanov to show} \cite{Dembo1998}.
\end{proof}

We now prove Lemma \ref{Lemma Last Lemma 1}.
\begin{proof}
Let $\eta_b \in \mathcal{P} $ be the law of the same stochastic process as $\xi_b(\boldsymbol\sigma,\mathbf{G})$, except that the law of the initial value at time $t^{(n)}_b$ is given by $\Psi_b \cdot \hat{\mu}^N$ rather than the empirical measure. More precisely, writing $\Psi_b\cdot \hat{\mu}^N$ to be the law of random variables $(\boldsymbol\alpha_b,\mathbf{x}_b)$, define $\eta_b$ to be the law of $(\boldsymbol\beta_{\Delta + t^{(n)}_b} ,\mathbf{z}_{\Delta+ t^{(n)}_b})$, where, writing $A\cdot x = (-1)^x$, for $p\in I_M$, for $t\geq t^{(n)}_b$,
\begin{align}
\beta^p_{t} =& \alpha^p_b A\cdot \tilde{Y}^p\big( (t-t^{(n)}_b) c(\alpha^p_b,x^p_b) \big) \label{eq: hybrid 27} \\
\mathbf{z}_{t} =& \mathbf{x}_b +(t-t^{(n)}_b)\mathbf{m}^{\hat{\mu}^N_b}(\boldsymbol\alpha_b , \mathbf{x}_b)+\mathbf{D}^{\hat{\mu}^N_b}\tilde{\mathbf{W}}_{t-t^{(n)}_b}\text{ , where }
D^{\hat{\mu}^N_b}_{ij} = 2\sqrt{L^{\hat{\mu}^N_b}_{ii}}\delta(i,j).\label{eq: hybrid 28} 
\end{align} 
Thanks to the fact that $\exp( \mathfrak{u} t^{(n)}_{b} / T + \mathfrak{u}\Delta / (2T)  -\mathfrak{u} ) \geq \exp( \mathfrak{u} t^{(n)}_{b} / T + \mathfrak{u}\Delta / 4T -\mathfrak{u} ) + \exp( \mathfrak{u} t^{(n)}_{b} / T-\mathfrak{u} )\mathfrak{u} \Delta / 4T$, analogously to \eqref{eq: temporary u bound 1} we find that
\begin{multline}
\big\lbrace d_{W}\big( \Psi_{ b+1}\cdot \hat{\mu}^N, \xi_b \big) > \tilde{\epsilon}\exp\big( \mathfrak{u} t^{(n)}_{b} / T + \mathfrak{u}\Delta / 2T -\mathfrak{u} \big) \big\rbrace 
\subseteq \big\lbrace d_{W}(  \eta_b , \xi_b ) > \tilde{\epsilon}\exp\big( \mathfrak{u} t^{(n)}_{b} / T + \mathfrak{u}\Delta / 4T -\mathfrak{u} \big) \big\rbrace\\ \cup
\big\lbrace d_{W}\big( \Psi_{ b+1}\cdot \hat{\mu}^N, \eta_b \big) > \exp\big( \mathfrak{u} t^{(n)}_{b} / T + \mathfrak{u}\Delta / 4T-\mathfrak{u} \big)\tilde{\epsilon}\mathfrak{u}\Delta / 4T\big\rbrace
\end{multline}
Thanks to Lemma \ref{Lemma Many Events}, it thus suffices for us to prove the following three inequalities,
\begin{multline}
\lsup{N}N^{-1}\log \mathbb{P}\big(  \mathcal{J}_N \text{ and }d_{W}( \eta_b, \xi_b ) > \tilde{\epsilon}\exp\big( \mathfrak{u} t^{(n)}_{b} / T + \mathfrak{u}\Delta / 4T -\mathfrak{u} \big)\\  \text{ and }d_W\big( \Psi_{b}\cdot \hat{\mu}^N, \hat{\mu}^N_b\big) \leq \tilde{\epsilon}\exp\big( \mathfrak{u} t^{(n)}_{b} / T -\mathfrak{u} \big) \big) < 0 \text{ and }\label{eq: last lemma 1}
\end{multline}
for some $\epsilon_0 > 0$,
\begin{align}
&\lsup{N}N^{-1}\log \mathbb{P}\big(  \mathcal{J}_N,\sup_{s\in [t^{(n)}_b,t^{(n)}_{b+1}]}d_W(\hat{\mu}^N_s,\hat{\mu}^N_b) > \epsilon_0 \big) < 0 \label{eq: epsilon 0 bound mu N s b fluctuations}\\
&\lsup{N}N^{-1}\log \mathbb{P}\big(  \mathcal{J}_N,\sup_{s\in [t^{(n)}_b,t^{(n)}_{b+1}]}d_W(\hat{\mu}^N_s,\hat{\mu}^N_b) \leq \epsilon_0,d_{W}\big( \eta_b, \Psi_{ b+1}\cdot \hat{\mu}^N\big) > \frac{\tilde{\epsilon} \mathfrak{u} \Delta}{4T}\exp( \mathfrak{u} t^{(n)}_{b} / T + \mathfrak{u}\Delta / 4T -\mathfrak{u} ) \big)< 0. \label{eq: last lemma 2}
\end{align}
It has already been proved in Lemma \ref{Lemma bound fluctuations} that for any $\epsilon_0$, for all large enough $n$ \eqref{eq: epsilon 0 bound mu N s b fluctuations} must hold. 
\vspace{0.4cm}

\emph{Proof of \eqref{eq: last lemma 2}.} 

\vspace{0.4cm}
We compare the stochastic processes \eqref{eq: hybrid 19}-\eqref{eq: hybrid 20} whose law is $\Psi_{b+1}\cdot \hat{\mu}^N$ to the stochastic processes  \eqref{eq: hybrid 27}-\eqref{eq: hybrid 28} whose law is $\eta_b$. Notice that these processes have the same initial condition at time $t^{(n)}_b$. Using Ito's Lemma, for $t \geq t^{(n)}_b$,
\begin{multline}\label{eq: Ito eta b comparison}
\norm{\mathbf{x}_{t}-\mathbf{z}_t}^2 = \int_{t^{(n)}_b}^t\big\lbrace 2\big\langle \mathbf{x}_s -\mathbf{z}_s , \mathbf{m}^{\hat{\mu}^N_s}(\boldsymbol\alpha_s , \mathbf{x}_s) -  \mathbf{m}^{\hat{\mu}^N_b}(\boldsymbol\alpha_b , \mathbf{x}_b) \big\rangle +  4 \sum_{p\in I_M} ( D^{\hat{\mu}^N_s}_{pp}-D^{\hat{\mu}^N_b}_{pp})^2 \big\rbrace ds\\+2\int_{t^{(n)}_b}^t\big\langle \mathbf{x}_s -\mathbf{z}_s , (\mathbf{D}^{\hat{\mu}^N_s}-\mathbf{D}^{\hat{\mu}^N_b})d\tilde{\mathbf{W}}_{s}\big\rangle
\end{multline}
Analogously to the bound in \eqref{eq: upperbound eta t expect}, one easily establishes the following uniform bound for the moments
\begin{equation}\label{eq: unifrom bound moments breve C}
\sup_{t\in [t^{(n)}_b , t^{(n)}_{b+1}]}\big\lbrace \norm{\mathbf{x}_t}^2 , \norm{\mathbf{z}_t}^2 \big\rbrace \leq \breve{C}
\end{equation}
for some constant $\breve{C}$. Using the Lipschitz inequality for $\mathbf{m}$ in Lemma \ref{Lemma Lipschitz Matrices}, and making use of the uniform bound in \eqref{eq: unifrom bound moments breve C}, there exists a constant $\bar{C}$ such that for all $s \in [t^{(n)}_b , t^{(n)}_{b+1}]$,
\begin{align*}
\norm{\mathbf{m}^{\hat{\mu}^N_s}(\boldsymbol\alpha_s , \mathbf{x}_s) -  \mathbf{m}^{\hat{\mu}^N_b}(\boldsymbol\alpha_b , \mathbf{x}_b)} \leq  \bar{C}\big( \norm{\mathbf{x}_s - \mathbf{x}_b} + \norm{\boldsymbol\alpha_s - \boldsymbol\alpha_b} \big).
\end{align*}
Taking expectations of both sides of \eqref{eq: Ito eta b comparison}, employing the Cauchy-Schwarz Inequality, and assuming that $\sup_{s\in [t^{(n)}_b,t^{(n)}_{b+1}]}d_W(\hat{\mu}^N_s,\hat{\mu}^N_b) \leq \epsilon_0$, we obtain that
\begin{multline}\label{eq: Ito eta b comparison 2}
\mathbb{E}\big[\norm{\mathbf{x}_{t}-\mathbf{z}_t}^2\big] \leq 2\grave{C} \int_{t^{(n)}_b}^t\big( \mathbb{E}\big[\norm{\mathbf{x}_{s}-\mathbf{z}_s}^2\big] +\mathbb{E}\big[\norm{\mathbf{x}_{s}-\mathbf{z}_s}^2\big]^{1/2} \mathbb{E}\big[\norm{\boldsymbol\alpha_s - \boldsymbol\alpha_b}^2\big]^{1/2}  \big) ds + 4(t-t^{(n)}_b)\epsilon_0.
\end{multline}
Properties of the Poisson Process (see for example Lemma \ref{Lemma Upper Bound Jump Rate}) dictate that $\mathbb{E}\big[\norm{\boldsymbol\alpha_s - \boldsymbol\alpha_b}^2\big] \leq 4 c_1\Delta$, as long as $s-t^{(n)}_b \leq \Delta$. Thus for all $t$ such that $\mathbb{E}\big[\norm{\mathbf{x}_{t}-\mathbf{z}_t}^2\big] \leq \Delta$, it must hold that
\[
\mathbb{E}\big[\norm{\mathbf{x}_{t}-\mathbf{z}_t}^2\big] \leq  2\grave{C} \int_{t^{(n)}_b}^t\big( \mathbb{E}\big[\norm{\mathbf{x}_{s}-\mathbf{z}_s}^2\big]ds+(t-t^{(n)}_b)\big\lbrace \epsilon_0 +4\Delta\grave{C}\sqrt{c_1} \big\rbrace .
\]
We thus find from Gronwall's Inequality that for any $\bar{\epsilon} > 0$, through choosing $\epsilon_0$ to be sufficiently small, and $n$ to be sufficiently large,
\begin{equation}\label{eq: intermediate eta b comparsion}
\sup_{t\in [t^{(n)}_b , t^{(n)}_{b+1}]}\mathbb{E}\big[\norm{\mathbf{x}_{t}-\mathbf{z}_t}^2\big] \leq \Delta \bar{\epsilon}.
\end{equation}
Using the compensated Poisson Process representation, we obtain that
\begin{align}
\mathbb{E}\big[ \norm{\boldsymbol\alpha_{t} - \boldsymbol\beta_t}^2\big] &\leq 4\sum_{p\in I_M}\mathbb{E}\big[\big|\tilde{Y}^p\big( (t-t^{(n)}_b) c(\alpha^p_b,z^p_b)-\tilde{Y}^p\big( \int_{t^{(n)}_b}^{t} c(\alpha^p_s,x^p_s)ds  \big)\big|\big]\nonumber \\
&\leq 4\sum_{p\in I_M}\mathbb{E}\big[ \int_{t^{(n)}_b}^{t} \big| c(\alpha^p_b,z^p_b) -c(\alpha^p_s,x^p_s) \big| ds \big]\nonumber \\
&\leq 4M(c_1+c_L)\Delta\sup_{s\in [t^{(n)}_b , t^{(n)}_{b+1}]}\mathbb{E}\big[\norm{\mathbf{x}_{s}-\mathbf{x}_b} + \norm{\boldsymbol\alpha_s - \boldsymbol\alpha_b}\big],\label{eq: norm alpha beta difference expectation}
\end{align}
using the fact that $c(\cdot,\cdot)$ is Lipschitz and bounded. Since the expectation in the last term goes to zero as $\Delta\to 0$, it follows from \eqref{eq: intermediate eta b comparsion} and \eqref{eq: norm alpha beta difference expectation} that for sufficiently large $\Delta$,
\[
d_{W}\big( \eta_b, \Psi_{ b+1}\cdot \hat{\mu}^N\big) \leq \frac{\tilde{\epsilon} \mathfrak{u} \Delta}{4T}\exp( \mathfrak{u} t^{(n)}_{b} / T + \mathfrak{u}\Delta / 4T -\mathfrak{u} ).
\]
We have thus established \eqref{eq: last lemma 2} and it remains to prove \eqref{eq: last lemma 1}. Suppose that $d_W\big( \Psi_{b}\cdot \hat{\mu}^N, \hat{\mu}^N_b\big) \leq \tilde{\epsilon}\exp\big( \mathfrak{u} t^{(n)}_{b} / T -\mathfrak{u} \big)$. The definition of the Wasserstein distance implies that for any $\delta > 0$, there must exist a common probability space supporting the random variables $(\boldsymbol\zeta,\mathbf{x},\boldsymbol\beta,\mathbf{z})$, with $\hat{\mu}^N_b$ the law of $(\boldsymbol\zeta,\mathbf{x})$, and $\Psi_b \cdot \hat{\mu}^N$ the law of $(\boldsymbol\beta,\mathbf{z})$, and such that
\begin{equation}
\mathbb{E}\big[ \norm{\boldsymbol\zeta-\boldsymbol\beta} + \norm{\mathbf{x }- \mathbf{z}} \big] \leq \delta + \tilde{\epsilon}\exp\big( \mathfrak{u} t^{(n)}_{b} / T -\mathfrak{u} \big).\label{eq: combine result 1}
\end{equation}
We append the mutually independent Poisson processes $\lbrace \tilde{Y}^p(t) \rbrace_{p\in I_M}$ and Brownian motions $\lbrace \tilde{W}^p_t \rbrace_{p\in I_M}$ to this same space, and define $(\boldsymbol\zeta_{\Delta},\mathbf{x}_{\Delta})$ to satisfy \eqref{eq: hybrid 17} -\eqref{eq: hybrid 18}  and $(\boldsymbol\beta_{\Delta}, \mathbf{z}_{\Delta})$ to satisfy \eqref{eq: hybrid 27}-\eqref{eq: hybrid 28}. We then observe using the triangle inequality that
\begin{align}
\mathbb{E}\big[ \norm{\boldsymbol\zeta_{\Delta}-\boldsymbol\beta_{\Delta}} + \norm{\mathbf{x}_{\Delta}- \mathbf{z}_{\Delta}} \big]  &\leq\mathbb{E}\big[ \norm{\boldsymbol\zeta-\boldsymbol\beta} + \norm{\mathbf{x }- \mathbf{z}} + \norm{\boldsymbol\zeta_{\Delta}-\boldsymbol\zeta + \boldsymbol\beta -\boldsymbol\beta_{\Delta}} +\norm{\mathbf{x}_{\Delta}-\mathbf{x} + \mathbf{z}- \mathbf{z}_{\Delta}}  \big] \nonumber \\
&\leq \delta + \tilde{\epsilon}\exp\big( \mathfrak{u} t^{(n)}_{b} / T -\mathfrak{u} \big) + \mathbb{E}\big[ \norm{\boldsymbol\zeta_{\Delta}-\boldsymbol\zeta + \boldsymbol\beta -\boldsymbol\beta_{\Delta}} +\norm{\mathbf{x}_{\Delta}-\mathbf{x} + \mathbf{z}- \mathbf{z}_{\Delta}}  \big]  \text{ and }\label{eq: combine result 2}
\end{align}
\begin{align}
\mathbb{E}\big[  \norm{\boldsymbol\zeta_{\Delta}-\boldsymbol\zeta + \boldsymbol\beta -\boldsymbol\beta_{\Delta}} \big|\; \boldsymbol\zeta , \boldsymbol\beta, \mathbf{x},\mathbf{g} \big] \leq & 2\sum_{p\in I_M} \mathbb{E}\big[  \big| \tilde{Y}^p\big( \Delta c(\zeta^p,x^p) \big) - \tilde{Y}^p\big(\Delta c(\beta^p,z^p) \big) \big|  \;\; \big|\; \boldsymbol\zeta , \boldsymbol\beta, \mathbf{x},\mathbf{g} \big] \label{eq: intermediate almost finished zeta delta}
\end{align}
Define $v_p = \inf\big\lbrace c(\beta^p,z^p),c(\zeta^p,x^p) \big\rbrace$ and let $\lbrace \breve{Y}^p,\hat{Y}^p, \grave{Y}^p\rbrace_{p\in I_M}$ be independent Poisson Processes. Using the additive property of Poisson Processes \cite{Ethier1986}, we have the following representation
\begin{align*}
\tilde{Y}^p\big( \Delta c(\zeta^p,x^p) \big) = \breve{Y}^p(v_p) + \hat{Y}^p\big(\Delta [c(\zeta^p,x^p)-v_p]_+ \big)\\
 \tilde{Y}^p\big(\Delta c(\beta^p,z^p) \big) = \breve{Y}^p(v_p)+ \grave{Y}^p\big(\Delta [c(\beta^p,z^p)-v_p]_+ \big).
\end{align*}
Hence \eqref{eq: intermediate almost finished zeta delta} implies that
\begin{align*}
\mathbb{E}\big[  \norm{\boldsymbol\zeta_{\Delta}-\boldsymbol\zeta + \boldsymbol\beta -\boldsymbol\beta_{\Delta}} \big|\; \boldsymbol\zeta , \boldsymbol\beta, \mathbf{x},\mathbf{g} \big] &\leq  2\sum_{p\in I_M} \mathbb{E}\big[   \hat{Y}^p\big( \Delta [c(\zeta^p,x^p)-v_p]_+ \big) +  \grave{Y}^p\big(\Delta [c(\beta^p,z^p)-v_p]_+ \big) \;\; \big|\; \boldsymbol\zeta , \boldsymbol\beta, \mathbf{x},\mathbf{g} \big] \\
&= 2\Delta\sum_{p\in I_M}\big| c(\zeta^p,x^p)-c(\beta^p,z^p)\big|\\
&\leq 2M\Delta \sup_{p\in I_M} \big\lbrace c_1|\zeta^p - \beta^p | +c_L | x^p - g^p | \big\rbrace 
\end{align*}
where $c_1$ is the uniform upperbound for the jump rate, and $c_L$ is the Lipschitz constant for $c$. Taking expectations of both sides, one finds that there exists a constant $\bar{C} > 0$ such that
\begin{equation}
\mathbb{E}\big[  \norm{\boldsymbol\zeta_{\Delta}-\boldsymbol\zeta + \boldsymbol\beta -\boldsymbol\beta_{\Delta}} \big] \leq \bar{C} \Delta \mathbb{E}\big[ \norm{\boldsymbol\zeta-\boldsymbol\beta} + \norm{\mathbf{x }- \mathbf{z}} \big].\label{eq: combine result 3}
\end{equation}
We analogously find that for a constant $C> 0$,
\begin{equation}
\mathbb{E}\big[\norm{\mathbf{x}_{\Delta}-\mathbf{x} + \mathbf{z}- \mathbf{z}_{\Delta}}  \big] \leq  C \Delta \mathbb{E}\big[ \norm{\boldsymbol\zeta-\boldsymbol\beta} +\norm{\mathbf{x} - \mathbf{z}} \big],\label{eq: combine result 4}
\end{equation}
since the coefficients $\mathbf{m}$ and $\mathbf{L}$ are Lipschitz, as noted in Lemma \ref{Lemma Lipschitz Matrices}. The above results \eqref{eq: combine result 1}-\eqref{eq: combine result 4} imply that there exists a constant $\hat{C} > 0$ such that
\begin{equation}
d_{W}( \eta_b, \xi_b ) \leq d_W(\hat{\mu}^N_b, \Psi_b\cdot \hat{\mu}^N) \big\lbrace 1 + \hat{C}\Delta \big\rbrace
\end{equation}
Thus as long as $\mathfrak{u}/4T > \hat{C}$, if $d_W(\hat{\mu}^N_b, \Psi_b\cdot \hat{\mu}^N) \leq  \tilde{\epsilon}\exp\big( \mathfrak{u} t^{(n)}_{b} / T -\mathfrak{u} \big)$, it must be that $d_{W}( \eta_b, \xi_b ) \leq \tilde{\epsilon}\exp\big( \mathfrak{u} t^{(n)}_{b} / T + \mathfrak{u}\Delta / 4T -\mathfrak{u} \big)$, which establishes \eqref{eq: last lemma 1}.

\end{proof}

\section{Change of Measure}\label{Section Change of Measure}
It remains for us to prove Lemma \ref{Lemma Main Lemma}. To do this, we must `separate' the effects of the stochasticity and the disorder on the dynamics by defining new processes $\tilde{\boldsymbol\sigma}_{i,t}$ (with $i$ belonging to an index set that grows polynomially in $N$) that are such that the spin-flipping is independent of the connections. However it will be seen that $\tilde{\boldsymbol\sigma}_{i,t}$ is an excellent approximation to the old process, as long as the empirical process lies in a small subset $\mathcal{V}^N_i$ of $\mathcal{M}^+_1\big(\mathcal{D}([0,T],\mathcal{E}^M\times\mathbb{R}^M)\big)$. The number of such subsets $\lbrace \mathcal{V}^N_i \rbrace$ is polynomial in $N$: this polynomial growth will be dominated by the exponential decay of the probability bounds of subsequent sections. The fact that the new processes are independent of the connections will allow us to use a conditional Gaussian measure to accurately infer the evolution of the fields over a small time step (in Section \ref{Section Field}). In order that we may employ Girsanov's Theorem, it is essential that the processes $\tilde{\boldsymbol\sigma}_{i,t}$ are adapted to the filtration $\mathcal{F}_t$ as well. The main result of this section is Lemma \ref{Lemma Change of Measure}: this lemma gives a sufficient condition in terms of the new processes $\tilde{\boldsymbol\sigma}_{i,t}$ for the condition of Lemma \ref{Lemma Main Lemma} to be satisfied.

\subsection{Partition of the Probability Space}

Define the pathwise empirical measure
\begin{equation}\label{eq: pathwise empirical measure}
\tilde{\mu}^N = N^{-1}\sum_{j\in I_N}\delta_{(\boldsymbol\sigma^j , \mathbf{G}^j)} \in \mathcal{M}^+_1\big( \mathcal{D}([0,T] , \mathcal{E}^M \times \mathbb{R}^M) \big).
\end{equation}
The pathwise empirical measure will be used to partition the probability space. Before we partition $\mathcal{M}^+_1\big(\mathcal{D}([0,T],\mathcal{E}^M\times\mathbb{R}^M)\big)$, we must first partition the underlying state space $\mathcal{E}^{M} \times \mathbb{R}^M$. For some positive integer $\mathfrak{n}$, define the sets $\lbrace D_i \rbrace_{0\leq i \leq \mathfrak{n}^2+1}\subset \mathbb{R}$ as follows.
\begin{align}
D_0 &= (-\infty , -\mathfrak{n}] \; \; , \; \; D_{\mathfrak{n}^2+1} = (\mathfrak{n}, \infty)\\
D_i &= (-\mathfrak{n} + 2(i-1)\mathfrak{n}^{-1} , -\mathfrak{n} + 2i\mathfrak{n}^{-1}] \text{ for }1\leq i \leq \mathfrak{n}^2.
\end{align}
Next, let $\lbrace \tilde{D}_i \rbrace_{1\leq i \leq C_\mathfrak{n}} \subset \mathbb{R}^{M}$ be such that for each $i$,
\begin{align}
\tilde{D}_i = D_{p^i_1} \times D_{p^i_2} \times\ldots D_{p^i_M},
\end{align}
for integers $\lbrace p^i_j \rbrace$. The sets are defined to be such that
\begin{align}
\mathbb{R}^M = \bigcup_{i=0}^{C_{\mathfrak{n}}} \tilde{D}_i \text{ and }
\tilde{D}_i \cap \tilde{D}_j = \emptyset \text{ if }i\neq j. \label{eq: R M partition}
\end{align}
Next we partition the path space 
\begin{equation}\label{eq: path space partition}
\mathcal{D}([0,T] , \mathcal{E}^M \times \mathbb{R}^M) = \bigcup_{i=1}^{\hat{C}_{\mathfrak{n}}} \hat{D}_{i}, 
\end{equation}
where $\lbrace \hat{D}_i \rbrace$ are defined as follows. In constructing this partition, we require a more refined partition of the time interval $[0,T]$ into $(m+1)$ time points $\lbrace t^{(m)}_a \rbrace_{0\leq a \leq m}$: this is necessary for us to be able to control the Girsanov Exponent in the next section. It is assumed that $m$ is an integer multiple of $n$ (the integer dictating the number of time points in the previous section). Throughout this section, unless specified otherwise, for $0\leq a \leq m$, we write $\boldsymbol\sigma_a := \boldsymbol\sigma_{t^{(m)}_a}$. Each $\hat{D}_i  \subset \mathcal{D}([0,T], \mathcal{E}^M \times \mathbb{R}^M)$ is nonempty, and of the form
\begin{equation}
\hat{D}_i = \big\lbrace \boldsymbol\alpha_{[0,T]} \times \mathbf{g}_{[0,T]} : \mathbf{g}_a \in \tilde{D}_{r^i_a}\text{ and }\boldsymbol\alpha_a \in \tilde{D}_{q^i_a} \text{ for each }0\leq a \leq m \big\rbrace,
\end{equation}
for indices $\lbrace q^i_a, r^i_a\rbrace_{0\leq a \leq m}$, $1\leq q_a , r_a \leq C_{\mathfrak{n}}$. The indices are chosen such that (i) $\hat{D}_i \cap \hat{D}_j = \emptyset$ if $i\neq j$, (ii) $\hat{D}_i \neq \emptyset$ and (iii) \eqref{eq: path space partition} is satisfied. Let
\begin{equation} \label{eq: hat W 2}
\hat{\mathcal{W}}_2 = \big\lbrace \mu \in  \mathcal{M}^+_1\big( \mathcal{D}([0,T] , \mathcal{E}^M \times \mathbb{R}^M) \big) : \sup_{p\in I_M} \sup_{t\in [0,T]}\mathbb{E}^{\mu}[ (g^p_t)^2] \leq 3 \big\rbrace.
\end{equation}
Next, for a positive integer $C^N_{\mathfrak{n}}$, make the partition
 \begin{align}
\tilde{\mathcal{W}}_2 = \bigcup_{i=1}^{C^N_{\mathfrak{n}}} \mathcal{V}^N_{i} \label{eq: W 2 partition}
 \end{align}
 where each $\mathcal{V}^N_i$ is such that $\mu \in \mathcal{V}^N_i$ if and only if (i) $\mu \in \hat{\mathcal{W}}_2$ and (ii) for all $1\leq q,r \leq \hat{C}_{\mathfrak{n}}$,
  \begin{align}
 \mu( \boldsymbol\sigma \in \hat{D}_{q} \text{ and } \mathbf{g} \in \hat{D}_{r} ) &\in [ \hat{u}^N_{i,qr} - 1/(2N), \hat{u}^N_{i,qr} + 1/(2N)) \text{ for numbers} \label{eq: V N i mass}\\
 \hat{u}^N_{i,qr} &\in \lbrace 0, N^{-1},2N^{-1},\ldots , 1-N^{-1}, 1 \rbrace.
 \end{align}
 It is assumed that the indices are chosen such that (i) $\mathcal{V}^N_i \neq \emptyset$ and (ii) the partition is disjoint, i.e. $\mathcal{V}^N_i \cap \mathcal{V}^N_j = \emptyset$ if $i\neq j$. The motivation for the scaling of $N^{-1}$ for the mass of each set in \eqref{eq: V N i mass} is that if $\tilde{\mu}^N \in \mathcal{V}^N_j$, then we will know the precise mass assigned to each set, since the empirical process can only assign a mass that is an integer multiple of $N^{-1}$ to each set.

We next prove that the radius of the sets in the partition goes to zero uniformly, in the following sense. 
 \begin{lemma}\label{Lemma Uniform Partition Wasserstein}
 Define
\[
\mathfrak{U} = \big\lbrace f :\mathcal{E}^{M(m+1)} \times \mathbb{R}^{M(m+1)} \to\mathbb{R} \; ; \; |f(\boldsymbol\alpha , \mathbf{x}) - f(\boldsymbol\beta,\mathbf{g})| \leq \sum_{q\in I_M}\sum_{a=0}^m\big\lbrace | \alpha^q_a - \beta^q_a | + |x^q_a - g^q_a| \big\rbrace \text{ and }f(\cdot,\mathbf{0}) = 0\big\rbrace.
\]
 For $f\in \mathfrak{U}$, write $\hat{f} : \mathcal{D}([0,T],\mathcal{E}^M \times\mathbb{R}^M) \to \mathbb{R}$ to be $\hat{f}(\boldsymbol\alpha,\mathbf{x}) := f\big( (\alpha^q_{t^{(m)}_a})_{0\leq a \leq m, q\in I_M} , (x^q_{t^{(m)}_a})_{0\leq a \leq m, q\in I_M} \big)$. We find that for any $m\geq 1$,
 \begin{equation}
 \lim_{\mathfrak{n}\to \infty}\lsup{N}\sup_{1\leq i \leq C^N_{\mathfrak{n}}}\sup_{\mu,\nu \in \mathcal{V}^N_i}\sup_{f\in \mathfrak{U}} \big| \mathbb{E}^{\mu}[\hat{f}] - \mathbb{E}^{\nu}[\hat{f}] \big| = 0.
 \end{equation}
 \end{lemma}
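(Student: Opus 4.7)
The plan is to decompose each expectation $\mathbb{E}^{\mu}[\hat{f}]$ over the partition cells $\hat{D}_q\times\hat{D}_r$ and to treat separately the \emph{bulk} cells (all field coordinates landing in $[-\mathfrak{n},\mathfrak{n}]$) and the \emph{tail} cells (some field coordinate landing in $D_0\cup D_{\mathfrak{n}^2+1}$). A preliminary pointwise bound that I will use throughout is
\begin{equation*}
|\hat{f}(\boldsymbol\alpha,\mathbf{x})|\leq \sum_{q\in I_M}\sum_{a=0}^{m}|x^q_a|,
\end{equation*}
which is uniform over $f\in\mathfrak{U}$: it follows from the Lipschitz condition together with $f(\cdot,\mathbf{0})=0$, since $|f(\boldsymbol\alpha,\mathbf{x})|=|f(\boldsymbol\alpha,\mathbf{x})-f(\boldsymbol\alpha,\mathbf{0})|$.

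For the tail cells I would exploit the second moment bound built into $\hat{\mathcal{W}}_2$. Writing $\mathcal{U}$ for the union of the tail cells, a union bound plus Chebyshev gives $\mathbb{P}^{\mu}(\mathcal{U})\leq 3M(m+1)/\mathfrak{n}^2$, and then Cauchy--Schwarz combined with $\mathbb{E}^{\mu}[(g^p_a)^2]\leq 3$ yields $\mathbb{E}^{\mu}[|\hat{f}|\mathbbm{1}_{\mathcal{U}}]=O(1/\mathfrak{n})$ uniformly in $f\in\mathfrak{U}$; the same bound applies to $\nu$, so the tail contribution to $|\mathbb{E}^{\mu}[\hat{f}]-\mathbb{E}^{\nu}[\hat{f}]|$ is $O(1/\mathfrak{n})$.

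For the bulk cells I pick a representative in each cell, set $c_{qr}:=\hat{f}(\mathrm{rep})$, and rewrite the cell contribution as
\begin{equation*}
\mathbb{E}^{\mu}[(\hat{f}-c_{qr})\mathbbm{1}_{\mathrm{cell}}]-\mathbb{E}^{\nu}[(\hat{f}-c_{qr})\mathbbm{1}_{\mathrm{cell}}]+c_{qr}(\mu-\nu)(\mathrm{cell}).
\end{equation*}
Since for $\mathfrak{n}\geq 2$ the intervals $D_1,\ldots,D_{\mathfrak{n}^2}$ each have length $2/\mathfrak{n}<2$, the two spin values $\pm 1$ land in distinct intervals; consequently any two points in a common bulk cell agree on every spin coordinate and differ in each field coordinate by at most $2/\mathfrak{n}$. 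The Lipschitz property then bounds $|\hat{f}-c_{qr}|$ on each bulk cell by $2M(m+1)/\mathfrak{n}$, and summing against $\mu(\mathrm{cell})+\nu(\mathrm{cell})\leq \mu(\mathrm{cell})+\nu(\mathrm{cell})$ over all bulk cells gives $O(1/\mathfrak{n})$. For the mass-difference term, the membership $\mu,\nu\in\mathcal{V}^N_i$ forces $|(\mu-\nu)(\mathrm{cell})|\leq 1/N$, while on bulk cells $|c_{qr}|\leq M(m+1)\mathfrak{n}$ and the number of bulk cells is at most a fixed polynomial $P(\mathfrak{n})=(\mathfrak{n}^2+2)^{M(m+1)}\cdot 2^{M(m+1)}$. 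Collecting the three pieces yields
\begin{equation*}
\sup_{\mu,\nu\in\mathcal{V}^N_i}\sup_{f\in\mathfrak{U}}\bigl|\mathbb{E}^{\mu}[\hat{f}]-\mathbb{E}^{\nu}[\hat{f}]\bigr|\;\leq\;\frac{C_1}{\mathfrak{n}}+\frac{C_2\,\mathfrak{n}\,P(\mathfrak{n})}{N}.
\end{equation*}

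The main obstacle, and the reason the statement is framed as $\lim_{\mathfrak{n}\to\infty}\limsup_{N}$ rather than a joint limit, is precisely the polynomial-in-$\mathfrak{n}$ blow-up of the cell count: a joint limit would fail outright, but because $N\to\infty$ is taken first at fixed $\mathfrak{n}$ the second term disappears, leaving $O(1/\mathfrak{n})$ which is then killed by $\mathfrak{n}\to\infty$. The two essential structural inputs are the quadratic-moment constraint encoded in $\hat{\mathcal{W}}_2$ (without which the tail contribution cannot be tamed) and the $1/N$ agreement of cell masses imposed by the definition of $\mathcal{V}^N_i$ (without which the mass-difference term would not vanish with $N$).
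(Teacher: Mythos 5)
Your proof is correct and follows essentially the same route as the paper: the tail contribution is tamed by the second-moment constraint in $\hat{\mathcal{W}}_2$ (giving the $O(1/\mathfrak{n})$ bound), and the bulk contribution is controlled by the vanishing radius of the bounded cells together with the near-equality of cell masses forced by $\mathcal{V}^N_i$. The only difference is that you spell out explicitly the $c_{qr}(\mu-\nu)(\mathrm{cell})$ term and the polynomial-in-$\mathfrak{n}$ cell count (which is why the iterated limit, $N\to\infty$ before $\mathfrak{n}\to\infty$, is needed), a step the paper's proof leaves implicit.
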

 \begin{proof}
 We notice that for any $1\leq i \leq C^N_{\mathfrak{n}}$ and any $\mu \in \mathcal{V}^N_i$,
 \begin{align*}
 \mathbb{E}^{\mu(\boldsymbol\alpha,\mathbf{x})}\big[ \hat{f}(\boldsymbol\alpha,\mathbf{x}) \chi\big\lbrace \sup_{q\in I_M , 0\leq a \leq m}|x^q_a| \geq \mathfrak{n} \big\rbrace \big]
\leq \mathfrak{n}^{-1}\mathbb{E}^{\mu(\boldsymbol\alpha,\mathbf{x})}\big[\sum_{a=0}^m \norm{\mathbf{x}_a}^2 \chi\big\lbrace \sup_{q\in I_M , 0\leq a \leq m}|x^q_a| \geq \mathfrak{n} \big\rbrace \big] \leq 3(m+1)\mathfrak{n}^{-1},
 \end{align*}
 using the fact that $\mathcal{V}^N_i \subset \hat{\mathcal{W}}_2$ (as defined in \eqref{eq: hat W 2}). Thus the mass assigned to non-bounded sets goes to zero uniformly as $\mathfrak{n} \to 0$. Furthermore it can be seen from the definition in \eqref{eq: R M partition} that the radius of the bounded sets goes to zero uniformly as $\mathfrak{n}\to\infty$, which implies the lemma.
 \end{proof}
 Next we observe that the number of sets in the partition is subexponential in $N$: this is an essential property, because it means that the partition size is dominated by the exponential decay of the probabilities in coming sections.
\begin{lemma}\label{eq: polynomial partition}
For any $\mathfrak{n} \in \mathbb{Z}^+$,
\begin{equation}
\lsup{N}N^{-1}\log C^N_{\mathfrak{n}} = 0
\end{equation}
\end{lemma}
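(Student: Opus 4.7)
The plan is to observe that each element $\mathcal{V}^N_i$ of the partition is uniquely labelled by the tuple of masses $(\hat{u}^N_{i,qr})_{1\leq q,r \leq \hat{C}_{\mathfrak{n}}}$, so that $C^N_{\mathfrak{n}}$ is bounded above by the number of possible such tuples, which grows only polynomially in $N$.

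More concretely, I would first note that by the definition in \eqref{eq: V N i mass}, each entry $\hat{u}^N_{i,qr}$ lies in the discrete set $\{0, N^{-1}, 2N^{-1}, \ldots, 1\}$, which has cardinality $N+1$. Two distinct indices $i \neq j$ yield disjoint sets $\mathcal{V}^N_i$, $\mathcal{V}^N_j$, and any common point $\mu \in \mathcal{V}^N_i \cap \mathcal{V}^N_j$ would force $\hat{u}^N_{i,qr} = \hat{u}^N_{j,qr}$ for all $q,r$ (since the half-open intervals of radius $1/(2N)$ centered on a grid of spacing $N^{-1}$ are disjoint unless the centers coincide). Hence the map $i \mapsto (\hat{u}^N_{i,qr})_{1\leq q,r \leq \hat{C}_{\mathfrak{n}}}$ is injective, giving
\begin{equation}
C^N_{\mathfrak{n}} \leq (N+1)^{\hat{C}_{\mathfrak{n}}^2}.
\end{equation}

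The crucial observation is that $\hat{C}_{\mathfrak{n}}$ depends only on $\mathfrak{n}$, $m$, and $M$, but \emph{not} on $N$; it is simply the finite number of nonempty cylinder sets of the form $\hat{D}_i$ in the fixed partition of the Skorokhod space, bounded by $\bigl(2^M (C_{\mathfrak{n}}+1)\bigr)^{m+1}$ where $C_{\mathfrak{n}} \leq (\mathfrak{n}^2+2)^M$. Taking logarithms yields
\begin{equation}
N^{-1}\log C^N_{\mathfrak{n}} \leq \hat{C}_{\mathfrak{n}}^2 \, N^{-1}\log(N+1),
\end{equation}
and the right-hand side tends to $0$ as $N \to \infty$, giving the claimed limit.

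There is no genuine obstacle here: this is a counting bound whose whole content is that the grid spacing $N^{-1}$ in the definition of $\mathcal{V}^N_i$ was chosen precisely so that the cardinality of the partition grows polynomially rather than exponentially in $N$. The lemma is essentially a bookkeeping statement confirming that this polynomial growth will later be dominated by the exponentially small probability estimates obtained for each fixed $\mathcal{V}^N_i$, via Lemma \ref{Lemma Many Events}.
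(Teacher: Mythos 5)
Your proposal is correct and follows essentially the same argument as the paper: each $\mathcal{V}^N_i$ is determined by the tuple of mass values $\hat{u}^N_{i,qr}$, each drawn from a grid of cardinality $N+1$, giving the bound $C^N_{\mathfrak{n}} \leq (N+1)^{\hat{C}_{\mathfrak{n}}^2}$ with the exponent independent of $N$, which is exactly the paper's counting argument. Your added remarks on injectivity of the labelling and the explicit bound on $\hat{C}_{\mathfrak{n}}$ only make explicit what the paper leaves implicit.
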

\begin{proof}
We notice from \eqref{eq: V N i mass} that each $\mathcal{V}^N_i$ can assign $(N+1)$ possible values to the mass of each set $\hat{D}_q \times \hat{D}_r \in \mathcal{E}^M \times \mathbb{R}^M$. Since there are $\tilde{C}_{\mathfrak{n}}^2$ such sets, the number of such $\mathcal{V}^N_i$ must be upperbounded by $(N+1)^{\tilde{C}^2_{\mathfrak{n}}}|$. Since this is polynomial in $N$, we have established the lemma.
\end{proof}
\subsubsection{Definition of the Approximating Process}

We are now in a position to define the adapted stochastic process $\tilde{\boldsymbol\sigma}_i$ (for each $1\leq i \leq C^N_{\mathfrak{n}}$), written $\tilde{\boldsymbol\sigma}_i := (\tilde{\sigma}^{q,j}_{i,t})_{q\in I_M, j\in I_N, t\in [0,T]}$.  Write $\tilde{\mathcal{V}}^N_{i,t} \subset \mathcal{M}^+_1\big(\mathcal{D}([0,t],\mathcal{E}^M)\big)$ to be the projection of the probability measures in $ \mathcal{M}^+_1\big(\mathcal{D}([0,T],\mathcal{E}^M \times \mathbb{R}^M)\big)$ onto their marginals over $\mathcal{D}([0,t],\mathcal{E}^M)$ - and define $\mathcal{V}^N_{i,t}$ to be the analogous projection onto the marginal over $\mathcal{D}([0,t],\mathcal{E}^M \times\mathbb{R}^M)$. We write the intensity of $\tilde{\sigma}^{q,j}_{i,t}$ as $\tilde{\mathfrak{G}}^{q,j}_{i,t}$. We will choose the intensities to be such that as long as $\tilde{\mu}^N_{[0,t]}(\tilde{\boldsymbol\sigma}) := N^{-1}\sum_{j\in I_N}\delta_{\boldsymbol\sigma^j_{[0,t]}} \in  \tilde{\mathcal{V}}^N_{i,t}$, then necessarily $\tilde{\mu}^N_{[0,t]}(\tilde{\boldsymbol\sigma}_i,\tilde{\mathfrak{G}}_i) \in \mathcal{V}^N_{i,t}$. This property is essential for us to be able to control the Girsanov Exponent in the next section.

We first find any set of paths $\boldsymbol\alpha_i$ and intensities $\mathfrak{G}_i$ that are such that their empirical process is in $\mathcal{V}^N_i$.
\begin{lemma}
For each $1\leq i \leq C^N_{\mathfrak{n}}$, there exists $\boldsymbol\alpha_i \in \mathcal{D}\big([0,T],\mathcal{E}^M\big)^N$ and $\mathfrak{G}_i \in \mathcal{D}\big([0,T],\mathbb{R}^M\big)^N$ such that
\begin{align}
\tilde{\mu}^N(\boldsymbol\alpha_i,\mathfrak{G}_i) :=& N^{-1}\sum_{j\in I_N}\delta_{(\boldsymbol\alpha_i^j,\mathfrak{G}_i^j)}\in \mathcal{V}^N_i \text{ where }\boldsymbol\alpha_i = (\boldsymbol\alpha_i^j)_{j\in I_N}\text{, }\mathfrak{G}_i = (\mathfrak{G}_i^j)_{j\in I_N} \text{ and }\\
\mathfrak{G}_{i,t} =& \mathfrak{G}_{i,t_a^{(m)}} \text{ for all }t\in [t^{(m)}_a,t^{(m)}_{a+1}) \\
\boldsymbol\alpha_{i,t} =&\boldsymbol\alpha_{i,t_a^{(m)}}  \text{ for all }t\in [t^{(m)}_a,t^{(m)}_{a+1}). 
\end{align}
\end{lemma}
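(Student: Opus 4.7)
The plan is a straightforward combinatorial construction: once one sees that the partition is tailored to empirical measures, the representative paths are built by hand, element by element.

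First I would exploit the assumption that $\mathcal{V}^N_i \neq \emptyset$ to fix some $\mu \in \mathcal{V}^N_i$. Because the masses $\hat{u}^N_{i,qr}$ take values in $\{0,N^{-1},\ldots,1\}$ and the window $[\hat{u}^N_{i,qr}-1/(2N),\hat{u}^N_{i,qr}+1/(2N))$ has width $N^{-1}$ centered on a multiple of $N^{-1}$, the only way for an empirical measure $N^{-1}\sum_j \delta_{(\boldsymbol\alpha^j,\mathfrak{G}^j)}$ to lie in $\mathcal{V}^N_i$ is for its mass on $\hat D_q \times \hat D_r$ to equal $\hat{u}^N_{i,qr}$ exactly. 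Since $\mathcal{V}^N_i$ is non-empty, the integers $n^i_{qr} := N\hat{u}^N_{i,qr}$ are non-negative and satisfy $\sum_{q,r} n^i_{qr} = N$ (this constraint forces the partition to be defined with $\hat{u}^N_{i,qr}$ summing to $1$; otherwise $\mathcal{V}^N_i$ contains no probability measure at all).

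Next, for every pair $(q,r)$ with $n^i_{qr} > 0$, I would construct a single piecewise-constant path $(\boldsymbol\alpha^{qr},\mathfrak{G}^{qr}) \in \hat D_q \times \hat D_r$. Recall that $\hat D_q$ (resp.\ $\hat D_r$) is specified by a sequence of indices $(q^i_a)_{0\le a\le m}$ of sets $\tilde D_{q^i_a}\subset \mathcal{E}^M$ (resp.\ $(r^i_a)_{0\le a\le m}$, with $\tilde D_{r^i_a}\subset \mathbb{R}^M$). At each grid point $t^{(m)}_a$, pick any element of $\tilde D_{q^i_a}$ (which is a finite subset of $\mathcal{E}^M$ and hence non-empty as soon as the cell is non-empty) and any representative of $\tilde D_{r^i_a}$; for bounded cells of $\mathbb{R}^M$ a centre point works, and for the unbounded cells $D_0,D_{\mathfrak{n}^2+1}$ we choose the boundary point $-\mathfrak{n}$ or $\mathfrak{n}$. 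Extend to $[0,T]$ by declaring the path constant on each subinterval $[t^{(m)}_a,t^{(m)}_{a+1})$, which automatically satisfies the two right-continuity conditions stated in the lemma.

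Finally I would partition $I_N$ into disjoint blocks $\{J^i_{qr}\}$ with $|J^i_{qr}| = n^i_{qr}$ (possible because $\sum n^i_{qr}=N$), and define $(\boldsymbol\alpha_i^j,\mathfrak{G}_i^j) := (\boldsymbol\alpha^{qr},\mathfrak{G}^{qr})$ whenever $j\in J^i_{qr}$. The empirical measure is then
\begin{equation*}
\tilde{\mu}^N(\boldsymbol\alpha_i,\mathfrak{G}_i) = \sum_{q,r} \frac{n^i_{qr}}{N}\delta_{(\boldsymbol\alpha^{qr},\mathfrak{G}^{qr})},
\end{equation*}
which assigns mass exactly $\hat{u}^N_{i,qr}$ to $\hat D_q \times \hat D_r$, thus meeting condition \eqref{eq: V N i mass}. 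The main (and essentially only) obstacle is the membership $\tilde{\mu}^N(\boldsymbol\alpha_i,\mathfrak{G}_i) \in \hat{\mathcal{W}}_2$, i.e.\ the second-moment bound $\sup_{p,t}\mathbb{E}[(g^p_t)^2]\le 3$; this is where choosing the boundary point in unbounded cells matters. Since $\mu\in\mathcal{V}^N_i\subset\hat{\mathcal{W}}_2$ already forces the mass on unbounded cells to be $O(\mathfrak{n}^{-2})$ and our representative field at such a grid point has modulus $\mathfrak{n}$, the resulting contribution to $\mathbb{E}[(g^p_t)^2]$ is $O(1)$, and by choosing the constant $3$ in the definition of $\hat{\mathcal{W}}_2$ slightly generously (or enlarging it to a universal constant) the bound is preserved. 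This is the only non-cosmetic point, and it is harmless because the constant $3$ in \eqref{eq: hat W 2} is only used through a Chebyshev-type tail estimate.
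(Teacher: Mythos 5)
Your construction is in essence the same argument as the paper's, done by hand: the paper projects $\mathcal{V}^N_i$ onto its marginals at the grid times $\lbrace t^{(m)}_a\rbrace$, invokes density of $N$-atom empirical measures in the space of probability measures on $\mathcal{E}^{M(m+1)}\times\mathbb{R}^{M(m+1)}$ to produce grid values $(\tilde{\boldsymbol\alpha}_i,\tilde{\mathfrak{G}}_i)$ with $\hat{\mu}^N(\tilde{\boldsymbol\alpha}_i,\tilde{\mathfrak{G}}_i)\in\breve{\pi}\cdot\mathcal{V}^N_i$, and then extends piecewise constantly; you replace the density step by an explicit cell-representative construction, which is more informative. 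Two points in your write-up need repair, though. First, the claim that $\mathcal{V}^N_i\neq\emptyset$ forces $\sum_{q,r}\hat{u}^N_{i,qr}=1$ is false: a general (non-empirical) probability measure only forces $\big|\sum_{q,r}\hat{u}^N_{i,qr}-1\big|\leq \hat{C}_{\mathfrak{n}}^2/(2N)$. For instance, three cells each carrying mass $1/3$ with $N\equiv 1\pmod 3$ yield $\hat{u}=k/N$ ($N=3k+1$) for each cell and $\sum\hat{u}=(N-1)/N\neq 1$, yet the measure lies in the prescribed windows. For such an index $i$ no $N$-atom empirical measure lies in $\mathcal{V}^N_i$ at all, so neither your construction nor, strictly speaking, the lemma can hold there. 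The honest resolution is that only cells that can actually be visited by $\tilde{\mu}^N(\boldsymbol\sigma,\mathbf{G})$ (equivalently, cells containing at least one $N$-atom empirical measure, which do satisfy $\sum_{q,r}\hat{u}^N_{i,qr}=1$) ever enter the subsequent union bounds, and the statement is needed, and provable, exactly for those; the paper's density argument is silently restricted to the same cells. You should say this rather than derive $\sum\hat{u}=1$ from non-emptiness.

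Second, your treatment of the constraint $\sup_{p,t}\mathbb{E}^{\mu}[(g^p_t)^2]\leq 3$ by ``enlarging the constant $3$'' amends the definition of $\hat{\mathcal{W}}_2$, which then propagates through every later lemma that uses the bound $3$ (e.g.\ Lemma \ref{Lemma Uniform Partition Wasserstein} and the Girsanov estimates); that is not a harmless cosmetic change and should be avoided. It is also unnecessary: for the cells that matter one can take any $N$ paths whose empirical measure lies in $\mathcal{V}^N_i$ (they exist precisely when the cell is relevant), read off their values at the grid times, and extend piecewise constantly. Cell membership depends only on the grid values, so the cell masses are unchanged, and for $t\in[t^{(m)}_a,t^{(m)}_{a+1})$ the second moment of the new empirical measure equals that of the original one at time $t^{(m)}_a$, hence is still $\leq 3$; so membership in $\hat{\mathcal{W}}_2$, and therefore in $\mathcal{V}^N_i$, is preserved without touching any constants. (Alternatively, within your representative-point scheme, choosing in each bounded field cell the point of minimal modulus and at the unbounded cells the boundary points $\pm\mathfrak{n}$ controls the second moment cell-by-cell, but you must then also track the $O(N^{-1})$ mass mismatches, which the grid-value construction avoids entirely.) With these two adjustments your proof is a valid, and more explicit, version of the paper's.
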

\begin{proof}
Let $\breve{\pi}:  \mathcal{M}^+_1\big( \mathcal{D}([0,T],\mathcal{E}^M \times \mathbb{R}^M)\big) \to \mathcal{M}^+_1\big( \mathcal{E}^{MN(m+1)} \times \mathbb{R}^{MN(m+1)}\big)$ be the projection of a measure onto its marginal at times $\lbrace t^{(m)}_a \rbrace_{0\leq a \leq m}$. Because empirical measures are dense in $ \mathcal{M}^+_1\big( \mathcal{E}^{MN(m+1)} \times \mathbb{R}^{MN(m+1)}\big)$, for all large enough $N$, there must exist $\tilde{\boldsymbol\alpha}_i \in \mathcal{E}^{MN(m+1)}$, written $\tilde{\boldsymbol\alpha}_i := (\tilde{\boldsymbol\alpha}_{i,a})_{0\leq a \leq m}$, and $\tilde{\mathfrak{G}}_i \in \mathbb{R}^{MN(m+1)}$, written $\tilde{\mathfrak{G}}_i := (\tilde{\mathfrak{G}}_{i,a})_{0\leq a \leq m}$ such that
\begin{equation}
\hat{\mu}^N(\tilde{\boldsymbol\alpha}_i,\tilde{\mathfrak{G}}_i) := N^{-1}\sum_{j\in I_N}\delta_{(\tilde{\boldsymbol\alpha}_i^j, \tilde{\mathfrak{G}}^j_i)}  \in \breve{\pi} \cdot \mathcal{V}^N_i.
\end{equation}
We can now define $\boldsymbol\alpha_i := (\boldsymbol\alpha_{i,t})_{t\in [0,T]} \in \mathcal{D}\big([0,T],\mathcal{E}^{M}\big)^N$ and $\mathfrak{G}_{i} := (\mathfrak{G}_{i,t})_{t\in [0,T]} \in  \mathcal{D}\big([0,T],\mathcal{E}^M\big)^N$ as follows: for each $0\leq a \leq m$,
\begin{align*}
\boldsymbol\alpha_{i,t^{(m)}_a} :=& \tilde{\boldsymbol\alpha}_{i,a}, \; \; \; \text{ and }\; \; \;
\boldsymbol\alpha_{i,t} =\boldsymbol\alpha_{i,t_a^{(m)}} \text{ for }t\in [t^{(m)}_a , t^{(m)}_{a+1})\\
\mathfrak{G}_{i,t^{(m)}_a} :=& \tilde{\mathfrak{G}}_{i,a}, \; \; \; \text{ and }\; \; \;
\mathfrak{G}_{i,t} = \mathfrak{G}_{i,t_a^{(m)}} \text{ for all }t\in [t^{(m)}_a,t^{(m)}_{a+1}).
\end{align*}
\end{proof}
Next, we prove that if $\tilde{\mu}^N_{[0,t]}(\tilde{\boldsymbol\sigma}) \in \tilde{\mathcal{V}}^N_{i,t}$, then we must be able to find a permutation of the intensities $\lbrace \mathfrak{G}_{i,t}^j \rbrace$ that ensures that the associated empirical process is in $\mathcal{V}^N_{i}$. Define $\mathfrak{P}^N$ to be the set of all permutations on $I_N$ (i.e. each member of $\mathfrak{P}^N$ is a bijective map $I_N \to I_N$).
\begin{lemma}\label{Lemma definition tilde pi permutation}
 For any $\tilde{\boldsymbol\sigma}_i \in \mathcal{D}([0,T] , \mathcal{E}^M)^N$ and any $t < \tilde{\tau}_i$, define $\pi_{t,\tilde{\boldsymbol\sigma}_i} \in \mathfrak{P}^N$ to be such that
\begin{align}
\tilde{\sigma}^{q,j}_{i,t^{(m)}_a} &= \alpha^{q,\pi_{t,\tilde{\boldsymbol\sigma}_i}(j)}_{i,t^{(m)}_a} \text{ for all }t^{(m)}_a < \tilde{\tau}_i \text{ and }\label{eq: pi permutation definition}\\
\pi_t &= \pi_{t^{(m)}_a} \text{ for all }t\in [t^{(m)}_a,t^{(m)}_{a+1}).
\end{align}
$\pi_{t,\tilde{\boldsymbol\sigma}}$ is well-defined, but not uniquely defined. Furthermore $\pi_{\cdot,\cdot}: \mathcal{D}([0,T],\mathcal{E}^M)^N \times [0,T] \to \mathfrak{P}^N$ is progressively-measurable
\end{lemma}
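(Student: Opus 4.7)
The plan is to reduce the lemma to an elementary multiset-matching statement for the discretized spin trajectories at the times $\{t^{(m)}_a\}$, followed by a measurable-selection argument. Fix $t<\tilde\tau_i$ and let $a=\max\{b:t^{(m)}_b\le t\}$. First I would argue that the hypothesis $\tilde{\mu}^N_{[0,t]}(\tilde{\boldsymbol\sigma}_i)\in\tilde{\mathcal{V}}^N_{i,t}$, combined with the quantization built into \eqref{eq: V N i mass} and the fact that both $\tilde{\mu}^N_{[0,t]}(\tilde{\boldsymbol\sigma}_i)$ and the projection of $\tilde{\mu}^N(\boldsymbol\alpha_i,\mathfrak{G}_i)\in\mathcal{V}^N_i$ take values that are integer multiples of $1/N$, forces the two multisets
\[
\bigl\{(\tilde{\boldsymbol\sigma}^{j}_{i,t^{(m)}_0},\dots,\tilde{\boldsymbol\sigma}^{j}_{i,t^{(m)}_a})\bigr\}_{j\in I_N},\qquad \bigl\{(\boldsymbol\alpha^{j}_{i,t^{(m)}_0},\dots,\boldsymbol\alpha^{j}_{i,t^{(m)}_a})\bigr\}_{j\in I_N}\subset \mathcal{E}^{M(a+1)}
\]
to coincide point-for-point. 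A matching of identical finite multisets is exactly a bijection $I_N\to I_N$, i.e.\ an element $\pi\in\mathfrak{P}^N$ realizing \eqref{eq: pi permutation definition}; this establishes existence.

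The permutation is generally not unique: whenever two indices $j\neq k$ yield identical discrete trajectories up to $t$, any transposition inside the corresponding equivalence class also satisfies \eqref{eq: pi permutation definition}. To obtain a deterministic selection I would fix once and for all a total order on $\mathfrak{P}^N$ (say lexicographic on the one-line representation) and set
\[
\pi_{t^{(m)}_a,\tilde{\boldsymbol\sigma}_i}:=\min\bigl\{\pi\in\mathfrak{P}^N\;:\;\tilde{\sigma}^{q,j}_{i,t^{(m)}_b}=\alpha^{q,\pi(j)}_{i,t^{(m)}_b}\text{ for all }0\le b\le a,\,q\in I_M,\,j\in I_N\bigr\},
\]
and extend $\pi_{t,\tilde{\boldsymbol\sigma}_i}:=\pi_{t^{(m)}_a,\tilde{\boldsymbol\sigma}_i}$ on each interval $[t^{(m)}_a,t^{(m)}_{a+1})$ as required by the lemma.

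Progressive measurability is then essentially automatic because $\mathfrak{P}^N$ is a finite set. For each $\pi\in\mathfrak{P}^N$, the event $\{\pi_{t^{(m)}_a,\tilde{\boldsymbol\sigma}_i}=\pi\}$ depends on $\tilde{\boldsymbol\sigma}_i$ only through the $\mathcal{F}_{t^{(m)}_a}$-measurable finite-dimensional vector $(\tilde{\boldsymbol\sigma}_{i,t^{(m)}_b})_{0\le b\le a}$, so this event is $\mathcal{F}_{t^{(m)}_a}$-measurable. Piecing these decisions together across the finitely many intervals $[t^{(m)}_a,t^{(m)}_{a+1})$ yields a map $[0,T]\times\mathcal{D}([0,T],\mathcal{E}^M)^N\to\mathfrak{P}^N$ that is piecewise-constant in time and $\mathcal{F}_t$-adapted, hence progressively measurable.

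The one non-routine step is the multiset-matching claim in the first paragraph: one must unpack the projection defining $\tilde{\mathcal{V}}^N_{i,t}$ from \eqref{eq: W 2 partition}--\eqref{eq: V N i mass} and verify that when restricted to empirical measures of $N$ particles the defining interval of width $1/N$ collapses to the unique attainable value $\hat{u}^N_{i,qr}$, so that $\tilde{\mu}^N_{[0,t]}(\tilde{\boldsymbol\sigma}_i)$ and $\tilde{\mu}^N_{[0,t]}(\boldsymbol\alpha_i)$ assign exactly equal mass to every cylinder in the partition. Once this quantization argument is in place, the remainder of the proof is a finite combinatorial matching plus a measurable selection from a finite set.
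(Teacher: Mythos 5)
Your proposal follows essentially the same route as the paper's own proof: the width-$1/N$ quantization in \eqref{eq: V N i mass}, together with the fact that empirical measures of $N$ paths only assign masses in multiples of $N^{-1}$, forces the discretized spin-trajectory multisets of $\tilde{\boldsymbol\sigma}_i$ and $\boldsymbol\alpha_i$ to coincide, whence a (non-unique) matching permutation exists. Your additional lexicographic-minimum selection and the piecewise-constant/adaptedness argument for progressive measurability are correct refinements of points the paper asserts but leaves implicit.
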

\begin{proof}
Write $\breve{\boldsymbol\alpha}_{i,t} := \boldsymbol\alpha_{i,t\wedge \tilde{\tau}_i}$. We first claim that $\breve{\pi}\cdot \tilde{\mu}^N(\tilde{\boldsymbol\sigma}_i) = \breve{\pi}\cdot\tilde{\mu}^N(\breve{\boldsymbol\alpha}_{i})$, as long as $t < \tilde{\tau}_i$. This is because $\mathcal{V}^N_i$ specifies the mass of each set to an accuracy of $N^{-1}$, but the mass assigned to any set by the empirical measure must also be a multiple of $N^{-1}$. This means that we must be able to find a permutation such that \eqref{eq: pi permutation definition} is satisfied.
\end{proof}
We can now formally define the stochastic process $\tilde{\boldsymbol\sigma}_i$. First, $\tilde{\sigma}^{q,j}_{i,t}$ is `stopped' once the empirical measure is no longer in $\tilde{\mathcal{V}}^N_{i,t}$, i.e.
\begin{align}
\tilde{\sigma}^{q,j}_{i,t} &:= \tilde{\sigma}^{q,j}_{i,\tilde{\tau}_i} \text{ where }\label{eq: stopped tilde sigma i}\\
\tilde{\tau}_i &= \inf\big\lbrace t \geq 0 : \tilde{\mu}^N_{[0,t]}(\tilde{\boldsymbol\sigma}) \notin \tilde{\mathcal{V}}^N_{i,t} \big\rbrace \text{ and }\tilde{\mu}^N_{[0,t]}(\tilde{\boldsymbol\sigma}) := N^{-1}\sum_{j\in I_N}\delta_{\boldsymbol\sigma^j_{[0,t]}} \in \mathcal{M}^+_1\big( \mathcal{D}([0,t] , \mathcal{E}^M ) \big).\label{eq: tilde tau stopping time}
\end{align}
For all $t\leq \tilde{\tau}_i$, we stipulate that $\tilde{\sigma}^{q,j}_{i,t}$ satisfies the identity,
\begin{equation}\label{eq: tilde sigma q j t definition}
\tilde{\sigma}^{q,j}_{i,t} = \sigma^{q,j}_{0} A \cdot Y^{q,j}\bigg( \int_0^t c(\tilde{\sigma}^{q,j}_{i,s}, \mathfrak{G}_{i,s}^{q,\pi_{s,\tilde{\boldsymbol\sigma}_i}(j)} ) ds \bigg),
\end{equation}
recalling that $A\cdot x$ is defined to be $-1^{x}$. Recall from \eqref{eq: stopped tilde sigma i} that $\tilde{\sigma}^{q,j}_t$ is defined to be stopped for $t\geq \tilde{\tau}_i$.

\begin{lemma}
The stochastic processes $\big\lbrace \tilde{\sigma}^{q,j}_{i,t} \big\rbrace_{j\in I_N,q\in I_M, t\in [0,T]}$ are uniquely well-defined and are adapted to the filtration $\mathcal{F}_t$. Also if $\tilde{\tau}_i > T$, then, writing $\tilde{\mathfrak{G}}^{q,j}_{i,s} := \mathfrak{G}^{q,\pi_{s,\tilde{\boldsymbol\sigma}}(j)}_{i,s}$ and $\tilde{\mathfrak{G}}^{q,j}_{i} = (\tilde{\mathfrak{G}}^{q,j}_{i,s})_{s\in [0,T]}$, it must be that
\begin{equation}
\tilde{\mu}^N(\tilde{\boldsymbol\sigma}_i, \tilde{\mathfrak{G}}_{i}) \in \mathcal{V}^N_i.
\end{equation}
\end{lemma}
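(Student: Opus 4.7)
The plan is to establish the three assertions in the lemma by an inductive sub-interval construction, followed by a consistent permutation-selection argument on the event $\lbrace\tilde{\tau}_i > T\rbrace$.

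First I would construct $\tilde{\boldsymbol\sigma}_i$ recursively across the grid $\lbrace t^{(m)}_a\rbrace_{0\leq a\leq m}$. By construction $\mathfrak{G}_{i,\cdot}$ is piecewise constant, equal to $\mathfrak{G}_{i,t^{(m)}_a}$ on $[t^{(m)}_a, t^{(m)}_{a+1})$, and the prescription of Lemma \ref{Lemma definition tilde pi permutation} gives $\pi_{s,\tilde{\boldsymbol\sigma}_i} = \pi_{t^{(m)}_a,\tilde{\boldsymbol\sigma}_i}$ on that interval, with $\pi_{t^{(m)}_a,\tilde{\boldsymbol\sigma}_i}$ an $\mathcal{F}_{t^{(m)}_a}$-measurable random permutation. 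Consequently the effective intensity $s\mapsto c(\tilde{\sigma}^{q,j}_{i,s}, \mathfrak{G}^{q,\pi_{t^{(m)}_a}(j)}_{i,t^{(m)}_a})$ is a measurable function of the single $\mathcal{E}$-valued process $\tilde{\sigma}^{q,j}_{i,s}$ whose coefficient is $\mathcal{F}_{t^{(m)}_a}$-measurable and uniformly bounded by $c_1$. The standard pathwise Poisson construction for jump Markov processes (see e.g.\ \cite{Ethier1986}) then produces on each such sub-interval a unique càdlàg $(\mathcal{F}_t)$-adapted solution driven by the independent Poisson clocks $Y^{q,j}$. Iterating through $a=0,\ldots,m-1$ and applying the stopping convention \eqref{eq: stopped tilde sigma i} extends this to a unique adapted process on $[0,T]$.

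For the empirical-measure identity, let $V_a\subset\mathfrak{P}^N$ denote the random set of permutations $\pi$ satisfying $\tilde{\sigma}^{q,j}_{i,t^{(m)}_b}=\alpha^{q,\pi(j)}_{i,t^{(m)}_b}$ for every $q\in I_M$, $j\in I_N$ and every $b\leq a$. The argument in the proof of Lemma \ref{Lemma definition tilde pi permutation} shows $V_a\neq\emptyset$ whenever $t^{(m)}_a<\tilde{\tau}_i$, and $V_0\supseteq V_1\supseteq\cdots\supseteq V_m$ is clearly a decreasing chain. On $\lbrace\tilde{\tau}_i>T\rbrace$ the terminal set $V_m$ is nonempty, and any element of $V_m$ simultaneously lies in every $V_a$. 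Exploiting the non-uniqueness of the selection in Lemma \ref{Lemma definition tilde pi permutation}, I would fix the measurable selection rule so that $\pi_{t^{(m)}_a,\tilde{\boldsymbol\sigma}_i}$ always retains the previous choice whenever it remains valid at time $t^{(m)}_a$; on $\lbrace\tilde{\tau}_i>T\rbrace$ this rule, combined with nonemptiness of $V_m$, produces a single random permutation $\pi_T\in V_m$ with $\pi_{t^{(m)}_a,\tilde{\boldsymbol\sigma}_i}=\pi_T$ for every $a\leq m$. With this consistent selection, at every grid time and every particle,
\[
\big(\tilde{\sigma}^{q,j}_{i,t^{(m)}_a},\, \tilde{\mathfrak{G}}^{q,j}_{i,t^{(m)}_a}\big) = \big(\alpha^{q,\pi_T(j)}_{i,t^{(m)}_a},\, \mathfrak{G}^{q,\pi_T(j)}_{i,t^{(m)}_a}\big).
\]
Because membership in $\hat{D}_q\times\hat{D}_r$ is determined only by the trajectory values at the grid points $\lbrace t^{(m)}_a\rbrace$, bijectivity of $\pi_T$ yields $\tilde{\mu}^N(\tilde{\boldsymbol\sigma}_i,\tilde{\mathfrak{G}}_i)(\hat{D}_q\times\hat{D}_r) = \tilde{\mu}^N(\boldsymbol\alpha_i,\mathfrak{G}_i)(\hat{D}_q\times\hat{D}_r)$. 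The right-hand side belongs to the tolerance window defining $\mathcal{V}^N_i$ by the construction of $(\boldsymbol\alpha_i,\mathfrak{G}_i)$, so the same is true of the left-hand side.

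The main obstacle I anticipate is justifying that the "retain when still valid" rule actually collapses to a single $\pi_T$ on $\lbrace\tilde{\tau}_i>T\rbrace$ while remaining progressively measurable, since the dynamics of $\tilde{\boldsymbol\sigma}_i$ themselves depend on the permutation choices. This should be handled by a joint induction that simultaneously (i) constructs $\tilde{\boldsymbol\sigma}_i$ on $[t^{(m)}_a,t^{(m)}_{a+1})$ using the currently selected $\pi_{t^{(m)}_a,\tilde{\boldsymbol\sigma}_i}$ and (ii) verifies on the target event that the nestedness and nonemptiness of the $V_a$'s forces any forced reselection to land on an element compatible with the eventual $V_m$. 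The delicate point is confirming that any reselection actually does not occur on $\lbrace\tilde{\tau}_i>T\rbrace$, so that a single $\pi_T$ is genuinely used throughout.
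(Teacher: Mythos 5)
Your part (i) (the recursive pathwise Poisson construction on the grid intervals, using boundedness of $c$ and the piecewise-constant, $\mathcal{F}_{t^{(m)}_a}$-measurable intensities) is fine, and your relabelling computation for part (ii) is the argument the paper's one-line proof has in mind. The genuine gap is exactly at the point you defer: the claim that the ``retain the previous permutation whenever it is still valid'' rule produces a single $\pi_T$ with $\pi_{t^{(m)}_a,\tilde{\boldsymbol\sigma}_i}=\pi_T$ for every $a\leq m$ on $\lbrace \tilde{\tau}_i>T\rbrace$. No adapted selection rule can guarantee this. At an early grid time the set $V_a$ may contain several permutations (particles whose spin histories agree so far), the rule cannot anticipate which of them survives into $V_{a'}$ for $a'>a$, and since the flip intensities are strictly positive there is positive probability that the retained permutation drops out of $V_{a+1}$ while the spin-path empirical measure stays inside the projection of $\mathcal{V}^N_i$, so that $\tilde{\tau}_i>T$ and a reselection is forced. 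Concretely, take $N=2$, $M=1$, two grid times, reference spin paths $(+1,+1)$ and $(+1,-1)$, and the four reference intensity values in four distinct cells $\tilde{D}$: at $t^{(m)}_0$ both permutations are admissible, and if the rule keeps the identity while particle $1$ flips and particle $2$ does not, the swap becomes the unique element of $V_1$. After such a reselection the assigned path $\tilde{\mathfrak{G}}^{j}_i$ is a concatenation of pieces of two different reference paths $\mathfrak{G}^{k}_i$; because the sets in \eqref{eq: V N i mass} are cylinder sets over \emph{all} grid times jointly, with windows of width $N^{-1}$ around multiples of $N^{-1}$, the joint empirical measure $\tilde{\mu}^N(\tilde{\boldsymbol\sigma}_i,\tilde{\mathfrak{G}}_i)$ then fails to lie in $\mathcal{V}^N_i$ even though $\tilde{\tau}_i>T$. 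So the step you flag as ``delicate'' is not merely delicate: for an adapted, time-varying choice of $\pi$ the asserted consistency is false, and your proof of the second assertion does not close.

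What does make part (ii) immediate — and is what the paper implicitly uses — is to work, on the event $\lbrace \tilde{\tau}_i>T\rbrace$, with a \emph{single} permutation matching the spins at every grid time in $[0,T]$; its existence is exactly the $N^{-1}$-granularity argument in the proof of Lemma \ref{Lemma definition tilde pi permutation}, and reading the condition \eqref{eq: pi permutation definition} literally (``for all $t^{(m)}_a<\tilde{\tau}_i$'') forces $\pi_{s,\tilde{\boldsymbol\sigma}}$ to be independent of $s$ on this event, after which your bijective relabelling gives $\tilde{\mu}^N(\tilde{\boldsymbol\sigma}_i,\tilde{\mathfrak{G}}_i)=\tilde{\mu}^N(\boldsymbol\alpha_i,\mathfrak{G}_i)\in\mathcal{V}^N_i$ on the grid cylinders. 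The price, which the paper glosses over and which your write-up should make explicit, is that this global matching is determined by the whole trajectory rather than progressively, whereas an adapted choice is what must drive the construction \eqref{eq: tilde sigma q j t definition} and the later Girsanov argument; the conclusion of this lemma is bookkeeping on the terminal event and should be stated for the terminal matching (or under a consistency hypothesis on the selection), not derived, as you attempt, from a retain-when-valid adapted rule.
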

\begin{proof}
This is immediate from the definitions.
\end{proof}

\subsection{Girsanov's Theorem}\label{Subsection Girsanov}

In this section we demonstrate that the probability law of the original system $\boldsymbol\sigma_t$ can be well-approximated by the law of one of the processes $\lbrace \tilde{\boldsymbol\sigma}_{i,t} \rbrace_{1\leq i \leq C^N_{\mathfrak{n}}}$. The main result is Lemma \ref{Lemma Change of Measure}: the implication of this lemma is that if we can show that the flow operator accurately describes the dynamics of the empirical processes generated by each of the $\tilde{\boldsymbol\sigma}_i$, then it must accurately describe the original empirical process as well.

Let $R^N_{i} \in \mathcal{M}^+_1 \big(  \mathcal{D}\big( [ 0 , T] , \mathcal{E}^{M} \big)^N \big)$ be the probability law of the processes $\big\lbrace \tilde{\sigma}^{q,j}_{i,t} \big\rbrace_{j\in I_N,q\in I_M, t\in [0,T]}$.  Define the stopping time $\tau_i$ that is the analog of $\tilde{\tau}_i$ in \eqref{eq: tilde tau stopping time}, i.e.
\begin{equation}
\tau_i = \inf\big\lbrace t\geq 0 : \tilde{\mu}^N_{[0,t]}(\boldsymbol\sigma) \notin \pi_t \cdot \tilde{\mathcal{V}}^N_{i,t} \big\rbrace .
\end{equation}
Notice that, necessarily,
\begin{equation}
\tau_i \in \big\lbrace t^{(m)}_a \big\rbrace_{0\leq a \leq m}. \label{eq: tau i definition}
\end{equation}
Let $P^N_{\mathbf{J}} \in  \mathcal{M}^+_1 \big(  \mathcal{D}\big( [ 0 , T] , \mathcal{E}^{M} \big)^N \big)$ be the law of the original spin system $\big\lbrace \sigma^{q,j}_{i,t \wedge \tau_i \wedge T} \big\rbrace_{j\in I_N,q\in I_M, t\in [0,T]}$, conditioned on a realization of the connections $\mathbf{J}$, and stopped at time $\tau_i$. Write
\begin{equation}
\hat{\mathfrak{G}}^{q,j}_{i,s} := \mathfrak{G}^{q,\pi_{s,\boldsymbol\sigma}(j)}_{i,s},
\end{equation}
where $\pi_{\cdot,\cdot}$ is defined in Lemma \ref{Lemma definition tilde pi permutation}. Define the Girsanov exponent
 \begin{multline}
 \Gamma^N_{i}\big(\boldsymbol\sigma_{[0, T]},\mathbf{J}\big) = N^{-1}\sum_{q \in I_M , j\in I_N} \bigg\lbrace \int_{0}^{\tau_i \wedge T}\big\lbrace c(\sigma^{q,j}_s , \hat{\mathfrak{G}}^{q,j}_{i,s}) -c(\sigma^{q,j}_{s},G^{q,j}_{s}) \big\rbrace ds\\ +\int_0^{\tau_i \wedge T}\big\lbrace \log c\big(\sigma^{q,j}_s,G^{q,j}_s \big)-\log c\big(\sigma^{q,j}_{s}, \hat{\mathfrak{G}}^{q,j}_{i,s} \big)\big\rbrace d\hat{\sigma}^{q,j}_s\bigg\rbrace,
 \end{multline}
 and we have defined $\hat{\sigma}^{i,j}_s$ to be the integer-valued nondecreasing c\`adl\`ag process specifying how many times that $\sigma^{i,j}_s$ has changed sign over the time period $[0,s)$, i.e. $\sigma^{i,j}_s = \sigma^{i,j}_0 \times (-1)^{\hat{\sigma}^{i,j}_s}$. It follows from Girsanov's Theorem \footnote{A quick way to see why this formula holds is to note that the probability of a jump occurring over a small time interval is approximately exponentially distributed, i.e. $\mathbb{P}(|\sigma^{i,j}_{\Delta} - \sigma^{i,j}_0| > 0 \big) \simeq  c(\sigma^{i,j}_0, G^{i,j}_0)\exp\big(-\Delta c(\sigma^{i,j}_0, G^{i,j}_0)\big)$. Taking the ratio of two such densities, multiplying over many time intervals, and then taking $\Delta \to 0$, we obtain the formula \eqref{eq: Girsanov}.}\cite{Grunwald1996,Jacod2002}
 that the Radon-Nikodym derivative satisfies
\begin{equation}\label{eq: Girsanov}
\frac{dP^N_{\mathbf{J}}}{dR^N_i}(\boldsymbol\sigma_{[0, T]}) = \exp\big( N \Gamma^N_{i}\big(\boldsymbol\sigma_{[0,T]},\mathbf{J}\big) \big).
\end{equation}
Write $\tilde{G}^{q,j}_{i,t} = N^{-1/2}\sum_{k\in I_N}J^{jk}\tilde{\sigma}^{q,k}_{i,t}$ and define $\tilde{\tau}_N$ to be the analog of \eqref{eq: tilde tau N stopping},i.e.
\begin{equation}\label{eq: tilde tau N stopping 2}
\tilde{\tau}_N =T\wedge \inf\big\lbrace t: t\in [0,  T] \text{ and }\boldsymbol\sigma_t \notin \mathcal{X}^N\big\rbrace .
\end{equation}
 \begin{lemma}\label{Lemma Change of Measure}
Suppose that for any $\bar{\epsilon} > 0$, there exists $n_0 \in \mathbb{Z}^+$ such that for all $n \geq n_0$, there exists $\mathfrak{n}_0(n) \in \mathbb{Z}^+$, such that for all $\mathfrak{n} \geq \mathfrak{n}_0(n)$, there exists $m_0(n,\mathfrak{n})$ such that for all $m\geq m(n,\mathfrak{n})$,
  \begin{multline}\label{eq: to prove Q N change of measure}
 \sup_{0\leq b < n}\sup_{1\leq i \leq C^N_{\mathfrak{n}}}\lsup{N} N^{-1}\log \mathbb{P}\big(\mathcal{J}_N,\tilde{\tau}_N > t^{(n)}_{b} ,\tilde{\mu}^N(\tilde{\boldsymbol\sigma}_i,\tilde{\mathbf{G}}_i) \in \mathcal{V}^N_i \text{ and }\\ d_W\big( \xi_b(\tilde{\boldsymbol\sigma}_{i,t^{(n)}_b},\tilde{\mathbf{G}}_{i,t^{(n)}_b}) ,  \hat{\mu}^N(\tilde{\boldsymbol\sigma}_{i,t^{(n)}_{b+1}},\tilde{\mathbf{G}}_{i,t^{(n)}_{b+1}})\big)  \geq \bar{\epsilon}Tn^{-1} \big) := - \mathfrak{k} < 0,
 \end{multline}
 for some $\mathfrak{k} > 0$. Then the condition of Lemma \ref{Lemma Main Lemma} is satisfied, i.e. for any $\tilde{\epsilon} > 0$, for large enough $n\in \mathbb{Z}^+$,
 \begin{equation}
\lsup{N}N^{-1}\log \mathbb{P}\big(\mathcal{J}_N \text{ and }\tau_N > t^{(n)}_{b} \text{ and }  d_W\big( \xi_b(\boldsymbol\sigma_{t^{(n)}_b},\mathbf{G}_{t^{(n)}_b}) ,  \hat{\mu}^N(\boldsymbol\sigma_{t^{(n)}_{b+1}},\mathbf{G}_{t^{(n)}_{b+1}}) \big)\geq \tilde{\epsilon} Tn^{-1} \big)  < 0.
 \end{equation}
 \end{lemma}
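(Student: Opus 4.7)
The plan is to reduce the bad event for the original process $\boldsymbol\sigma$ to a union (over the partition $\lbrace \mathcal{V}^N_i \rbrace$) of analogous bad events for the reference processes $\tilde{\boldsymbol\sigma}_i$, and on each piece apply Girsanov's formula \eqref{eq: Girsanov}. Since by Lemma \ref{bound connections lemma} the event $\mathcal{J}_N$ already forces $\tilde{\mu}^N(\boldsymbol\sigma,\mathbf{G})\in\tilde{\mathcal{W}}_2$, and $\lbrace \tilde{\mu}^N(\boldsymbol\sigma,\mathbf{G})\in \mathcal{V}^N_i \rbrace_{i}$ partition $\tilde{\mathcal{W}}_2$, we may write
\begin{equation*}
\mathbb{P}\big(\mathcal{J}_N, \tau_N > t^{(n)}_b,  d_W(\xi_b, \hat{\mu}^N_{b+1}) \geq \tilde{\epsilon}\Delta \big) \leq \sum_{i=1}^{C^N_{\mathfrak{n}}} \mathbb{P}(\mathcal{A}^N_{b,i}),
\end{equation*}
where $\mathcal{A}^N_{b,i}$ is the intersection with $\lbrace \tilde{\mu}^N(\boldsymbol\sigma,\mathbf{G})\in \mathcal{V}^N_i \rbrace$. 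On this event, $\tau_i > t^{(n)}_{b+1}$ and by construction of $\pi_{s,\boldsymbol\sigma}$ in Lemma \ref{Lemma definition tilde pi permutation}, the surrogate fields $\hat{\mathfrak{G}}^{q,j}_{i,s}$ lie in the same cell of the $\mathbb{R}^M$-partition as $G^{q,j}_s$ at every $t^{(m)}_a$, so their difference is $O(\mathfrak{n}^{-1})$ on the bounded cells, while the unbounded cells contribute mass $O(\mathfrak{n}^{-2})$ via the second-moment bound supplied by $\mathcal{J}_N$.

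The crux is a uniform bound on the Girsanov exponent: using \eqref{eq: c Lipschitz 1}, \eqref{eq: c Lipschitz 2}, and the cell-diameter estimate above, together with the modulus of continuity of $s\to G^{q,j}_s$ on $[t^{(m)}_a, t^{(m)}_{a+1})$ (controlled by a Poisson-process fluctuation estimate of order $T/m$), I obtain
\begin{equation*}
|\Gamma^N_i\big(\boldsymbol\sigma_{[0,T]},\mathbf{J}\big)| \leq C\big(\mathfrak{n}^{-1} + T/m\big) + \mathcal{R}^N_i,
\end{equation*}
where $\mathcal{R}^N_i$ collects (a) the contribution of large fields, which is small by Chebyshev applied to $\mathcal{J}_N$, and (b) the martingale term coming from integrating $\log c(\sigma^{q,j}_s,G^{q,j}_s)-\log c(\sigma^{q,j}_s,\hat{\mathfrak{G}}^{q,j}_{i,s})$ against $d\hat{\sigma}^{q,j}_s$. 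Since the total number of jumps up to $T$ grows linearly in $N$ with exponentially negligible deviations, a compensation argument (analogous to those used in Section \ref{Section Stochastic Estimates}) shows $\mathcal{R}^N_i \to 0$ except on a set of exponentially small probability. Hence on the event $\mathcal{A}^N_{b,i}$ we have $\exp(N\Gamma^N_i) \leq \exp\big(N\varepsilon(\mathfrak{n},m)\big)$ with $\varepsilon(\mathfrak{n},m)\to 0$ as $\mathfrak{n},m\to\infty$.

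Applying Girsanov and taking expectation under $R^N_i$ gives
\begin{equation*}
\mathbb{P}(\mathcal{A}^N_{b,i}\mid \mathbf{J}) \leq e^{N\varepsilon(\mathfrak{n},m)} R^N_i(\mathcal{A}^N_{b,i}\text{-analog for }\tilde{\boldsymbol\sigma}_i).
\end{equation*}
Here I use that on $\mathcal{A}^N_{b,i}\cap \lbrace\tau_i > t^{(n)}_{b+1}\rbrace$ the pathwise empirical measures of $(\boldsymbol\sigma,\mathbf{G})$ and $(\tilde{\boldsymbol\sigma}_i, \tilde{\mathbf{G}}_i)$ lie in the same cell $\mathcal{V}^N_i$, so by Lemma \ref{Lemma Uniform Partition Wasserstein} their Wasserstein discrepancy tends to zero uniformly as $\mathfrak{n}\to\infty$. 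Thus for $\mathfrak{n}$ large we can replace $\tilde{\epsilon}\Delta$ by $\bar{\epsilon}\Delta$ (say $\bar{\epsilon} = \tilde{\epsilon}/2$) and deduce the hypothesized event for $\tilde{\boldsymbol\sigma}_i$. Taking $\lsup{N}N^{-1}\log$, combining with the polynomial bound $\lsup{N}N^{-1}\log C^N_{\mathfrak{n}} = 0$ from Lemma \ref{eq: polynomial partition} via Lemma \ref{Lemma Many Events}, and using the hypothesis \eqref{eq: to prove Q N change of measure}, yields
\begin{equation*}
\lsup{N}N^{-1}\log \mathbb{P}(\cdots) \leq \varepsilon(\mathfrak{n},m) - \mathfrak{k},
\end{equation*}
which is strictly negative once $\mathfrak{n}$ and $m$ are chosen large enough relative to $\mathfrak{k}$.

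The main obstacle I anticipate is the martingale contribution to $\Gamma^N_i$ coming from the integral against $d\hat{\sigma}^{q,j}_s$: while the total jump count is $O(N)$, the integrand is only $O(\mathfrak{n}^{-1} + T/m)$ plus a tail term that must be handled very carefully because $\log c$ is not uniformly bounded (only bounded by $C_g|g|$ via \eqref{eq: bound log c}). Controlling this requires truncating on $\mathcal{J}_N$ to get a bound on $|G^{q,j}_s|$ in terms of the operator norm of $\mathbf{J}_N$, and then applying an exponential martingale inequality to the compensated jump process — all told the delicate balance between the magnitude of $\log c$, the rate of jumps, and the accuracy of the field approximation is what determines the hierarchy of limits $n\to\infty$, $\mathfrak{n}\to\infty$, $m\to\infty$ stated in the lemma.
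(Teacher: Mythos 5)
Your proposal is correct and follows essentially the same route as the paper: a union bound over the polynomially many cells $\mathcal{V}^N_i$ (Lemmas \ref{eq: polynomial partition} and \ref{Lemma Many Events}), the Girsanov identity \eqref{eq: Girsanov} to transfer the event to the reference processes $\tilde{\boldsymbol\sigma}_i$, and control of the Girsanov exponent by combining the small cell diameter (Lemma \ref{Lemma Uniform Partition Wasserstein} with the Lipschitz bounds on $c$ and $\log c$), the large-field tail via $\mathcal{J}_N$, the fine $m$-grid time discretization, and compensated-Poisson/Chernoff estimates for the jump terms. The only differences are cosmetic: the paper splits on $\lbrace |\Gamma^N_i|\leq \mathfrak{k}/2\rbrace$ rather than proving a quantitative bound $\varepsilon(\mathfrak{n},m)$ off an exceptional set, and your intermediate ``same cell, so degrade $\tilde{\epsilon}$ to $\bar{\epsilon}=\tilde{\epsilon}/2$'' step is unnecessary (the change of measure transfers the event with the same threshold) but harmless since the hypothesis holds for every $\bar{\epsilon}>0$.
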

 \begin{proof}
The event $\mathcal{J}_N$ necessarily implies that $\tilde{\mu}^N \in \hat{\mathcal{W}}_2$. We can thus apply a union-of-events bound to the partition in \eqref{eq: W 2 partition} to obtain that
  \begin{multline}
\mathbb{P}\big(\mathcal{J}_N,\tau_N > t^{(n)}_{b},d_W\big( \xi_b(\boldsymbol\sigma_{t^{(n)}_b},\mathbf{G}_{t^{(n)}_b}) ,  \hat{\mu}^N(\boldsymbol\sigma_{t^{(n)}_{b+1}},\mathbf{G}_{t^{(n)}_{b+1}}) \big)\geq \tilde{\epsilon} \Delta \big)  \\
\leq \sum_{i=1}^{C^N_{\mathfrak{n}}}\mathbb{P}\big(\mathcal{J}_N, \tau_N > t^{(n)}_{b}, d_W\big( \xi_b(\boldsymbol\sigma_{t^{(n)}_b},\mathbf{G}_{t^{(n)}_b}) ,  \hat{\mu}^N(\boldsymbol\sigma_{t^{(n)}_{b+1}},\mathbf{G}_{t^{(n)}_{b+1}}) \big)\geq \tilde{\epsilon} \Delta , \tilde{\mu}^N(\boldsymbol\sigma,\mathbf{G}) \in \mathcal{V}^N_i , \big| \Gamma^N_i(\boldsymbol\sigma,\mathbf{J}) \big| \leq \mathfrak{k} / 2 \big) \\
+ \sum_{i=1}^{C^N_{\mathfrak{n}}}\mathbb{P}\big(\mathcal{J}_N , \tilde{\mu}^N(\boldsymbol\sigma,\mathbf{G}) \in \mathcal{V}^N_i,\big| \Gamma^N_i(\boldsymbol\sigma,\mathbf{J}) \big| > \mathfrak{k} / 2 \big),\label{eq: RHS temporary}
 \end{multline}
 noting that the constant $\mathfrak{k}$ is defined in \eqref{eq: to prove Q N change of measure}. Noting that $C^N_{\mathfrak{n}}$ is polynomial in $N$ (as proved in Lemma \ref{eq: polynomial partition}), thanks to Lemma \ref{Lemma Many Events}, it suffices to prove that each of the terms on the right hand side of \eqref{eq: RHS temporary} are exponentially decaying in $N$. Using the Radon-Nikodym derivative \eqref{eq: Girsanov},
 \begin{align*}
 \mathbb{P}\big(\mathcal{J}_N, \tau_N > t^{(n)}_{b}, &d_W\big( \xi_b(\boldsymbol\sigma_{t^{(n)}_b},\mathbf{G}_{t^{(n)}_b}) ,  \hat{\mu}^N(\boldsymbol\sigma_{t^{(n)}_{b+1}},\mathbf{G}_{t^{(n)}_{b+1}}) \big)\geq \tilde{\epsilon} \Delta , \tilde{\mu}^N(\boldsymbol\sigma,\mathbf{G}) \in \mathcal{V}^N_i , | \Gamma^N_i(\boldsymbol\sigma,\mathbf{J}) | \leq \mathfrak{k} / 2 \big)\\
 &\leq \exp(N\mathfrak{k} / 2)  \mathbb{P}\big(\mathcal{J}_N, \tilde{\tau}_N > t^{(n)}_{b}, d_W\big( \xi_b(\tilde{\boldsymbol\sigma}_{i,t^{(n)}_b},\tilde{\mathbf{G}}_{i,t^{(n)}_b}) ,  \hat{\mu}^N(\tilde{\boldsymbol\sigma}_{i,t^{(n)}_{b+1}},\tilde{\mathbf{G}}_{i,t^{(n)}_{b+1}}) \big)\geq \tilde{\epsilon} \Delta , \tilde{\mu}^N(\tilde{\boldsymbol\sigma}_{i},\tilde{\mathbf{G}}_i) \in \mathcal{V}^N_i \big)\\ &\leq \exp(-N\mathfrak{k}/2),
 \end{align*}
 using the assumption \eqref{eq: to prove Q N change of measure} in the statement of the lemma. It thus remains to prove that
 \begin{equation}\label{eq: to establish tilde Gamma}
\lsup{N} \sup_{1\leq i \leq C^N_{\mathfrak{n}}}N^{-1}\log \mathbb{P}\big(\mathcal{J}_N , \tilde{\mu}^N(\boldsymbol\sigma,\mathbf{G}) \in \mathcal{V}^N_i,\big| \Gamma^N_i(\boldsymbol\sigma,\mathbf{J}) \big| > \mathfrak{k} / 2 \big) < 0.
 \end{equation}
Notice that $\tilde{\mu}^N(\boldsymbol\sigma,\mathbf{G}) \in \mathcal{V}^N_i$ implies that $\tau_i > T$. Recalling that $\hat{\mathfrak{G}}^{q,j}_a := \hat{\mathfrak{G}}^{q,j}_{t^{(m)}_a}$ and $\sigma^{q,j}_a := \sigma^{q,j}_{t^{(n)}_a}$, define the following time-discretized approximation of the Girsanov Exponent,
  \begin{multline}
 \tilde{\Gamma}^N_{i}\big(\boldsymbol\sigma_{[0, T]},\mathbf{J}\big) = N^{-1}\sum_{q \in I_M , j\in I_N} \bigg\lbrace Tm^{-1}\sum_{a=0}^{m-1} \big\lbrace c(\sigma^{q,j}_a , \hat{\mathfrak{G}}^{q,j}_{i,a}) -c(\sigma^{q,j}_{a},G^{q,j}_{a}) \big\rbrace \\ -\frac{1}{2}\sum_{a=0}^{m-1} \big\lbrace \chi\lbrace |G^{q,j}_a| \leq \mathfrak{n} \rbrace\log c\big(\sigma^{q,j}_a,G^{q,j}_a \big)-\chi\lbrace |\hat{\mathfrak{G}}^{q,j}_{i,a}| \leq \mathfrak{n} \rbrace\log c\big(\sigma^{q,j}_{a}, \hat{\mathfrak{G}}^{q,j}_{i,a} \big)\big\rbrace \sigma^{q,j}_a(\sigma^{q,j}_{a+1} - \sigma^{q,j}_a)\bigg\rbrace. \label{eq: discretized Girsanov}
 \end{multline}
One expects the above approximation to be very accurate for large $m \in \mathbb{Z}^+$ because
 \begin{equation}\label{eq: equivalence discretized stochastic integral}
\hat{\sigma}^{q,j}_{a+1} - \hat{\sigma}^{q,j}_a \in \lbrace 0,1 \rbrace \text{ implies that }-\frac{1}{2} \sigma^{q,j}_a(\sigma^{q,j}_{a+1} - \sigma^{q,j}_a) = \hat{\sigma}^{q,j}_{a+1} - \hat{\sigma}^{q,j}_a.
 \end{equation}
 (The probability that $ \hat{\sigma}^{q,j}_{a+1} - \hat{\sigma}^{q,j}_a \geq 2$ is very small once the time interval $Tm^{-1}$ is small). Thus to establish \eqref{eq: to establish tilde Gamma}, it suffices to establish the follow two identities
  \begin{align}\label{eq: to establish tilde Gamma 2}
\lsup{N} \sup_{1\leq i \leq C^N_{\mathfrak{n}}}N^{-1}\log \mathbb{P}\big(\mathcal{J}_N , \tilde{\mu}^N(\boldsymbol\sigma,\mathbf{G}) \in \mathcal{V}^N_i, \Gamma^N_i(\boldsymbol\sigma,\mathbf{J}) - \tilde{\Gamma}^N_i(\boldsymbol\sigma,\mathbf{J})  > \mathfrak{k} / 4 \big) < 0 \\
\lsup{N} \sup_{1\leq i \leq C^N_{\mathfrak{n}}}N^{-1}\log \mathbb{P}\big(\mathcal{J}_N , \tilde{\mu}^N(\boldsymbol\sigma,\mathbf{G}) \in \mathcal{V}^N_i,\big| \tilde{\Gamma}^N_i(\boldsymbol\sigma,\mathbf{J}) \big| > \mathfrak{k} / 4 \big) < 0.\label{eq: to establish tilde Gamma 3}
 \end{align}
We start by establishing \eqref{eq: to establish tilde Gamma 3}. We observe from \eqref{eq: discretized Girsanov} that there exists a function $\mathcal{H}: \mathcal{D}\big( [0,T], \mathcal{E}^M \times \mathbb{R}^M\big) \to \mathbb{R}$ such that
 \begin{equation}
 \tilde{\Gamma}^N_i \big(\boldsymbol\sigma_{[0, T]},\mathbf{J}\big) = \mathbb{E}^{\tilde{\mu}^N(\boldsymbol\sigma , \mathbf{G})}[\mathcal{H}] -  \mathbb{E}^{\tilde{\mu}^N(\boldsymbol\sigma , \hat{\mathfrak{G}_i})}[\mathcal{H}]. 
 \end{equation}
Furthermore $\mathcal{H}$ is a function of the values of the variables at the times $\lbrace t^{(m)}_a \rbrace_{0\leq a \leq m}$. Now if $\tilde{\mu}^N(\boldsymbol\sigma , \mathbf{G}) \in \mathcal{V}^N_i$, then necessarily $\tilde{\mu}^N(\boldsymbol\sigma , \hat{\mathfrak{G}}) \in \mathcal{V}^N_i$. It now follows from (i) the fact that the functions $c$ and $\log c$ are uniformly Lipschitz in their second argument and (ii) Lemma \ref{Lemma Uniform Partition Wasserstein}, that for large enough $\mathfrak{n}$, it must be that
\[
\big| \mathbb{E}^{\tilde{\mu}^N(\boldsymbol\sigma , \mathbf{G})}[\mathcal{H}] -  \mathbb{E}^{\tilde{\mu}^N(\boldsymbol\sigma , \hat{\mathfrak{G}}_i)}[\mathcal{H}] \big| \leq \mathfrak{k} / 4.
\]
We have thus established \eqref{eq: to establish tilde Gamma 3}. It remains to establish \eqref{eq: to establish tilde Gamma 2}. Write
\begin{align*}
F^{q,j}_s =& \chi\lbrace -\mathfrak{n} \leq G^{q,j}_s  \leq \mathfrak{n} \rbrace\log c\big(\sigma^{q,j}_s,G^{q,j}_s \big)-\chi\lbrace -\mathfrak{n} \leq \mathfrak{G}^{q,j}_s  \leq \mathfrak{n} \rbrace\log c\big(\sigma^{q,j}_{s}, \hat{\mathfrak{G}}^{q,j}_{i,s}) \\
f^{q,j}_s =& \chi\lbrace |G^{q,j}_s| > \mathfrak{n} \rbrace \log c\big(\sigma^{q,j}_s,G^{q,j}_s \big).
\end{align*}
We wish to split $\Gamma^N_i(\boldsymbol\sigma,\mathbf{J}) - \tilde{\Gamma}^N_i(\boldsymbol\sigma)$ into the sum of five terms and bound each term separately. First, using \eqref{eq: equivalence discretized stochastic integral}, we notice that the difference of the stochastic integral in $\Gamma^N_i(\boldsymbol\sigma,\mathbf{J})$ and its time-discretized equivalent in $\tilde{\Gamma}^N_i(\boldsymbol\sigma)$ is
\begin{align*}
\int_{t^{(n)}_{a}}^{t^{(m)}_{a+1}} F^{q,j}_{t^{(m)}_a} d\hat{\sigma}^{q,j}_s + \frac{1}{2} F^{q,j}_{t^{(m)}_a}\sigma^{q,j}_a(\sigma^{q,j}_{a+1} - \sigma^{q,j}_a) =  F^{q,j}_{t^{(m)}_a} (\hat{\sigma}^{q,j}_{t^{(m)}_{a+1}} - \hat{\sigma}_{t^{(m)}_{a}}^{q,j}) \chi\big\lbrace \hat{\sigma}^{q,j}_{t^{(m)}_{a+1}} - \hat{\sigma}_{t^{(m)}_{a}}^{q,j} \geq 2 \big\rbrace.
\end{align*}
Second, it is immediate from the definition that it is always the case that $-\mathfrak{n} \leq \hat{\mathfrak{G}}^{q,j}_s  \leq \mathfrak{n}$.
In order that \eqref{eq: to establish tilde Gamma 2} is satisfied, it suffices to demonstrate the following identities,
\begin{align}
&\lsup{N}N^{-1}\log \mathbb{P}\bigg(\mathcal{J}_N, \sum_{j\in I_N,q\in I_M}\int_{0}^{T}f^{q,j}_s d\hat{\sigma}^{q,j}_s   \geq \frac{N\mathfrak{k}}{20} \bigg) < 0\label{eq: to establish Girsanov 1}\\
&\lsup{N}N^{-1}\log \mathbb{P}\bigg(\mathcal{J}_N, \sum_{j\in I_N,q\in I_M}\sum_{a=0}^{m-1}\int_{t^{(m)}_a}^{t^{(m)}_{a+1}}(F^{q,j}_s - F^{q,j}_{t^{(m)}_a}) (d\hat{\sigma}^{q,j}_s - c(\sigma^{q,j}_s,G^{q,j}_{s})ds)  \geq \frac{N\mathfrak{k}}{20} \bigg) < 0\label{eq: to establish Girsanov 2}\\
&\lsup{N}N^{-1}\log \mathbb{P}\bigg(\mathcal{J}_N , \tilde{\mu}^N(\boldsymbol\sigma,\mathbf{G}) \in \mathcal{V}^N_i ,  \sum_{j\in I_N,q\in I_M}\sum_{a=0}^{m-1}\int_{t^{(m)}_a}^{t^{(m)}_{a+1}}(F^{q,j}_s - F^{q,j}_{t^{(m)}_a}) c(\sigma^{q,j}_s,G^{q,j}_{s})ds  \geq \frac{N\mathfrak{k}}{20}  \bigg) < 0\label{eq: to establish Girsanov 3}\\
&\lsup{N}N^{-1}\log \mathbb{P}\bigg(\mathcal{J}_N, \sum_{a=0}^{m-1}\sum_{q\in I_M,j\in I_N} F^{q,j}_{t^{(m)}_a} (\hat{\sigma}^{q,j}_{t^{(m)}_{a+1}} - \hat{\sigma}_{t^{(m)}_{a}}^{q,j}) \chi\big\lbrace \hat{\sigma}^{q,j}_{t^{(m)}_{a+1}} - \hat{\sigma}_{t^{(m)}_{a}}^{q,j} \geq 2 \big\rbrace  \geq \frac{N\mathfrak{k}}{20} \bigg) < 0\label{eq: to establish Girsanov 4}\\
&\lsup{N}N^{-1}\log \mathbb{P}\bigg(\mathcal{J}_N , \tilde{\mu}^N(\boldsymbol\sigma,\mathbf{G}) \in \mathcal{V}^N_i \text{ and }\bigg|\sum_{q \in I_M , j\in I_N}  \sum_{a=0}^{m-1} \bigg\lbrace \frac{Tc(\sigma^{q,j}_a , \hat{\mathfrak{G}}^{q,j}_{i,a})-Tc(\sigma^{q,j}_{a},G^{q,j}_{a})}{m}\nonumber  \\&- \int_{t^{(m)}_a}^{t^{(m)}_{a+1}}\big\lbrace  c(\sigma^{q,j}_s , \hat{\mathfrak{G}}^{q,j}_{i,s}) -  c(\sigma^{q,j}_{s},G^{q,j}_{s}) \big\rbrace ds\bigg\rbrace \bigg| > \frac{N\mathfrak{k}}{20} \bigg) < 0. \label{eq: to establish Girsanov 5}
\end{align}
We start with \eqref{eq: to establish Girsanov 1}. The event $\mathcal{J}_N$ implies that $N^{-1}\sum_{j\in I_N} \chi\lbrace |G^{q,j}_s| > \mathfrak{n} \rbrace \leq 3\mathfrak{n}^{-2}$. Thus, since $c(\cdot,\cdot)$ is uniformly upperbounded by $c_1$,
\[
N^{-1}\sum_{q\in I_M,j\in I_N}  \chi\lbrace |G^{q,j}_s| > \mathfrak{n} \rbrace \exp( f^{q,j}_s) \leq  3M\mathfrak{n}^{-2}\log(c_1).
\]
Since the right hand side goes to zero as $\mathfrak{n}\to\infty$, \eqref{eq: to establish Girsanov 1} follows from (ii) of Lemma \ref{Lemma Concentration Poisson}, as long as $\mathfrak{n}$ is large enough. 

\eqref{eq: to establish Girsanov 2} follows from the concentration inequality in (i) of Lemma \ref{Lemma Concentration Poisson}, employing the facts that (i) $|F^{q,j}_s|$ is uniformly upperbounded, and (ii)
$\hat{\sigma}^{q,j}_t - \int_0^t c(\sigma^{q,j}_s,G^{q,j}_s)ds$ is a compensated Poisson Process (a Martingale \cite{Anderson2015}). 

For \eqref{eq: to establish Girsanov 3}, the boundedness of $c(\cdot,\cdot)$ by $c_1$ (in the first line), and Jensen's Inequality (in the second line) imply that 
\begin{align*}
  N^{-1}\big|\sum_{j\in I_N,q\in I_M}\sum_{a=0}^{m-1}\int_{t^{(m)}_a}^{t^{(m)}_{a+1}}(F^{q,j}_s - F^{q,j}_{t^{(m)}_a}) c(\sigma^{q,j}_s,G^{q,j}_{s})ds \big| &\leq
N^{-1}c_1   \sum_{j\in I_N,q\in I_M}\sum_{a=0}^{m-1}\int_{t^{(m)}_a}^{t^{(m)}_{a+1}}\big| F^{q,j}_s - F^{q,j}_{t^{(m)}_a}\big| ds   \\
&\leq c_1    \int_0^{T}\big\lbrace N^{-1}\sum_{j\in I_N,q\in I_M}\big| F^{q,j}_t - F^{q,j}_{t^{(m)}}\big|^2 \big\rbrace^{1/2} dt \\
&\leq c_1\sqrt{c_L} \sqrt{3}   \int_0^{T}\big\lbrace N^{-1}\sum_{j\in I_N,q\in I_M}\big| \sigma^{q,j}_t - \sigma^{q,j}_{t^{(m)}}\big|^2 \big\rbrace^{1/2} dt ,
\end{align*}
using (i) the fact that $\log c(\cdot,\cdot)$ has Lipschitz constant $c_L$ (in its second argument), and (ii) as long as the event $\mathcal{J}_N$ holds. Define the renewed Poisson Processes $\lbrace Y^{q,j}_a(t) \rbrace_{q\in I_M, j \in I_N}$ to be
\begin{align}
Y_a^{q,j}(t) :=& Y^{q,j}\big(t+ \int_0^{t^{(m)}_a} c(\sigma^{q,j}_s , G^{q,j}_s) ds \big) - Y^{q,j}\big( \int_0^{t^{(m)}_a} c(\sigma^{q,j}_s , G^{q,j}_s) ds \big). \label{eq: Y a q j t definition 2}
\end{align}
Now since the flipping intensity is uniformly upperbounded by $c_1$, if $t \leq t^{(m)}_{a+1}$ then
\begin{align*}
\sum_{q\in I_M,j\in I_N}\big| \sigma^{q,j}_t - \sigma^{q,j}_{t_a^{(m)}}\big|^2 \leq & 4\sum_{q\in I_M,j\in I_N} \chi\big\lbrace \hat{Y}_a^{q,j}(c_1t - c_1t^{(m)}_a) \geq 1 \big\rbrace \text{ where }\\
\hat{Y}_a^{q,j}(t) =& Y^{q,j}_a\big( t \wedge \hat{\tau}_a^{q,j} \big) \text{ and }\\
\hat{\tau}_a^{q,j} =& \inf\big\lbrace u \geq 0 : u = \int_{t^{(m)}_a}^{t^{(m)}_{a+1}}  c(\sigma^{q,j}_s , G^{q,j}_s) ds \big\rbrace .
\end{align*}
Now $t - t^{(m)} \leq \delta$, where $\delta = Tm^{-1}$. Jensen's Inequality thus implies that
\[
 \int_0^{T}\big\lbrace N^{-1}\sum_{j\in I_N,q\in I_M}\chi\big\lbrace \hat{Y}_a^{q,j}(c_1 \delta ) \geq 1 \big\rbrace  \big\rbrace^{1/2} dt \leq \sqrt{T} \bigg\lbrace \int_0^T N^{-1}\sum_{j\in I_N,q\in I_M}\chi\big\lbrace \hat{Y}_a^{q,j}(c_1\delta) \geq 1 \big\rbrace dt \bigg\rbrace^{1/2}.
\]
We thus find that there is a constant $C$ such that
\begin{align*}
 \mathbb{P}\bigg(\mathcal{J}_N , \tilde{\mu}^N(\boldsymbol\sigma,\mathbf{G}) \in \mathcal{V}^N_i ,  \sum_{j\in I_N,q\in I_M}\sum_{a=0}^{m-1}\int_{t^{(m)}_a}^{t^{(m)}_{a+1}}(F^{q,j}_s - F^{q,j}_{t^{(m)}_a}) c(\sigma^{q,j}_s,G^{q,j}_{s})ds  \geq \frac{N\mathfrak{k}}{16}  \bigg) \\
 \leq \mathbb{P}\big(  N^{-1}\sum_{a=0}^{m-1}\sum_{j\in I_N,q\in I_M}\chi\big\lbrace \hat{Y}_a^{q,j}(c_1\delta) \geq 1 \big\rbrace \geq C \big).
\end{align*}
For large enough $m$, this probability is exponentially decaying, thanks to Lemma \ref{Lemma Upper Bound Jump Rate}.

For \eqref{eq: to establish Girsanov 4}, since the flipping rate is uniformly upperbounded by $c_1$, there exists a constant $C(\mathfrak{n})$ such that $F^{q,j}_S \leq C(\mathfrak{n})$. Thus by Chernoff's Inequality,
\begin{align}
\mathbb{P}\bigg(\mathcal{J}_N, \sum_{a=0}^{m-1}\sum_{q\in I_M,j\in I_N} &F^{q,j}_{t^{(m)}_a} (\hat{\sigma}^{q,j}_{t^{(m)}_{a+1}} - \hat{\sigma}_{t^{(m)}_{a}}^{q,j}) \chi\big\lbrace \hat{\sigma}^{q,j}_{t^{(m)}_{a+1}} - \hat{\sigma}_{t^{(m)}_{a}}^{q,j} \geq 2 \big\rbrace  \geq \frac{N\mathfrak{k}}{20} \bigg)\nonumber \\
&\leq \mathbb{P}\bigg( \sum_{a=0}^{m-1}\sum_{q\in I_M,j\in I_N} \hat{Y}_a^{q,j}(c_1\delta)\chi\big\lbrace \hat{Y}_a^{q,j}(c_1\delta) \geq 2 \big\rbrace  \geq \frac{N\mathfrak{k}}{16C(\mathfrak{n})}  \bigg) \nonumber\\
&\leq \mathbb{E}\bigg[ \exp\bigg( v\sum_{a=0}^{m-1}\sum_{q\in I_M,j\in I_N} \hat{Y}_a^{q,j}(c_1\delta)\chi\big\lbrace \hat{Y}_a^{q,j}(c_1\delta) \geq 2 \big\rbrace -\frac{N\mathfrak{k}v}{20C(\mathfrak{n})}  \bigg) \bigg], \label{eq: iterative expectation a}
\end{align}
for some constant $v > 0$. To bound \eqref{eq: iterative expectation a}, we start by evaluating the integral conditionally on $\mathcal{F}_{t^{(m)}_{m-1}}$. Notice that $\lbrace \hat{Y}^{q,j}_{m-1} \rbrace_{q\in I_M, j\in I_N}$ are independent of $\mathcal{F}_{t^{(m)}_{m-1}}$ (thanks to the renewal property of Poisson Processes). Also $\hat{Y}^{q,j}_a(c_1 \delta) \chi\lbrace \hat{Y}^{q,j}_a(c_1 \delta ) \geq 2 \rbrace \leq Y^{q,j}_a(c_1 \delta) \chi\lbrace Y^{q,j}_a(c_1 \delta ) \geq 2 \rbrace$. We thus find that, for $a= m-1$, and using the fact that $\mathbb{P}(Y^{q,j}_a(c_1 \delta) = r) = \exp(-r\delta c_1)(\delta c_1)^r / (r!)$, 
\begin{align*}
 \mathbb{E}\big[ \exp\big( v\sum_{q\in I_M,j\in I_N} \hat{Y}^{q,j}_a(c_1\delta ) \chi\lbrace \hat{Y}^{q,j}_a(c_1\delta) \geq 2 \rbrace \big) \; | \; \mathcal{F}_{t^{(m)}_a}\big]
\leq \big[\sum_{r=2}^\infty  \big\lbrace \delta c_1 \exp\big(-\delta c_1 + v\big) \big\rbrace^{r} \big]^{NM}.
\end{align*}
We take $m$ to be large enough that $\delta c_1 \exp(-\delta c_1 + v) \leq 1/4$, which means that the formula for summing a geometric sequence implies that
\[
 \big[\sum_{r=2}^\infty  \big\lbrace \delta c_1 \exp\big(-\delta c_1 + v\big) \big\rbrace^{r} \big]^{NM} \leq 4^{-NM}.
\]
We then continue the argument, evaluating \eqref{eq: iterative expectation a} conditionally on $\mathcal{F}_{t^{(m)}_{m-2}}$, then $\mathcal{F}_{t^{(m)}_{m-3}}$ ... and finally $\mathcal{F}_{t^{(m)}_0}$. We find that \eqref{eq: iterative expectation a} must be less than or equal to $4^{-NM(m+1)}\exp\big( -\frac{N\mathfrak{k}v}{20C(\mathfrak{n})} \big)$. We have established  \eqref{eq: to establish Girsanov 4}.

We see that \eqref{eq: to establish Girsanov 5} is a difference between an integral and its time-discretized approximation, and can easily be shown to be true for large enough $m$.

\end{proof}
\section{Taylor Expansion of Test Functions}\label{Expectation Test Functions}

After the change of measure of the previous section, our task is easier, because now the spin-flipping intensity of $\tilde{\boldsymbol\sigma}_{i,t}$ is independent of the connections $\mathbf{J}$. This section (and the remainder of the paper) is oriented towards proving condition \eqref{eq: to prove Q N change of measure} of Lemma \ref{Lemma Change of Measure}. This proof is accomplished through the comparison of the expectations of test functions, using the dual Kantorovich representation of the Wasserstein distance. We will Taylor expand the test functions to second order, and (in subsequent sections) demonstrate that the expectation with respect to the flow operator $\Psi_t$ almost matches the expectation with respect to the empirical process.\\
 Let $\mathfrak{H}$ be the set of all functions that are uniformly Lipschitz, i.e.
\begin{align}
\mathfrak{H} = \big\lbrace f \in \mathcal{C}\big(\mathcal{E}^M \times \mathbb{R}^M\big) \; : \; |f(\boldsymbol\alpha,\mathbf{x}) - f(\boldsymbol\beta,\mathbf{z}) | \leq \norm{\boldsymbol\alpha-\boldsymbol\beta} + \norm{\mathbf{x} - \mathbf{z}} \text{ and }f(\mathbf{0}) = 0 \big\rbrace.
\end{align}
It follows from the Kantorovich-Rubinstein theorem \cite{Gibbs2002} that 
\begin{equation}
d_W\big( \mu,\nu\big) = \sup_{\mathbf{f}\in \mathfrak{H}}\big\lbrace \big| \mathbb{E}^{\mu}[ f ] -  \mathbb{E}^{\nu}[ f ] \big| \big\rbrace. \label{eq: kantorovich}
\end{equation}
Our proofs only make use of a finite number of test functions: so we must demonstrate that the right hand side of the above equation can be approximated arbitrarily well by taking the supremum over a finite subset. Furthermore we require that the test functions are three-times differentiable in order that the expectations of stochastic fluctuations converge smoothly. Let $\mathfrak{H}_a$ be the set of all $f \in \mathfrak{H}$ satisfying the following assumptions.
\begin{itemize}
\item $f(\boldsymbol\alpha,\mathbf{x}) = 0$ for $\norm{\mathbf{x}} \geq a$. 
\item $f(\boldsymbol\alpha,\mathbf{x}) = \chi\lbrace \boldsymbol\alpha = \boldsymbol\beta \rbrace \bar{f}(\mathbf{x})$, for some fixed $\boldsymbol\beta \in \mathcal{E}^M$ and $\bar{f} \in \mathcal{C}^3(\mathbb{R}^M)$.
\item Write the first, second and third order partial derivatives, in the second variable, as (respectively) $\bar{f}_j , \bar{f}_{jk} , \bar{f}_{jkl}$, for $j,k,l \in I_M$. These are all assumed to be uniformly bounded by $1$.
\end{itemize}
\begin{lemma}
For any $\delta > 0$, there exists $a \in \mathbb{Z}^+$ and a finite subset $\bar{\mathfrak{H}}_a \subset \mathfrak{H}_a$ such that for all $\mu,\nu \in \mathcal{W}_2$,
\begin{equation}
d_W(\mu,\nu) \leq \delta + \sup_{f\in \bar{\mathfrak{H}}_a}\big\lbrace \big| \mathbb{E}^{\mu}[ f ] -  \mathbb{E}^{\nu}[ f ] \big| \big\rbrace
\end{equation}
\end{lemma}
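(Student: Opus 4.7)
The plan is to use the Kantorovich--Rubinstein duality \eqref{eq: kantorovich} to express $d_W(\mu,\nu)$ as a supremum over the infinite-dimensional class $\mathfrak{H}$, and then replace this supremum by one over a finite subset of $\mathfrak{H}_a$ via a three-stage approximation argument: truncation, mollification, and extraction of a finite net.

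First, I would truncate. For $f\in\mathfrak{H}$ the Lipschitz-$1$ property together with $f(\mathbf{0})=0$ gives the pointwise bound $|f(\boldsymbol\alpha,\mathbf{x})|\leq \sqrt{M}+\norm{\mathbf{x}}$. Multiplying by a smooth radial cutoff $\phi_a$ that equals $1$ for $\norm{\mathbf{x}}\leq a-1$ and vanishes outside $\norm{\mathbf{x}}\leq a$, Chebyshev's inequality and the second-moment bound $\Exp^\mu[\norm{\mathbf{x}}^2]\leq 3M$ defining $\mathcal{W}_2$ yield $|\Exp^\mu[f]-\Exp^\mu[\phi_a f]|\leq C/(a-1)$ uniformly in $\mu\in\mathcal{W}_2$, so this is at most $\delta/4$ once $a$ is large enough. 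Decomposing by spin,
\[
\phi_a(\mathbf{x})f(\boldsymbol\alpha,\mathbf{x})=\sum_{\boldsymbol\beta\in\mathcal{E}^M}\chi\{\boldsymbol\alpha=\boldsymbol\beta\}\,\phi_a(\mathbf{x})f(\boldsymbol\beta,\mathbf{x}),
\]
expresses the truncation as a sum of at most $2^M$ terms of precisely the structural form required by $\mathfrak{H}_a$, each with a Lipschitz spatial factor supported in $\overline{B(0,a)}$.

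Second, I would mollify each spatial factor by convolution with a $C^\infty$ mollifier $\psi_\epsilon$ at a fixed scale $\epsilon=\epsilon(\delta)$ chosen once at the outset. The resulting function is smooth, compactly supported in $\overline{B(0,a+1)}$, inherits Lipschitz constant $\leq 1$, and has $C^3$ norm uniformly bounded by a constant depending only on $\epsilon$ and $M$; the uniform approximation error is $O(\epsilon)$. A universal rescaling brings all derivatives up to order $3$ below $1$, so each rescaled summand lies in $\mathfrak{H}_{a+1}$. Third, the family of all such rescaled mollified truncates is equicontinuous and uniformly $C^3$-bounded on the compact ball $\overline{B(0,a+1)}$; by Arzel\`a--Ascoli it is totally bounded in $C^0$ and thus admits a finite $\delta/4$-net $\bar{\mathfrak{H}}_a\subset\mathfrak{H}_{a+1}$. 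Summing the approximation errors across the three stages produces the claim.

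The main obstacle will be coordinating the mollification scale with the fixed derivative bound of $1$ in the definition of $\mathfrak{H}_a$, since mollification of a Lipschitz function at scale $\epsilon$ produces $k$th-order derivatives of size $O(\epsilon^{-(k-1)})$. This is resolved by fixing $\epsilon$ at the outset (depending only on $\delta$ and $M$) so that the universal rescaling factor becomes a fixed constant, which can be absorbed into the $2^M$ combinatorial factor from the spin decomposition when apportioning the per-stage tolerance.
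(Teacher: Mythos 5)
Your skeleton (truncate to a ball using the second-moment bound defining $\mathcal{W}_2$, smooth the truncated functions, then extract a finite net by Arzel\`a--Ascoli) is the same as the paper's, but the way you execute the smoothing step opens a genuine gap. After mollifying at scale $\epsilon$ you must multiply by a factor $\lambda\asymp\epsilon^{2}$ to force the second and third derivatives below $1$, so only the rescaled function $\lambda\tilde f$ lies in the admissible class. When you then estimate a generic $f\in\mathfrak{H}$, what you actually obtain is $\big|\mathbb{E}^{\mu}[f]-\mathbb{E}^{\nu}[f]\big|\leq \delta' + 2^{M}\lambda^{-1}\big(\text{net error}+\sup_{g\in\bar{\mathfrak{H}}_a}\big|\mathbb{E}^{\mu}[g]-\mathbb{E}^{\nu}[g]\big|\big)$: both the factor $\lambda^{-1}$ from undoing the rescaling and the factor $2^{M}$ from the spin decomposition multiply the \emph{supremum}, not merely the additive tolerances. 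The conclusion is therefore $d_W(\mu,\nu)\leq\delta + C(\delta,M)\sup_{g\in\bar{\mathfrak{H}}_a}\big|\mathbb{E}^{\mu}[g]-\mathbb{E}^{\nu}[g]\big|$ with $C(\delta,M)\sim 2^{M}\delta^{-2}$, which is strictly weaker than the lemma, whose coefficient in front of the supremum is $1$. The proposed fix --- ``absorbing'' the constant when apportioning the per-stage tolerance --- cannot work, because the supremum is not a quantity you control at this stage: if it happens to be of size $\delta$, your bound gives $d_W\lesssim 2^{M}\delta^{-1}$ rather than $2\delta$. A constant multiplying the supremum is a qualitatively different statement, and it is precisely the coefficient-one form that the lemma (and its later use, where $d_K\leq\bar\epsilon\Delta$ must force $d_W$ small at the same scale) asserts.

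A secondary unjustified claim feeds the same problem: $\phi_a f$ is not $1$-Lipschitz. Near the transition annulus $|f|$ can be of order $a$ while $\|\nabla\phi_a\|\asymp 1$, so $\phi_a f$ has Lipschitz constant of order $a$; hence the mollified factors do not ``inherit Lipschitz constant $\leq 1$'' as written, and curing this (a cutoff transitioning over a width of order $a$, or yet another rescaling) again costs a multiplicative constant. By contrast, the paper's proof never rescales: it truncates with the Chebyshev bound $\mathbb{E}^{\mu}[\,\|\mathbf{x}\|\chi\{\|\mathbf{x}\|\geq a\}\,]\leq 3/a$, then passes directly from the truncated Lipschitz class to $\mathfrak{H}_a$ by invoking smooth approximation on the compact ball, and finally uses Arzel\`a--Ascoli compactness of $\mathfrak{H}_a$ in the supremum norm to extract the finite $\delta/2$-net, so no constant ever appears in front of the supremum. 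To repair your argument you would need to show that the supremum over the truncated Lipschitz class exceeds the supremum over $\mathfrak{H}_a$ by at most an additive $\delta/2$, uniformly over $\mu,\nu\in\mathcal{W}_2$ (which is the step the paper asserts), rather than approximate each individual test function inside $\mathfrak{H}_a$ at the price of a rescaling.
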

\begin{proof}
For any $\mu \in \mathcal{W}_2$, any $f\in \mathfrak{H}$, and $a > 0$,
\begin{align}
\mathbb{E}^{\mu}\big[ f \chi\lbrace \norm{\mathbf{x}} \geq a \rbrace\big] \leq\mathbb{E}^{\mu}\big[ \norm{\mathbf{x}} \chi\lbrace \norm{\mathbf{x}} \geq a \rbrace\big] \nonumber\\
\leq a^{-1}\mathbb{E}^{\mu}\big[ \norm{\mathbf{x}}^2\big] \leq 3/a.
\end{align}
Thus for any $\delta >0$, for large enough $a$,
\begin{equation}
d_W\big( \mu,\nu\big) \leq \delta / 2 + \sup_{\mathbf{f}\in \tilde{\mathfrak{H}}_a}\big\lbrace \big| \mathbb{E}^{\mu}[ f ] -  \mathbb{E}^{\nu}[ f ] \big| \big\rbrace,
\end{equation}
where $\tilde{\mathfrak{H}}_a$ is the set of all $f \in \mathfrak{H}$ such that $f(\boldsymbol\alpha,\mathbf{x}) = 0$ if $\norm{\mathbf{x}} \geq a$. It remains to demonstrate that we can find a finite subset $\bar{\mathfrak{H}}_a$ of $\mathfrak{H}_a$ such that
\[
\sup_{\mathbf{f}\in \tilde{\mathfrak{H}}_a}\big\lbrace \big| \mathbb{E}^{\mu}[ f ] -  \mathbb{E}^{\nu}[ f ] \big| \big\rbrace \leq \delta / 2 + \sup_{\mathbf{f}\in \bar{\mathfrak{H}}_a}\big\lbrace \big| \mathbb{E}^{\mu}[ f ] -  \mathbb{E}^{\nu}[ f ] \big| \big\rbrace .
\]
Since continuous functions on compact domains can be approximated arbitrarily well by smooth functions, it must be that
\[
\sup_{\mathbf{f}\in \tilde{\mathfrak{H}}_a}\big\lbrace \big| \mathbb{E}^{\mu}[ f ] -  \mathbb{E}^{\nu}[ f ] \big| \big\rbrace  = \sup_{\mathbf{f}\in \mathfrak{H}_a}\big\lbrace \big| \mathbb{E}^{\mu}[ f ] -  \mathbb{E}^{\nu}[ f ] \big| \big\rbrace. 
\]
It follows from the Arzela-Ascoli Theorem that $\mathfrak{H}_a$ is compact. Thus we can find a finite cover of $\mathfrak{H}_a$ such that every function in $\mathfrak{H}_a$ is within $\delta / 2$ of a function in the finite cover (relative to the supremum norm).
\end{proof}

Now set $\delta = \Delta \bar{\epsilon} / 2$, and let $a \in \mathbb{R}^+$ and $\mathfrak{h}_a$ be such that for all $\mu,\nu \in \bar{\mathcal{P}}$, 
$d_W(\mu,\nu) \leq \delta + \sup_{\mathbf{f}\in \bar{\mathfrak{H}}_a}\big\lbrace \big| \mathbb{E}^{\mu}[ f ] -  \mathbb{E}^{\nu}[ f ] \big| \big\rbrace$. We write $\mathfrak{F} \subset \mathcal{C}^3(\mathbb{R}^M)$ to be such that
\begin{align}
\bar{\mathfrak{H}}_a =& \big\lbrace f(\boldsymbol\alpha,\mathbf{x}) = \chi\lbrace \boldsymbol\alpha=\boldsymbol\beta\rbrace \phi(\mathbf{x}) \text{ for some }\boldsymbol\beta \in \mathcal{E}^M \text{ and }\phi \in \mathfrak{F}\big\rbrace 
\end{align}
and we define the pseudo-metric\footnote{This satisfies all of the axioms of a metric, except that $d_K(\mu,\nu)=0$ does not necessarily imply that $\mu=\nu$.}
\begin{equation}
d_K(\mu,\nu) =  \sup_{\phi \in \mathfrak{F}, \boldsymbol\beta \in \mathcal{E}^M}\big\lbrace \big| \mathbb{E}^{\mu}\big[ \phi(\mathbf{x})\chi\lbrace\boldsymbol\alpha = \boldsymbol\beta \rbrace \big] - \mathbb{E}^{\nu}\big[ \phi(\mathbf{x})\chi\lbrace\boldsymbol\alpha = \boldsymbol\beta \rbrace \big]  \big| \big\rbrace.
\end{equation}
Henceforth we drop the subscript $\mathfrak{q}$ from the processes $\tilde{\boldsymbol\sigma}_{\mathfrak{q},t}$ and $\tilde{\mathbf{G}}_{\mathfrak{q},t}$. We find that for the condition \eqref{eq: to prove Q N change of measure} of Lemma \ref{Lemma Change of Measure} to be satisfied, it suffices for us to prove that for any $\bar{\epsilon}> 0$, for all sufficiently large $n$ and all $0\leq b \leq n-1$,
\begin{multline}\label{Lemma Main Lemma 2}
\lsup{N}\sup_{1\leq \mathfrak{q} \leq C^N_{\mathfrak{n}}} N^{-1}\log \mathbb{P}\big(\mathcal{J}_N,\tilde{\mu}^N(\tilde{\boldsymbol\sigma},\tilde{\mathbf{G}}) \in \mathcal{V}^N_{\mathfrak{q}}, \tilde{\tau}_N > t^{(n)}_b, d_K\big(\xi_b(\tilde{\boldsymbol\sigma},\tilde{\mathbf{G}}) ,  \hat{\mu}^N(\tilde{\boldsymbol\sigma}_{b+1},\tilde{\mathbf{G}}_{b+1})\big)\geq \bar{\epsilon}\Delta \big) < 0,
\end{multline}
recalling that $\tilde{G}^{p,j}_t = N^{-1/2}\sum_{k\in I_N}J^{jk} \tilde{\sigma}^{p,k}_t$ and $\xi_b(\tilde{\boldsymbol\sigma},\tilde{\mathbf{G}})$ is the law of the random variables in \eqref{eq: hybrid 17}-\eqref{eq: hybrid 18}. We emphasize that throughout the rest of this paper, $\tilde{\boldsymbol\sigma}_{b} := \tilde{\boldsymbol\sigma}_{t^{(n)}_{b}}$: that is the subscript is with respect to the $n+1$-point time discretization. Write the first derivative of $\phi \in \mathfrak{F}_{\mathfrak{m}}$ with respect to the $j^{th}$ variable as $\phi_j$, the second derivative of $\phi \in \mathfrak{F}_{\mathfrak{m}}$ with respect to the $j^{th}$ and $k^{th}$ variables as $\phi_{jk}$, and the third derivative as $\phi_{jkl}$.

We enumerate $\mathfrak{F}$ as $\mathfrak{F} = \big\lbrace \phi^a \big\rbrace_{a=1}^{|\mathfrak{F}|}$. For $\boldsymbol\alpha \in \mathcal{E}^M$, define
\begin{align}
Q^{a,\boldsymbol\alpha}_b =& \Exp^{\hat{\mu}^N(\tilde{\boldsymbol\sigma}_{b+1},\tilde{\mathbf{G}}_{b+1})}\big[ \phi^a(\mathbf{x}) \chi( \boldsymbol\sigma= \boldsymbol\alpha)\big] = N^{-1}\sum_{j\in I_N}\chi\lbrace \tilde{\boldsymbol\sigma}^j_{b+1}= \boldsymbol\alpha \rbrace \phi^a(\tilde{\mathbf{G}}^j_{b+1}) \\
R^{a,\boldsymbol\alpha}_b =& \Exp^{\xi_{b+1}(\tilde{\boldsymbol\sigma},\tilde{\mathbf{G}})}\big[ \phi^a(\mathbf{x}) \chi(\boldsymbol\sigma = \boldsymbol\alpha )\big].
\end{align}
We first establish a more workable expression for $R^{a,\boldsymbol\alpha}_b$.
\begin{lemma}\label{Lemma Exp Empirical Measure} 
Recall that $\boldsymbol\alpha[i] \in \mathcal{E}^M$ is the same as $\boldsymbol\alpha$, except that the $i^{th}$ spin has a flipped sign.
\begin{multline}
R^{a,\boldsymbol\alpha}_{b}=\Delta N^{-1}\sum_{j\in I_N,i\in I_M}\phi^a(\tilde{\mathbf{G}}^j_b)\chi\lbrace \tilde{\boldsymbol\sigma}^j_b= \boldsymbol\alpha[i] \rbrace c(-\alpha^i,\tilde{G}^{i,j}_b) +N^{-1}\sum_{j\in I_N} \chi\lbrace \tilde{\boldsymbol\sigma}^j_b= \boldsymbol\alpha \rbrace\big[\phi^a\big(\tilde{\mathbf{G}}^j_b \big) + \Delta\sum_{i\in I_M}\big\lbrace  \\ \phi^a_i(\tilde{\mathbf{G}}^{j}_b)  m^{\hat{\mu}^N_b,i}(\boldsymbol\alpha,\tilde{\mathbf{G}}^j_b) +2L^{\hat{\mu}^N_b}_{ii}\phi^a_{ii}(\tilde{\mathbf{G}}^{j}_b)  -\phi^a\big(\tilde{\mathbf{G}}^j_b \big)  c(\alpha^i,\tilde{G}^{i,j}_b)\big\rbrace \big]+ O\big( (\Delta)^{3/2}\big).
\end{multline}
\end{lemma}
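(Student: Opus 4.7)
The plan is to unfold the definition of $R^{a,\boldsymbol\alpha}_b$ and expand each contribution in $\Delta$ by separately treating the (Poissonian) spin transitions and the (Gaussian) field increment. Since $\hat{\mu}^N_b$ is an empirical measure that assigns mass $N^{-1}$ to each configuration $(\tilde{\boldsymbol\sigma}^j_b,\tilde{\mathbf{G}}^j_b)$, the definition of $\xi_{b+1}$ via \eqref{eq: hybrid 17}--\eqref{eq: hybrid 18} gives
\[
R^{a,\boldsymbol\alpha}_b = N^{-1}\sum_{j\in I_N}\mathbb{E}\big[\phi^a(\mathbf{x}^j_\Delta)\,\chi(\boldsymbol\zeta^j_\Delta=\boldsymbol\alpha)\big],
\]
where $(\boldsymbol\zeta^j_\Delta,\mathbf{x}^j_\Delta)$ is the $\Delta$-time evolution started deterministically at $(\tilde{\boldsymbol\sigma}^j_b,\tilde{\mathbf{G}}^j_b)$. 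The crucial structural fact is that the Poisson processes $\{\tilde Y^p\}$ driving the spin component and the Brownian motion $\tilde{\mathbf{W}}$ driving the field component are independent, and that the intensities depend only on the initial data $(\boldsymbol\zeta^j_0,\mathbf{x}^j_0)=(\tilde{\boldsymbol\sigma}^j_b,\tilde{\mathbf{G}}^j_b)$. This allows a clean factorisation into a transition probability and a conditional moment of $\phi^a(\mathbf{x}^j_\Delta)$.

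First I would compute the spin transition probabilities by Taylor-expanding $\exp(-\Delta c)$. Using $\mathbb{P}(\tilde Y^p(t)=r)=e^{-t}t^r/r!$ and independence across $p\in I_M$:
\begin{align*}
\mathbb{P}(\boldsymbol\zeta^j_\Delta=\boldsymbol\alpha\mid \tilde{\boldsymbol\sigma}^j_b=\boldsymbol\alpha) &= 1-\Delta\sum_{i\in I_M}c(\alpha^i,\tilde G^{i,j}_b)+O(\Delta^2),\\
\mathbb{P}(\boldsymbol\zeta^j_\Delta=\boldsymbol\alpha\mid \tilde{\boldsymbol\sigma}^j_b=\boldsymbol\alpha[i]) &= \Delta\,c(-\alpha^i,\tilde G^{i,j}_b)+O(\Delta^2),
\end{align*}
while configurations differing from $\boldsymbol\alpha$ in two or more replicas contribute $O(\Delta^2)$.

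Second, I would Taylor expand $\phi^a$ to second order around $\tilde{\mathbf{G}}^j_b$, with a cubic remainder controlled by the hypothesis $|\phi^a_{ikl}|\le 1$. Since $\mathbf{x}^j_\Delta-\tilde{\mathbf{G}}^j_b=\Delta\,\mathbf{m}^{\hat\mu^N_b}(\boldsymbol\zeta^j_0,\tilde{\mathbf{G}}^j_b)+\mathbf{D}^{\hat\mu^N_b}\tilde{\mathbf{W}}_\Delta$ and $\mathbf{D}^{\hat\mu^N_b}$ is diagonal with entries $2\sqrt{L^{\hat\mu^N_b}_{ii}}$, the first two moments of the increment satisfy
\[
\mathbb{E}[x^{j,i}_\Delta-\tilde G^{i,j}_b]=\Delta\, m^{\hat\mu^N_b,i}(\boldsymbol\zeta^j_0,\tilde{\mathbf{G}}^j_b),\qquad \mathbb{E}[(x^{j,i}_\Delta-\tilde G^{i,j}_b)(x^{j,k}_\Delta-\tilde G^{k,j}_b)]=4\Delta L^{\hat\mu^N_b}_{ii}\delta_{ik}+O(\Delta^2),
\]
and $\mathbb{E}[\|\mathbf{x}^j_\Delta-\tilde{\mathbf{G}}^j_b\|^3]=O(\Delta^{3/2})$ coming from the Brownian term. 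Thus on the event $\tilde{\boldsymbol\sigma}^j_b=\boldsymbol\alpha$ (so that $\boldsymbol\zeta^j_0=\boldsymbol\alpha$ is determined),
\[
\mathbb{E}[\phi^a(\mathbf{x}^j_\Delta)]=\phi^a(\tilde{\mathbf{G}}^j_b)+\Delta\sum_{i\in I_M}\!\Big\{\phi^a_i(\tilde{\mathbf{G}}^j_b)\,m^{\hat\mu^N_b,i}(\boldsymbol\alpha,\tilde{\mathbf{G}}^j_b)+2L^{\hat\mu^N_b}_{ii}\phi^a_{ii}(\tilde{\mathbf{G}}^j_b)\Big\}+O(\Delta^{3/2}).
\]

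Finally I would multiply transition probability by conditional expectation in each case and sum over $j$. The case $\tilde{\boldsymbol\sigma}^j_b=\boldsymbol\alpha$ reproduces, via the product of $(1-\Delta\sum_i c(\alpha^i,\tilde G^{i,j}_b))$ with the displayed Taylor expansion, the bracketed term in the lemma (including the $-\phi^a(\tilde{\mathbf{G}}^j_b)c(\alpha^i,\tilde G^{i,j}_b)$ ``exit'' contribution). The case $\tilde{\boldsymbol\sigma}^j_b=\boldsymbol\alpha[i]$ gives the ``jump-in'' term, since $\phi^a(\mathbf{x}^j_\Delta)=\phi^a(\tilde{\mathbf{G}}^j_b)+O(\sqrt\Delta)$ against a probability of order $\Delta$ produces $\Delta\,\phi^a(\tilde{\mathbf{G}}^j_b)c(-\alpha^i,\tilde G^{i,j}_b)+O(\Delta^{3/2})$. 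All other spin configurations are absorbed into $O(\Delta^2)$.

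The only subtlety is ensuring the $O(\Delta^{3/2})$ remainder is uniform in $j\in I_N$ and in $\phi^a\in\mathfrak{F}$. This follows because (i) $\phi^a$, $\phi^a_i$, $\phi^a_{ik}$, $\phi^a_{ikl}$ are uniformly bounded by $1$ with $\phi^a$ supported in $\{\|\mathbf{x}\|\le a\}$, (ii) $|c|$ and $|L^{\hat\mu^N_b}_{ii}|$ are bounded by $c_1$, and (iii) $\|\mathbf{m}^{\hat\mu^N_b}(\boldsymbol\alpha,\tilde{\mathbf{G}}^j_b)\|$ is controlled on $\mathcal{J}_N\cap\{\tilde\tau_N>t^{(n)}_b\}$ by Lemma \ref{Lemma Lipschitz Matrices} together with the overlap bound from $\mathcal{X}^N$. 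No step is technically hard; the only care required is in bookkeeping the Poisson and Gaussian expansions to the same order.
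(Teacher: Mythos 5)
Your proposal is correct and follows essentially the same route as the paper: both represent $R^{a,\boldsymbol\alpha}_b$ as the expectation of one frozen-coefficient step \eqref{eq: hybrid 17}--\eqref{eq: hybrid 18} started from $\hat{\mu}^N_b$, use independence of the Poisson and Brownian drivers to factorise the spin indicator (expanded to first order in $\Delta$, with multi-jump events absorbed into $O(\Delta^2)$) from the field expectation, and Taylor expand $\phi^a$ to second order with the $O(\Delta^{3/2})$ remainder coming from $\mathbb{E}[\|\tilde{\mathbf{W}}_\Delta\|^3]$, so that $(D^{\hat{\mu}^N_b}_{ii})^2/2=2L^{\hat{\mu}^N_b}_{ii}$ produces the diffusion term. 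Your added remarks on uniformity of the remainder in $j$ and $\phi^a$ are consistent with, and slightly more explicit than, the paper's argument.
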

\begin{proof}
Recall the definition of $\xi_b(\tilde{\boldsymbol\sigma},\tilde{\mathbf{G}})$, in terms of independent unit-intensity Poisson processes $\lbrace \tilde{Y}^p(t)\rbrace_{p\in I_M}$ and independent Brownian motions $\lbrace \tilde{W}^p_t \rbrace_{p\in I_M}$ in \eqref{eq: hybrid 17}-\eqref{eq: hybrid 18}. We can then write $R^{a,\boldsymbol\alpha}_b$ as
\begin{align}
R^{a,\boldsymbol\alpha}_b=& N^{-1}\sum_{j\in I_N} \mathbb{E}\big[\chi\lbrace \boldsymbol\zeta^j_{\Delta} = \boldsymbol\alpha \rbrace\phi^a\big(\mathbf{x}^j_{\Delta} \big) \; | \tilde{\boldsymbol\sigma}_b, \tilde{\mathbf{G}}_b \big] \text{ where }\boldsymbol\zeta^j_{\Delta} = (\zeta_{\Delta}^{p,j})_{p\in I_M} \text{ and } \label{eq: R transformed 0 0} \\
\zeta^{p,j}_{\Delta} =& \tilde{\sigma}^{p,j}_b A\cdot \tilde{Y}^p\big( \Delta c(\tilde{\sigma}^{p,j}_b,\tilde{G}^{p,j}_b) \big) \label{eq: hybrid 17 again} \\
\mathbf{x}^j_{\Delta} =& \tilde{\mathbf{G}}^j_b +\Delta \mathbf{m}^{\hat{\mu}^N_b}(\tilde{\boldsymbol\sigma}^j_b , \tilde{\mathbf{G}}^j_b)+\mathbf{D}^{\hat{\mu}^N_b}\tilde{\mathbf{W}}_{\Delta}\text{ where }
D^{\hat{\mu}^N_b}_{ij} = 2\sqrt{L^{\hat{\mu}^N_b}_{ii}}\delta(i,j),\label{eq: hybrid 18 again} 
\end{align}
and the expectation in \eqref{eq: R transformed 0 0} is taken with respect to the $\tilde{\mathbf{Y}}$ and $\tilde{\mathbf{W}}_{\Delta}$ random variables, holding $\tilde{\boldsymbol\sigma}_b$ and $\tilde{\mathbf{G}}_b$ to be fixed, and $A \cdot x := (-1)^x$. Write 
\[
X^j_{b+1}=\chi\lbrace \boldsymbol\zeta^j_{\Delta}= \boldsymbol\alpha \rbrace -  \chi\lbrace \tilde{\boldsymbol\sigma}^j_b= \boldsymbol\alpha \rbrace +\Delta\sum_{i\in I_M}\big\lbrace c(\tilde{\sigma}^{i,j}_b, \tilde{G}^{i,j}_b)\chi\lbrace \tilde{\boldsymbol\sigma}^j_b= \boldsymbol\alpha \rbrace-c(-\tilde{\sigma}^{i,j}_b, \tilde{G}^{i,j}_b)\chi\lbrace \tilde{\boldsymbol\sigma}^j_b= \boldsymbol\alpha[i] \rbrace \big\rbrace.
\]
Basic properties of the Poisson process - and recalling that the jump intensity $c$ is uniformly upperbounded by $c_1$ - imply that $\mathbb{E}[ X^j_{b+1} \; | \; \tilde{\boldsymbol\sigma}_b, \tilde{\mathbf{G}}_b ] = O(\Delta^2)$ \cite{Ethier1986} (one can see from the Komolgorov Forward equation \eqref{eq: ODE for P J sigma} why this is true). We thus find that
\begin{multline*}
R^{a,\boldsymbol\alpha}_b=N^{-1}\sum_{j\in I_N}\big(\chi\lbrace \tilde{\boldsymbol\sigma}^j_b= \boldsymbol\alpha \rbrace +\Delta\sum_{i\in I_M}\big\lbrace c(-\tilde{\sigma}^{i,j}_b, \tilde{G}^{i,j}_b)\chi\lbrace \tilde{\boldsymbol\sigma}^j_b= \boldsymbol\alpha[i] \rbrace - c(\tilde{\sigma}^{i,j}_b, \tilde{G}^{i,j}_b) \chi\lbrace \tilde{\boldsymbol\sigma}^j_b= \boldsymbol\alpha \rbrace  \big\rbrace\big) \\
\times\mathbb{E}\big[\phi^a\big(\tilde{\mathbf{G}}^j_b + \Delta\mathbf{m}^{\hat{\mu}^N_b}(\tilde{\boldsymbol\sigma}^j_b, \tilde{\mathbf{G}}^j_b) + \mathbf{D}^{\hat{\mu}^N_b}\tilde{\mathbf{W}}_{\Delta} \big)\; | \tilde{\boldsymbol\sigma}_b , \tilde{\mathbf{G}}_b \big]
 + O\big(\Delta^2\big).
\end{multline*}
Applying a Taylor expansion, and noting that the third order partial derivatives of $\phi^a$ are uniformly bounded, we obtain that
\begin{multline}
\mathbb{E}\big[\phi^a\big(\tilde{\mathbf{G}}^j_b + \Delta\mathbf{m}^{\hat{\mu}^N_b}(\tilde{\boldsymbol\sigma}^j_b, \tilde{\mathbf{G}}^j_b) + \mathbf{D}^{\hat{\mu}^N_b}\tilde{\mathbf{W}}_{\Delta} \big)\; | \; \tilde{\boldsymbol\sigma},\tilde{\mathbf{G}} \big] \\ =\phi^a(\tilde{\mathbf{G}}^j_b )+\Delta \sum_{i\in I_M} \big\lbrace  \phi^a_i(\tilde{\mathbf{G}}^j_b) m^{\hat{\mu}^N_b,i}(\tilde{\boldsymbol\sigma}^j_b, \tilde{\mathbf{G}}^j_b)+(D^{\hat{\mu}^N_b}_{ii})^2\phi^a_{ii}(\tilde{\mathbf{G}}^j_b) / 2\big\rbrace + O\big(\Delta^{3/2}\big),
\end{multline}
since $\mathbb{E}[ \| \tilde{\mathbf{W}}_{\Delta}\|^3 ] = O\big( (\Delta)^{3/2}\big)$. This implies the lemma.
\end{proof}
Using a union-of-events bound, we obtain that 
  \begin{multline}
 \mathbb{P}\big( \mathcal{J}_N,\tilde{\mu}^N \in \mathcal{V}^N_{\mathfrak{q}}, d_K\big( \xi_b(\tilde{\boldsymbol\sigma},\tilde{\mathbf{G}}) ,  \hat{\mu}^N(\tilde{\boldsymbol\sigma}_{b+1},\tilde{\mathbf{G}}_{b+1}) \big) \geq \bar{\epsilon}\Delta \big) \\
\leq  \sum_{\phi^a \in \mathfrak{F}}\sum_{\boldsymbol\alpha\in\mathcal{E}^M}\mathbb{P}\big( \mathcal{J}_N, \tilde{\mu}^N \in \mathcal{V}^N_{\mathfrak{q}},\big|Q^{a,\boldsymbol\alpha}_b -R^{a,\boldsymbol\alpha}_b \big| \geq \bar{\epsilon} \Delta    \big). 
\end{multline}
The implication of the above argument is that, in order that \eqref{Lemma Main Lemma 2} is satisfied, and making use of Lemma \ref{Lemma Many Events}, it suffices to prove the following lemma. 
\begin{lemma}\label{Main Lemma Taylor}
In order that condition \eqref{eq: to prove Q N change of measure} of Lemma \ref{Lemma Change of Measure} is satisfied, it suffices to prove the following statement. For any $\epsilon > 0$, there exists $n_0$ such that for all $n \geq n_0$, there exists $\mathfrak{n}_0 \in \mathbb{Z}^+$ such that for all $\mathfrak{n} \geq \mathfrak{n}_0$,\begin{equation}\nonumber
\sup_{0\leq b \leq n-1} \lsup{N} \sup_{1\leq \mathfrak{q} \leq C^N_{\mathfrak{n}}}\sup_{\phi^a \in \mathfrak{F},\boldsymbol\alpha \in \mathcal{E}^M} N^{-1}\log  \mathbb{P}\big(\mathcal{J}_N ,\tilde{\mu}^N(\tilde{\boldsymbol\sigma},\tilde{\mathbf{G}}) \in \mathcal{V}^N_{\mathfrak{q}} ,\tilde{\tau}_N > t^{(n)}_b, \big|Q^{a,\boldsymbol\alpha}_b -R^{a,\boldsymbol\alpha}_b \big| \geq \epsilon \Delta\big)< 0.
\end{equation}
\end{lemma}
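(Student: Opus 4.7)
The plan is to read Lemma \ref{Main Lemma Taylor} purely as a \emph{reduction}: assuming the displayed concentration bound for the expectations of test functions, I want to deduce condition \eqref{eq: to prove Q N change of measure} of Lemma \ref{Lemma Change of Measure}. The proof is essentially bookkeeping on top of two ingredients already assembled just above the lemma: (a) the finite-test-function approximation of $d_W$ proved in the first lemma of Section \ref{Expectation Test Functions}, and (b) the fact that the partition size $C^N_\mathfrak{n}$ is subexponential in $N$ (Lemma \ref{eq: polynomial partition}). No new probabilistic estimate is produced here; the work in Sections \ref{Section Stochastic Estimates} and \ref{Section Field} is what will eventually verify the hypothesis.

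First I would fix $\bar\epsilon>0$ (the parameter appearing in \eqref{eq: to prove Q N change of measure}) and set $\delta=\bar\epsilon\Delta/2$. Applying the finite-test-function approximation lemma at scale $\delta$ yields a finite subset $\bar{\mathfrak{H}}_a\subset\mathfrak{H}_a$, and hence a finite family $\mathfrak{F}\subset\mathcal{C}^3(\mathbb{R}^M)$, such that $d_W(\mu,\nu)\le \delta + d_K(\mu,\nu)$ for all $\mu,\nu\in\mathcal{W}_2$. The event $\mathcal{J}_N$ together with $\tilde{\mu}^N(\tilde{\boldsymbol\sigma},\tilde{\mathbf{G}})\in\mathcal{V}^N_\mathfrak{q}\subset\hat{\mathcal{W}}_2$ places $\hat{\mu}^N_{b+1}\in\mathcal{W}_2$, and a direct moment computation for the one-step Euler-type process $(\boldsymbol\zeta_{\Delta},\mathbf{x}_{\Delta})$ defined in \eqref{eq: hybrid 17}--\eqref{eq: hybrid 18} (using the uniform bound on $\mathbf{L}^{\hat\mu^N_b}$ from $|c|\le c_1$ and the linear growth of $\mathbf{m}^{\hat\mu^N_b}$ from Lemma \ref{Lemma Lipschitz Matrices}) places $\xi_b\in\mathcal{W}_2$ as well. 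Consequently
\[
\bigl\{d_W(\xi_b,\hat{\mu}^N_{b+1})\ge \bar\epsilon\Delta\bigr\}\cap\mathcal{J}_N\cap\{\tilde{\mu}^N\in\mathcal{V}^N_\mathfrak{q}\}\ \subseteq\ \bigl\{d_K(\xi_b,\hat{\mu}^N_{b+1})\ge \bar\epsilon\Delta/2\bigr\}\cap\mathcal{J}_N\cap\{\tilde{\mu}^N\in\mathcal{V}^N_\mathfrak{q}\}.
\]

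Since $d_K$ is a supremum over the finite index set $\mathfrak{F}\times\mathcal{E}^M$, the right-hand event is contained in the union, over $(\phi^a,\boldsymbol\alpha)\in\mathfrak{F}\times\mathcal{E}^M$, of the events $\{|Q^{a,\boldsymbol\alpha}_b-R^{a,\boldsymbol\alpha}_b|\ge \bar\epsilon\Delta/2\}$. Next I take the further union over the partition index $1\le\mathfrak{q}\le C^N_\mathfrak{n}$ coming from \eqref{eq: to prove Q N change of measure}. The cardinality $|\mathfrak{F}|\cdot 2^M$ depends on $\bar\epsilon$ and $n$ but is independent of $N$, while $C^N_\mathfrak{n}$ is polynomial in $N$ by Lemma \ref{eq: polynomial partition}; in both cases $\lsup{N}N^{-1}\log(\text{cardinality})=0$. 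An application of Lemma \ref{Lemma Many Events} therefore gives
\[
\lsup{N}N^{-1}\log \mathbb{P}\bigl(\mathcal{J}_N,\,\tilde\tau_N>t^{(n)}_b,\,\tilde{\mu}^N\in\mathcal{V}^N_\mathfrak{q},\,d_W(\xi_b,\hat{\mu}^N_{b+1})\ge\bar\epsilon\Delta\bigr)\ \le\ \sup_{\mathfrak{q},\phi^a,\boldsymbol\alpha}\lsup{N}N^{-1}\log\mathbb{P}\bigl(\cdots,\,|Q^{a,\boldsymbol\alpha}_b-R^{a,\boldsymbol\alpha}_b|\ge\bar\epsilon\Delta/2\bigr),
\]
and it only remains to feed the hypothesis of Lemma \ref{Main Lemma Taylor} in with parameter $\epsilon=\bar\epsilon/2$ to make the right-hand side strictly negative, and to forward the resulting $(n_0,\mathfrak{n}_0,m_0)$ as the witnesses required by the quantifier structure of \eqref{eq: to prove Q N change of measure}.

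There is no hard analytic step here; the only delicate point is keeping the quantifiers straight and observing that $\mathfrak{F}$ depends on $(\bar\epsilon,n)$ but not on $N$, so that the union bound over test functions costs nothing on the exponential scale. The bulk of the paper's technical work is what actually \emph{verifies} the hypothesis, splitting $Q^{a,\boldsymbol\alpha}_b-R^{a,\boldsymbol\alpha}_b$ via Lemma \ref{Lemma Exp Empirical Measure} into (i) Poisson-driven terms controlled by concentration for compensated jump processes and (ii) field-driven terms controlled by the conditional Gaussian law $\gamma_{\tilde{\boldsymbol\sigma},\tilde{\mathbf{G}}}$; but all of that is outside the scope of the present lemma, whose content is just the sufficiency of the test-function statement.
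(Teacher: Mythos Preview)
Your proposal is correct and follows essentially the same route as the paper: both reduce $d_W$ to $d_K$ via the finite-test-function approximation lemma (with $\delta=\bar\epsilon\Delta/2$), decompose the $d_K$ event as a finite union over $(\phi^a,\boldsymbol\alpha)\in\mathfrak{F}\times\mathcal{E}^M$, and invoke Lemma \ref{Lemma Many Events} since $|\mathfrak{F}|\cdot 2^M$ is independent of $N$. Your explicit check that $\xi_b,\hat\mu^N_{b+1}\in\mathcal{W}_2$ under the conditioning events is a detail the paper leaves implicit, and your additional union over $\mathfrak{q}$ is harmless but unnecessary here since the $\sup_\mathfrak{q}$ is simply carried through from \eqref{eq: to prove Q N change of measure} to the hypothesis.
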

Substituting the expression for $R^{a,\boldsymbol\alpha}_{b+1}$ in Lemma \ref{Lemma Exp Empirical Measure}, we find that the difference can be decomposed as
\begin{equation}\label{eq: alpha decomposition}
\sum_{\boldsymbol\alpha \in \mathcal{E}^M}(Q^{a,\boldsymbol\alpha}_{b+1}- R^{a,\boldsymbol\alpha}_{b+1}) =
O(\Delta^{3/2}) + \sum_{\boldsymbol\alpha\in \mathcal{E}^M}\sum_{i=1}^6\beta^i(\boldsymbol\alpha,\tilde{\boldsymbol\sigma},\tilde{\mathbf{G}})  ,
\end{equation}
and $\lbrace \beta^i \rbrace_{i=1}^5$ are defined as follows (the dependence of $\beta^i$ on $a$ has been neglected from the notation). Our aim is to decompose the difference into terms that can either be controlled with Poisson concentration inequalities or controlled with the Gaussian law of the connections. Here and below, $\hat{\mu}^N_b := \hat{\mu}^N_b(\tilde{\boldsymbol\sigma},\tilde{\mathbf{G}})$. The term $\beta^1$ represents the leading order change in the two expectations due to jumps in the spins, while holding the field to be constant, i.e.
\begin{multline}
\beta^1(\boldsymbol\alpha,\tilde{\boldsymbol\sigma},\tilde{\mathbf{G}}) = N^{-1}\sum_{j\in I_N} \phi^a(\tilde{\mathbf{G}}^{j}_b) \big( \chi\lbrace \tilde{\boldsymbol\sigma}^j_{b+1}= \boldsymbol\alpha \rbrace - \chi\lbrace \tilde{\boldsymbol\sigma}^j_b= \boldsymbol\alpha \rbrace \\- \Delta \sum_{i\in I_M}\big\lbrace c(-\alpha^i,\tilde{G}^{i,j}_b)\chi\lbrace \tilde{\boldsymbol\sigma}^j_b= \boldsymbol\alpha[i] \rbrace -c(\alpha^i,\tilde{G}^{i,j}_b)\chi\lbrace \tilde{\boldsymbol\sigma}^j_b= \boldsymbol\alpha \rbrace \big\rbrace\big) ,
\end{multline}
recalling that $\boldsymbol\alpha[i]$ is the same as $\boldsymbol\alpha$, except that the $i^{th}$ element has a flipped sign.

The sum of the terms $\beta^2 + \beta^3$ represents the leading order change in the two expectations due to changes in the field $\tilde{\mathbf{G}}_t$, while holding the spin to be constant. A Taylor approximation is used: $\beta^2$ contains the linear terms, and $\beta^3$ the quadratic terms,
\begin{align*}
\beta^2(\boldsymbol\alpha,\tilde{\boldsymbol\sigma},\tilde{\mathbf{G}})  =& N^{-1}\sum_{j\in I_N}\sum_{i\in I_M}\phi^a_i(\tilde{\mathbf{G}}^{j}_b)\chi\lbrace \tilde{\boldsymbol\sigma}^j_b= \boldsymbol\alpha \rbrace\big\lbrace \tilde{G}^{i,j}_{b+1} - \tilde{G}^{i,j}_b- \Delta m^{\hat{\mu}^N_b,i}(\boldsymbol\alpha,\tilde{\mathbf{G}}^j_b) \big\rbrace \\
\beta^3(\boldsymbol\alpha,\tilde{\boldsymbol\sigma},\tilde{\mathbf{G}})  =&  (2N)^{-1}\sum_{j\in I_N}\sum_{i,p\in I_M}\chi\lbrace \tilde{\boldsymbol\sigma}^{j}_b=\boldsymbol\alpha \rbrace\big(\phi^a_{ip}(\tilde{\mathbf{G}}_b^{j})\big\lbrace \tilde{G}^{i,j}_{b+1} - \tilde{G}^{i,j}_b \big\rbrace \big\lbrace \tilde{G}^{p,j}_{b+1} - \tilde{G}^{p,j}_b \big\rbrace -4L^{\hat{\mu}^N_b}_{ii}\Delta\big).
\end{align*}
$\beta^4$ can be thought of as the average `cross-variation' between the spins and the fields:
\begin{equation}
\beta^4(\boldsymbol\alpha,\tilde{\boldsymbol\sigma},\tilde{\mathbf{G}})  = N^{-1}\sum_{j\in I_N} \big\lbrace \phi^a(\tilde{\mathbf{G}}^{j}_{b+1})- \phi^a(\tilde{\mathbf{G}}^{j}_b)\big\rbrace\big\lbrace \chi \lbrace \tilde{\boldsymbol\sigma}^{j}_{b+1} = \boldsymbol\alpha \rbrace - \chi \lbrace \tilde{\boldsymbol\sigma}^{j}_b = \boldsymbol\alpha \rbrace \big\rbrace. \label{eq: beta 4 definition}
\end{equation}
The term $\beta^5$ is the remainder, such that \eqref{eq: alpha decomposition} holds identically. This means that
\begin{multline}\label{eq: beta 7 b}
\beta^5(\boldsymbol\alpha,\tilde{\boldsymbol\sigma},\tilde{\mathbf{G}}) = N^{-1}\sum_{j\in I_N}\chi\lbrace \tilde{\boldsymbol\sigma}^j_b= \boldsymbol\alpha\rbrace \big( \phi^a(\tilde{\mathbf{G}}_{b+1}^{j} )- \phi^a(\tilde{\mathbf{G}}_{b}^{j} )-\sum_{i\in I_M} \phi
^a_i(\tilde{\mathbf{G}}^{j}_b)\lbrace \tilde{G}^{i,j}_{b+1} - \tilde{G}^{i,j}_b \rbrace \\ -\frac{1}{2}\sum_{i,p\in I_M }\phi^a_{ip}(\tilde{\mathbf{G}}_b^{j})\big\lbrace \tilde{G}^{i,j}_{b+1} - \tilde{G}^{i,j}_b \big\rbrace \big\lbrace \tilde{G}^{p,j}_{b+1} - \tilde{G}^{p,j}_b \big\rbrace \big) .
\end{multline}
We further decompose $\beta^2$ and $\beta^3$ as follows. The terms $\beta^6$, $\beta^8$ and $\beta^9$ - to be outlined just below - will be bounded in Section \ref{Section Field} using the conditional Gaussian law of the connections. The term $\tilde{\mathbf{m}}^j_b$ - to be outlined just below - is the mean of a conditional Gaussian expectation, and $\tilde{\mathbf{L}}^{ij}_b$ is approximately half the conditional variance. Define the  $M\times M$ matrices $\lbrace \tilde{\mathbf{K}}_b,\tilde{\mathbf{L}}_b,\tilde{\boldsymbol\kappa}_b,\tilde{\boldsymbol\upsilon}_b \rbrace$ to have the following elements: for $p,q\in I_M$
\begin{align}
\tilde{K}^{pq}_b &= N^{-1}\sum_{l=1}^N \tilde{\sigma}^{p,l}_{b}\tilde{\sigma}^{q,l}_b \; \; , \; \; 
\tilde{L}^{pq}_b =  N^{-1}\sum_{k=1}^{N}\tilde{\sigma}^{q,k}_b\big(\tilde{\sigma}^{p,k}_{b}- \tilde{\sigma}^{p,k}_{b+1}\big)\label{eq: tilde K definition 0 0}  \\
\tilde{\kappa}^{pq}_b &= N^{-1}\sum_{k=1}^{N} \tilde{G}^{q,k}_b \big(\tilde{\sigma}^{p,k}_b-\tilde{\sigma}^{p,k}_{b+1}\big) \; \; , \; \;
\tilde{\upsilon}_b^{pq} = N^{-1}\sum_{k=1}^{N}\tilde{\sigma}^{p,k}_b \tilde{G}^{q,k}_b.\label{eq: tilde upsilon definition 0 0}
\end{align}
If $\tilde{\tau}_N > t^{(n)}_b$, $\tilde{\mathbf{K}}_b$ is invertible, and we write $\tilde{\mathbf{H}}_b = \tilde{\mathbf{K}}_b^{-1}$. For $j\in I_N$, writing $\tilde{\boldsymbol\sigma}^j_b = \big(\sigma^{1,j}_b,\ldots,\sigma^{M,j}_b\big)$, $\tilde{\mathbf{G}}^j_b = \big(\tilde{G}^{1,j}_b,\ldots,\tilde{G}^{M,j}_b \big)$ and $\tilde{\mathbf{m}}^j_b = \big(\tilde{m}^{1,j}_b,\ldots,\tilde{m}_b^{M,j}\big)$, we define
	\begin{align}
	\tilde{\mathbf{m}}^j_b =& - \tilde{\mathbf{L}}_b\tilde{\mathbf{H}}_b\tilde{\mathbf{G}}^j_b -\mathfrak{s} \tilde{\boldsymbol\kappa}_b\tilde{\mathbf{H}}_b\tilde{\boldsymbol\sigma}^j_b + \mathfrak{s}\tilde{\mathbf{L}}_b\tilde{\mathbf{H}}_b\tilde{\boldsymbol\upsilon}_b\tilde{\mathbf{H}}_b\tilde{\boldsymbol\sigma}^j_b .\label{eq: temp H 0}
	\end{align}
We can now further decompose $\beta^2$ as follows,
\begin{align}
\beta^2(\boldsymbol\alpha,\tilde{\boldsymbol\sigma},\tilde{\mathbf{G}})   =& \beta^6(\boldsymbol\alpha,\tilde{\boldsymbol\sigma},\tilde{\mathbf{G}}) + \beta^7(\boldsymbol\alpha,\tilde{\boldsymbol\sigma},\tilde{\mathbf{G}})  \text{ where } \\
\beta^6(\boldsymbol\alpha,\tilde{\boldsymbol\sigma},\tilde{\mathbf{G}})   =& N^{-1}\sum_{j\in I_N}\sum_{i\in I_M}\phi^a_i(\tilde{\mathbf{G}}^{j}_b)\chi\lbrace \tilde{\boldsymbol\sigma}_b^j=\boldsymbol\alpha \rbrace\big\lbrace \tilde{G}^{i,j}_{b+1} - \tilde{G}^{i,j}_b  -\tilde{m}^{i,j}_b\big\rbrace \label{eq: beta 6}\\ 
\beta^{7}(\boldsymbol\alpha,\tilde{\boldsymbol\sigma},\tilde{\mathbf{G}})   =& N^{-1}\sum_{j\in I_N}\sum_{i\in I_M}\phi^a_i(\tilde{\mathbf{G}}^{j}_b)\chi\lbrace \tilde{\boldsymbol\sigma}_b^j=\boldsymbol\alpha \rbrace\big\lbrace \tilde{m}^{i,j}_b -\Delta m^{\hat{\mu}^N_b,i}(\tilde{\boldsymbol\sigma}^j_b, \tilde{\mathbf{G}}^j_b) \big\rbrace ,
\end{align}
noting that $m^{\hat{\mu}^N_b}$ is defined in \eqref{eq: m xi}. We further decompose $\beta^3(\boldsymbol\alpha,\tilde{\boldsymbol\sigma},\tilde{\mathbf{G}}) $ as follows
\begin{align}
\beta^3(\boldsymbol\alpha,\tilde{\boldsymbol\sigma},\tilde{\mathbf{G}})  =& \beta^8(\boldsymbol\alpha,\tilde{\boldsymbol\sigma},\tilde{\mathbf{G}})+\beta^9(\boldsymbol\alpha,\tilde{\boldsymbol\sigma},\tilde{\mathbf{G}}) +\beta^{10}(\boldsymbol\alpha ,\tilde{\boldsymbol\sigma}_b,\tilde{\mathbf{G}}_b)+\beta^{11}(\boldsymbol\alpha ,\tilde{\boldsymbol\sigma}_b,\tilde{\mathbf{G}}_b) \text{ where}\nonumber\\ 
\beta^8 (\boldsymbol\alpha,\tilde{\boldsymbol\sigma},\tilde{\mathbf{G}}) =& N^{-1}\sum_{j\in I_{N}}\sum_{i,p \in I_M}\chi\big\lbrace \tilde{\boldsymbol\sigma}_b^j=\boldsymbol\alpha \big\rbrace\phi^a_{ip}(\tilde{\mathbf{G}}_b^{j})\tilde{m}^{i,j}_b \big\lbrace \tilde{G}^{p,j}_{b+1} - \tilde{G}^{p,j}_b-\tilde{m}^{p,j}_b\big\rbrace\label{eq: beta 8} \\
\beta^9(\boldsymbol\alpha,\tilde{\boldsymbol\sigma},\tilde{\mathbf{G}})  =&  (2N)^{-1}\sum_{j\in I_{N}}\sum_{i,p \in I_M}\chi\big\lbrace \tilde{\boldsymbol\sigma}_b^j=\boldsymbol\alpha \big\rbrace\phi^a_{ip}(\tilde{\mathbf{G}}_b^{j})\big\lbrace \tilde{G}^{i,j}_{b+1} - \tilde{G}^{i,j}_b-\tilde{m}^{i,j}_b \big\rbrace \big\lbrace \tilde{G}^{p,j}_{b+1} - \tilde{G}^{p,j}_b-\tilde{m}^{p,j}_b\big\rbrace \nonumber \\ &-(N)^{-1}\sum_{j\in I_{N}}\sum_{i \in I_M}\chi\big\lbrace \tilde{\boldsymbol\sigma}_b^j=\boldsymbol\alpha \big\rbrace \phi^a_{ii}(\tilde{\mathbf{G}}_b^{j})\tilde{L}^{ii}_b\label{eq: beta 9} \\
\beta^{10}(\boldsymbol\alpha,\tilde{\boldsymbol\sigma},\tilde{\mathbf{G}})  =&(2N)^{-1}\sum_{j\in I_N}\sum_{i,p\in I_M}\chi\big\lbrace \tilde{\boldsymbol\sigma}_b^j=\boldsymbol\alpha \big\rbrace\phi^a_{ip}(\tilde{\mathbf{G}}_b^{j}) \tilde{m}^{i,j}_b \tilde{m}^{p,j}_b   \\
\beta^{11}(\boldsymbol\alpha,\tilde{\boldsymbol\sigma},\tilde{\mathbf{G}}) = &N^{-1}\sum_{j\in I_{N}}\sum_{i\in I_M}\chi\big\lbrace \tilde{\boldsymbol\sigma}_b^j=\boldsymbol\alpha \big\rbrace\phi^a_{ii}(\tilde{\mathbf{G}}_b^{j})\big( \tilde{L}^{ii}_b -2\Delta L_{ii}^{\hat{\mu}^N_b}\big) .
\end{align}
We can now decompose the criteria of Lemma \ref{Main Lemma Taylor} into the following set of criteria.
\begin{lemma} \label{Lemma general beta bound}
To prove Lemma \ref{Lemma Change of Measure} it suffices for us to show that for any $\bar{\epsilon} > 0$, there exists $n_0$ such that for all $n \geq n_0$, there exists $\mathfrak{n}_0 \in \mathbb{Z}^+$ such that for all $\mathfrak{n} \geq \mathfrak{n}_0$, for each $i$ such that $1 \leq i \leq 11$ (with $i\neq 2,3$), (recalling that $\Delta = Tn^{-1}$) 
\begin{equation}
\sup_{0\leq b < n}\sup_{\phi^a\in \mathfrak{F},\boldsymbol\alpha \in \mathcal{E}^M}\lsup{N}N^{-1}\log \sup_{1\leq \mathfrak{q}\leq C^N_{\mathfrak{n}}}\mathbb{P}\big(\mathcal{J}_N,\tilde{\mu}^N(\tilde{\boldsymbol\sigma},\tilde{\mathbf{G}}) \in \mathcal{V}^N_{\mathfrak{q}},\tilde{\tau}_N > t^{(n)}_b,\big|\beta^i(\boldsymbol\alpha,\tilde{\boldsymbol\sigma},\tilde{\mathbf{G}}) \big| \geq \bar{\epsilon} \Delta / 18 \big) < 0.
\end{equation}
\end{lemma}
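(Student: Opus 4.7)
The plan is to view Lemma \ref{Lemma general beta bound} as a routine triangle-inequality plus union-bound reduction of Lemma \ref{Main Lemma Taylor}. The first step is to observe that, although the identity \eqref{eq: alpha decomposition} was written as an equality of sums over $\boldsymbol\alpha$, it is in fact a termwise identity: each $\beta^i$ depends on $\boldsymbol\alpha$ only through indicators of the form $\chi\{\tilde{\boldsymbol\sigma}^j_b = \boldsymbol\alpha\}$, $\chi\{\tilde{\boldsymbol\sigma}^j_b = \boldsymbol\alpha[i]\}$, or $\chi\{\tilde{\boldsymbol\sigma}^j_{b+1} = \boldsymbol\alpha\}$, and direct substitution of Lemma \ref{Lemma Exp Empirical Measure} into $Q^{a,\boldsymbol\alpha}_b - R^{a,\boldsymbol\alpha}_b$ yields
\begin{equation*}
Q^{a,\boldsymbol\alpha}_b - R^{a,\boldsymbol\alpha}_b = r^{\boldsymbol\alpha}_b + \sum_{i \in \mathcal{I}} \beta^i(\boldsymbol\alpha, \tilde{\boldsymbol\sigma}, \tilde{\mathbf{G}}), \qquad \mathcal{I} := \{1,4,5,6,7,8,9,10,11\},
\end{equation*}
where the remainder satisfies $|r^{\boldsymbol\alpha}_b| \leq C\Delta^{3/2}$ uniformly in $\phi^a \in \mathfrak{F}$ and $\boldsymbol\alpha \in \mathcal{E}^M$. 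This uniform constant $C$ comes from the third-order Taylor remainder in the proof of Lemma \ref{Lemma Exp Empirical Measure}, together with the uniform bounds on the first three partial derivatives of $\phi^a$, on the jump rate $c_1$, and on $\mathbb{E}[\|\tilde{\mathbf{W}}_{\Delta}\|^3] = O(\Delta^{3/2})$.

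Since $|\mathcal{I}| = 9$, the triangle inequality gives $|Q^{a,\boldsymbol\alpha}_b - R^{a,\boldsymbol\alpha}_b| \leq C\Delta^{3/2} + \sum_{i \in \mathcal{I}} |\beta^i|$. Setting $\bar\epsilon = \epsilon$ in the statement of Lemma \ref{Lemma general beta bound} and choosing $n_0$ large enough that $C\Delta^{1/2} \leq \epsilon/2$ for all $n \geq n_0$, we see that on the intersection $\bigcap_{i\in \mathcal{I}} \{|\beta^i(\boldsymbol\alpha,\cdot,\cdot)| \leq \bar\epsilon\Delta/18\}$ one has $|Q^{a,\boldsymbol\alpha}_b - R^{a,\boldsymbol\alpha}_b| \leq 9\bar\epsilon\Delta/18 + \epsilon\Delta/2 = \epsilon\Delta$. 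Taking complements yields the inclusion
\begin{equation*}
\big\{|Q^{a,\boldsymbol\alpha}_b - R^{a,\boldsymbol\alpha}_b| \geq \epsilon\Delta\big\} \subseteq \bigcup_{i \in \mathcal{I}} \big\{|\beta^i(\boldsymbol\alpha, \tilde{\boldsymbol\sigma}, \tilde{\mathbf{G}})| \geq \bar\epsilon\Delta/18\big\}.
\end{equation*}

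Intersecting both sides with $\{\mathcal{J}_N, \tilde{\mu}^N(\tilde{\boldsymbol\sigma}, \tilde{\mathbf{G}}) \in \mathcal{V}^N_{\mathfrak{q}}, \tilde{\tau}_N > t^{(n)}_b\}$, one obtains
\begin{equation*}
\mathbb{P}\big( \mathcal{J}_N, \tilde{\mu}^N \in \mathcal{V}^N_{\mathfrak{q}}, \tilde{\tau}_N > t^{(n)}_b, |Q^{a,\boldsymbol\alpha}_b - R^{a,\boldsymbol\alpha}_b| \geq \epsilon\Delta \big) \leq \sum_{i \in \mathcal{I}} \mathbb{P}\big( \mathcal{J}_N, \tilde{\mu}^N \in \mathcal{V}^N_{\mathfrak{q}}, \tilde{\tau}_N > t^{(n)}_b, |\beta^i| \geq \bar\epsilon\Delta/18 \big).
\end{equation*}
Because $|\mathcal{I}| = 9$ is independent of $N$, Lemma \ref{Lemma Many Events} reduces the logarithmic rate of the left-hand side to the supremum, over $i \in \mathcal{I}$, of the rates of the right-hand summands, which are strictly negative by hypothesis. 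A further application of Lemma \ref{Lemma Many Events} over the finite sets $\mathcal{E}^M$ (cardinality $2^M$), $\mathfrak{F}$ (fixed independently of $N$ by the discussion preceding Lemma \ref{Main Lemma Taylor}) and the $C^N_\mathfrak{n}$ partition blocks (subexponential in $N$ by Lemma \ref{eq: polynomial partition}) yields Lemma \ref{Main Lemma Taylor}, which by the preceding discussion implies condition \eqref{eq: to prove Q N change of measure} of Lemma \ref{Lemma Change of Measure}.

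The nontrivial content of the argument therefore lies entirely in the $\beta^i$ bounds themselves, which are the subject of Sections \ref{Section Stochastic Estimates} and \ref{Section Field}. The grouping has been engineered so that $\beta^1, \beta^4, \beta^5$ only involve intrinsic Poisson stochasticity and are handled by compensated-Poisson martingale concentration; $\beta^6, \beta^8, \beta^9$ are centered fluctuations in the field increments and require the conditional Gaussian law of $\mathbf{J}$ given $\tilde{\mathbf{G}}_b$ together with the Woodbury identity; finally $\beta^7, \beta^{10}, \beta^{11}$ are deterministic, measuring the discrepancy between the empirical order parameters $\tilde{\mathbf{m}}^j_b, \tilde{L}^{ii}_b$ from \eqref{eq: tilde K definition 0 0}--\eqref{eq: temp H 0} and the PDE coefficients $\mathbf{m}^{\hat{\mu}^N_b}, L^{\hat{\mu}^N_b}_{ii}$ up to $O(\Delta)$. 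Identifying the Gaussian conditional mean with $\Delta\mathbf{m}^{\hat{\mu}^N_b}$ and the Gaussian conditional covariance diagonal with $2\Delta L^{\hat{\mu}^N_b}_{ii}$ is the main conceptual obstacle in this paper, and is where the Woodbury formula and the assumption $\tilde{\tau}_N > t^{(n)}_b$ (guaranteeing $\tilde{\mathbf{K}}_b$ is well-conditioned) enter crucially.
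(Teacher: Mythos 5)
Your proposal is correct and follows essentially the same route as the paper: substitute Lemma \ref{Lemma Exp Empirical Measure} to get the termwise decomposition of $Q^{a,\boldsymbol\alpha}_b - R^{a,\boldsymbol\alpha}_b$ into the nine $\beta^i$ plus a uniform $O(\Delta^{3/2})$ remainder, absorb the remainder by taking $n$ large, deduce the event inclusion with threshold $\bar{\epsilon}\Delta/18$, and conclude with Lemma \ref{Lemma Many Events} over the finitely many $(\phi^a,\boldsymbol\alpha)$ and the subexponentially many $\mathcal{V}^N_{\mathfrak{q}}$. The only (inconsequential) slip is in your closing commentary: $\beta^{7},\beta^{10},\beta^{11}$ are not deterministic but random, and are controlled in Section \ref{Section Stochastic Estimates} via the Poisson concentration bounds of Lemma \ref{Lemma Matrix Convergence}; this does not affect the reduction you prove.
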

\begin{proof}
The above analysis implies that for large enough $n$,
\begin{multline}
\big\lbrace d_K\big(\hat{\mu}^N(\tilde{\boldsymbol\sigma}_{b+1},\tilde{\mathbf{G}}_{b+1}) , \xi(\tilde{\boldsymbol\sigma}_{b}, \tilde{\mathbf{G}}_{b})\big) \geq \bar{\epsilon}\Delta \big\rbrace \subseteq \bigcup_{1\leq i \leq 11, i\neq 2,3} \big\lbrace \big|\beta^i(\boldsymbol\alpha,\tilde{\boldsymbol\sigma},\tilde{\mathbf{G}}) \big| \geq \bar{\epsilon} \Delta / 18  \big\rbrace \\
\cup \big\lbrace \big|\Delta N^{-1}\sum_{j\in I_N,i\in I_M}\phi^a(\tilde{\mathbf{G}}^j_b)\chi\lbrace \tilde{\boldsymbol\sigma}^j_b= \boldsymbol\alpha[i] \rbrace c(-\alpha^i,\tilde{G}^{i,j}_b) +N^{-1}\sum_{j\in I_N} \chi\lbrace \tilde{\boldsymbol\sigma}^j_b= \boldsymbol\alpha \rbrace\big[\phi^a\big(\tilde{\boldsymbol\sigma}^j_b \big) + \Delta\sum_{i\in I_M}\big\lbrace  \\ \phi^a_i(\tilde{\mathbf{G}}^{j}_b)  m^{\hat{\mu}^N_b,i}(\boldsymbol\alpha,\tilde{\mathbf{G}}^j_b) +2L^{\hat{\mu}^N_b}_{ii}\phi^a_{ii}(\tilde{\mathbf{G}}^{j}_b)  -\phi^a\big(\tilde{\boldsymbol\sigma}^j_b \big)  c(\alpha^i,\tilde{G}^{i,j}_b)\big\rbrace \big] -R^{a,\boldsymbol\alpha}_{b} \big| \geq \bar{\epsilon} \Delta / 2 \big\rbrace.
\end{multline}
By Lemma \ref{Lemma Exp Empirical Measure}, as long as $\Delta$ is sufficiently small,
\begin{multline}
\big\lbrace \big| \Delta N^{-1}\sum_{j\in I_N,i\in I_M}\phi^a(\tilde{\mathbf{G}}^j_b)\chi\lbrace \tilde{\boldsymbol\sigma}^j_b= \boldsymbol\alpha[i] \rbrace c(-\alpha^i,\tilde{G}^{i,j}_b) +N^{-1}\sum_{j\in I_N} \chi\lbrace \tilde{\boldsymbol\sigma}^j_b= \boldsymbol\alpha \rbrace\big[\phi^a\big(\tilde{\boldsymbol\sigma}^j_b \big) + \Delta\sum_{i\in I_M}\big\lbrace  \\ \phi^a_i(\tilde{\mathbf{G}}^{j}_b)  m^{\hat{\mu}^N_b,i}(\boldsymbol\alpha,\tilde{\mathbf{G}}^j_b) +2L^{\hat{\mu}^N_b}_{ii}\phi^a_{ii}(\tilde{\mathbf{G}}^{j}_b)  -\phi^a\big(\tilde{\boldsymbol\sigma}^j_b \big)  c(\alpha^i,\tilde{G}^{i,j}_b)\big\rbrace \big]-R^{a,\boldsymbol\alpha}_{b} \big| \geq \bar{\epsilon} \Delta / 2 \big\rbrace = \emptyset
\end{multline}
The Lemma now follows as a consequence of Lemma \ref{Lemma Many Events}, since $|\mathfrak{F}| < \infty$. 
\end{proof}
The nine bounds necessary for Lemma \ref{Lemma general beta bound} are contained in the next two sections. They are split into two types: the terms directly requiring the law of the Gaussian connections (i.e. $\beta^6, \beta^8, \beta^9$) are bounded in Section \ref{Section Field}. The other six terms mostly require concentration inequalities for Poisson processes, and they are bounded in Section \ref{Section Stochastic Estimates}.
\section{Stochastic Bounds} \label{Section Stochastic Estimates}

This section is devoted to bounding the terms in Lemma \ref{Lemma general beta bound} that do not directly require the law of the Gaussian connections (i.e. $\gamma$). The terms that are bounded in the first part of this section are $\beta^4$ (the `cross-variation' of the spins and fields) and $\beta^5$ (the remainder after the Taylor Expansion). In the next subsection, the remaining terms $\beta^1,\beta^7,\beta^{10},\beta^{11}$ are bounded: the bounding of these terms requires concentration inequalities for sums of compensated Poisson Processes. Throughout this section we omit the $\mathfrak{q}$ subscript from the stochastic process, writing $\tilde{\boldsymbol\sigma}_{\mathfrak{q},t} := \tilde{\boldsymbol\sigma}_t$.

Throughout this section $\boldsymbol\alpha \in \mathcal{E}^M$ is a fixed constant. Define for $ u \geq 0$, 
\begin{equation}\label{eq: Y i j b definition}
Y_b^{i,j}(u) = Y^{i,j}\bigg(u + \int_0^{t^{(n)}_b} c\big(\tilde{\sigma}^{i,j}_s , \tilde{\mathfrak{G}}^{i,j}_{\mathfrak{q},s}(\tilde{\boldsymbol\sigma}) \big) ds \bigg)-Y^{i,j}\bigg(\int_0^{t^{(n)}_b} c\big(\tilde{\sigma}^{i,j}_s , \tilde{\mathfrak{G}}^{i,j}_{\mathfrak{q},s}(\tilde{\boldsymbol\sigma})\big) ds \bigg),
\end{equation}
and notice that $\lbrace Y_b^{i,j}(t) \rbrace_{i\in I_M,j\in I_N}$ are distributed as iid unit intensity Poisson Processes. Recalling that $A\cdot x := (-1)^x$, it may be inferred from the definition in \eqref{eq: tilde sigma q j t definition} that for $t\geq t^{(n)}_b$,
\begin{equation}\label{eq: tilde sigma alternative definition}
\tilde{\sigma}^{i,j}_t = \tilde{\sigma}^{i,j}_b A\cdot Y_b^{i,j}\big(\int_{t^{(n)}_b}^{t} c(\tilde{\sigma}^{i,j}_s, \tilde{\mathfrak{G}}^{i,j}_{\mathfrak{q},s}) ds \big).
\end{equation}
Let $\mathcal{I}_N = \big\lbrace j\in I_N \; : \text{ For some }i\in I_M \; , \; Y^{i,j}_b(c_1 \Delta ) \geq 1 \big\rbrace $, recalling that $c_1$ is the uniform upper bound for the spin flipping rate. Clearly if $j\notin \mathcal{I}_N$ then $\tilde{\boldsymbol\sigma}^j_b = \tilde{\boldsymbol\sigma}^j_{b+1}$. Write $\mathcal{I}_N^c = \lbrace j\in I_N : j\notin \mathcal{I}_N \rbrace$. Splitting the indices as $I_N = \mathcal{I}_N \cup \mathcal{I}_N^c$ is useful because the fields $\lbrace \tilde{G}^{p,j}_{b+1} - \tilde{G}^{p,j}_b \rbrace_{j\in \mathcal{I}_N^c}$ are independent.

We start with a lemma concerning the average change in fields indexed by $\mathcal{I}_N$.
\begin{lemma}\label{Lemma square norm G difference}
There exists $n_0 \in \mathbb{Z}^+$ and a constant $\hat{C}_{\gamma}$ such that for all $n \geq n_0$,
\begin{equation}
\lsup{N}\sup_{1\leq \mathfrak{q}\leq C^N_{\mathfrak{n}}} N^{-1} \log \mathbb{P}\big( N^{-1}\sum_{j\in \mathcal{I}_N}\norm{\tilde{\mathbf{G}}^{j}_{b+1} - \tilde{\mathbf{G}}^{j}_b}^2 \geq \Delta^{3/2}\hat{C}_{\gamma}\big) < 0.
\end{equation}
\end{lemma}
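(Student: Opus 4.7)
The plan is to exploit two sources of smallness in the sum: the cardinality $|\mathcal{I}_N|\sim Mc_1 N\Delta$ of the set of flipped spins, and the operator norm of the $|\mathcal{I}_N|\times|\mathcal{I}_N|$ Gaussian submatrix of $\mathbf{J}_N$, which is of order $\sqrt{|\mathcal{I}_N|/N}\sim\sqrt{\Delta}$. Their product delivers the required $\Delta^{3/2}$ rate, whereas the naive deterministic bound $\norm{\mathbf{J}_N}\leq 3$ available on $\mathcal{J}_N$ only yields a bound of order $|\mathcal{I}_N|/N\sim\Delta$ and is therefore insufficient.

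First I would reduce the problem to a submatrix-norm estimate. By the construction in Section \ref{Section Change of Measure}, the Poisson processes $\lbrace Y^{i,j}(\cdot)\rbrace$, and hence their time-shifted versions $Y^{i,j}_b$, are independent of the connections $\mathbf{J}$, so the index set $\mathcal{I}_N$ is independent of $\mathbf{J}$. Set $S^{p,k}:=\tilde{\sigma}^{p,k}_{b+1}-\tilde{\sigma}^{p,k}_b$, which vanishes outside $\mathcal{I}_N$ and otherwise lies in $\lbrace -2,0,2\rbrace$, and let $\tilde{\mathbf{J}}_N$ denote the submatrix of $\mathbf{J}_N$ indexed by $\mathcal{I}_N\times\mathcal{I}_N$. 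For $j\in\mathcal{I}_N$ one has $\tilde G^{p,j}_{b+1}-\tilde G^{p,j}_b = (\tilde{\mathbf{J}}_N S^p|_{\mathcal{I}_N})_j$, whence
\[
N^{-1}\sum_{j\in\mathcal{I}_N}\norm{\tilde{\mathbf{G}}^j_{b+1}-\tilde{\mathbf{G}}^j_b}^2 \;\leq\; 4M\,\norm{\tilde{\mathbf{J}}_N}^2\,|\mathcal{I}_N|/N.
\]

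Next I would bound the two factors separately. For the cardinality, the events $\lbrace j\in\mathcal{I}_N\rbrace$ are independent across $j\in I_N$, each of probability $1-e^{-Mc_1\Delta}\leq Mc_1\Delta$; Chernoff's inequality supplies a rate $h(\Delta)>0$ with $\mathbb{P}(|\mathcal{I}_N|/N \geq 2Mc_1\Delta)\leq \exp(-Nh(\Delta))$. For the operator norm, conditionally on $\lbrace |\mathcal{I}_N|=k\rbrace$ the matrix $\tilde{\mathbf{J}}_N$ is a $k\times k$ Gaussian matrix whose entries have variance $O(1/N)$ and whose symmetry structure is inherited from $\mathbf{J}_N$. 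Decomposing into symmetric, Ginibre, and diagonal parts as in the proof of Lemma \ref{bound connections lemma} and applying the standard Gaussian concentration for the operator norm (cf.\ \cite{Anderson2011,Poplavskyi2017}), one obtains constants $c_0,c_\ast>0$ such that
\[
\mathbb{P}\bigl(\norm{\tilde{\mathbf{J}}_N}\geq c_0\sqrt{k/N}+t \,\big|\, |\mathcal{I}_N|=k\bigr)\leq \exp(-c_\ast N t^2),\quad t>0.
\]

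Combining these with the choice $t=\Delta^{1/4}$ on the event $\lbrace |\mathcal{I}_N|\leq 2Mc_1 N\Delta\rbrace$ gives $\norm{\tilde{\mathbf{J}}_N}^2 \leq (c_0\sqrt{2Mc_1\Delta}+\Delta^{1/4})^2\leq 4\sqrt{\Delta}$ for all sufficiently large $n$, so that
\[
4M\norm{\tilde{\mathbf{J}}_N}^2 |\mathcal{I}_N|/N \;\leq\; 32 M^2 c_1\,\Delta^{3/2}.
\]
Taking $\hat{C}_\gamma := 32M^2c_1+1$, a union bound over the two rare events then shows that the probability of failure is at most $\exp(-Nh(\Delta))+\exp(-c_\ast N\sqrt{\Delta})$, which is exponentially small in $N$ for each fixed $n\geq n_0$, uniformly in $\mathfrak{q}$ since neither $\mathcal{I}_N$ nor the Gaussian matrix concentration depends on $\mathfrak{q}$. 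The main obstacle is the third step: the deterministic bound $\norm{\tilde{\mathbf{J}}_N}\leq\norm{\mathbf{J}_N}\leq 3$ is too crude by a factor of $\Delta^{-1/2}$, and one really must exploit that the $k\times k$ Gaussian submatrix has operator norm of order $\sqrt{k/N}$ rather than order one — this is the mechanism through which the required additional $\sqrt{\Delta}$ gain is extracted.
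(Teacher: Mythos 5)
Your proposal is correct and follows essentially the same route as the paper: reduce the sum to $\norm{\tilde{\mathbf{J}}_N}$ times the number of flipped sites, use that $\mathcal{I}_N$ (being determined by the Poisson clocks) is independent of $\mathbf{J}$, control $|\mathcal{I}_N|/N\lesssim\Delta$ by Poisson/Chernoff bounds, and control the $|\mathcal{I}_N|\times|\mathcal{I}_N|$ Gaussian submatrix by random-matrix estimates conditionally on $\mathcal{I}_N$, the product yielding the $\Delta^{3/2}$ rate. The only (harmless) difference is the input for the submatrix norm: you use Gaussian concentration around the mean with deviation $t=\Delta^{1/4}$, giving failure probability $e^{-c_*N\sqrt{\Delta}}$ with no lower bound on $|\mathcal{I}_N|$ needed, whereas the paper rescales to $\bar{\mathbf{J}}_N=\sqrt{N}|\mathcal{I}_N|^{-1/2}\tilde{\mathbf{J}}_N$, bounds $\norm{\bar{\mathbf{J}}_N}\leq 3$ via the eigenvalue bounds of Lemma \ref{bound connections lemma}, and therefore also invokes the lower bound $|\mathcal{I}_N|\gtrsim Nc_1\Delta$ so that the conditional failure probability $e^{-|\mathcal{I}_N|\Lambda_J}$ is exponentially small in $N$.
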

\begin{proof}
Let $\tilde{\mathbf{J}}_N$ be the $|\mathcal{I}_N | \times  |\mathcal{I}_N |$ square matrix with entries given by $\lbrace N^{-1/2} J^{jk} \rbrace_{j,k \in \mathcal{I}_N}$. Let its operator norm be $\norm{\tilde{\mathbf{J}}_N}$. Observe that
\begin{align}
N^{-1}\sum_{j\in \mathcal{I}_N,i\in I_M}|\tilde{G}^{i,j}_{b+1} - \tilde{G}^{i,j}_b|^2 &\leq N^{-1}\norm{\tilde{\mathbf{J}}_N} \sum_{j\in \mathcal{I}_N \; , i\in I_M}\big|\tilde{\sigma}^{i,j}_{b+1} - \tilde{\sigma}^{i,j}_b\big|^2\nonumber \\ &\leq  4MN^{-1}\norm{\tilde{\mathbf{J}}_N} \sum_{j\in \mathcal{I}_N}\chi\big\lbrace \text{For some }i\in I_M \; ,  \tilde{\sigma}^{i,j}_{b+1} \neq \tilde{\sigma}^{i,j}_b\big\rbrace .
\end{align}
Writing $\hat{C}_{\gamma} = 6\sqrt{2} c_1^{3/2}M(M+1)$, we observe that if  $ \norm{\tilde{\mathbf{J}}_N} 
\leq 3\sqrt{c_1\Delta / 2}$ and $N^{-1}\sum_{j\in \mathcal{I}_N}\chi\big\lbrace \text{For some }i\in I_M \; ,  \tilde{\sigma}^{i,j}_{b+1} \neq \tilde{\sigma}^{i,j}_b\big\rbrace \leq c_1 \Delta( M+1)$ then $N^{-1}\sum_{j\in \mathcal{I}_N}\norm{\tilde{G}^{i,j}_{b+1} - \tilde{G}^{i,j}_b}^2 \leq \hat{C}_{\gamma}\Delta^{3/2}$. We thus find that,
\begin{multline*}
\big\lbrace N^{-1}\sum_{j\in \mathcal{I}_N}\norm{\tilde{G}^{i,j}_{b+1} - \tilde{G}^{i,j}_b}^2 >  \Delta^{3/2}\hat{C}_{\gamma} \big\rbrace \subseteq
\big\lbrace N^{-1} |\mathcal{I}_N|  \notin  [c_1\Delta / 2 , c_1 \Delta( M+1)] \big\rbrace \\ \cup \big\lbrace N^{-1} |\mathcal{I}_N|  \in  [c_1\Delta / 2 , c_1 \Delta( M+1)] \text{ and }\norm{\tilde{\mathbf{J}}_N} > 3\sqrt{c_1\Delta / 2}\big\rbrace.
\end{multline*}
It follows from basic properties of Poisson Processes (noted in Lemma \ref{Lemma Upper Bound Jump Rate}) that the probability of the first term on the right hand side is exponentially decaying. It thus remains to prove that 
\begin{equation}
\lsup{N} N^{-1}\log \mathbb{P}\big( |\mathcal{I}_N|  \in  [c_1\Delta / 2 , c_1 \Delta( M+1)] \text{ and }\norm{\tilde{\mathbf{J}}_N} > 3\sqrt{c_1\Delta / 2} \big) < 0.
\end{equation}
Define $\bar{\mathbf{J}}_N$ to be the $|\mathcal{I}_N| \times |\mathcal{I}_N| $ square matrix with elements $|\mathcal{I}_N|^{-\frac{1}{2}} J^{jk}$: that is, $\bar{\mathbf{J}}_N =\sqrt{N}  |\mathcal{I}_N|^{-\frac{1}{2}}\tilde{\mathbf{J}}_N$. This means that 
\[
\big\lbrace \norm{\tilde{\mathbf{J}}_N} \geq 3\sqrt{c_1\Delta / 2} \text{ and } N^{-1} |\mathcal{I}_N|  \in [c_1\Delta / 2 , c_1 \Delta( M+1)]  \big\rbrace \subseteq
\big\lbrace \norm{\bar{\mathbf{J}}_N} \geq 3 \text{ and } N^{-1} |\mathcal{I}_N|  \in [c_1\Delta / 2 , c_1 \Delta( M+1)]  \big\rbrace.
\]
Notice that the (random) indices in $\mathcal{I}_N$ are independent of the static connections $\lbrace J^{jk} \rbrace_{j,k \in \mathbb{Z}^+}$ - since the Poisson Processes $\lbrace Y^{i,j}(t)\rbrace$ are Markovian and independent of the static connections. We can now use known bounds on the dominant eigenvalue of random matrices (as noted in (3) of Lemma \ref{bound connections lemma}) to obtain that
\begin{align}
\lsup{N}N^{-1}\log\mathbb{P}\big( & \norm{\bar{\mathbf{J}}_N} \geq 3  \text{ and } N^{-1} |\mathcal{I}_N|  \in [c_1\Delta / 2 , c_1 \Delta( M+1)]  \big)\nonumber \\
&= \lsup{N}N^{-1}\log\mathbb{E}\big[\mathbb{P}\big( \norm{\bar{\mathbf{J}}_N} \geq 3 \text{ and }N^{-1} |\mathcal{I}_N|  \in [c_1\Delta / 2 , c_1 \Delta( M+1)]  \; | \; \mathbf{Y}(t) \big)\big]\nonumber \\
&\leq  \lsup{N}N^{-1}\log\mathbb{E}\big[\chi\lbrace |\mathcal{I}_N| \geq Nc_1 \Delta / 2  |\rbrace \exp\big(-|\mathcal{I}_N| \Lambda_J \big)\big]\nonumber  \\ &< 0,\label{eq: bound tilde J matrix norm}
\end{align}
as required, where the constant $\Lambda_J$ is defined in Lemma \ref{bound connections lemma}.
\end{proof}

We start with the bound of $\beta^4$ (which is defined in \eqref{eq: beta 4 definition}): this can be thought of as the average `cross-variation' between the spins and fields over the small time interval $\Delta$.

\begin{lemma}
For any $\bar{\epsilon} > 0$, for sufficiently large $n$,
\[
\lsup{N} \sup_{1\leq \mathfrak{q}\leq C^N_{\mathfrak{n}}} N^{-1}\log \mathbb{P}\big(\tilde{\mu}^N \in \mathcal{V}^N_{\mathfrak{q}}, \big|\beta^4\big(\boldsymbol\alpha, \tilde{\boldsymbol\sigma}_b,\tilde{\mathbf{G}}\big)\big| \geq \bar{\epsilon}\Delta \big) < 0.
\]
\end{lemma}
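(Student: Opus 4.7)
The plan is to bound $|\beta^4|$ directly by exploiting the fact that both factors in each summand are ``small'' precisely when the summand is nonzero. Observe first that the difference of indicators $\chi\{\tilde{\boldsymbol\sigma}^j_{b+1}=\boldsymbol\alpha\} - \chi\{\tilde{\boldsymbol\sigma}^j_b=\boldsymbol\alpha\}$ vanishes whenever $\tilde{\boldsymbol\sigma}^j_b = \tilde{\boldsymbol\sigma}^j_{b+1}$, which in turn is automatic for $j \notin \mathcal{I}_N$. Hence only indices $j \in \mathcal{I}_N$ contribute, and each such contribution is bounded by $|\phi^a(\tilde{\mathbf{G}}^j_{b+1}) - \phi^a(\tilde{\mathbf{G}}^j_b)|$. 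Since $\phi^a \in \mathfrak{F}$ has partial derivatives uniformly bounded by $1$, the mean value theorem yields $|\phi^a(\tilde{\mathbf{G}}^j_{b+1}) - \phi^a(\tilde{\mathbf{G}}^j_b)| \leq \sqrt{M}\,\|\tilde{\mathbf{G}}^j_{b+1} - \tilde{\mathbf{G}}^j_b\|$.

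Applying Cauchy--Schwarz then gives
\[
|\beta^4(\boldsymbol\alpha,\tilde{\boldsymbol\sigma},\tilde{\mathbf{G}})| \;\leq\; \sqrt{M}\,\Bigl(\frac{|\mathcal{I}_N|}{N}\Bigr)^{1/2}\Bigl(N^{-1}\sum_{j\in \mathcal{I}_N}\norm{\tilde{\mathbf{G}}^j_{b+1} - \tilde{\mathbf{G}}^j_b}^2\Bigr)^{1/2}.
\]
Now I would bound the two factors on the right separately on events of overwhelming probability. For the first factor, Lemma \ref{Lemma Upper Bound Jump Rate} (or an equivalent elementary Poisson concentration argument, much as in the proof of Lemma \ref{Lemma bound fluctuations}) gives
\[
\lsup{N} N^{-1}\log \mathbb{P}\bigl(|\mathcal{I}_N| / N \geq 2Mc_1\Delta\bigr) < 0,
\]
since $\mathbb{E}[|\mathcal{I}_N|/N] \leq Mc_1\Delta$. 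For the second factor, Lemma \ref{Lemma square norm G difference} (applied for all sufficiently large $n$) gives a constant $\hat{C}_\gamma > 0$ such that $N^{-1}\sum_{j\in \mathcal{I}_N}\|\tilde{\mathbf{G}}^j_{b+1} - \tilde{\mathbf{G}}^j_b\|^2 \leq \hat{C}_\gamma \Delta^{3/2}$ with exponentially decaying complement probability.

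Putting these two bounds together via Lemma \ref{Lemma Many Events}, on the intersection of the two favorable events we obtain
\[
|\beta^4(\boldsymbol\alpha,\tilde{\boldsymbol\sigma},\tilde{\mathbf{G}})| \;\leq\; \sqrt{M}\cdot\sqrt{2Mc_1\Delta}\cdot\sqrt{\hat{C}_\gamma}\,\Delta^{3/4} \;=\; C_\star \Delta^{5/4}
\]
for some constant $C_\star$ independent of $n$ and $\mathfrak{q}$. Since $\Delta = T/n$, for all sufficiently large $n$ we have $C_\star \Delta^{5/4} < \bar\epsilon \Delta$, so the event $\{|\beta^4| \geq \bar\epsilon \Delta\}$ is contained in the union of the two complement events, both of which have exponentially decaying probability (uniformly in $\mathfrak{q}$, since Lemma \ref{Lemma square norm G difference} is uniform in $\mathfrak{q}$ and the bound on $|\mathcal{I}_N|$ does not depend on $\mathfrak{q}$). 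I do not anticipate any serious obstacle: the only mild subtlety is ensuring the uniformity in $\mathfrak{q}$, which is automatic because the $Y^{i,j}_b$ that define $\mathcal{I}_N$ are renewed unit-intensity Poisson processes independent of $\mathfrak{q}$, and the matrix-norm bound on $\tilde{\mathbf{J}}_N$ used in Lemma \ref{Lemma square norm G difference} is similarly independent of $\mathfrak{q}$.
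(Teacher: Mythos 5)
Your proof is correct and follows essentially the same route as the paper: restrict the sum to the jump indices $\mathcal{I}_N$, use the Lipschitz/derivative bound on $\phi^a$ together with Cauchy--Schwarz, control $|\mathcal{I}_N|/N$ by the Poisson bound of Lemma \ref{Lemma Upper Bound Jump Rate} and the field increments by the random-matrix estimate, and conclude $|\beta^4| = O(\Delta^{5/4}) < \bar{\epsilon}\Delta$ for large $n$. The only difference is cosmetic: you invoke Lemma \ref{Lemma square norm G difference} to package the bound $N^{-1}\sum_{j\in\mathcal{I}_N}\norm{\tilde{\mathbf{G}}^j_{b+1}-\tilde{\mathbf{G}}^j_b}^2 \leq \hat{C}_\gamma\Delta^{3/2}$, whereas the paper re-derives the same submatrix operator-norm estimate inline with a case split on $|\mathcal{I}_N|$; both yield the identical conclusion with uniformity in $\mathfrak{q}$.
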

\begin{proof}

Now for some $z > 0$,
\begin{multline}
\lsup{N}N^{-1}\log \mathbb{P}\big(\tilde{\mu}^N \in \mathcal{V}^N_{\mathfrak{q}},\big|\beta^4\big(\boldsymbol\alpha, \tilde{\boldsymbol\sigma},\tilde{\mathbf{G}}\big)\big| \geq \bar{\epsilon} \Delta\big) 
\leq \max\big\lbrace  \lsup{N}N^{-1}\log \mathbb{P}\big(\big|\mathcal{I}_N\big| > N (Mc_1 \Delta + z)  \big) ,\\ \lsup{N} \sup_{1\leq \mathfrak{q}\leq C^N_{\mathfrak{n}}}N^{-1}\log  \mathbb{P}\big(  \big|\beta^4\big(\boldsymbol\alpha, \tilde{\boldsymbol\sigma},\tilde{\mathbf{G}}\big)\big|\geq \bar{\epsilon}\Delta ,\big|\mathcal{I}_N\big| \leq N ( Mc_t \Delta + z) ,\tilde{\mu}^N \in \mathcal{V}^N_{\mathfrak{q}} \big) \big\rbrace \label{eq: tmp Q n beta 6}
\end{multline}
By Lemma \ref{Lemma Upper Bound Jump Rate},
\[
 \lsup{N}N^{-1}\log \mathbb{P}\big(\big|\mathcal{I}_N\big| > N ( M c_1 \Delta + z)  \big) < 0.
\]
It remains to prove that the second term on the right-hand-side of \eqref{eq: tmp Q n beta 6} is negative. Thanks to the identity in \eqref{eq: tilde sigma alternative definition}, if $Y^{i,j}_b(c_1 \Delta+ z ) = 0$ for all $i\in I_M$ then $\tilde{\boldsymbol\sigma}_{b+1} = \tilde{\boldsymbol\sigma}_b$ and $\chi \lbrace \tilde{\boldsymbol\sigma}^{j}_{b+1} = \boldsymbol\alpha \rbrace - \chi \lbrace \tilde{\boldsymbol\sigma}^{j}_b = \boldsymbol\alpha \rbrace  = 0$. We thus have that
\begin{align}
 \beta^4\big(\boldsymbol\alpha, \tilde{\boldsymbol\sigma}_b,\tilde{\mathbf{G}}\big)  =& N^{-1} \sum_{j\in \mathcal{I}_N}\big\lbrace \phi^a(\tilde{\mathbf{G}}^{j}_{b+1})- \phi^a(\tilde{\mathbf{G}}^{j}_b)\big\rbrace\big\lbrace \chi (\tilde{\boldsymbol\sigma}^{j}_{b+1} = \boldsymbol\alpha ) - \chi (\tilde{\boldsymbol\sigma}^{j}_b = \boldsymbol\alpha ) \big\rbrace \label{eq: beta 6 Taylor 0} \\
=& N^{-1}\sum_{j\in \mathcal{I}_N \; i\in I_M} \phi^a_i(\bar{\mathbf{G}}^{j})(\tilde{\mathbf{G}}^{i,j}_{b+1} -\tilde{\mathbf{G}}^{i,j}_{b}  )\big\lbrace \chi (\tilde{\boldsymbol\sigma}^{j}_{b+1} = \boldsymbol\alpha ) - \chi (\tilde{\boldsymbol\sigma}^{j}_b = \boldsymbol\alpha ) \big\rbrace,\label{eq: beta 6 Taylor}
\end{align}
for $\bar{\mathbf{G}}^j = \lambda_j \tilde{\mathbf{G}}^j_b + (1-\lambda_j)\tilde{\mathbf{G}}^j_{b+1}$, for some $\lambda_j \in [0,1]$, by the Taylor Remainder Theorem. It follows from \eqref{eq: beta 6 Taylor 0} that if $|\mathcal{I}_N| < N\bar{\epsilon}\Delta / 4$, then since $|\phi^a| \leq 1$, it must necessarily be the case that
\[
\big|  \beta^4\big(\boldsymbol\alpha, \tilde{\boldsymbol\sigma},\tilde{\mathbf{G}}\big) \big| < \bar{\epsilon} \Delta, 
\]
as required. It thus suffices for us to prove that 
\begin{equation}
\lsup{N} N^{-1}\log \mathbb{P}\big(\tilde{\mu}^N \in \mathcal{V}^N_{\mathfrak{q}},\big|  \beta^4\big(\boldsymbol\alpha, \tilde{\boldsymbol\sigma},\tilde{\mathbf{G}}\big) \big| \geq \bar{\epsilon}\Delta\text{ and }\big|\mathcal{I}_N\big| \in [N\bar{\epsilon}\Delta / 4, N ( Mc_1 \Delta + z) ] \big) < 0.
\end{equation}
Using a union-of-events bound,
\begin{multline*}
\lsup{N} N^{-1}\log \mathbb{P}\big( \big|  \beta^4\big(\boldsymbol\alpha, \tilde{\boldsymbol\sigma}_b,\tilde{\mathbf{G}}\big) \big| \geq \bar{\epsilon}\Delta\text{ and }\big|\mathcal{I}_N\big| \in [N\bar{\epsilon}\Delta / 4, N ( Mc_1 \Delta + z) ] \big)  \\
\leq \max\big\lbrace \lsup{N}N^{-1}\log \mathbb{P}\big( \norm{\tilde{\mathbf{J}}_N} \geq 3 |\mathcal{I}_N|^{\frac{1}{2}}N^{-1/2} \text{ and }|\mathcal{I}_N| \geq N\bar{\epsilon}\Delta / 4\big) , \\
\lsup{N}N^{-1}\log \mathbb{P}\big( \big|  \beta^4\big(\boldsymbol\alpha, \tilde{\boldsymbol\sigma}_b,\tilde{\mathbf{G}}\big) \big| \geq \bar{\epsilon}\Delta\text{ and }\big|\mathcal{I}_N\big| \in [N\bar{\epsilon}\Delta / 4, N ( Mc_1 \Delta + z) ] \text{ and }\norm{\tilde{\mathbf{J}}} \leq 3 |\mathcal{I}_N|^{\frac{1}{2}}N^{-1/2}  \big) \big\rbrace,
\end{multline*}
where $\tilde{\mathbf{J}}_N$ is the $|\mathcal{I}_N | \times  |\mathcal{I}_N |$ square matrix with entries given by $\lbrace N^{-1/2} J^{jk} \rbrace_{j,k \in \mathcal{I}_N}$. Just as we proved in \eqref{eq: bound tilde J matrix norm}, for small enough $\Delta$,
\begin{align*}
 \lsup{N}N^{-1}\log \mathbb{P}\big( \norm{\tilde{\mathbf{J}}_N} \geq 3 |\mathcal{I}_N|^{\frac{1}{2}}N^{-1/2} \text{ and }|\mathcal{I}_N| \geq N\bar{\epsilon}\Delta / 4\big)  
 \leq  \lsup{N}N^{-1}\log \mathbb{P}\big( \norm{\tilde{\mathbf{J}}_N} \geq 3 N\bar{\epsilon}\Delta / 4\big)  < 0.
\end{align*}
It thus suffices for us to prove that for $\Delta$ sufficiently small,
\begin{equation}\label{eq: to show beta 6}
\lsup{N} N^{-1}\log \mathbb{P}\big(  \big|  \beta^4\big(\boldsymbol\alpha, \tilde{\boldsymbol\sigma}_b,\tilde{\mathbf{G}}\big) \big| \geq \bar{\epsilon}\Delta\text{ and }\big|\mathcal{I}_N\big| \in [N\bar{\epsilon}\Delta / 4, N ( Mc_1 \Delta + z) ] \text{ and }\norm{\tilde{\mathbf{J}}} \leq 3 |\mathcal{I}_N|^{\frac{1}{2}}N^{-1/2}  \big) < 0.
\end{equation}
 To this end, we obtain from \eqref{eq: beta 6 Taylor} that, since $|\phi^a_i | \leq 1$, by the Cauchy-Schwarz Inequality,
\begin{align*}
 \big|  \beta^4\big(\boldsymbol\alpha, \tilde{\boldsymbol\sigma}_b,\tilde{\mathbf{G}}\big) \big|^2 &\leq N^{-2}\sum_{j\in \mathcal{I}_N \; i\in I_M}(\tilde{\mathbf{G}}^{i,j}_{b+1} -\tilde{\mathbf{G}}^{i,j}_{b}  )^2\times \sum_{j\in \mathcal{I}_N} \big\lbrace \chi (\tilde{\boldsymbol\sigma}^{j}_{b+1} = \boldsymbol\alpha ) - \chi (\tilde{\boldsymbol\sigma}^{j}_b = \boldsymbol\alpha ) \big\rbrace^2 \\
&\leq N^{-2}\sum_{j\in \mathcal{I}_N \; i\in I_M}\norm{\tilde{\mathbf{J}}_N}\big(\tilde{\sigma}^{i,j}_{b+1} - \tilde{\sigma}^{i,j}_b\big)^2 \times  \sum_{j\in \mathcal{I}_N} \big\lbrace \chi (\tilde{\boldsymbol\sigma}^{j}_{b+1} = \boldsymbol\alpha ) - \chi (\tilde{\boldsymbol\sigma}^{j}_b = \boldsymbol\alpha ) \big\rbrace^2.
\end{align*}
Now if $ |\mathcal{I}_N| \leq N (M c_1 \Delta + z)$ and $\norm{\tilde{\mathbf{J}}_N}\leq 3 |\mathcal{I}_N|^{\frac{1}{2}}N^{-1/2} \leq 3 (Mc_1 \Delta + z )^{\frac{1}{2}}$, it must be that, (since $(\tilde{\sigma}^{i,j}_{b+1} - \tilde{\sigma}^{i,j}_b)^2 \leq 4$),
\begin{align}
 \big|  \beta^4\big(\boldsymbol\alpha, \tilde{\boldsymbol\sigma}_b,\tilde{\mathbf{G}}\big) \big|^2 \leq 16 N^{-2} |\mathcal{I}_N|^2 3 \big(M c_1 \Delta + z \big)^{\frac{1}{2}}.\label{eq: beta 6 temporary}
\end{align}
We choose $z = \Delta$, and find that \eqref{eq: beta 6 temporary} implies that 
\begin{align*}
 \big|  \beta^4\big(\boldsymbol\alpha, \tilde{\boldsymbol\sigma}_b,\tilde{\mathbf{G}}\big) \big|^2 \leq \text{Const}\times (\Delta)^{\frac{5}{2}} .
\end{align*}
This means that for $\Delta$ sufficiently small,
\[
\mathbb{P}\big(  \big|  \beta^4\big(\boldsymbol\alpha, \tilde{\boldsymbol\sigma}_b,\tilde{\mathbf{G}}\big) \big| \geq \bar{\epsilon}\Delta\text{ and }\big|\mathcal{I}_N\big| \in [N\bar{\epsilon}\Delta, N ( M c_1 \Delta + z) ] \text{ and }\norm{\tilde{\mathbf{J}}} \leq 3 |\mathcal{I}_N|^{\frac{1}{2}}  \big) = 0,
\]
which implies \eqref{eq: to show beta 6}, as required.

\end{proof}

\begin{lemma} \label{Lemma beta 5 bound}
For any $\bar{\epsilon} > 0$, for large enough $n$
\begin{equation}
\sup_{0\leq b < n}\sup_{\boldsymbol\alpha \in \mathcal{E}^M}\lsup{N}N^{-1}\log \sup_{1\leq \mathfrak{q}\leq C^N_{\mathfrak{n}}}\mathbb{P}\big(\mathcal{J}_N,\tilde{\mu}^N \in \mathcal{V}^N_{\mathfrak{q}},\tilde{\tau}_N > t^{(n)}_b,\big|\beta^5(\boldsymbol\alpha,\tilde{\boldsymbol\sigma},\tilde{\mathbf{G}}) \big| \geq \bar{\epsilon} \Delta / 18 \big) < 0
\end{equation}
\end{lemma}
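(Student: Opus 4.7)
The starting point is Taylor's theorem applied at $\tilde{\mathbf{G}}^j_b$. Since each $\phi^a\in\mathfrak{F}$ has third partial derivatives uniformly bounded by $1$, the Lagrange remainder satisfies $|R^j|\leq \tfrac{M^{3/2}}{6}\norm{v^j}^3$, where $v^j:=\tilde{\mathbf{G}}^j_{b+1}-\tilde{\mathbf{G}}^j_b$. This yields the pointwise bound $|\beta^5(\boldsymbol\alpha,\tilde{\boldsymbol\sigma},\tilde{\mathbf{G}})|\leq \tfrac{M^{3/2}}{6}N^{-1}\sum_{j\in I_N}\norm{v^j}^3$. Hence it suffices to prove that for some constant $C$,
\[
\lsup{N}N^{-1}\log\mathbb{P}\big(\mathcal{J}_N,\,N^{-1}\sum_{j\in I_N}\norm{v^j}^3\geq C\Delta^{3/2}\big)<0,
\]
because for $n$ large enough $C\Delta^{3/2}<\bar\epsilon\Delta/(3M^{3/2})$.

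The key observation is that the process $\tilde{\boldsymbol\sigma}$ defined in \eqref{eq: tilde sigma q j t definition} has jump intensities $c(\tilde\sigma^{q,j}_{i,s},\tilde{\mathfrak{G}}^{q,j}_{i,s})$ depending only on the deterministic piecewise-constant reference field $\mathfrak{G}_i$, not on the fields $\tilde{\mathbf{G}}$. Consequently $\tilde{\boldsymbol\sigma}$ is measurable with respect to the Poisson clocks $\lbrace Y^{i,j}\rbrace$ alone and is independent of the Gaussian connection matrix $\mathbf{J}$. Conditioning on $\tilde{\boldsymbol\sigma}$, each increment $v^j=N^{-1/2}\sum_{k\in\mathcal{I}_N}J^{jk}(\tilde{\boldsymbol\sigma}^k_{b+1}-\tilde{\boldsymbol\sigma}^k_b)$ is a centred Gaussian vector in $\mathbb{R}^M$; a direct computation with \eqref{eq: covariance gamma} gives $\mathrm{Cov}(v^{i,j},v^{p,j})=N^{-1}\sum_k\Delta\tilde\sigma^{i,k}\Delta\tilde\sigma^{p,k}+\mathfrak{s}N^{-1}\Delta\tilde\sigma^{i,j}\Delta\tilde\sigma^{p,j}$ (of order $\Delta$) and, for $j\neq j'$, $\mathrm{Cov}(v^{i,j},v^{p,j'})=\mathfrak{s}N^{-1}\Delta\tilde\sigma^{i,j'}\Delta\tilde\sigma^{p,j}$ (of order $1/N$). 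Thus the $v^j$'s are exactly independent across $j$ in the fully asymmetric case ($\mathfrak{s}=0$), and are correlated with strength at most $4\mathfrak{s}/N$ in the general case.

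With this independence at hand, I would note that $\norm{v^j}^3$ is, conditionally on $\tilde{\boldsymbol\sigma}$, a sub-exponential random variable with mean and parameter both of order $\Delta^{3/2}$ (standard Gaussian moment bounds, using that the covariance of $v^j$ has operator norm $\lesssim \Delta$). A Bernstein-type concentration inequality for sums of independent sub-exponential variables, applied conditionally on $\tilde{\boldsymbol\sigma}$ and after restricting to the high-probability event $\{|\mathcal{I}_N|\leq CN\Delta\}$ (from Lemma \ref{Lemma Upper Bound Jump Rate}), then yields
\[
\mathbb{P}\big(N^{-1}\sum_{j\in I_N}\norm{v^j}^3\geq C\Delta^{3/2}\,\big|\,\tilde{\boldsymbol\sigma}\big)\leq \exp\big(-cN\Delta^{-1/2}\big).
\]
Integrating over $\tilde{\boldsymbol\sigma}$ preserves the bound since it is uniform in the spin trajectory, which proves the lemma via Lemma \ref{Lemma Many Events}.

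The principal obstacle is converting the heuristic near-independence of the $v^j$'s in the symmetric case ($\mathfrak{s}>0$) into a clean application of Bernstein. The cleanest route is to decompose $v^{i,j}=\bar v^{i,j}+\tilde v^{i,j}$, where $\bar v^{i,j}$ collects the contributions from $J^{jk}$ with $k\notin\mathcal{I}_N\setminus\{j\}$ (whence $\bar v^j$ is independent of $\bar v^{j'}$ for $j\neq j'$) and $\tilde v^{i,j}$ collects the remaining terms that couple different $j$'s through the symmetry $J^{jk}=J^{kj}$. The independent part $\bar v$ is handled by the Bernstein argument above; the symmetric-coupling remainder $\tilde v$ is a bilinear Gaussian form of variance $O(\Delta/N)$ per entry whose aggregate contribution $N^{-1}\sum_j\norm{\tilde v^j}^3$ is controlled on $\mathcal{J}_N$ by $\norm{\mathbf{J}_N}\leq 3$, exactly as in the proof of Lemma \ref{Lemma square norm G difference}, and shown to be $o(\Delta)$ with exponentially small exceptional probability.
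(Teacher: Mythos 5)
Your pointwise Taylor bound and your structural observations are fine: the Lagrange remainder does give $|\beta^5|\leq \tfrac{M^{3/2}}{6}N^{-1}\sum_j\norm{v^j}^3$, the process $\tilde{\boldsymbol\sigma}$ is indeed independent of $\mathbf{J}$, and conditionally on $\tilde{\boldsymbol\sigma}$ the increments $v^j$ are Gaussian with per-spin covariance of order $\Delta$, exactly independent across $j\in\mathcal{I}_N^c$ and only $O(\mathfrak{s}/N)$-coupled otherwise. The gap is the concentration step. The variable $\norm{v^j}^3$ is \emph{not} sub-exponential: it is a cube of a Gaussian norm, so its tails decay like $\exp(-c t^{2/3}/\Delta)$ and its moment generating function is infinite for every positive parameter. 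A Bernstein-type bound therefore cannot be applied, and the conclusion you claim is in fact false at the required scale: conditionally on a typical $\tilde{\boldsymbol\sigma}$ (with $|\mathcal{I}_N|\asymp N\Delta$), the single event $\norm{v^1}\geq (NC)^{1/3}\Delta^{1/2}$ already forces $N^{-1}\sum_j\norm{v^j}^3\geq C\Delta^{3/2}$ and has probability of order $\exp(-c'N^{2/3})$, so $N^{-1}\log\mathbb{P}\big(N^{-1}\sum_j\norm{v^j}^3\geq C\Delta^{3/2}\,\big|\,\tilde{\boldsymbol\sigma}\big)\to 0$ rather than staying negative, let alone being $\leq -cN^{0}\Delta^{-1/2}$ as you assert. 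Restricting to $\{|\mathcal{I}_N|\leq CN\Delta\}$ does not help (it only fixes the variance scale), and the event $\mathcal{J}_N$ cannot be imported into your conditional Chernoff computation the way you integrate over $\tilde{\boldsymbol\sigma}$ at the end; nor would a bound like $\max_j\norm{v^j}\leq K$ be available with exceptional probability $e^{-cN}$.

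This is precisely why the paper's proof never tries to control a cubic functional of the increments. It splits at the threshold $\Delta^{2/5}$: when $\sup_i|v^{i,j}|\leq\Delta^{2/5}$ the third-order remainder is \emph{deterministically} at most $M^3\Delta^{6/5}/6=o(\Delta)$, and when some $|v^{i,j}|>\Delta^{2/5}$ the remainder is bounded not by a cube but by $C(1+\sum_p|v^{p,j}|^2)$, i.e.\ an indicator times a \emph{quadratic} in the Gaussian increment. Quadratic functionals do have finite exponential moments when the variance is $O(\Delta)$, and the indicator makes the conditional exponential moment $1+e^{-k\Delta^{-1/5}}$, which is what drives the Chernoff bound in Lemma \ref{eq: bound square norm G} (for $j\in\mathcal{I}_N^c$, using exactly the independence you identified), while the flipped indices $j\in\mathcal{I}_N$ are handled through $\norm{\tilde{\mathbf{J}}_N}$ and Lemma \ref{Lemma square norm G difference}. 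To repair your argument you would need to introduce such a truncation (or otherwise reduce to quadratic-type functionals) before invoking any exponential-moment inequality; as written, the reduction to $N^{-1}\sum_j\norm{v^j}^3$ with a Bernstein bound does not prove the lemma.
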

\begin{proof}
Recall the definition of $\beta^5$:
\begin{multline}\label{eq: beta 5 definition}
\beta^5(\boldsymbol\alpha,\tilde{\boldsymbol\sigma},\tilde{\mathbf{G}}) = N^{-1}\sum_{j\in I_N}\chi\lbrace \tilde{\boldsymbol\sigma}^j_b= \boldsymbol\alpha\rbrace \big( \phi^a(\tilde{\mathbf{G}}_{b+1}^{j} )- \phi^a(\tilde{\mathbf{G}}_{b}^{j} )-\sum_{i\in I_M} \phi
^a_i(\tilde{\mathbf{G}}^{j}_b)\lbrace \tilde{G}^{i,j}_{b+1} - \tilde{G}^{i,j}_b \rbrace \\ -\frac{1}{2}\sum_{i,p\in I_M }\phi^a_{ip}(\tilde{\mathbf{G}}_b^{j})\big\lbrace \tilde{G}^{i,j}_{b+1} - \tilde{G}^{i,j}_b \big\rbrace \big\lbrace \tilde{G}^{p,j}_{b+1} - \tilde{G}^{p,j}_b \big\rbrace \big) .
\end{multline}
If $\sup_{i\in I_M}|\tilde{G}^{i,j}_{b+1} - \tilde{G}^{i,j}_b| \leq (\Delta)^{2/5}$, then it follows from Taylor's Theorem that
\begin{multline*}
\big|\chi\lbrace \tilde{\boldsymbol\sigma}^j_b= \boldsymbol\alpha\rbrace \big( \phi^a(\tilde{\mathbf{G}}_{b+1}^{j} )- \phi^a(\tilde{\mathbf{G}}_{b}^{j} )-\sum_{i\in I_M} \phi
^a_i(\tilde{\mathbf{G}}^{j}_b)\lbrace \tilde{G}^{i,j}_{b+1} - \tilde{G}^{i,j}_b \rbrace \\ -\frac{1}{2}\sum_{i,p\in I_M }\phi^a_{ip}(\tilde{\mathbf{G}}_b^{j})\big\lbrace \tilde{G}^{i,j}_{b+1} - \tilde{G}^{i,j}_b \big\rbrace \big\lbrace \tilde{G}^{p,j}_{b+1} - \tilde{G}^{p,j}_b \big\rbrace \big) \big| \leq \frac{1}{6}\big|\sum_{i,p,q\in I_M} \phi^a_{ipq}(\hat{\mathbf{G}}^j)|\tilde{G}^{i,j}_{b+1} - \tilde{G}^{i,j}_b||\tilde{G}^{p,j}_{b+1} - \tilde{G}^{p,j}_b||\tilde{G}^{q,j}_{b+1} - \tilde{G}^{q,j}_b| \big| \\
\leq M^3(\Delta)^{6/5} / 6 \leq \Delta \bar{\epsilon} / 36,
\end{multline*}
once $n$ is sufficiently large (since $\Delta = Tn^{-1}$). Write $\tilde{I}_N = \big\lbrace j\in I_N: \sup_{i\in I_M}|\tilde{G}^{i,j}_{b+1} - \tilde{G}^{i,j}_b| \leq (\Delta)^{2/5}\big\rbrace$. Since the magnitude of $\phi$ and its first three derivatives are all upperbounded by $1$, it must be that there is a constant $C$ such that
\begin{multline}
\big| \chi\lbrace \tilde{\boldsymbol\sigma}^j_b= \boldsymbol\alpha\rbrace \big( \phi^a(\tilde{\mathbf{G}}_{b+1}^{j} )- \phi^a(\tilde{\mathbf{G}}_{b}^{j} )-\sum_{i\in I_M} \phi
^a_i(\tilde{\mathbf{G}}^{j}_b)\lbrace \tilde{G}^{i,j}_{b+1} - \tilde{G}^{i,j}_b \rbrace -\frac{1}{2}\sum_{i,p\in I_M }\phi^a_{ip}(\tilde{\mathbf{G}}_b^{j})\big\lbrace \tilde{G}^{i,j}_{b+1} - \tilde{G}^{i,j}_b \big\rbrace \big\lbrace \tilde{G}^{p,j}_{b+1} - \tilde{G}^{p,j}_b \big\rbrace \big)\big| \\ \leq C\big(1 + \sum_{p\in I_M} \big|\tilde{G}^{p,j}_{b+1} - \tilde{G}^{p,j}_b\big|^2 \big).
\end{multline}
The previous two equations imply that
\begin{multline}
\big|  \beta^5\big(\boldsymbol\alpha, \tilde{\boldsymbol\sigma}_b,\tilde{\mathbf{G}}_b\big) \big| \leq  \Delta \bar{\epsilon} / 36 + CN^{-1}\sum_{j\in I_N}\big[\chi\big\lbrace \sup_{i\in I_M}|\tilde{G}^{i,j}_{b+1} - \tilde{G}^{i,j}_b| > (\Delta)^{2/5} \big\rbrace \big(1 + \sum_{p\in I_M} \big|\tilde{G}^{p,j}_{b+1} - \tilde{G}^{p,j}_b\big|^2\big)\big].
\end{multline}
It thus suffices to prove that
\begin{equation*}
\lsup{N}N^{-1}\log \sup_{1\leq \mathfrak{q}\leq C^N_{\mathfrak{n}}}\mathbb{P}\big(\mathcal{J}_N, CN^{-1}\sum_{j\in I_N}\big[\chi\big\lbrace \sup_{i\in I_M}|\tilde{G}^{i,j}_{b+1} - \tilde{G}^{i,j}_b| > (\Delta)^{2/5} \big\rbrace \big(1 + \sum_{p\in I_M} \big|\tilde{G}^{p,j}_{b+1} - \tilde{G}^{p,j}_b\big|^2\big)\big]  > \Delta \bar{\epsilon} / 36 \big) < 0.
\end{equation*}
To establish the above equation, it suffices in turn to prove that (writing $\breve{\epsilon} = \bar{\epsilon} / 108$),
\begin{align}
&\lsup{N}N^{-1}\log \sup_{1\leq \mathfrak{q}\leq C^N_{\mathfrak{n}}}\mathbb{P}\big(\mathcal{J}_N, CN^{-1}\sum_{j\in \mathcal{I}_N}\big[\chi\big\lbrace \sup_{i\in I_M}|\tilde{G}^{i,j}_{b+1} - \tilde{G}^{i,j}_b| > (\Delta)^{2/5} \big\rbrace \big(1 + \sum_{p\in I_M} \big|\tilde{G}^{p,j}_{b+1} - \tilde{G}^{p,j}_b\big|^2\big)\big]  >\Delta \breve{\epsilon} \big) < 0\label{eq: to establish beta 5 1} \\
&\lsup{N}N^{-1}\log \sup_{1\leq \mathfrak{q}\leq C^N_{\mathfrak{n}}}\mathbb{P}\big(\mathcal{J}_N, CN^{-1}\sum_{j \in \mathcal{I}_N^c}\big[\chi\big\lbrace \sup_{i\in I_M}|\tilde{G}^{i,j}_{b+1} - \tilde{G}^{i,j}_b| > (\Delta)^{2/5} \big\rbrace  \sum_{p\in I_M} \big|\tilde{G}^{p,j}_{b+1} - \tilde{G}^{p,j}_b\big|^2\big]  >\Delta \breve{\epsilon} \big) < 0 \label{eq: to establish beta 5 2}\\
&\lsup{N}N^{-1}\log \sup_{1\leq \mathfrak{q}\leq C^N_{\mathfrak{n}}}\mathbb{P}\big(\mathcal{J}_N, CN^{-1}\sum_{j \in \mathcal{I}_N^c}\big[\chi\big\lbrace \sup_{i\in I_M}|\tilde{G}^{i,j}_{b+1} - \tilde{G}^{i,j}_b| > (\Delta)^{2/5} \big\rbrace \big]  >\Delta \breve{\epsilon} \big) < 0,\label{eq: to establish beta 5 3}
\end{align}
and we recall that $\mathcal{I}_N^c = \lbrace j\in I_N : j\notin \mathcal{I}_N \rbrace$. Starting with \eqref{eq: to establish beta 5 1}, observe that
\begin{multline}
\big\lbrace N^{-1}\sum_{j\in \mathcal{I}_N}\big[\chi\big\lbrace \sup_{i\in I_M}|\tilde{G}^{i,j}_{b+1} - \tilde{G}^{i,j}_b| > (\Delta)^{2/5} \big\rbrace \big(1 + \sum_{p\in I_M} \big|\tilde{G}^{p,j}_{b+1} - \tilde{G}^{p,j}_b\big|^2\big)\big]  >\Delta \breve{\epsilon}C^{-1} \big\rbrace \\ 
\subseteq \big\lbrace  N^{-1}\sum_{j\in \mathcal{I}_N}\sum_{p\in I_M} \big|\tilde{G}^{p,j}_{b+1} - \tilde{G}^{p,j}_b\big|^2 \big(\Delta^{-1/5} +1 \big) \big)  >\Delta \breve{\epsilon} /C \big\rbrace 
\end{multline}
The probability of the right hand side is exponentially decaying (for large enough $n$), as a consequence of Lemma \ref{Lemma square norm G difference}. The inequalities \eqref{eq: to establish beta 5 2} and \eqref{eq: to establish beta 5 3} are established in Lemma \ref{eq: bound square norm G}.

\end{proof}
\begin{lemma}\label{eq: bound square norm G}
For any $\epsilon > 0$, for sufficiently large $n\in \mathbb{Z}^+$ (and recalling that $\Delta = T/n$),
\begin{align*}
\lsup{N} \sup_{1\leq \mathfrak{q}\leq C^N_{\mathfrak{n}}} N^{-1}\log \mathbb{P}\big( N^{-1}\sum_{j\in \mathcal{I}^c_N}\chi\big\lbrace \big| \tilde{G}^{i,j}_{b+1} - \tilde{G}^{i,j}_b\big| \geq (\Delta)^{2/5} \text{ for some }i\in I_M\big\rbrace  \geq \epsilon \Delta\big) &<0 \\
\lsup{N} \sup_{1\leq \mathfrak{q}\leq C^N_{\mathfrak{n}}} N^{-1}\log \mathbb{P}\big( N^{-1}\sum_{j\in \mathcal{I}^c_N}\chi\big\lbrace \big| \tilde{G}^{i,j}_{b+1} - \tilde{G}^{i,j}_b\big| \geq (\Delta)^{2/5} \text{ for some }i\in I_M\big\rbrace \big| \tilde{G}^{i,j}_{b+1} - \tilde{G}^{i,j}_b\big|^2 \geq \epsilon \Delta\big) &<0 .
\end{align*}
\end{lemma}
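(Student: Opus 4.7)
The plan is to exploit the fact that, thanks to the change of measure constructed in Section \ref{Section Change of Measure}, the spin process $\tilde{\boldsymbol\sigma}$ is driven by the Poisson family $\mathbf{Y}$ and is independent of the connections $\mathbf{J}$. I would condition throughout on the $\sigma$-algebra generated by $\mathbf{Y}$ (equivalently, on the full spin trajectory). On this conditioning $\mathcal{I}_N$ and $\mathcal{I}_N^c$ are frozen, and for every $j\in \mathcal{I}_N^c$ (so that $\tilde{\sigma}^{q,j}_{b+1}=\tilde{\sigma}^{q,j}_b$) we have the crucial representation
\begin{equation*}
\tilde{G}^{i,j}_{b+1}-\tilde{G}^{i,j}_b \;=\; N^{-1/2}\sum_{k\in \mathcal{I}_N} J^{jk}\bigl(\tilde{\sigma}^{i,k}_{b+1}-\tilde{\sigma}^{i,k}_b\bigr),
\end{equation*}
which, given the spin trajectory, is a centered Gaussian in the disorder with conditional variance at most $4|\mathcal{I}_N|/N$. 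By Lemma \ref{Lemma Upper Bound Jump Rate} I can restrict with exponentially small error to the event $\{|\mathcal{I}_N|\le 2Mc_1 N\Delta\}$, on which this variance is bounded by $8Mc_1\Delta$.

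I would then observe the key independence structure: for distinct $j,j'\in \mathcal{I}_N^c$ and $k,k'\in \mathcal{I}_N$, the covariance $\Exp^\gamma[J^{jk}J^{j'k'}]$ vanishes because $j\neq j'$ kills the direct term in \eqref{eq: covariance gamma}, while the symmetric term requires $j=k'\in \mathcal{I}_N$, contradicting $j\in \mathcal{I}_N^c$. Hence, conditionally on $\mathbf{Y}$, the increments $\{\tilde{\mathbf{G}}^j_{b+1}-\tilde{\mathbf{G}}^j_b\}_{j\in \mathcal{I}_N^c}$ are \emph{mutually independent} Gaussian vectors. A standard Gaussian tail bound yields, uniformly in $j\in\mathcal{I}_N^c$ and $i\in I_M$,
\begin{equation*}
p_\Delta \,:=\, \mathbb{P}\bigl(|\tilde{G}^{i,j}_{b+1}-\tilde{G}^{i,j}_b|\ge \Delta^{2/5}\,\big|\,\mathbf{Y}\bigr)\;\le\; 2\exp\bigl(-c\,\Delta^{-1/5}\bigr)
\end{equation*}
for a constant $c=c(M,c_1)>0$, which is much smaller than $\epsilon\Delta$ once $n$ is large.

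For the first inequality, the sum $N^{-1}\sum_{j\in\mathcal{I}_N^c}\chi\{\cdot\}$ is (conditionally on $\mathbf{Y}$) a normalized sum of at most $N$ independent Bernoulli$(\le Mp_\Delta)$ indicators, so Chernoff's bound gives conditional decay of order $\exp(-N c'\epsilon\Delta)$; the unconditional bound follows immediately after taking expectation in $\mathbf{Y}$. For the second inequality I would partition into dyadic shells $\{|\tilde G^{i,j}_{b+1}-\tilde G^{i,j}_b|\in[2^r\Delta^{2/5},2^{r+1}\Delta^{2/5}]\}$ for $r\ge 0$: on shell $r$ the squared contribution of each $j$ is at most $M\cdot 4^{r+1}\Delta^{4/5}$ and each event has conditional Bernoulli parameter $\le 2\exp(-c\,4^r\Delta^{-1/5})$. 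Chernoff applied per shell gives decay $\exp(-Nc''\epsilon\Delta\cdot 4^{-r}\Delta^{-4/5})$ which, after multiplication by the geometrically growing prefactor $4^r$, is still summable in $r$; taking a union bound over the $O(\log(1/\Delta))$ relevant shells and over $i\in I_M$ yields the required conclusion.

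The main obstacle, and what this argument carefully addresses, is disentangling the Gaussian randomness in $\mathbf{J}$ from the Poisson randomness in $\mathbf{Y}$: this is exactly what the constructions of Section \ref{Section Change of Measure} were designed to enable, and the ``off-diagonal'' independence in the covariance structure \eqref{eq: covariance gamma} must be invoked precisely to obtain mutual independence of the Gaussian increments across $j\in \mathcal{I}_N^c$. A minor book-keeping nuisance is the supremum over $\mathfrak{q}\in\{1,\dots,C^N_\mathfrak{n}\}$, but Lemma \ref{eq: polynomial partition} guarantees this index set is subexponential and is absorbed by the exponential decay; the stopping time and the constraint $\tilde\mu^N\in \mathcal{V}^N_\mathfrak{q}$ play no role here since the bounds are uniform in the conditioning.
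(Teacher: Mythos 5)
Your setup coincides with the paper's: condition on the Poisson randomness so that $\mathcal{I}_N$ is frozen, restrict to $|\mathcal{I}_N|=O(N\Delta)$ via Lemma \ref{Lemma Upper Bound Jump Rate}, observe that for $j\in\mathcal{I}_N^c$ the increment $\tilde{H}^{i,j}:=\tilde{G}^{i,j}_{b+1}-\tilde{G}^{i,j}_b$ is a centred Gaussian with conditional variance $O(\Delta)$, and that these increments are mutually independent across $j\in\mathcal{I}_N^c$ because in \eqref{eq: covariance gamma} the direct term needs $j=j'$ and the symmetric term would force $j\in\mathcal{I}_N$. Your treatment of the first inequality (Bernoulli indicators with parameter at most $2M\exp(-c\Delta^{-1/5})$, then Chernoff, then averaging over the conditioning) is sound and is essentially the paper's argument.

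The second inequality is where there is a genuine gap: the dyadic-shell decomposition does not control the unbounded upper tail. You take a union bound ``over the $O(\log(1/\Delta))$ relevant shells'', but nothing confines the relevant shells to that range: the Gaussian increments are unbounded, and after the $N^{-1}$ normalisation a single site with $|\tilde{H}^{i,j}|$ of order $\sqrt{N\epsilon\Delta}$ already exhausts the budget $\epsilon\Delta$, so shells up to $r\sim\tfrac12\log_2 N$ must be accounted for. If you extend your scheme to those shells with the geometric budget $\epsilon\Delta 2^{-r-1}$, the required occupation number of shell $r$ is $m_r\approx c\epsilon N\Delta^{1/5}2^{-3r}$, and even the sharp binomial bound yields an exponent of order $m_r\cdot 4^r\Delta^{-1/5}\approx \epsilon N2^{-r}$, which for $r$ growing like $\log N$ is only $\exp(-cN^{\theta})$ with $\theta<1$ (your stated per-shell rate $\exp(-Nc''\epsilon\Delta^{1/5}4^{-r})$ degrades even faster); either way the union bound does not give $\lsup{N}N^{-1}\log\mathbb{P}<0$. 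The paper avoids the decomposition altogether: it applies Chernoff to the weighted sum itself, using the same conditional independence across $j\in\mathcal{I}_N^c$ together with the single-site exponential moment
\begin{equation*}
\Exp^{\gamma}\Big[\exp\Big(\chi\big\lbrace |\tilde{H}^{i,j}|\geq \Delta^{2/5}\text{ for some }i\in I_M\big\rbrace\sum_{p\in I_M}\big|\tilde{H}^{p,j}\big|^2\Big)\;\Big|\;\tilde{\boldsymbol\sigma}\Big]\;\leq\; 1+\exp\big(-k\Delta^{-1/5}\big),
\end{equation*}
which is finite precisely because the conditional variance $O(\Delta)$ lies below $1/2$ and which integrates the entire tail in one stroke, giving the bound $\exp\big(-N\epsilon\Delta+N\exp\lbrace -k\Delta^{-1/5}\rbrace\big)$. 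To repair your argument you would need either to import this exponential-moment step for the region $|\tilde{H}^{i,j}|\geq K$ above a fixed cutoff, or to treat the intermediate and huge-deviation shells with a non-geometric budget and a separate single-site estimate at scale $\sqrt{N\epsilon\Delta}$ --- at which point you have essentially reproduced the paper's computation.
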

\begin{proof}
The proofs are very similar and so we only include the second result. It follows from Lemma \ref{Lemma Many Events} that
\begin{multline}
\lsup{N} N^{-1}\log \mathbb{P}\big( N^{-1}\sum_{j\in \mathcal{I}^c_N}\chi\big\lbrace \big| \tilde{G}^{i,j}_{b+1} - \tilde{G}^{i,j}_b\big| \geq (\Delta)^{2/5} \text{ for some }i\in I_M\big\rbrace \big| \tilde{G}^{i,j}_{b+1} - \tilde{G}^{i,j}_b\big|^2 \geq \epsilon \Delta\big)\\ \leq \max\big\lbrace  \lsup{N} N^{-1}\log \mathbb{P}\big( N^{-1}\sum_{j\in \mathcal{I}^c_N,p\in I_M}\chi\big\lbrace \big| \tilde{G}^{i,j}_{b+1} - \tilde{G}^{i,j}_b\big| \geq (\Delta)^{2/5} \text{ for some }i\in I_M\big\rbrace \big| \tilde{G}^{p,j}_{b+1} - \tilde{G}^{p,j}_b\big|^2 \geq \epsilon \Delta \\ \text{ and }N^{-1} |\mathcal{I}_N|  \in [c_1\Delta / 2 , c_1 \Delta( M+1)]\big) , 
\lsup{N} N^{-1}\log \mathbb{P}\big(N^{-1} |\mathcal{I}_N|  \notin [c_1\Delta / 2 , c_1 \Delta( M+1)] \big) \big\rbrace.
\end{multline}
By Lemma \ref{Lemma Upper Bound Jump Rate}, 
$\lsup{N} N^{-1}\log \mathbb{P}\big(N^{-1} |\mathcal{I}_N|  \notin [c_1\Delta / 2 , c_1 \Delta( M+1)] \big) < 0$ as required. 

We write $\tilde{H}^{i,j} = \tilde{G}^{i,j}_{b+1} - \tilde{G}^{i,j}_b$. By Chernoff's Inequality,
\begin{multline}
 \mathbb{P}\big( N^{-1}\sum_{j\in \mathcal{I}^c_N,p\in I_M}\chi\big\lbrace \big| \tilde{H}^{i,j}\big| \geq (\Delta)^{2/5} \text{ for some }i\in I_M\big\rbrace \big| \tilde{H}^{p,j}\big|^2 \geq \epsilon \Delta \\ \text{ and }N^{-1} |\mathcal{I}_N|  \in [c_1\Delta / 2 , c_1 \Delta( M+1)]\big) \leq \exp\big(-N\epsilon \Delta\big)\times \\
\mathbb{E}\big[ \chi\big\lbrace N^{-1} |\mathcal{I}_N|  \in [c_1\Delta / 2 , c_1 \Delta( M+1)] \big\rbrace \exp\big(\sum_{j\in \mathcal{I}^c_N,p\in I_M}\chi\big\lbrace \big| \tilde{H}^{i,j} \big| \geq (\Delta)^{2/5} \text{ for some }i\in I_M\big\rbrace \big| \tilde{H}^{p,j}\big|^2 \big) \big] .\label{eq: bound exp decay H }
\end{multline}
In the above expectation, $\tilde{\boldsymbol\sigma}$ (which determines the indices $\mathcal{I}_N$) is independent of $\mathbf{J}$. Furthermore, conditionally on $\tilde{\boldsymbol\sigma}$, $\tilde{G}^{i,j}$ is independent of $\tilde{G}^{p,k}$ if $j\neq k$ and $j,k\in \mathcal{I}_N^c$. This last fact is immediate from the definition of the indices in $\mathcal{I}_N^c$: the coefficients of common edges are zero. We thus see that the conditional moments are
\begin{align*}
\mathbb{E}\big[ \tilde{H}^{i,j} | \tilde{\boldsymbol\sigma} \big] &= 0 \\
\mathbb{E}\big[ (\tilde{H}^{i,j})^2\;  | \;  N^{-1} |\mathcal{I}_N|  \in [c_1\Delta/ 2 , c_1 \Delta( M+1)] \big] &\leq N^{-1}\sum_{j\in I_N}(\tilde{\sigma}^{i,j}_{b+1} - \tilde{\sigma}^{i,j}_b)^2 + N^{-1} \\
&\leq 4N^{-1}| \mathcal{I}_N|  + N^{-1} \\
&\leq  4c_1 \Delta( M+1) + N^{-1},
\end{align*}
as long as $N^{1}|\mathcal{I}_N | \leq c_1 \Delta( M+1)$.
Standard Gaussian properties therefore dictate that as long as $N^{-1} |\mathcal{I}_N| \leq c_1 \Delta( M+1)$, there exists a constant $k > 0$ such that for all sufficiently small $\Delta$,
\[
 \mathbb{E}^{\gamma}\big[ \exp\big(\sum_{p\in I_M}\chi\big\lbrace \big| \tilde{H}^{i,j} \big| \geq (\Delta)^{2/5} \text{ for some }i\in I_M\big\rbrace \big| \tilde{H}^{p,j}\big|^2 \big) \; \big| \; \; \tilde{\boldsymbol\sigma} \big]  \leq 1 + \exp\big( - k (\Delta)^{-1/5}\big) \leq \exp\big( \exp\lbrace -k(\Delta)^{-1/5} \rbrace \big).
\]
This means that
\begin{multline*}
\exp\big(-N\epsilon \Delta\big)\mathbb{E}\big[ \chi\big\lbrace N^{-1} |\mathcal{I}_N|  \leq c_1 \Delta( M+1) \big\rbrace \exp\big(\sum_{j\in \mathcal{I}^c_N,p\in I_M}\chi\big\lbrace \big| \tilde{H}^{i,j} \big| \geq (\Delta)^{2/5} \text{ for some }i\in I_M\big\rbrace \big| \tilde{H}^{p,j}\big|^2 \big) \big] \\
\leq \exp\big(-N\epsilon \Delta+N\exp\lbrace -k(\Delta)^{-1/5} \rbrace \big).
\end{multline*}
For small enough $\Delta$, the right hand side is exponentially decaying, as required.
\end{proof}

\subsection{Bounds using Concentration Inequalities for Poisson Processes}

\begin{lemma}
For any $\tilde{\epsilon} > 0$, for all large enough $n$ (and therefore small $\Delta = T/n$), we can find $\mathfrak{n}_0(n) \in \mathbb{Z}^+$ such that for all $\mathfrak{n} \geq \mathfrak{n}_0(n)$,
\begin{equation}
\sup_{0\leq b < n}\lsup{N}\sup_{1\leq \mathfrak{q} \leq C^N_{\mathfrak{n}}} N^{-1}\log \sup_{\boldsymbol\alpha\in \mathcal{E}^M}\mathbb{P}\big(\mathcal{J}_N,\tilde{\mu}^N(\tilde{\boldsymbol\sigma},\tilde{\mathbf{G}}) \in \mathcal{V}^N_{\mathfrak{q}}, \big|\beta^1(\boldsymbol\alpha , \tilde{\boldsymbol\sigma}_b,\mathbf{G}_b)\big| \geq \tilde{\epsilon}\Delta \big) < 0.
\end{equation}
\end{lemma}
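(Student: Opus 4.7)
My plan is to decompose $\beta^1$ into a martingale part plus a purely predictable drift discrepancy, controlling the first by an exponential Bernstein-type inequality and the second by a combination of Poisson concentration and Lemma \ref{Lemma Uniform Partition Wasserstein}. The point of having passed to the process $\tilde{\boldsymbol\sigma}_{\mathfrak{q}}$ is precisely that the driving Poisson processes $Y^{i,j}$ are independent of the connections $\mathbf{J}$, so the martingale analysis takes place on the Poisson product space alone.

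First I will apply the semimartingale decomposition to $\chi\lbrace\tilde{\boldsymbol\sigma}^j_{b+1}=\boldsymbol\alpha\rbrace-\chi\lbrace\tilde{\boldsymbol\sigma}^j_b=\boldsymbol\alpha\rbrace$. Writing $M^{i,j}_t:=\hat{\sigma}^{i,j}_t-\hat{\sigma}^{i,j}_{t_b^{(n)}}-\int_{t_b^{(n)}}^{t}c(\tilde{\sigma}^{i,j}_u,\tilde{\mathfrak{G}}^{i,j}_u)du$ for the compensated jump-counting martingale, a flip of spin $(i,j)$ toggles the indicator between the values at $\tilde{\boldsymbol\sigma}^j_{s-}=\boldsymbol\alpha$ and $\tilde{\boldsymbol\sigma}^j_{s-}=\boldsymbol\alpha[i]$, which by \eqref{eq: tilde sigma alternative definition} gives
\[
\chi\lbrace\tilde{\boldsymbol\sigma}^j_{b+1}=\boldsymbol\alpha\rbrace-\chi\lbrace\tilde{\boldsymbol\sigma}^j_b=\boldsymbol\alpha\rbrace = \sum_{i\in I_M}\int_{t_b^{(n)}}^{t_{b+1}^{(n)}}\bigl[\chi\lbrace\tilde{\boldsymbol\sigma}^j_{s-}=\boldsymbol\alpha[i]\rbrace-\chi\lbrace\tilde{\boldsymbol\sigma}^j_{s-}=\boldsymbol\alpha\rbrace\bigr]\bigl(dM^{i,j}_s+c(\tilde{\sigma}^{i,j}_s,\tilde{\mathfrak{G}}^{i,j}_s)ds\bigr).
\]
Substituting into the definition of $\beta^1$ and collecting the compensated stochastic integrals on one side yields $\beta^1=\beta^{1,M}+\beta^{1,D}$. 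The martingale part $\beta^{1,M}$ is a sum over $(i,j)$ of stochastic integrals against independent compensated Poisson processes with $\mathcal{F}_{t_b^{(n)}}$-measurable weights of magnitude at most $1/N$; its predictable quadratic variation on $[t_b^{(n)},t_{b+1}^{(n)}]$ is at most $Mc_1\Delta/N$, and each jump is at most $2/N$. A Bernstein-type exponential inequality for purely discontinuous martingales (the content of the paper's Lemma \ref{Lemma Concentration Poisson}) then gives $\mathbb{P}(|\beta^{1,M}|\geq\tilde{\epsilon}\Delta/2)\leq 2\exp(-cN\tilde{\epsilon}^2\Delta)$ for some positive $c$, uniformly in $\mathfrak{q}$.

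For the drift discrepancy $\beta^{1,D}$ I will make two replacements: (a) replace $\tilde{\boldsymbol\sigma}^j_s$ by $\tilde{\boldsymbol\sigma}^j_b$ in the indicators, and (b) replace $c(\cdot,\tilde{\mathfrak{G}}^{i,j}_s)$ by $c(\cdot,\tilde{G}^{i,j}_b)$. Contribution (a) is supported on $\mathcal{I}_N$, whose cardinality satisfies $|\mathcal{I}_N|/N\leq C'\Delta$ with exponentially high probability by Lemma \ref{Lemma Upper Bound Jump Rate}, so it totals $O(\Delta^2)$. For (b) I split $\tilde{\mathfrak{G}}^{i,j}_s-\tilde{G}^{i,j}_b=(\tilde{G}^{i,j}_s-\tilde{G}^{i,j}_b)+(\tilde{\mathfrak{G}}^{i,j}_s-\tilde{G}^{i,j}_s)$. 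The first summand, after Cauchy--Schwarz and Lemma \ref{Lemma square norm G difference} under $\mathcal{J}_N$, contributes $O(\Delta^{3/2})$. The second summand is, after reorganising the sum, precisely a difference of empirical expectations of a Lipschitz path-space function under the two measures $\tilde{\mu}^N(\tilde{\boldsymbol\sigma},\tilde{\mathbf{G}})$ and $\tilde{\mu}^N(\tilde{\boldsymbol\sigma},\tilde{\mathfrak{G}})$, both of which lie in the common bin $\mathcal{V}^N_{\mathfrak{q}}$; Lemma \ref{Lemma Uniform Partition Wasserstein} then yields a discrepancy that is $o(1)$ as $\mathfrak{n}\to\infty$, uniformly in $\mathfrak{q}$ and $N$, and choosing $\mathfrak{n}$ large enough (depending only on $\tilde{\epsilon}$ and $n$) makes the total contribution of (b) at most $\tilde{\epsilon}\Delta/4$. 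A union bound of the martingale estimate and the two drift pieces gives the claimed exponential decay.

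The main obstacle is step (b): pointwise in $j$ the vectors $\tilde{\mathbf{G}}^j_b$ and $\tilde{\mathfrak{G}}^j_b$ are \emph{not} close, since the permutation $\pi_{s,\tilde{\boldsymbol\sigma}}$ in the construction of $\tilde{\mathfrak{G}}$ is chosen to match spins, not fields. The trick is to package the discrepancy as a genuine difference of empirical expectations of the same Lipschitz path-space functional before invoking Lemma \ref{Lemma Uniform Partition Wasserstein}; this is the same device used in the treatment of the discretised Girsanov exponent near \eqref{eq: to establish tilde Gamma 3}, and its correct implementation is what controls the subtlety of having the spin-matching permutation fail to match fields pointwise.
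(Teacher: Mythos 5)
Your proposal is essentially the paper's own argument: the paper likewise splits $\beta^1$ into a compensated-Poisson fluctuation term $\mathcal{Z}^N_{\mathfrak{q}}$, bounded via Lemma \ref{Lemma Concentration Poisson} together with a Chernoff/jump-count treatment of multiple jumps and of the within-interval variation of the intensities, plus the drift discrepancy $\Delta\,\mathcal{U}^N_{\mathfrak{q}}$ between rates evaluated at the prescribed fields $\tilde{\mathfrak{G}}_{\mathfrak{q}}$ and at the actual fields $\tilde{\mathbf{G}}_b$, which is controlled exactly as you propose by Lemma \ref{Lemma Uniform Partition Wasserstein} because both empirical measures lie in the common bin $\mathcal{V}^N_{\mathfrak{q}}$. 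Your Bernstein-style bound on the martingale part and your routing of the drift replacement through the time-$s$ fields are only organizational variants of the paper's single-jump/double-jump case analysis, so the two proofs coincide in substance.
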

\begin{proof}
We prove that
\begin{equation}
\sup_{0\leq b < n}\lsup{N}\sup_{1\leq \mathfrak{q} \leq C^N_{\mathfrak{n}}} N^{-1}\log \sup_{\boldsymbol\alpha\in \mathcal{E}^M}\mathbb{P}\big(\mathcal{J}_N,\tilde{\mu}^N(\tilde{\boldsymbol\sigma},\tilde{\mathbf{G}}) \in \mathcal{V}^N_{\mathfrak{q}}, \beta^1(\boldsymbol\alpha , \tilde{\boldsymbol\sigma}_b,\mathbf{G}_b) \geq \tilde{\epsilon}\Delta \big) < 0.
\end{equation}
The proof of the reverse inequality, i.e.
\begin{equation*}
\sup_{0\leq b < n}\lsup{N}\sup_{1\leq \mathfrak{q} \leq C^N_{\mathfrak{n}}} N^{-1}\log \sup_{\boldsymbol\alpha\in \mathcal{E}^M}\mathbb{P}\big(\mathcal{J}_N,\tilde{\mu}^N(\tilde{\boldsymbol\sigma},\tilde{\mathbf{G}}) \in \mathcal{V}^N_{\mathfrak{q}}, \beta^1(\boldsymbol\alpha , \tilde{\boldsymbol\sigma}_b,\mathbf{G}_b) \leq -\tilde{\epsilon}\Delta \big) < 0,
\end{equation*}
is analogous. One can decompose
\begin{equation}
\beta^1(\boldsymbol\alpha,\tilde{\boldsymbol\sigma},\tilde{\mathbf{G}}) = \mathcal{Z}^N_{\mathfrak{q}}+ \Delta \mathcal{U}^N_{\mathfrak{q}}  ,
\end{equation}
where
\begin{multline}
\mathcal{Z}^N_{\mathfrak{q}} =  N^{-1}\sum_{j\in I_N} \phi^a(\tilde{\mathbf{G}}^{j}_b) \big[ \chi\lbrace \tilde{\boldsymbol\sigma}^j_{b+1}= \boldsymbol\alpha \rbrace - \chi\lbrace \tilde{\boldsymbol\sigma}^j_b= \boldsymbol\alpha \rbrace \\
- \Delta 
\sum_{i\in I_M,j\in I_N}\big\lbrace c\big(-\tilde{\sigma}^{i,j},\tilde{\mathfrak{G}}_{\mathfrak{q},t^{(n)}_b}^{i,j}\big)\chi\lbrace \tilde{\boldsymbol\sigma}^j_b= \boldsymbol\alpha[i] \rbrace - c\big(\tilde{\sigma}^{i,j},\tilde{\mathfrak{G}}_{\mathfrak{q},t^{(n)}_b}^{i,j}\big)\chi\lbrace \tilde{\boldsymbol\sigma}^j_b= \boldsymbol\alpha \rbrace \big\rbrace \big] \text{ and }
\end{multline}
\begin{align}
\mathcal{U}^N_{\mathfrak{q}} &= \mathbb{E}^{\hat{\mu}^N_{b}(\tilde{\boldsymbol\sigma},\tilde{\mathfrak{G}}_{\mathfrak{q}} )}[ H]-\mathbb{E}^{\hat{\mu}^N_{b}(\tilde{\boldsymbol\sigma},\tilde{\mathbf{G}})}[ H] \text{ where }H: \mathcal{E}^M \times \mathbb{R}^M \to \mathbb{R} \text{ is of the form  }\nonumber\\
H(\boldsymbol\zeta,\mathbf{x}) &=\sum_{i\in I_M}\big\lbrace c(-\zeta^i,x^i)\chi\lbrace \boldsymbol\zeta = \boldsymbol\alpha[i] \rbrace -c(\zeta^i,x^i)\chi\lbrace \boldsymbol\zeta= \boldsymbol\alpha \rbrace\big\rbrace .
\end{align}
Thanks to Lemma \ref{Lemma Uniform Partition Wasserstein}, for large enough $\mathfrak{n}$, if $\tilde{\mu}^N(\tilde{\boldsymbol\sigma},\tilde{\mathbf{G}}) \in \mathcal{V}^N_{\mathfrak{q}}$ then necessarily
\begin{equation}
| \mathcal{U}^N_{\mathfrak{q}}| \leq \tilde{\epsilon}\Delta / 2.
\end{equation}
Suppose that $\sum_{i\in I_M}Y_b^{i,j}\big( c_1 \Delta \big) \leq 1$ (recall the definition in \eqref{eq: Y i j b definition}). In this case, at most one of the spins $\lbrace \tilde{\sigma}^{i,j}_s \rbrace_{i \in I_M}$ flips once over the time interval $[t^{(n)}_b , t^{(n)}_{b+1}]$. In this case,
\begin{multline}
  \chi\lbrace \tilde{\boldsymbol\sigma}^j_{b+1}= \boldsymbol\alpha \rbrace - \chi\lbrace \tilde{\boldsymbol\sigma}^j_b= \boldsymbol\alpha \rbrace 
- \Delta 
\sum_{i\in I_M,j\in I_N}\big\lbrace c\big(-\tilde{\sigma}^{i,j}_b,\tilde{\mathfrak{G}}_{\mathfrak{q},t^{(n)}_b}^{i,j}\big)\chi\lbrace \tilde{\boldsymbol\sigma}^j_b= \boldsymbol\alpha[i] \rbrace - c\big(\tilde{\sigma}^{i,j}_b,\tilde{\mathfrak{G}}_{\mathfrak{q},t^{(n)}_b}^{i,j}\big)\chi\lbrace \tilde{\boldsymbol\sigma}^j_b= \boldsymbol\alpha \rbrace \big\rbrace  \\
= \sum_{i\in I_M}\bigg\lbrace Y^{i,j}_b\big( \int^{t^{(n)}_{b+1}}_{t^{(n)}_b}c(\tilde{\sigma}^{i,j}_s , \tilde{\mathfrak{G}}^{i,j}_s) ds\big)\chi\lbrace \tilde{\boldsymbol\sigma}^j_b=  \boldsymbol\alpha[i]\rbrace-Y^{i,j}_b\big( \int^{t^{(n)}_{b+1}}_{t^{(n)}_b}c(\tilde{\sigma}^{i,j}_s , \tilde{\mathfrak{G}}^{i,j}_s) ds\big)\chi\lbrace \tilde{\boldsymbol\sigma}^j_b=  \boldsymbol\alpha \rbrace \\ - \Delta 
\big\lbrace c\big(-\tilde{\sigma}^{i,j}_b,\tilde{\mathfrak{G}}_{\mathfrak{q},t^{(n)}_b}^{i,j}\big)\chi\lbrace \tilde{\boldsymbol\sigma}^j_b= \boldsymbol\alpha[i]\rbrace - c\big(\tilde{\sigma}^{i,j},\tilde{\mathfrak{G}}_{\mathfrak{q},t^{(n)}_b}^{i,j}\big)\chi\lbrace \tilde{\boldsymbol\sigma}^j_b= \boldsymbol\alpha \rbrace \big\rbrace  \bigg\rbrace .
\end{multline}
Conversely if $\sum_{i\in I_M}Y_b^{i,j}\big( c_1 \Delta \big) \geq 2$, then 
\begin{multline*}
  -  \sum_{i\in I_M}\bigg\lbrace Y^{i,j}_b\big( \int^{t^{(n)}_{b+1}}_{t^{(n)}_b}c(\tilde{\sigma}^{i,j}_s , \tilde{\mathfrak{G}}^{i,j}_s) ds\big)\chi\lbrace \tilde{\boldsymbol\sigma}^j_b=  \boldsymbol\alpha[i]\rbrace-Y^{i,j}_b\big( \int^{t^{(n)}_{b+1}}_{t^{(n)}_b}c(\tilde{\sigma}^{i,j}_s , \tilde{\mathfrak{G}}^{i,j}_s) ds\big)\chi\lbrace \tilde{\boldsymbol\sigma}^j_b=  \boldsymbol\alpha \rbrace  \bigg\rbrace \\ +\chi\lbrace \tilde{\boldsymbol\sigma}^j_{b+1}= \boldsymbol\alpha \rbrace - \chi\lbrace \tilde{\boldsymbol\sigma}^j_b= \boldsymbol\alpha \rbrace \leq 4\sum_{i\in I_M}Y^{i,j}_b(c_1\Delta) \chi\big\lbrace Y^{i,j}_b(c_1\Delta) \geq 2 \big\rbrace .
\end{multline*}
We thus find that
\begin{multline}
\big\lbrace \mathcal{Z}^N_{\mathfrak{q}}  > \frac{\tilde{\epsilon}\Delta}{2} \big\rbrace \subseteq \bigg\lbrace  N^{-1}\sum_{j\in I_N,i\in I_M}\chi\lbrace \tilde{\boldsymbol\sigma}^j_b=  \boldsymbol\alpha\rbrace \bigg(\int_{t^{(n)}_b}^{t^{(n)}_{b+1}}c(\tilde{\sigma}^{i,j}_s , \tilde{\mathfrak{G}}^{i,j}_s)ds  -Y^{i,j}_b\big( \int^{t^{(n)}_{b+1}}_{t^{(n)}_b}c(\tilde{\sigma}^{i,j}_s , \tilde{\mathfrak{G}}^{i,j}_s) ds\big) \bigg) \geq \frac{\tilde{\epsilon}\Delta}{8} \bigg\rbrace \\
\cup \bigg\lbrace  N^{-1}\sum_{j\in I_N,i\in I_M}\chi\lbrace \tilde{\boldsymbol\sigma}^j_b=  \boldsymbol\alpha[i]\rbrace\bigg(Y^{i,j}_b\big( \int^{t^{(n)}_{b+1}}_{t^{(n)}_b}c(\tilde{\sigma}^{i,j}_s , \tilde{\mathfrak{G}}^{i,j}_s) ds\big)-\int_{t^{(n)}_b}^{t^{(n)}_{b+1}}c(\tilde{\sigma}^{i,j}_s , \tilde{\mathfrak{G}}^{i,j}_s)ds  \bigg) \geq \frac{\tilde{\epsilon}\Delta}{8} \bigg\rbrace \\
\cup \bigg\lbrace N^{-1}\sum_{j\in I_N,i\in I_M}\chi\lbrace \tilde{\boldsymbol\sigma}^j_b=  \boldsymbol\alpha\rbrace\int_{t^{(n)}_b}^{t^{(n)}_{b+1}}\big\lbrace c(\tilde{\sigma}^{i,j}_b , \tilde{\mathfrak{G}}^{i,j}_b)-c(\tilde{\sigma}^{i,j}_s , \tilde{\mathfrak{G}}^{i,j}_s)\big\rbrace ds \\
- N^{-1}\sum_{j\in I_N,i\in I_M}\chi\lbrace \tilde{\boldsymbol\sigma}^j_b=  \boldsymbol\alpha[i]\rbrace\int_{t^{(n)}_b}^{t^{(n)}_{b+1}}\big\lbrace c(\tilde{\sigma}^{i,j}_b , \tilde{\mathfrak{G}}^{i,j}_b)-c(\tilde{\sigma}^{i,j}_s , \tilde{\mathfrak{G}}^{i,j}_s)\big\rbrace ds \geq \frac{\tilde{\epsilon}\Delta}{8}  \bigg\rbrace  \\
\cup \bigg\lbrace 4N^{-1}\sum_{j\in I_N,i\in I_M}Y_b^{i,j}(c_1\Delta) \chi\lbrace Y_b^{i,j}(c_1\Delta) \geq 2 \rbrace \geq \frac{\tilde{\epsilon}\Delta}{8}  \bigg\rbrace . \label{eq: beta 1 decomposition}
\end{multline}
The probability of each of the first two terms on the right hand side is exponentially decaying thanks to (i) of Lemma \ref{Lemma Concentration Poisson}. For the third term, one easily shows that as long as the event $\mathcal{J}_N$ holds,
\begin{multline*}
 \bigg| N^{-1}\sum_{j\in I_N,i\in I_M}\chi\lbrace \tilde{\boldsymbol\sigma}^j_b=  \boldsymbol\alpha\rbrace\int_{t^{(n)}_b}^{t^{(n)}_{b+1}}\big\lbrace c(\tilde{\sigma}^{i,j}_b , \tilde{\mathfrak{G}}^{i,j}_b)-c(\tilde{\sigma}^{i,j}_s , \tilde{\mathfrak{G}}^{i,j}_s)\big\rbrace ds \\
- N^{-1}\sum_{j\in I_N,i\in I_M}\chi\lbrace \tilde{\boldsymbol\sigma}^j_b=  \boldsymbol\alpha[i]\rbrace\int_{t^{(n)}_b}^{t^{(n)}_{b+1}}\big\lbrace c(\tilde{\sigma}^{i,j}_b , \tilde{\mathfrak{G}}^{i,j}_b)-c(\tilde{\sigma}^{i,j}_s , \tilde{\mathfrak{G}}^{i,j}_s)\big\rbrace ds\bigg| \leq \text{Const}\big(N^{-1}\sum_{j\in I_N,i\in I_M}\chi\lbrace Y^{i,j}_b(c_1\Delta t) \geq 1 \rbrace \big)^2.
\end{multline*}
Thanks to (ii) of Lemma \ref{Lemma Upper Bound Jump Rate}, one finds that the probability of the RHS of the above equation exceeding $\tilde{\epsilon}\Delta /8$ is exponentially decaying in $N$, once $\Delta$ is small enough. For the last term on the RHS of \eqref{eq: beta 1 decomposition}, by Chernoff's Inequality, for a constant $u > 0$,
\begin{multline}
\mathbb{P}\big( 4N^{-1}\sum_{j\in I_N,i\in I_M}Y_b^{i,j}(c_1\Delta) \chi\lbrace Y_b^{i,j}(c_1\Delta) \geq 2 \rbrace \geq \tilde{\epsilon}\Delta / 8\big)\\ \leq
\mathbb{E}\big[ \exp\big( 4uN^{-1}\sum_{j\in I_N,i\in I_M}Y_b^{i,j}(c_1\Delta) \chi\lbrace Y_b^{i,j}(c_1\Delta) \geq 2 \rbrace - u\tilde{\epsilon}\Delta /8 \big)\big] \label{eq: final beta 1 bound}
\end{multline}
Now for any positive integer $k$, thanks to the renewal property of Poisson Processes,
\begin{align*}
\mathbb{P}\big( Y_b^{i,j}(c_1\Delta) = k \big) \leq \mathbb{P}\big(Y_b^{i,j}(c_1\Delta) = 1\big)^k = \big\lbrace c_1\Delta \exp(-c_1\Delta) \big\rbrace^k,
\end{align*}
since $Y_b^{i,j}(c_1\Delta)$ is Poisson-distributed. We take $n$ sufficiently large that $\Delta \leq (c_1)^{-1}$, and $4u = c_1\Delta / 2$, and we obtain that 
\begin{align*}
\mathbb{E}\big[ \exp\big( 4uN^{-1}Y_b^{i,j}(c_1\Delta) \chi\lbrace Y_b^{i,j}(c_1\Delta) \geq 2 \rbrace - u\tilde{\epsilon}\Delta /8 \big)\big]
&\leq \exp(- u\tilde{\epsilon}\Delta /8 ) \sum_{k=2}^\infty \exp\big(-kc_1\Delta /2 \big)\\ &= \exp\big(- u\tilde{\epsilon}\Delta /8 -c_1\Delta \big)\big(1 -\exp(-c_1\Delta/2) \big)^{-1},
\end{align*}
by summing the geometric series. Since the processes $Y^{i,j}_b$ are independent, we find that the RHS of \eqref{eq: final beta 1 bound} is exponentially decaying in $N$, as required.
\end{proof}

\begin{lemma}
For any $\bar{\epsilon}$, for all sufficiently large $n$,
\begin{align}
\lsup{N}\sup_{1\leq \mathfrak{q} \leq C^N_{\mathfrak{n}}} N^{-1}\log \mathbb{P}\big(\mathcal{J}_N, \tilde{\mu}^N \in \mathcal{V}^N_{\mathfrak{q}}, \tilde{\tau}_N > t^{(n)}_b, \big|\beta^{11}(\boldsymbol\alpha,\tilde{\boldsymbol\sigma},\tilde{\mathbf{G}})\big| \geq \bar{\epsilon}\Delta\big) < 0\\
\lsup{N}\sup_{1\leq \mathfrak{q} \leq C^N_{\mathfrak{n}}} N^{-1}\log \mathbb{P}\big(\mathcal{J}_N,  \tilde{\mu}^N \in \mathcal{V}^N_{\mathfrak{q}},\tilde{\tau}_N > t^{(n)}_b, \big|\beta^{7}(\boldsymbol\alpha,\tilde{\boldsymbol\sigma},\tilde{\mathbf{G}})\big| \geq \bar{\epsilon}\Delta\big) < 0\label{Eq: beta 11 final}
\end{align}
\end{lemma}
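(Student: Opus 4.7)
The plan is to reduce both bounds to concentration estimates for sums of compensated Poisson processes, by showing that the empirical matrices $\tilde{\mathbf{L}}_b$ and $\tilde{\boldsymbol\kappa}_b$ are uniformly close to $2\Delta\mathbf{L}^{\hat{\mu}^N_b}$ and $2\Delta\boldsymbol\kappa^{\hat{\mu}^N_b}$ respectively. A crucial simplification is that $\tilde{\mathbf{K}}_b = \mathbf{K}^{\hat{\mu}^N_b}$ and $\tilde{\boldsymbol\upsilon}_b = \boldsymbol\upsilon^{\hat{\mu}^N_b}$ by direct inspection of the definitions \eqref{eq: K xi defn}--\eqref{eq: upsilon xi defn} and \eqref{eq: tilde K definition 0 0}--\eqref{eq: tilde upsilon definition 0 0}, so that on $\{\tilde{\tau}_N > t^{(n)}_b\}$ we have $\tilde{\mathbf{H}}_b = \mathbf{H}^{\hat{\mu}^N_b}$, with $\|\mathbf{H}^{\hat{\mu}^N_b}\|\leq 1/\mathfrak{c}$ and $\|\boldsymbol\upsilon^{\hat{\mu}^N_b}\|$ uniformly bounded on $\mathcal{J}_N$ by Lemma \ref{bound connections lemma}(2).

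For $\beta^{11}$, I will use the identity $\tilde{L}^{ii}_b = 2N^{-1}\sum_{k\in I_N}\chi\{\tilde{\sigma}^{i,k}_{b+1}\neq\tilde{\sigma}^{i,k}_b\}$ modulo an odd-flip-count correction of order $N^{-1}\sum_k \chi\{Y^{i,k}_b(c_1\Delta)\geq 2\}$, the latter being controlled by the geometric-series Chernoff argument already used in \eqref{eq: iterative expectation a}. I then write the difference with $2\Delta L^{\hat{\mu}^N_b}_{ii}$ as the sum of a compensated Poisson martingale (bounded via Lemma \ref{Lemma Concentration Poisson}) and a residual discrepancy between $\int_{t^{(n)}_b}^{t^{(n)}_{b+1}} c(\tilde{\sigma}^{i,k}_s,\tilde{\mathfrak{G}}^{i,k}_{\mathfrak{q},s})\,ds$ and $\Delta c(\tilde{\sigma}^{i,k}_b,\tilde{G}^{i,k}_b)$. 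This residual is dispatched using \eqref{eq: c Lipschitz 1} together with Lemma \ref{Lemma Uniform Partition Wasserstein} (replacing $\tilde{\mathfrak{G}}$ by $\tilde{G}$ once $\mathfrak{n}$ is large enough) and Lemma \ref{Lemma square norm G difference} (to control the intra-step field drift). Averaging against the uniformly bounded weight $\phi^a_{ii}(\tilde{\mathbf{G}}^j_b)\chi\{\tilde{\boldsymbol\sigma}^j_b=\boldsymbol\alpha\}$ preserves the exponential decay.

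For $\beta^{7}$, the identifications above allow me to rewrite
\begin{multline*}
\tilde{\mathbf{m}}^j_b - \Delta\mathbf{m}^{\hat{\mu}^N_b}(\tilde{\boldsymbol\sigma}^j_b,\tilde{\mathbf{G}}^j_b) = -\bigl(\tilde{\mathbf{L}}_b - 2\Delta\mathbf{L}^{\hat{\mu}^N_b}\bigr)\mathbf{H}^{\hat{\mu}^N_b}\tilde{\mathbf{G}}^j_b \\ - \mathfrak{s}\bigl(\tilde{\boldsymbol\kappa}_b - 2\Delta\boldsymbol\kappa^{\hat{\mu}^N_b}\bigr)\mathbf{H}^{\hat{\mu}^N_b}\tilde{\boldsymbol\sigma}^j_b + \mathfrak{s}\bigl(\tilde{\mathbf{L}}_b - 2\Delta\mathbf{L}^{\hat{\mu}^N_b}\bigr)\mathbf{H}^{\hat{\mu}^N_b}\boldsymbol\upsilon^{\hat{\mu}^N_b}\mathbf{H}^{\hat{\mu}^N_b}\tilde{\boldsymbol\sigma}^j_b.
\end{multline*}
Averaging against $\phi^a_i(\tilde{\mathbf{G}}^j_b)\chi\{\tilde{\boldsymbol\sigma}^j_b=\boldsymbol\alpha\}$ and applying Cauchy--Schwarz together with the moment control from Lemma \ref{bound connections lemma}(2) reduces the problem to entrywise bounds on $\tilde{\mathbf{L}}_b - 2\Delta\mathbf{L}^{\hat{\mu}^N_b}$ and $\tilde{\boldsymbol\kappa}_b - 2\Delta\boldsymbol\kappa^{\hat{\mu}^N_b}$. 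The off-diagonal $\tilde{L}^{pq}_b$ bounds follow verbatim from the $\beta^{11}$ analysis with the extra bounded factor $\tilde{\sigma}^{q,k}_b$, and the $\tilde{\kappa}^{pq}_b$ bounds are the same computation but with $\tilde{G}^{q,k}_b$ in place of $\tilde{\sigma}^{q,k}_b$; the unboundedness of this last weight is absorbed by Cauchy--Schwarz and the second-moment bound on $\mathcal{J}_N$.

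The main obstacle will be simultaneously controlling the two nuisance errors: the intensity mismatch $c(\cdot,\tilde{\mathfrak{G}}^{\cdot,\cdot}_{\mathfrak{q},s}) - c(\cdot,\tilde{G}^{\cdot,\cdot}_\cdot)$ and the intra-step field drift $\tilde{G}^{\cdot,\cdot}_s - \tilde{G}^{\cdot,\cdot}_b$. Both are handled in the quantifier order already prescribed by \eqref{eq: to prove Q N change of measure}: first I pick $n$ large enough that $\Delta$ is small (making Lemma \ref{Lemma square norm G difference} and the multiple-flip bound both $o(\Delta)$), and then I pick $\mathfrak{n}(n)$ large enough so that Lemma \ref{Lemma Uniform Partition Wasserstein} forces $|\tilde{\mathfrak{G}}-\tilde{G}|$ to be negligible uniformly on $\mathcal{V}^N_{\mathfrak{q}}$.
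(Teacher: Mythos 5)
Your overall architecture coincides with the paper's: on $\lbrace\tilde{\tau}_N > t^{(n)}_b\rbrace$ you identify $\tilde{\mathbf{K}}_b=\mathbf{K}^{\hat{\mu}^N_b}$, $\tilde{\boldsymbol\upsilon}_b=\boldsymbol\upsilon^{\hat{\mu}^N_b}$ and $\tilde{\mathbf{H}}_b=\mathbf{H}^{\hat{\mu}^N_b}$ with $\norm{\tilde{\mathbf{H}}_b}\leq\mathfrak{c}^{-1}$, you write $\tilde{\mathbf{m}}^j_b-\Delta\mathbf{m}^{\hat{\mu}^N_b}(\tilde{\boldsymbol\sigma}^j_b,\tilde{\mathbf{G}}^j_b)$ in exactly the paper's form in terms of $\tilde{\mathbf{L}}_b-2\Delta\mathbf{L}^{\hat{\mu}^N_b}$ and $\tilde{\boldsymbol\kappa}_b-2\Delta\boldsymbol\kappa^{\hat{\mu}^N_b}$, and you reduce both $\beta^{11}$ and $\beta^{7}$ to entrywise bounds of size $\epsilon_0\Delta$ on these matrix differences; the paper packages that last step as Lemma \ref{Lemma Matrix Convergence}, which you essentially re-prove inline. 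Your treatment of the $\mathbf{L}$ entries (compensated-Poisson martingale via Lemma \ref{Lemma Concentration Poisson}, multiple-jump correction, intensity mismatch handled through Lemma \ref{Lemma Uniform Partition Wasserstein} and the intra-step field drift) is in substance the paper's argument.

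The gap is in the $\boldsymbol\kappa$ entries. You propose to absorb the unbounded weight $\tilde{G}^{q,k}_b$ ``by Cauchy--Schwarz and the second-moment bound on $\mathcal{J}_N$'', but this loses an order in $\Delta$ precisely on the fluctuation part. Writing $\tilde{\kappa}^{pq}_b-2\Delta\kappa^{\hat{\mu}^N_b}_{pq}=N^{-1}\sum_{k}\tilde{G}^{q,k}_b\,\zeta^{p,k}$ with $\zeta^{p,k}=(\tilde{\sigma}^{p,k}_b-\tilde{\sigma}^{p,k}_{b+1})-2\Delta\tilde{\sigma}^{p,k}_b c(\tilde{\sigma}^{p,k}_b,\cdot)$, Cauchy--Schwarz gives a bound of the form $\big(N^{-1}\sum_k(\tilde{G}^{q,k}_b)^2\big)^{1/2}\big(N^{-1}\sum_k(\zeta^{p,k})^2\big)^{1/2}\leq\sqrt{3}\cdot O(\sqrt{\Delta})$, because $(\zeta^{p,k})^2$ is of order $1$ on the $O(\Delta)$-fraction of sites that flip; $O(\sqrt{\Delta})$ is far larger than the required $\epsilon_0\Delta$, so your reduction of $\beta^{7}$ collapses at this point. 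Nor can Lemma \ref{Lemma Concentration Poisson} be invoked directly with weight $\tilde{G}^{q,k}_b$: part (i) needs a uniform bound on the weights, and part (ii) needs an exponential-moment bound that the second-moment control on $\mathcal{J}_N$ does not supply. The paper's remedy is to first replace $\tilde{G}^{q,k}_b$ by the bounded reference intensity $\tilde{\mathfrak{G}}^{q,k}_{\mathfrak{q},b}$ (bounded by $\mathfrak{n}$), the replacement error vanishing for large $\mathfrak{n}$ by $\mathcal{V}^N_{\mathfrak{q}}$-membership and Lemma \ref{Lemma Uniform Partition Wasserstein}, and only then run the martingale concentration with $v_{max}=\mathfrak{n}$; you already use exactly this swap for the intensity mismatch, and it is needed here as well. (Alternatively, truncating the weights at level of order $(\epsilon_0\Delta)^{-1}$ and using a Bernstein-type bound that exploits the $O(\Delta)$ conditional variance of $\zeta^{p,k}$ would also close the gap, but plain Cauchy--Schwarz does not.)
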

\begin{proof}
Now since $|\phi^a_{ii}| \leq 1$, $\big\lbrace \big|\beta^{11}(\boldsymbol\alpha,\tilde{\boldsymbol\sigma},\tilde{\mathbf{G}})\big| \geq \bar{\epsilon}\Delta \big\rbrace \subseteq \big\lbrace \big| 2\Delta L^{\hat{\mu}^N_b}_{ii} - \tilde{L}^{ii}_b \big| \geq \bar{\epsilon}\Delta \big\rbrace$. The probability of this event is exponentially decaying, thanks to Lemma \ref{Lemma Matrix Convergence}.

The proof of \eqref{Eq: beta 11 final} is similar: one compares the definition of $\tilde{\mathbf{m}}_b$ in \eqref{eq: temp H 0} to the definition of $\mathbf{m}^{\hat{\mu}^N_b}$ in \eqref{eq: m xi}. Note that the condition $\tilde{\tau}_N > t^{(n)}_b$ implies that $\tilde{\mathbf{H}}_b = \mathbf{H}^{\hat{\mu}^N_b}$ (see the definition in \eqref{eq: H xi definition}) and also $\tilde{\boldsymbol\upsilon}_b = \boldsymbol\upsilon^{\hat{\mu}^N_b}$. One therefore finds that
\begin{align*}
\big|\beta^{7}(\boldsymbol\alpha,\tilde{\boldsymbol\sigma},\tilde{\mathbf{G}}) \big|  &\leq N^{-1}\sum_{j\in I_N}\sum_{i\in I_M}\big| \tilde{m}^{i,j}_b -\Delta m^{\hat{\mu}^N_b,i}(\tilde{\boldsymbol\sigma}^j_b, \tilde{\mathbf{G}}^j_b) \big| \\
\tilde{\mathbf{m}}^j_b-\Delta \mathbf{m}^{\hat{\mu}^N_b}(\tilde{\boldsymbol\sigma}^j_b, \tilde{\mathbf{G}}^j_b) &= - (\tilde{\mathbf{L}}_b - 2\Delta \mathbf{L}^{\hat{\mu}^N_b})\tilde{\mathbf{H}}_b\tilde{\mathbf{G}}^j_b -\mathfrak{s}( \tilde{\boldsymbol\kappa}_b - 2\Delta\boldsymbol\kappa^{\hat{\mu}^N_b} )\tilde{\mathbf{H}}_b\tilde{\boldsymbol\sigma}^j_b + \mathfrak{s} (\tilde{\mathbf{L}}_b - 2\Delta \mathbf{L}^{\hat{\mu}^N_b})\tilde{\mathbf{H}}_b\tilde{\boldsymbol\upsilon}_b\tilde{\mathbf{H}}_b\tilde{\boldsymbol\sigma}^j_b .
\end{align*}
Thus 
\begin{multline*}
\norm{\tilde{\mathbf{m}}^j_b-\Delta \mathbf{m}^{\hat{\mu}^N_b}(\tilde{\boldsymbol\sigma}^j_b, \tilde{\mathbf{G}}^j_b)} \leq \norm{\tilde{\mathbf{L}}_b - 2\Delta \mathbf{L}^{\hat{\mu}^N_b}} \norm{\tilde{\mathbf{H}}_b}\norm{\tilde{\mathbf{G}}^j_b} +\norm{\tilde{\boldsymbol\kappa}_b - 2\Delta\boldsymbol\kappa^{\hat{\mu}^N_b} }\norm{\tilde{\mathbf{H}}_b}\norm{\tilde{\boldsymbol\sigma}^j_b} + \norm{\tilde{\mathbf{L}}_b - 2\Delta \mathbf{L}^{\hat{\mu}^N_b}} \norm{\tilde{\mathbf{H}}_b}^2 \norm{\tilde{\boldsymbol\upsilon}_b}\norm{\tilde{\boldsymbol\sigma}^j_b} 
\end{multline*}
 Now write
\[ 
\mathcal{U}^N =\big\lbrace \norm{\tilde{\mathbf{L}}_b - \mathbf{L}^{\hat{\mu}^N_b}} \leq \epsilon_0\Delta \; , \; \norm{\tilde{\boldsymbol\kappa}_b - \boldsymbol\kappa^{\hat{\mu}^N_b}} \big\rbrace \leq \epsilon_0 \Delta \big\rbrace.
\]  
We now establish that 
\[
\big\lbrace \mathcal{J}_N, \tilde{\tau}_N > t^{(n)}_b, \big|\beta^{7}(\boldsymbol\alpha,\tilde{\boldsymbol\sigma},\tilde{\mathbf{G}})\big| \geq \bar{\epsilon}\Delta \big\rbrace \subseteq \mathcal{J}_N \cap (\mathcal{U}^N)^c \cap \big\lbrace \tilde{\tau}_N > t^{(n)}_b \big\rbrace
\]
for sufficiently small $\epsilon_0$, and sufficiently small $\Delta$. Now $\tilde{\tau}_N > t^{(n)}_b$ implies that $\norm{\tilde{\mathbf{H}}_b} \leq \mathfrak{c}^{-1}$, and the Cauchy-Schwarz Inequality (and also condition $\mathcal{J}_N$) imply that $| \upsilon_b^{pq} |^2 \leq N^{-1}\sum_{j\in I_N}|G^{p,j}|^2 \leq 3$. This means that $\norm{\boldsymbol\upsilon_b}$ is bounded. Furthermore by Jensen's Inequality $\big(N^{-1}\sum_{j\in I_N} \norm{\mathbf{G}^j_b}\big)^2 \leq N^{-1}\sum_{j\in I_N} \norm{\mathbf{G}^j_b}^2 \leq  3M$ (as a consequence of $\mathcal{J}_N$). The probability of $(\mathcal{U}^N)^c$ is exponentially decaying, thanks to Lemma \ref{Lemma Matrix Convergence}.
\end{proof}

\begin{lemma}\label{Lemma Matrix Convergence}
For any $\epsilon_0 > 0$, there exists $n_0 \in \mathbb{Z}^+$ such that for all $n\geq n_0$, there exists $\mathfrak{n}_0(n)$ such that for all $\mathfrak{n} \geq \mathfrak{n}_0(n)$,
\begin{align}
\lsup{N}\sup_{1\leq \mathfrak{q} \leq C^N_{\mathfrak{n}}}N^{-1}\log \mathbb{P}\big(\mathcal{J}_N, \tilde{\mu}^N \in \mathcal{V}^N_{\mathfrak{q}},\sup_{0\leq b \leq n-1}\sup_{p,q\in I_M}\big| \tilde{L}^{pq}_b-2\Delta L_{pq}^{\hat{\mu}^N_b(\tilde{\boldsymbol\sigma},\tilde{\mathbf{G}})} \big| \geq \epsilon_0 \Delta \big) &< 0 \\
\lsup{N}\sup_{1\leq \mathfrak{q} \leq C^N_{\mathfrak{n}}} N^{-1}\log \mathbb{P}\big(\mathcal{J}_N, \tilde{\mu}^N \in \mathcal{V}^N_{\mathfrak{q}},\sup_{0\leq b \leq n-1}\sup_{p,q\in I_M}\big| \tilde{\kappa}^{pq}_b-2\Delta \kappa_{pq}^{\hat{\mu}^N_b(\tilde{\boldsymbol\sigma},\tilde{\mathbf{G}})}\big| \geq  \epsilon_0 \Delta \big) &< 0 . \label{eq: kappa tilde kappa convergence}
\end{align}
\end{lemma}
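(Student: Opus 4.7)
The plan is to prove both bounds by writing $\tilde{L}^{pq}_b$ and $\tilde{\kappa}^{pq}_b$ in terms of the Poisson increments $Y_b^{p,k}$ (defined as in \eqref{eq: Y i j b definition}), then applying concentration to isolate the leading order conditional expectation and Lipschitz continuity of $c$ to identify that expectation with $2\Delta L^{\hat\mu^N_b}_{pq}$ or $2\Delta\kappa^{\hat\mu^N_b}_{pq}$.

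First I would rewrite $\tilde{\sigma}^{p,k}_b - \tilde{\sigma}^{p,k}_{b+1} = 2\tilde{\sigma}^{p,k}_b \chi\{Y_b^{p,k}\text{ odd}\}$ using \eqref{eq: tilde sigma alternative definition}. Split $\chi\{Y\text{ odd}\} = \chi\{Y=1\} + \chi\{Y\geq 3, Y\text{ odd}\}$. The second summand's total contribution to either $\tilde{L}^{pq}_b$ or $\tilde{\kappa}^{pq}_b$ can be shown to be bounded by $C \Delta^2$ in expectation and to satisfy a Chernoff-type bound (as in the estimate of $\beta^4$ in Lemma \ref{Lemma beta 5 bound} and the iterative-expectation argument used for \eqref{eq: to establish Girsanov 4}). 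The first summand, evaluated at time $t^{(n)}_b$ against the intensity $c(\tilde{\sigma}^{p,k}_b,\tilde{\mathfrak{G}}^{p,k}_b)$, has conditional expectation
\[
\mathbb{E}\big[\chi\{Y_b^{p,k}=1\}\;\big|\; \mathcal{F}_{t^{(n)}_b}\big] = \Delta\, c\big(\tilde{\sigma}^{p,k}_b,\tilde{\mathfrak{G}}^{p,k}_b\big) + O(\Delta^2),
\]
using the uniform bound \eqref{eq: uniform bound c sigma g} and the Lipschitz condition \eqref{eq: c Lipschitz 1} together with $\mathbb{E}[|\tilde{\sigma}^{p,k}_s - \tilde{\sigma}^{p,k}_b|] = O(\Delta)$ on the event that one jump occurs.

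Next, I would argue concentration. Conditionally on $\mathcal{F}_{t^{(n)}_b}$ the variables $\chi\{Y_b^{p,k}=1\}\tilde{\sigma}^{p,k}_b\tilde{\sigma}^{q,k}_b$ (resp.\ weighted by $\tilde{G}^{q,k}_b$) are independent across $k$, bounded by $1$ (resp.\ by $|\tilde{G}^{q,k}_b|$), so Chernoff's inequality applied conditionally gives exponential decay of the probability that the empirical average deviates from its conditional mean by more than $\epsilon_0 \Delta / 4$. In the $\tilde{\kappa}$ case the unboundedness of $\tilde{G}^{q,k}_b$ is controlled by the event $\mathcal{J}_N$, which bounds $N^{-1}\sum_k (\tilde{G}^{q,k}_b)^2 \leq 3$, so Cauchy--Schwarz and a truncation at a level growing slowly in $\mathfrak{n}$ suffice to push the sub-Gaussian concentration argument through.

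Finally I would replace $c(\tilde{\sigma}^{p,k}_b,\tilde{\mathfrak{G}}^{p,k}_b)$ by $c(\tilde{\sigma}^{p,k}_b,\tilde{G}^{p,k}_b)$. This is where the partition refinement enters: the joint hypothesis $\tilde{\mu}^N(\tilde{\boldsymbol\sigma},\tilde{\mathbf{G}})\in\mathcal{V}^N_{\mathfrak{q}}$ together with the construction of $\tilde{\mathfrak{G}}$ ensures that $\tilde{\mu}^N(\tilde{\boldsymbol\sigma},\tilde{\mathfrak{G}})\in\mathcal{V}^N_{\mathfrak{q}}$ as well, so both field samples assign mass within $1/N$ of $\hat u^N_{\mathfrak{q},qr}$ to every product box $\hat D_q\times\hat D_r$. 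On the bounded boxes $\tilde D_i$ with $i\in[1,\mathfrak{n}^2]$ the diameter is $2\sqrt{M}/\mathfrak{n}$, so after a permutation matching spins and fields bin-by-bin, the Lipschitz bound \eqref{eq: c Lipschitz 1} yields $|c(\cdot,\tilde{\mathfrak{G}}^{p,k}_b)-c(\cdot,\tilde{G}^{p,k}_b)|\leq 2 c_L\sqrt{M}/\mathfrak{n}$ for all but a vanishing fraction of indices (those with fields in the unbounded boxes $D_0\cup D_{\mathfrak{n}^2+1}$, whose total mass is $O(\mathfrak{n}^{-2})$ by Markov's inequality under $\mathcal{J}_N$). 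Choosing $\mathfrak{n}$ large enough (in terms of $\epsilon_0$ and $\Delta$) makes this replacement error less than $\epsilon_0\Delta/4$. The main obstacle in executing this plan is the combination of the three scales $\Delta, \mathfrak{n}, N$: one must pick $\mathfrak{n}_0(n)$ large enough that the Lipschitz replacement error is $o(\Delta)$ \emph{and} that the unbounded-tail contribution to $\tilde{\kappa}$ is negligible, while still keeping the partition size $C^N_{\mathfrak{n}}$ subexponential (Lemma \ref{eq: polynomial partition}) so that the supremum over $\mathfrak{q}$ does not destroy the exponential decay via Lemma \ref{Lemma Many Events}. Assembling these pieces with a final union-of-events bound over $p,q\in I_M$ and $0\leq b<n$ completes the argument.
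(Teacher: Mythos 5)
Your high-level plan (jump decomposition, concentration around the compensator, then a distributional replacement of $\tilde{\mathfrak{G}}$ by $\tilde{\mathbf{G}}$ using $\mathcal{V}^N_{\mathfrak{q}}$-membership and Lemma \ref{Lemma Uniform Partition Wasserstein}) is in the same spirit as the paper's, but two of your steps do not hold as stated. First, the concentration step: conditionally on $\mathcal{F}_{t^{(n)}_b}$ the summands are \emph{not} independent across $k$. By \eqref{eq: tilde sigma q j t definition} the intensity of $\tilde{\sigma}^{p,k}$ on $[t^{(n)}_b,t^{(n)}_{b+1}]$ is $c\big(\tilde{\sigma}^{p,k}_s,\mathfrak{G}^{p,\pi_{s,\tilde{\boldsymbol\sigma}}(k)}_{\mathfrak{q},s}\big)$, and the permutation $\pi_{s,\tilde{\boldsymbol\sigma}}$ (and the stopping at $\tilde{\tau}_{\mathfrak{q}}$) is recomputed from the \emph{entire} spin configuration at every fine time $t^{(m)}_a$ inside the coarse interval, so the jump rates of different $k$ are coupled. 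A conditional Chernoff bound based on independence is therefore not available; the paper instead applies the concentration inequality for compensated Poisson processes with adapted intensities (Lemma \ref{Lemma Concentration Poisson}), exploiting that the weights $\tilde{\mathfrak{G}}^{q,k}_{\mathfrak{q},t^{(n)}_b}$ are bounded by $\mathfrak{n}$, together with the jump-count bounds of Lemma \ref{Lemma Upper Bound Jump Rate}. This defect is repairable by switching to that martingale argument.

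The second gap is more structural and concerns the $\tilde{\kappa}$ bound. After your Steps A--C the approximating quantity is $2\Delta N^{-1}\sum_k \tilde{G}^{q,k}_b\tilde{\sigma}^{p,k}_b\, c(\tilde{\sigma}^{p,k}_b,\tilde{\mathfrak{G}}^{p,k}_b)$: the weight is the \emph{actual} field at index $k$ while the intensity argument is the \emph{reference} field at the same index. Replacing $c(\cdot,\tilde{\mathfrak{G}}^{p,k}_b)$ by $c(\cdot,\tilde{G}^{p,k}_b)$ index-by-index requires $|\tilde{G}^{p,k}_b-\tilde{\mathfrak{G}}^{p,k}_b|$ to be small for most $k$, and membership of both $\tilde{\mu}^N(\tilde{\boldsymbol\sigma},\tilde{\mathbf{G}})$ and $\tilde{\mu}^N(\tilde{\boldsymbol\sigma},\tilde{\mathfrak{G}}_{\mathfrak{q}})$ in $\mathcal{V}^N_{\mathfrak{q}}$ does not give this: the construction matches spin paths only, so within a class of particles sharing the same discretized spin path the reference field paths may be assigned in any order, and only the two empirical \emph{distributions} are close. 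Your ``permutation matching spins and fields bin-by-bin'' would re-pair indices and thereby detach the intensity argument from the weight $\tilde{G}^{q,k}_b$ fixed in the previous step, so it cannot justify the pointwise Lipschitz estimate. (For the $\tilde{L}$ bound the weight is the spin $\tilde{\sigma}^{q,k}_b$, which is matched exactly, so there your Step D reduces to a legitimate two-measure comparison; the breakdown is specific to $\tilde{\kappa}$.) The paper's proof is organized precisely to avoid this mixed object: it inserts $\breve{\kappa}^{pq}_b=N^{-1}\sum_k\tilde{\mathfrak{G}}^{q,k}_{\mathfrak{q},t^{(n)}_b}\big(\tilde{\sigma}^{p,k}_b-\tilde{\sigma}^{p,k}_{b+1}\big)$, so that $\tilde{\kappa}^{pq}_b-\breve{\kappa}^{pq}_b$ and $2\Delta\kappa_{pq}^{\hat{\mu}^N_b(\tilde{\boldsymbol\sigma},\tilde{\mathfrak{G}}_{\mathfrak{q}})}-2\Delta\kappa_{pq}^{\hat{\mu}^N_b(\tilde{\boldsymbol\sigma},\tilde{\mathbf{G}})}$ are each differences of expectations of one fixed function under two measures lying in the same $\mathcal{V}^N_{\mathfrak{q}}$, hence vanish for $\mathfrak{n}\geq\mathfrak{n}_0(n)$ by Lemma \ref{Lemma Uniform Partition Wasserstein}, while $\breve{\kappa}^{pq}_b-2\Delta\kappa_{pq}^{\hat{\mu}^N_b(\tilde{\boldsymbol\sigma},\tilde{\mathfrak{G}}_{\mathfrak{q}})}$ is a compensated-Poisson fluctuation plus a small jump-count remainder. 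You would need an analogous restructuring (or a genuine coupling argument for the triple $(\tilde{\boldsymbol\sigma},\tilde{\mathbf{G}},\tilde{\mathfrak{G}}_{\mathfrak{q}})$, which $\mathcal{V}^N_{\mathfrak{q}}$ does not control) before your proof of the second display goes through.
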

\begin{proof}
The proofs are almost identical, so we only prove \eqref{eq: kappa tilde kappa convergence}. Recall from \eqref{eq: kappa xi defn} and \eqref{eq: tilde upsilon definition 0 0} that
\begin{equation}
\tilde{\kappa}^{ij}_b = N^{-1}\sum_{k=1}^{N} \tilde{G}^{j,k}_b \big(\tilde{\sigma}^{i,k}_b-\tilde{\sigma}^{i,k}_{b+1}\big) \; \; \;\text{ and }\; \; \; \kappa_{pq}^{\hat{\mu}^N_b(\tilde{\boldsymbol\sigma},\tilde{\mathbf{G}})}= N^{-1}\sum_{k=1}^{N} \tilde{G}^{j,k}_b\tilde{\sigma}^{i,k}_b c(\tilde{\sigma}^{i,k}_b , \tilde{G}^{i,k}_b).
\end{equation}
By Lemma \ref{Lemma Many Events}, 
\begin{multline}
\lsup{N}\sup_{1\leq \mathfrak{q} \leq C^N_{\mathfrak{n}}}N^{-1}\log \mathbb{P}\big(\mathcal{J}_N, \tilde{\mu}^N \in \mathcal{V}^N_{\mathfrak{q}},\sup_{0\leq b \leq n-1}\sup_{p,q\in I_M}\big| \tilde{\kappa}^{pq}_b-2\Delta \kappa_{pq}^{\hat{\mu}^N_b(\tilde{\boldsymbol\sigma},\tilde{\mathbf{G}})}\big| \geq  \epsilon_0\Delta \big) \\
= \sup_{0\leq b \leq n-1}\sup_{p,q\in I_M}\lsup{N}\sup_{1\leq \mathfrak{q} \leq C^N_{\mathfrak{n}}}N^{-1}\log \mathbb{P}\big(\mathcal{J}_N, \tilde{\mu}^N \in \mathcal{V}^N_{\mathfrak{q}},\big| \tilde{\kappa}^{pq}_b-2\Delta \kappa_{pq}^{\hat{\mu}^N_b(\tilde{\boldsymbol\sigma},\tilde{\mathbf{G}})}\big| \geq \epsilon_0 \Delta \big) .
\end{multline}
Now a union of events bound implies that
\begin{multline}
\mathbb{P}\big(\mathcal{J}_N, \tilde{\mu}^N \in \mathcal{V}^N_{\mathfrak{q}},\big| \tilde{\kappa}^{pq}_b-2\Delta \kappa_{pq}^{\hat{\mu}^N_b(\tilde{\boldsymbol\sigma},\tilde{\mathbf{G}})}\big| \geq \epsilon_0 \Delta \big) \leq
\mathbb{P}\big(\mathcal{J}_N, \tilde{\mu}^N \in \mathcal{V}^N_{\mathfrak{q}},\big| 2\Delta\kappa_{pq}^{\hat{\mu}^N_b(\tilde{\boldsymbol\sigma},\tilde{\mathfrak{G}}_{\mathfrak{q}})}-2\Delta \kappa_{pq}^{\hat{\mu}^N_b(\tilde{\boldsymbol\sigma},\tilde{\mathbf{G}})}\big| \geq \epsilon_0 \Delta / 3\big)\\
+\mathbb{P}\big(\mathcal{J}_N, \tilde{\mu}^N \in \mathcal{V}^N_{\mathfrak{q}},\big| \breve{\kappa}^{pq}_b-2\Delta \kappa_{pq}^{\hat{\mu}^N_b(\tilde{\boldsymbol\sigma},\tilde{\mathfrak{G}}_{\mathfrak{q}})}\big| \geq \epsilon_0 \Delta / 3 \big)+\mathbb{P}\big(\mathcal{J}_N, \tilde{\mu}^N \in \mathcal{V}^N_{\mathfrak{q}},\big| \tilde{\kappa}^{pq}_b-\breve{\kappa}_b^{pq}\big| \geq \epsilon_0 \Delta / 3 \big). \label{eq: remaining kappa derivative bound}
\end{multline}
where
\begin{equation}
\breve{\kappa}^{ij}_b = N^{-1}\sum_{k=1}^{N} \tilde{\mathfrak{G}}^{j,k}_{\mathfrak{q},t^{(n)}_b} \big(\tilde{\sigma}^{i,k}_b-\tilde{\sigma}^{i,k}_{b+1}\big).
\end{equation}
Now, by definition, $\hat{\mu}^N(\tilde{\boldsymbol\sigma},\tilde{\mathfrak{G}}_{\mathfrak{q}}) \in \mathcal{V}^N_{\mathfrak{q}}$. Thus if $\tilde{\mu}^N(\tilde{\boldsymbol\sigma},\tilde{\mathbf{G}}) \in \mathcal{V}^N_{\mathfrak{q}}$ as well, then since the radius of the set $\mathcal{V}^N_{\mathfrak{q}}$ goes to zero as $\mathfrak{n}\to\infty$, (as proved in Lemma \ref{Lemma Uniform Partition Wasserstein}), it must be that for sufficiently large $\mathfrak{n}$
\begin{align*}
\mathbb{P}\big(\tilde{\mu}^N \in \mathcal{V}^N_{\mathfrak{q}},\big| \tilde{\kappa}^{pq}_b-\breve{\kappa}_b^{pq}\big| \geq \epsilon_0 \Delta / 3 \big) =& 0,
\end{align*}
since
\begin{equation*}
\tilde{\kappa}^{pq}_b - \breve{\kappa}^{pq}_b = N^{-1}\sum_{k=1}^{N}\big\lbrace \tilde{G}^{q,k}_{t^{(n)}_b} \big(\tilde{\sigma}^{p,k}_b-\tilde{\sigma}^{p,k}_{b+1}\big)
-\tilde{\mathfrak{G}}^{q,k}_{\mathfrak{q},t^{(n)}_b} \big(\tilde{\sigma}^{p,k}_b-\tilde{\sigma}^{p,k}_{b+1}\big) \big\rbrace.
\end{equation*}
We similarly find that for large enough $\mathfrak{n}$,
\begin{equation}
\mathbb{P}\big(\mathcal{J}_N, \tilde{\mu}^N \in \mathcal{V}^N_{\mathfrak{q}},\big| 2\Delta\kappa_{pq}^{\hat{\mu}^N_b(\tilde{\boldsymbol\sigma},\tilde{\mathfrak{G}}_{\mathfrak{q}})}-2\Delta \kappa_{pq}^{\hat{\mu}^N_b(\tilde{\boldsymbol\sigma},\tilde{\mathbf{G}})}\big| \geq \epsilon_0 \Delta / 3\big) =
\mathbb{P}\big(\mathcal{J}_N, \tilde{\mu}^N \in \mathcal{V}^N_{\mathfrak{q}},\big| \kappa_{pq}^{\hat{\mu}^N_b(\tilde{\boldsymbol\sigma},\tilde{\mathfrak{G}}_{\mathfrak{q}})}-\kappa_{pq}^{\hat{\mu}^N_b(\tilde{\boldsymbol\sigma},\tilde{\mathbf{G}})}\big| \geq \epsilon_0  / 6\big) =0 
\end{equation}
Concerning the other term on the right hand side of \eqref{eq: remaining kappa derivative bound},
\begin{align*}
 \breve{\kappa}^{pq}_b-2\Delta \kappa_{pq}^{\hat{\mu}^N_b(\tilde{\boldsymbol\sigma},\tilde{\mathfrak{G}}_{\mathfrak{q}})} =& \grave{\kappa}^{pq} + \bar{\kappa}^{pq} \text{ where }\\
 \grave{\kappa}^{pq} =& N^{-1}\sum_{j\in I_N}\tilde{\mathfrak{G}}^{q,k}_{\mathfrak{q},t^{(n)}_b} \big(\tilde{\sigma}^{p,k}_b-\tilde{\sigma}^{p,k}_{b+1} -2 \int_{t^{(n)}_b}^{t^{(n)}_{b+1}}\tilde{\sigma}^{p,k}_s c(\tilde{\sigma}^{p,k}_b, \tilde{\mathfrak{G}}^{p,k}_{\mathfrak{q},t^{(n)}_b})ds \big) \\
 \bar{\kappa}^{pq} =& 2N^{-1}\sum_{j\in I_N}\tilde{\mathfrak{G}}^{q,k}_{\mathfrak{q},t^{(n)}_b} \int_{t^{(n)}_b}^{t^{(n)}_{b+1}}(\tilde{\sigma}^{p,k}_s - \tilde{\sigma}^{p,k}_{b}) c(\tilde{\sigma}^{p,k}_b, \tilde{\mathfrak{G}}^{p,k}_{\mathfrak{q},t^{(n)}_b})ds \big) 
\end{align*}
We thus find that
\begin{multline*}
\mathbb{P}\big(\mathcal{J}_N, \tilde{\mu}^N \in \mathcal{V}^N_{\mathfrak{q}},\big| \breve{\kappa}^{pq}_b-2\Delta \kappa_{pq}^{\hat{\mu}^N_b(\tilde{\boldsymbol\sigma},\tilde{\mathfrak{G}}_{\mathfrak{q}})}\big| \geq \epsilon_0 \Delta / 3 \big) \leq \\ \mathbb{P}\big(\mathcal{J}_N, \tilde{\mu}^N \in \mathcal{V}^N_{\mathfrak{q}},\big| \grave{\kappa}^{pq}\big| \geq \epsilon_0 \Delta / 6 \big)+\mathbb{P}\big(\mathcal{J}_N, \tilde{\mu}^N \in \mathcal{V}^N_{\mathfrak{q}},\big| \bar{\kappa}^{pq}\big| \geq \epsilon_0 \Delta / 6 \big).
\end{multline*}
Now $\grave{\kappa}^{pq}$ is the sum of compensated Poisson Processes (which are Martingales), since, making use of the representation in \eqref{eq: tilde sigma alternative definition},
\[
\tilde{\sigma}^{p,k}_t - \tilde{\sigma}^{p,k}_b = -2\int_{t^{(n)}_b}^{t}\tilde{\sigma}^{p,k}_s d\hat{\sigma}^{p,k}_s \text{ where }\hat{\sigma}^{p,k}_s = \tilde{\sigma}^{p,k}_bA\cdot Y_b^{p,k}\bigg( \int_{t^{(n)}_b}^{s} c(\tilde{\sigma}^{p,k}_b , \tilde{\mathfrak{G}}^{p,k}_{\mathfrak{q},b}) dr \bigg).
\]
Recalling that $|\tilde{\mathfrak{G}}^{q,k}_{\mathfrak{q},t^{(n)}_b}| \leq \mathfrak{n}$, it is therefore a consequence of Lemma \ref{Lemma Concentration Poisson} that
\[
\lsup{N}\sup_{1\leq \mathfrak{q} \leq C^N_{\mathfrak{n}}}N^{-1}\log \mathbb{P}\big(\mathcal{J}_N, \tilde{\mu}^N \in \mathcal{V}^N_{\mathfrak{q}},\big| \grave{\kappa}^{pq}\big| \geq \epsilon_0 \Delta / 6 \big) < 0.
\]
Since $\tilde{\sigma}^{p,k}_s = \tilde{\sigma}^{p,k}_b$ for all $s\in [t^{(n)}_b , t^{(n)}_{b+1}]$ if $Y^{p,k}_b(c_1\Delta) = 0$, we similarly find that
\begin{multline*}
\lsup{N}\sup_{1\leq \mathfrak{q} \leq C^N_{\mathfrak{n}}}N^{-1}\log \mathbb{P}\big(\mathcal{J}_N, \tilde{\mu}^N \in \mathcal{V}^N_{\mathfrak{q}},\big| \bar{\kappa}^{pq}\big| \geq \epsilon_0 \Delta / 6 \big) \\
\leq \lsup{N}\sup_{1\leq \mathfrak{q} \leq C^N_{\mathfrak{n}}}N^{-1}\log \mathbb{P}\big( 4c_1N^{-1}\sum_{j\in I_N}\chi\lbrace Y^{p,k}_b(c_1 \Delta) > 0 \rbrace \geq \epsilon_0  / 6 \big) 
< 0,
\end{multline*}
for large enough $n$ (recalling that $\Delta = Tn^{-1}$), thanks to Lemma \ref{Lemma Upper Bound Jump Rate} (ii).

\end{proof}

\begin{lemma}
For any $\bar{\epsilon} > 0$, for large enough $n\in \mathbb{Z}^+$,
\begin{equation}
\lsup{N}N^{-1}\log  \mathbb{P}\big(\tilde{\tau}_N > t^{(n)}_b,\mathcal{J}_N,\big|\beta^{10}(\boldsymbol\alpha,\tilde{\boldsymbol\sigma},\tilde{\mathbf{G}})  \big| \geq \bar{\epsilon}\Delta\big) < 0. \label{eq: beta 12 equation}
\end{equation}
\end{lemma}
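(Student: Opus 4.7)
The plan is to show that $\tilde{\mathbf{m}}^j_b = O(\Delta)$ for the indices $j$ contributing to $\beta^{10}$, so that $\beta^{10}$ itself is $O(\Delta^2)$ on a high-probability event, which is much smaller than $\bar{\epsilon}\Delta$ once $n$ is large. The argument essentially reflects the elementary fact that $\tilde{\mathbf{m}}^j_b$ is the $\Delta$-scale drift produced by spin flipping over one time step, so its square is inherently $o(\Delta)$.

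First I would localise the sum. Every $\phi^a \in \mathfrak{F}$ vanishes outside $\{\mathbf{x}:\norm{\mathbf{x}}\leq a\}$, and by hypothesis $|\phi^a_{ip}|\leq 1$. Applying Cauchy--Schwarz to the inner $I_M$-sum gives the pathwise bound
\begin{equation*}
\big|\beta^{10}(\boldsymbol\alpha,\tilde{\boldsymbol\sigma},\tilde{\mathbf{G}})\big| \leq \frac{M}{2}\sup_{j\in I_N:\,\norm{\tilde{\mathbf{G}}^j_b}\leq a}\norm{\tilde{\mathbf{m}}^j_b}^2.
\end{equation*}
Feeding the triangle inequality into the definition \eqref{eq: temp H 0}, and noting that on $\mathcal{J}_N \cap \{\tilde{\tau}_N > t^{(n)}_b\}$ we have $\norm{\tilde{\mathbf{H}}_b}\leq 1/\mathfrak{c}$ (the stopping-time condition forces the smallest eigenvalue of $\tilde{\mathbf{K}}_b$ to be at least $\mathfrak{c}$, so \eqref{eq: H xi definition} collapses to the usual inverse) and $\norm{\tilde{\boldsymbol\upsilon}_b}\leq M\sqrt{3}$ (Cauchy--Schwarz combined with $\mathbb{E}^{\hat{\mu}^N_b}[(g^p)^2]\leq 3$ from Lemma \ref{bound connections lemma}), we obtain, for such $j$,
\begin{equation*}
\norm{\tilde{\mathbf{m}}^j_b} \leq \mathfrak{c}^{-1} a\,\norm{\tilde{\mathbf{L}}_b} + \mathfrak{s}\mathfrak{c}^{-1}\sqrt{M}\,\norm{\tilde{\boldsymbol\kappa}_b} + \mathfrak{s}\mathfrak{c}^{-2} M\sqrt{3M}\,\norm{\tilde{\mathbf{L}}_b}.
\end{equation*}

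The crux is to bound $\norm{\tilde{\mathbf{L}}_b}$ and $\norm{\tilde{\boldsymbol\kappa}_b}$ at the scale $\Delta$, which is precisely what Lemma \ref{Lemma Matrix Convergence} delivers. Since $\tilde{\mathcal{W}}_2 = \bigcup_{\mathfrak{q}} \mathcal{V}^N_{\mathfrak{q}}$ covers the support of $\tilde{\mu}^N$ on $\mathcal{J}_N$, the subexponential growth of $C^N_{\mathfrak{n}}$ (Lemma \ref{eq: polynomial partition}) together with Lemma \ref{Lemma Many Events} let us upgrade the per-$\mathfrak{q}$ statement of Lemma \ref{Lemma Matrix Convergence} to a uniform one. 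The upshot is that there is an event $\mathcal{G}_N \subseteq \mathcal{J}_N$ of probability $1 - \exp\{-\Omega(N)\}$ on which, uniformly in $p,q$,
\begin{equation*}
|\tilde{L}^{pq}_b| \leq (2c_1 + \epsilon_0)\Delta \quad\text{and}\quad |\tilde{\kappa}^{pq}_b| \leq (2c_1\sqrt{3} + \epsilon_0)\Delta,
\end{equation*}
using the uniform bounds $|L^{\hat{\mu}^N_b}_{pq}| \leq c_1$ (from \eqref{eq: uniform bound c sigma g}) and $|\kappa^{\hat{\mu}^N_b}_{pq}| \leq c_1\sqrt{3}$ (Cauchy--Schwarz, valid on $\mathcal{J}_N$). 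Combining, on $\mathcal{J}_N \cap \{\tilde{\tau}_N > t^{(n)}_b\} \cap \mathcal{G}_N$ we obtain $\norm{\tilde{\mathbf{m}}^j_b} \leq C\Delta$ for an explicit constant $C = C(\mathfrak{c}, c_1, M, a)$, and therefore $|\beta^{10}| \leq (MC^2/2)\Delta^2$. For $n$ large enough that $(MC^2/2)\Delta < \bar{\epsilon}$, the event $\{|\beta^{10}| \geq \bar{\epsilon}\Delta\} \cap \mathcal{J}_N \cap \{\tilde{\tau}_N > t^{(n)}_b\} \cap \mathcal{G}_N$ is empty, so the probability in \eqref{eq: beta 12 equation} is bounded by $\mathbb{P}(\mathcal{G}_N^c)$, which decays exponentially. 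I do not anticipate any substantive obstacle; the whole argument is essentially cosmetic once one observes that $\tilde{\mathbf{m}}^j_b$ is a drift at scale $\Delta$.
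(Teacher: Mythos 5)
Your proposal is correct and follows essentially the same route as the paper: bound $|\beta^{10}|$ by a quadratic in $\norm{\tilde{\mathbf{m}}^j_b}$, use $\tilde{\tau}_N > t^{(n)}_b$ and $\mathcal{J}_N$ to control $\norm{\tilde{\mathbf{H}}_b}\leq \mathfrak{c}^{-1}$ and $\norm{\tilde{\boldsymbol\upsilon}_b}$, and then invoke Lemma \ref{Lemma Matrix Convergence} to conclude $\norm{\tilde{\mathbf{L}}_b},\norm{\tilde{\boldsymbol\kappa}_b}=O(\Delta)$, whence $|\beta^{10}|=O(\Delta^2)<\bar{\epsilon}\Delta$ deterministically on an event whose complement is exponentially unlikely. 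The only cosmetic difference is that you eliminate the $\norm{\tilde{\mathbf{G}}^j_b}$ factor by exploiting the compact support of $\phi^a$ (so only $j$ with $\norm{\tilde{\mathbf{G}}^j_b}\leq a$ contribute, and you take a supremum over those $j$), whereas the paper keeps all $j$ and averages, using Jensen's inequality together with $N^{-1}\sum_{j\in I_N}\norm{\tilde{\mathbf{G}}^j_b}^2\leq 3M$ on $\mathcal{J}_N$.
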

\begin{proof}
Since $\big| \phi^a_{ip}(\tilde{\mathbf{G}}_b^{j})| \leq 1$ by definition,
\begin{align*}
\big|\beta^{10}(\boldsymbol\alpha,\tilde{\boldsymbol\sigma},\tilde{\mathbf{G}}) \big| &\leq  (2N)^{-1}\big|\sum_{j\in I_N}\sum_{i,p\in I_M}\chi\lbrace \tilde{\boldsymbol\sigma}_b^j=\boldsymbol\alpha \rbrace \tilde{m}^{i,j}_b \tilde{m}^{p,j}_b \big| \\
&=  (2N)^{-1}\sum_{j\in I_N}\chi\lbrace \tilde{\boldsymbol\sigma}_b^j=\boldsymbol\alpha \rbrace \big(\sum_{p\in I_M}\tilde{m}^{p,j}_b \big)^2 
\leq  (2N)^{-1}M\sum_{j\in I_N} \sum_{p\in I_M}(\tilde{m}^{p,j}_b)^2,
\end{align*}
by Jensen's Inequality. Thanks to the triangle inequality,
\begin{align*}
 \norm{\tilde{\mathbf{m}}^j_b } &\leq \norm{\tilde{\mathbf{L}}_b\tilde{\mathbf{H}}_b\tilde{\mathbf{G}}^j_b} +\mathfrak{s} \norm{ \tilde{\boldsymbol\kappa}_b\tilde{\mathbf{H}}_b\tilde{\boldsymbol\sigma}^j_b} + \mathfrak{s}\norm{\tilde{\mathbf{L}}_b\tilde{\mathbf{H}}_b\tilde{\boldsymbol\nu}\tilde{\mathbf{H}}_b\tilde{\boldsymbol\sigma}^j_b} \\
 &\leq \norm{\tilde{\mathbf{L}}_b}\norm{\tilde{\mathbf{H}}_b}\norm{\tilde{\mathbf{G}}^j_b} +\mathfrak{s} \norm{\tilde{\boldsymbol\kappa}_b}\norm{\tilde{\mathbf{H}}_b}\norm{\tilde{\boldsymbol\sigma}^j_b} + \mathfrak{s}\norm{\tilde{\mathbf{L}}_b}\norm{\tilde{\mathbf{H}}_b}\norm{\tilde{\boldsymbol\nu}_b}\norm{\tilde{\mathbf{H}}_b}\norm{\tilde{\boldsymbol\sigma}^j_b}.
\end{align*}
We thus find that
\begin{equation}
 \norm{\tilde{\mathbf{m}}^j_b }^2  
 \leq 3\norm{\tilde{\mathbf{L}}_b}^2\norm{\tilde{\mathbf{H}}_b}^2\norm{\tilde{\mathbf{G}}^j_b}^2 +3\mathfrak{s}^2 \norm{\tilde{\boldsymbol\kappa}_b}^2\norm{\tilde{\mathbf{H}}_b}^2\norm{\tilde{\boldsymbol\sigma}^j_b}^2 + 3\mathfrak{s}^2\norm{\tilde{\mathbf{L}}_b}^2\norm{\tilde{\mathbf{H}}_b}^4\norm{\tilde{\boldsymbol\nu}_b}^2\norm{\tilde{\boldsymbol\sigma}^j_b}^2.
\end{equation}
Since $\tilde{\tau}_N > t^{(n)}_b$, $\norm{\tilde{\mathbf{H}}_b} \leq \mathfrak{c}^{-1}$. Since $|\tilde{\sigma}^{i,j}_t | \leq 1$, $\norm{\tilde{\boldsymbol\sigma}^j_b} \leq \sqrt{M}$. The event $\mathcal{J}_N$ implies - after an application of the Cauchy-Schwarz Inequality - that $| \tilde{\upsilon}^{pq}_b| \leq \sqrt{3}$ and $| \tilde{\kappa}^{pq}_b | \leq \sqrt{3} c_1$. Now
\begin{align*}
\big| \tilde{L}_b^{pq} \big| &\leq \Delta\big(2 \big| L^{\hat{\mu}^N_b}_{pq} \big| +  \big| \tilde{L}^{pq}_b-2\Delta L_{pq}^{\hat{\mu}^N_b(\tilde{\boldsymbol\sigma},\tilde{\mathbf{G}})}\big|  \big) \\
\big| \tilde{\kappa}_b^{pq} \big| &\leq \Delta\big(2 \big| \kappa^{\hat{\mu}^N_b}_{pq} \big| + \big| \tilde{\kappa}^{pq}_b-2\Delta \kappa_{pq}^{\hat{\mu}^N_b(\tilde{\boldsymbol\sigma},\tilde{\mathbf{G}})} \big| \big).
\end{align*}
Lemma \ref{Lemma Matrix Convergence} implies that the probability of the following event not holding is exponentially decaying,
\begin{align}
\big\lbrace \sup_{p,q\in I_M}\big| \tilde{L}^{pq}_b-2\Delta L_{pq}^{\hat{\mu}^N_b(\tilde{\boldsymbol\sigma},\tilde{\mathbf{G}})} \big| \leq \Delta \; \; , \; \; \sup_{p,q \in I_M} \big| \tilde{\kappa}^{pq}_b-2\Delta \kappa_{pq}^{\hat{\mu}^N_b(\tilde{\boldsymbol\sigma},\tilde{\mathbf{G}})}\big|  \leq \Delta \big\rbrace . \label{eq: tilde L tilde kappa bound}
\end{align}
We can thus assume that the above events hold. Since $| L_{pq}^{\hat{\mu}^N_b(\tilde{\boldsymbol\sigma},\tilde{\mathbf{G}})}| \leq c_1$ and 
\[
\big| \kappa_{pq}^{\hat{\mu}^N_b(\tilde{\boldsymbol\sigma},\tilde{\mathbf{G}})}\big| \leq c_1 \mathbb{E}^{\hat{\mu}^N_b}[ |g_q|] \leq c_1 \sqrt{3},
\]
it must be that there exist positive constants $C_1,C_2$ such that
\begin{equation}
 \norm{\tilde{\mathbf{m}}^j_b }^2  
 \leq C_1 (\Delta)^2 \norm{\tilde{\mathbf{G}}^j_b}^2 +C_2 (\Delta)^2.
\end{equation}
We thus find that 
\begin{equation}
 N^{-1}\sum_{j\in I_N}\norm{\tilde{\mathbf{m}}^j_b }^2  
 \leq C_1 (\Delta)^2 \sum_{j\in I_N}\norm{\tilde{\mathbf{G}}^j_b}^2 +C_2 (\Delta)^2 \leq 3C_1M (\Delta)^2 + C_2 (\Delta)^2,
\end{equation}
as long as the event $\mathcal{J}_N$ holds. In conclusion, as long as the events $\tilde{\tau}_N \geq t^{(n)}_b$, $\mathcal{J}_N$ and \eqref{eq: tilde L tilde kappa bound} hold, it must be that
\[
\big|\beta^{10}(\boldsymbol\alpha,\tilde{\boldsymbol\sigma},\tilde{\mathbf{G}}) \big|  \leq  3C_1M (\Delta)^2 + C_2 (\Delta)^2.
\]
Clearly for small enough $\Delta$, \eqref{eq: beta 12 equation} must hold. 
\end{proof}

\section{Using the Gaussian Law to Estimate the Field Dynamics}
\label{Section Field}

In this section we continue the proof of Lemma \ref{Lemma general beta bound}: providing bounds for the terms \newline$\beta^6(\boldsymbol\alpha,\tilde{\boldsymbol\sigma},\tilde{\mathbf{G}}),\beta^8(\boldsymbol\alpha,\tilde{\boldsymbol\sigma},\tilde{\mathbf{G}}),\beta^9(\boldsymbol\alpha,\tilde{\boldsymbol\sigma},\tilde{\mathbf{G}})$. The bounding of these terms requires the law $\gamma$ of the Gaussian connections $\lbrace J^{jk} \rbrace_{j,k\in I_N}$. Recall that the processes $\lbrace \tilde{\boldsymbol\sigma}_i \rbrace_{1\leq i \leq C^N_{\mathfrak{n}}}$ are independent of the connections, and so conditioning on these processes does not affect the distribution of the connections. For fixed $\tilde{\boldsymbol\sigma}_b \in \mathcal{E}^{MN}$ and any $\mathbf{g} \in \mathbb{R}^{MN}$, let $\gamma_{\tilde{\boldsymbol\sigma}_b, \mathbf{g}} \in \mathcal{M}^+_1\big(\mathbb{R}^{N^2}\big)$ be the regular conditional probability distribution of the connections $\mathbf{J}$, conditionally on 
\begin{equation}
N^{-1/2}\sum_{k\in I_N} J^{jk}\tilde{\sigma}^{p,k}_b = g^{p,j}.
\end{equation}
Standard theory dictates that $\gamma_{\tilde{\boldsymbol\sigma}_b, \mathbf{g}}$ is Gaussian (see for instance Theorem A.1.3 in \cite{Lindgren2013}). We start by determining expressions for the conditional mean and variance of $\gamma_{\tilde{\boldsymbol\sigma}_b, \mathbf{g}}$ in Section \ref{Section Gaussian}. We then use these expressions to bound $\beta^6, \beta^8$ and $\beta^9$ in Section \ref{Section bounding beta 6 8 9}.

\subsection{The Conditional Mean and Covariance} \label{Section Gaussian}


The main result of this section is Lemma \ref{Lemma Conditional Mean}: this lemma is crucial because it demonstrates that the conditional mean of the increment $\tilde{G}^{i,j}_{b+1} - \tilde{G}^{i,j}_b$ can be written as a function of the variables $\lbrace \tilde{\boldsymbol\sigma}^{j}_b, \tilde{\boldsymbol\sigma}^{j}_{b+1},\tilde{\mathbf{G}}^{j}_b\rbrace$ and the empirical measure at time $t^{(n)}_b$, i.e. $\hat{\mu}^N(\tilde{\boldsymbol\sigma}_b,\tilde{\mathbf{G}}_b)$. This property allows us to obtain a closed expression for the dynamics of the empirical process. We also determine some bounds on the conditional variance matrix.

We write
\begin{align*}
\tilde{G}^{i,j}_b = N^{-\frac{1}{2}}\sum_{k=1}^N J^{jk}\tilde{\sigma}^{i,k}_b \; \; , \; \;
\tilde{F}^{i,j}_b = N^{-\frac{1}{2}}\sum_{k=1}^N J^{jk}\big(\tilde{\sigma}^{i,k}_{b+1} - \tilde{\sigma}^{i,k}_b \big).
\end{align*}
Let $\tilde{\gamma}_{\tilde{\boldsymbol\sigma}_b,\tilde{\boldsymbol\sigma}_{b+1}}\in \mathcal{M}^+_1\big(\mathbb{R}^{2MN}\big)$ be the law of $\lbrace \tilde{\mathbf{G}}_b,\mathbf{F}_b\rbrace$ under $\gamma$ (for fixed $\lbrace \tilde{\boldsymbol\sigma}_b,\tilde{\boldsymbol\sigma}_{b+1}\rbrace$). Since the above definitions are linear, standard theory dictates that $\tilde{\gamma}_{\tilde{\boldsymbol\sigma}_b,\tilde{\boldsymbol\sigma}_{b+1}}$ is Gaussian.
Next define $\gamma_{\tilde{\boldsymbol\sigma}_b,\tilde{\boldsymbol\sigma}_{b+1},\tilde{\mathbf{G}}_b}^N \in \mathcal{M}^+_1\big(\mathbb{R}^{MN}\big)$ to be the law of $\mathbf{F}_b$ under $\tilde{\gamma}_{\tilde{\boldsymbol\sigma}_b,\tilde{\boldsymbol\sigma}_{b+1}}$, conditionally on $\tilde{\mathbf{G}}_b$. The rest of this section is devoted to finding tractable expressions for the mean and variance of $\gamma_{\tilde{\boldsymbol\sigma}_b,\tilde{\boldsymbol\sigma}_{b+1},\tilde{\mathbf{G}}_b}^N$. We define the density of $\gamma_{\tilde{\boldsymbol\sigma}_b,\tilde{\boldsymbol\sigma}_{b+1},\tilde{\mathbf{G}}_b}^N$ to be $\grave{\Upsilon}_{\tilde{\boldsymbol\sigma}_b,\tilde{\boldsymbol\sigma}_{b+1},\tilde{\mathbf{G}}_b} \in \mathcal{C}\big(\mathbb{R}^{MN}\big)$. 

Let $\Upsilon^N_{\tilde{\boldsymbol \sigma}_b, \tilde{\boldsymbol \sigma}_{b+1}} \in \mathcal{C}(\mathbb{R}^{2MN}) $ be the Gaussian density of $\lbrace \tilde{G}^{i,j}_b, \tilde{F}^{i,j}_{b+1} \rbrace_{j=1}^N$ under $\gamma$,  i.e.
\begin{equation}
\Upsilon^N_{\tilde{\boldsymbol\sigma}_b, \tilde{\boldsymbol \sigma}_{b+1}}(\tilde{\mathbf{G}}_b, \tilde{\mathbf{F}}_b) = (2\pi)^{-N}\big(\det(\mathcal{L}_N)\big)^{-\frac{1}{2}}\exp\big( -(\tilde{\mathbf{G}}_b,\tilde{\mathbf{F}}_b)^T \bar{\mathcal{K}}_N(\tilde{\boldsymbol \sigma}_b,\tilde{\boldsymbol \sigma}_{b+1})^{-1}(\tilde{\mathbf{G}}_b,\tilde{\mathbf{F}}_{b}) / 2\big),
\end{equation}
and $\bar{\mathcal{K}}_N\big(\tilde{\boldsymbol \sigma}_b,\tilde{\boldsymbol \sigma}_{b+1}\big)$ is the $2NM \times 2NM$ covariance matrix of $\big\lbrace \tilde{G}^{i,j}_b, \tilde{F}^{i,j}_b \big\rbrace_{i\in I_N,j \in I_N}$, i.e. 
\begin{equation}
\bar{\mathcal{K}}_N = \left(\begin{array}{c c}
\mathcal{K}_N(\tilde{\boldsymbol\sigma}_b) & \grave{\mathcal{K}}_N(\tilde{\boldsymbol\sigma}_b, \tilde{\boldsymbol\sigma}_{b+1} ) \\
(\grave{\mathcal{K}}_N)(\tilde{\boldsymbol\sigma}_b, \tilde{\boldsymbol\sigma}_{b+1} )^T & \tilde{\mathcal{K}}_N(\tilde{\boldsymbol\sigma}_{b+1})
\end{array}\right).
\end{equation}
The contents of $\bar{\mathcal{K}}_N\big(\tilde{\boldsymbol\sigma}_b,\tilde{\boldsymbol \sigma}_{b+1}\big)$ are the following $MN \times MN$ square matrices, with the replica indices at the top, and the spin indices at the bottom, i.e. for $i,m \in I_M$ and $j,k \in I_N$,
\begin{align}
\mathcal{K}_N(\tilde{\boldsymbol\sigma}_b)^{im}_{jk} &=  \Exp^{\gamma}\big[ \tilde{G}^{i,j}_b \tilde{G}^{m,k}_b\big]=  \delta(j,k)N^{-1}\sum_{l=1}^N \tilde{\sigma}^{i,l}_b \tilde{\sigma}^{m,l}_b+\frac{\mathfrak{s}}{N} \tilde{\sigma}^{m,j}_b\tilde{\sigma}^{i,k}_b \label{eq: covariance 1} \\
\tilde{\mathcal{K}}_N(\tilde{\boldsymbol\sigma}_b,\tilde{\boldsymbol\sigma}_{b+1} )^{im}_{jk}&=  \Exp^{\gamma}\big[ \tilde{F}^{i,j}_{b}\tilde{F}^{m,k}_b\big]=  \delta(j,k)N^{-1}\sum_{l=1}^N \big(\tilde{\sigma}^{i,l}_{b+1} - \tilde{\sigma}^{i,l}_b\big)\big(\tilde{\sigma}^{m,l}_{b+1} - \tilde{\sigma}^{m,l}_b\big)  +\frac{\mathfrak{s}}{N} \big(\tilde{\sigma}^{m,j}_{b+1} - \tilde{\sigma}^{m,j}_b\big)\big( \tilde{\sigma}^{i,k}_{b+1} - \tilde{\sigma}^{i,k}_b\big)\label{eq: covariance 2}\\
\grave{\mathcal{K}}_N(\tilde{\boldsymbol\sigma}_b, \tilde{\boldsymbol\sigma}_{b+1})^{im}_{jk}&= \Exp^{\gamma}\big[ \tilde{F}^{i,j}_b \tilde{G}^{m,k}_b \big] = \delta(j,k)N^{-1}\sum_{l=1}^N \big(\tilde{\sigma}^{i,l}_{b+1} - \tilde{\sigma}^{i,l}_b\big)\tilde{\sigma}^{m,l}_b + \frac{\mathfrak{s}}{N} \big(\tilde{\sigma}^{i,k}_{b+1} - \tilde{\sigma}^{i,k}_b\big)\tilde{\sigma}^{m,j}_b.\label{eq: covariance 3}
\end{align}
Standard theory (see for instance Theorem A.1.3 in \cite{Lindgren2013}) dictates that the density $\grave{\Upsilon}^N_{\tilde{\boldsymbol\sigma}_b \tilde{\boldsymbol \sigma}_{b+1},\tilde{\mathbf{G}}_b}(\tilde{\mathbf{F}}_b) $ of $\gamma_{\tilde{\boldsymbol\sigma}_b,\tilde{\boldsymbol\sigma}_{b+1},\tilde{\mathbf{G}}_b}$ assumes the form
\begin{multline}\label{eq: conditional gaussian density upsilon}
\grave{\Upsilon}^N_{\tilde{\boldsymbol\sigma}_b \tilde{\boldsymbol \sigma}_{b+1},\tilde{\mathbf{G}}_b}(\tilde{\mathbf{F}}_b) = (2\pi)^{-N/2} \det\big(\mathcal{R}^N(\tilde{\boldsymbol\sigma}_{b},\tilde{\boldsymbol\sigma}_{b+1}) \big)^{-\frac{1}{2}}\\ \exp\bigg( -\frac{1}{2}\big\lbrace \tilde{\mathbf{F}}_b-\tilde{\mathbf{m}}_b(\tilde{\boldsymbol\sigma}_{b},\tilde{\boldsymbol\sigma}_{b+1},\tilde{\mathbf{G}}_b) \big\rbrace^T \mathcal{R}^N\big(\tilde{\boldsymbol\sigma}_{b},\tilde{\boldsymbol\sigma}_{b+1}\big) ^{-1}\big\lbrace \tilde{\mathbf{F}}_b-\tilde{\mathbf{m}}_b(\tilde{\boldsymbol\sigma}_{b},\tilde{\boldsymbol\sigma}_{b+1},\tilde{\mathbf{G}}_b) \big\rbrace \bigg).
\end{multline}
Here $\tilde{\mathbf{m}}_b(\tilde{\boldsymbol\sigma}_{b},\tilde{\boldsymbol\sigma}_{b+1},\tilde{\mathbf{G}}_b) := \lbrace \tilde{m}_b^{i,j}(\tilde{\boldsymbol\sigma}_{b},\tilde{\boldsymbol\sigma}_{b+1},\tilde{\mathbf{G}}_b) \rbrace_{(i,j)\in I_{M,N}}$ is the vector of conditional means of $\lbrace F^{i,j}_{b+1} \rbrace$  i.e.
\begin{equation}\label{eq: mean definition}
\tilde{m}^{i,j}_b(\tilde{\boldsymbol\sigma}_{b},\tilde{\boldsymbol\sigma}_{b+1},\tilde{\mathbf{G}}_b) = \big(\grave{\mathcal{K}}_N(\tilde{\boldsymbol\sigma}_{b},\tilde{\boldsymbol\sigma}_{b+1})\mathcal{K}_N(\tilde{\boldsymbol\sigma}_{b})^{-1}\tilde{\mathbf{G}}_b\big)^{i,j},
\end{equation}
i.e. in the above $\tilde{m}^{i,j}_{b}$ is the element with index $(i,j)$ in the above vector resulting from two matrix multiplications on the vector $\tilde{\mathbf{G}}_b$.
 $\mathcal{R}_N\big(\tilde{\boldsymbol\sigma}_{b},\tilde{\boldsymbol\sigma}_{b+1}\big)$ is the $MN\times MN$ conditional covariance matrix of $\tilde{\mathbf{F}}_b$, i.e. 
 \begin{align}
\mathcal{R}_N\big(\tilde{\boldsymbol\sigma}_{b},\tilde{\boldsymbol\sigma}_{b+1}\big) &= \tilde{\mathcal{K}}_N(\tilde{\boldsymbol\sigma}_{b},\tilde{\boldsymbol\sigma}_{b+1}) - \mathcal{L}_N(\tilde{\boldsymbol\sigma}_{b},\tilde{\boldsymbol\sigma}_{b+1})\text{ where }\label{eq: LN definition 0} \\ 
\mathcal{L}_N\big(\tilde{\boldsymbol\sigma}_{b},\tilde{\boldsymbol\sigma}_{b+1}\big)  &= \grave{\mathcal{K}}_N\big(\tilde{\boldsymbol\sigma}_{b},\tilde{\boldsymbol\sigma}_{b+1}\big)  \mathcal{K}_N(\tilde{\boldsymbol\sigma}_{b})^{-1}\grave{\mathcal{K}}_N\big(\tilde{\boldsymbol\sigma}_{b},\tilde{\boldsymbol\sigma}_{b+1}\big) ^T,\label{eq: LN definition}
\end{align}
noting that $\mathcal{L}_N$ is an $MN\times MN$ matrix. 

\begin{lemma}\label{Lemma Bound Matrix Norms}
Recall that $\norm{\cdot }$ is the operator norm and the definition of $\tilde{\mathbf{L}}_b$ in \eqref{eq: tilde K definition 0 0}. We have the following bounds on $NM \times NM$ square matrices
\begin{align}
\norm{\mathcal{R}_N(\tilde{\boldsymbol\sigma}_{b},\tilde{\boldsymbol\sigma}_{b+1})} &\leq \norm{\tilde{\mathcal{K}}(\tilde{\boldsymbol\sigma}_{b},\tilde{\boldsymbol\sigma}_{b+1})}\label{eq: K covariance matrix 1}\\
\norm{\tilde{\mathcal{K}}(\tilde{\boldsymbol\sigma}_{b},\tilde{\boldsymbol\sigma}_{b+1})} &\leq N^{-1}\big\lbrace 4M + 4\mathfrak{s}M\big\rbrace \sup_{i\in I_M}\sum_{l=1}^N \chi\big\lbrace \tilde{\sigma}^{i,l}_{b+1} \neq \tilde{\sigma}^{i,l}_b \big\rbrace \label{eq: K covariance matrix 2}\\
\norm{\grave{\mathcal{K}}(\tilde{\boldsymbol\sigma}_{b},\tilde{\boldsymbol\sigma}_{b+1})} &\leq \norm{\tilde{L}_b} + \sqrt{2}N^{-1/2}\label{eq: K covariance matrix 3}\\
\norm{\mathcal{K}(\tilde{\boldsymbol\sigma}_{b})} &\leq M\big\lbrace 1 + \mathfrak{s}\big\rbrace \label{eq: K covariance matrix 4}
\end{align}
\end{lemma}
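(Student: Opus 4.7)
The plan is to prove the four inequalities one at a time, exploiting a common structural decomposition: each of $\mathcal{K}_N$, $\tilde{\mathcal{K}}_N$, and $\grave{\mathcal{K}}_N$ splits as a block-diagonal part (diagonal in the spin index $j\in I_N$, with an $M\times M$ Gram-type block) plus a rank-$\le M^{2}$ correction proportional to $\mathfrak{s}/N$.

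For \eqref{eq: K covariance matrix 1}, I would note that $\mathcal{R}_N$ is a conditional covariance of the Gaussian vector $\tilde{\mathbf{F}}_b$ given $\tilde{\mathbf{G}}_b$, so $\mathcal{R}_N\succeq 0$. Likewise $\mathcal{L}_N=\grave{\mathcal{K}}_N\mathcal{K}_N^{-1}\grave{\mathcal{K}}_N^T$ is positive semidefinite, because $\mathcal{K}_N$ is an unconditional covariance and hence $\mathcal{K}_N^{-1}\succeq 0$. Since $\tilde{\mathcal{K}}_N=\mathcal{R}_N+\mathcal{L}_N$, the L\"owner ordering gives $0\preceq\mathcal{R}_N\preceq\tilde{\mathcal{K}}_N$ and therefore $\|\mathcal{R}_N\|_{\mathrm{op}}\le\|\tilde{\mathcal{K}}_N\|_{\mathrm{op}}$.

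For \eqref{eq: K covariance matrix 2} and \eqref{eq: K covariance matrix 4}, the matrices $\tilde{\mathcal{K}}_N$ and $\mathcal{K}_N$ are genuine (unconditional) covariance matrices, hence positive semidefinite, so their operator norms are bounded by their traces. For \eqref{eq: K covariance matrix 2}, tracing the block-diagonal part contributes $N^{-1}\sum_{i,l}(\tilde\sigma^{i,l}_{b+1}-\tilde\sigma^{i,l}_b)^2\le 4M\sup_i N^{-1}\sum_l\chi\{\tilde\sigma^{i,l}_{b+1}\neq\tilde\sigma^{i,l}_b\}$ via $|\tilde\sigma^{i,l}_{b+1}-\tilde\sigma^{i,l}_b|^{2}\le 4\chi\{\tilde\sigma^{i,l}_{b+1}\neq\tilde\sigma^{i,l}_b\}$; the rank correction contributes an additional factor of $\mathfrak{s}$ times the same quantity, yielding the prefactor $4M+4\mathfrak{s}M$. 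For \eqref{eq: K covariance matrix 4}, the analogous computation uses $(\tilde\sigma^{i,l}_b)^{2}=1$ directly, giving block-diagonal contribution $M$ and rank-correction contribution $\mathfrak{s}M$.

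The delicate bound is \eqref{eq: K covariance matrix 3}, because $\grave{\mathcal{K}}_N$ is not symmetric and no trace argument is available. Writing $\grave{\mathcal{K}}_N=\grave{\mathcal{K}}_N^{(1)}+\grave{\mathcal{K}}_N^{(2)}$, the block-diagonal part $\grave{\mathcal{K}}_N^{(1)}$ has identical $M\times M$ blocks equal to $-\tilde{L}_b^{T}$ (direct comparison with the definition of $\tilde{L}_b$), and the operator norm of a block-diagonal matrix with identical blocks equals the operator norm of a single block, giving $\|\grave{\mathcal{K}}_N^{(1)}\|_{\mathrm{op}}=\|\tilde{L}_b\|$. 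For the rank correction, I would recognize the entry $(\mathfrak{s}/N)(\tilde\sigma^{i,k}_{b+1}-\tilde\sigma^{i,k}_b)\tilde\sigma^{m,j}_b$ as, after the canonical index-swap isometry between $\mathbb{R}^{M}\otimes\mathbb{R}^{N}$ and $\mathbb{R}^{N}\otimes\mathbb{R}^{M}$, equal to $(\mathfrak{s}/N)$ times the Kronecker product $W\otimes V^{T}$, where $V$ and $W$ are the $N\times M$ matrices with entries $V_{k,i}=\tilde\sigma^{i,k}_{b+1}-\tilde\sigma^{i,k}_b$ and $W_{j,m}=\tilde\sigma^{m,j}_b$. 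Multiplicativity of operator norms under Kronecker products then yields $\|\grave{\mathcal{K}}_N^{(2)}\|_{\mathrm{op}}=(\mathfrak{s}/N)\|V\|_{\mathrm{op}}\|W\|_{\mathrm{op}}$, which one controls through elementary estimates on $V^TV$ and $W^TW$ (each $M\times M$, with entries bounded by $1$ using the uniform bound on spins, and similarly for the rank-$M$ structure of $V$), extracting the advertised $\sqrt{2}N^{-1/2}$ constant. The triangle inequality then assembles the two pieces.

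The main obstacle is \eqref{eq: K covariance matrix 3}: unlike the symmetric PSD matrices handled by trace bounds, the non-symmetry of $\grave{\mathcal{K}}_N^{(2)}$ forces one to exploit the tensor-product structure directly, and recognising the rank correction as a Kronecker product (up to the coordinate-swap isometry) is the key idea that unlocks the $N^{-1/2}$ decay.
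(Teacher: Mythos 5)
Your treatment of \eqref{eq: K covariance matrix 1} is the same as the paper's (conditional covariance $\preceq$ covariance, via positive semidefiniteness of $\mathcal{R}_N$ and $\mathcal{L}_N$). For \eqref{eq: K covariance matrix 2} and \eqref{eq: K covariance matrix 4} the paper argues differently: it bounds the quadratic form $\sum \tilde{\mathcal{K}}_N^{im}{}_{jk}\mathfrak{a}^{i,j}\mathfrak{a}^{m,k}$ directly with Cauchy--Schwarz and Jensen, and declares \eqref{eq: K covariance matrix 3}--\eqref{eq: K covariance matrix 4} analogous. Your trace-based route is not correct as written: the trace of the full $NM\times NM$ block-diagonal part is $\sum_{i,l}(\tilde\sigma^{i,l}_{b+1}-\tilde\sigma^{i,l}_b)^2$, not $N^{-1}\sum_{i,l}(\tilde\sigma^{i,l}_{b+1}-\tilde\sigma^{i,l}_b)^2$, since the same $M\times M$ block is repeated $N$ times; "operator norm $\le$ trace" applied to the full matrix therefore loses a factor of $N$ and does not give \eqref{eq: K covariance matrix 2}. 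What rescues this part is exactly the repetition you exploit later for \eqref{eq: K covariance matrix 3}: the norm of the block-diagonal part equals the norm of one block, which is a Gram matrix with trace $N^{-1}\sum_{i,l}(\cdot)^2$. You also need a separate estimate for the $\mathfrak{s}/N$ correction in \eqref{eq: K covariance matrix 2}, because that piece is not positive semidefinite by itself, so a trace bound does not apply to it; a Frobenius-norm bound does produce the $4\mathfrak{s}M$ term. These are repairable slips.

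The genuine gap is \eqref{eq: K covariance matrix 3}. Your Kronecker factorization and the identity $\norm{\grave{\mathcal{K}}_N^{(2)}}=(\mathfrak{s}/N)\norm{V}\,\norm{W}$ are fine, but the quantitative claims attached to them are false: with $W_{j,m}=\tilde\sigma^{m,j}_b$ the matrix $W^TW$ has $(m,m')$ entry $\sum_{j\in I_N}\tilde\sigma^{m,j}_b\tilde\sigma^{m',j}_b=N\tilde K^{mm'}_b$, so its entries are of order $N$ (not bounded by $1$) and $\norm{W}\ge\sqrt N$; likewise $\norm{V}\ge 2\big(\sup_{i}\sum_{l}\chi\{\tilde\sigma^{i,l}_{b+1}\neq\tilde\sigma^{i,l}_b\}\big)^{1/2}$. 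Consequently $(\mathfrak{s}/N)\norm{V}\,\norm{W}\ge 2\mathfrak{s}N^{-1/2}\big(\sup_i\sum_l\chi\{\cdot\}\big)^{1/2}$, which already exceeds $\sqrt2\,N^{-1/2}$ once a single spin flips and $\mathfrak{s}=1$, and in the regime where the lemma is used (order $N\Delta$ flips) is of size $\sqrt{\Delta}$, not $N^{-1/2}$. So the advertised additive constant $\sqrt2\,N^{-1/2}$ cannot be "extracted" from the product $\norm{V}\,\norm{W}$; your argument at best yields $\norm{\grave{\mathcal{K}}_N}\le\norm{\tilde{\mathbf{L}}_b}+2\mathfrak{s}MN^{-1/2}\big(\sup_i\sum_l\chi\{\cdot\}\big)^{1/2}$. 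In fact your own (exact) norm identity shows the stated inequality cannot be recovered without further restrictions on the number of flips: for $M=1$, $\mathfrak{s}=1$ and $\tilde{\boldsymbol\sigma}_{b+1}=-\tilde{\boldsymbol\sigma}_b$ one has $\grave{\mathcal{K}}_N=-2\mathbf{I}-\tfrac{2}{N}\tilde{\boldsymbol\sigma}_b\tilde{\boldsymbol\sigma}_b^T$ with norm $4$, while $\norm{\tilde{\mathbf{L}}_b}+\sqrt2N^{-1/2}=2+\sqrt2N^{-1/2}$. (The paper omits its proof of \eqref{eq: K covariance matrix 3} as "analogous" to \eqref{eq: K covariance matrix 2}.) As it stands, therefore, your proof of \eqref{eq: K covariance matrix 3} does not go through, and this is precisely the step you identified as the delicate one.
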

\begin{proof}
\eqref{eq: K covariance matrix 1} is a known property of finite Gaussian systems: the conditional variance is always less than or equal to the variance. It follows from the fact that $\mathcal{R}_N$, $\tilde{\mathcal{K}}_N$ and $\mathcal{L}_N$ are positive definite.

For \eqref{eq: K covariance matrix 2}, write $U^i_b = N^{-1}\sum_{l=1}^N \chi\big\lbrace \tilde{\sigma}^{i,l}_{b+1} \neq \tilde{\sigma}^{i,l}_b \big\rbrace$ and $\mathfrak{a} = (\mathfrak{a}^{i,j})_{i\in I_M, j\in I_N}$. Observe that
\begin{align*}
\sum_{i,m \in I_M}&\sum_{j,k \in I_N}\tilde{\mathcal{K}}_N(\tilde{\boldsymbol\sigma}_b,\tilde{\boldsymbol\sigma}_{b+1} )^{im}_{jk}\mathfrak{a}^{i,j}\mathfrak{a}^{m,k} \\ &=  N^{-1}\sum_{l,j \in I_N}\sum_{i,m \in I_M}\mathfrak{a}^{i,j}\mathfrak{a}^{m,j} \big(\tilde{\sigma}^{i,l}_{b+1} - \tilde{\sigma}^{i,l}_b\big)\big(\tilde{\sigma}^{m,l}_{b+1} - \tilde{\sigma}^{m,l}_b\big)  +\frac{\mathfrak{s}}{N} \sum_{i\in I_M , j,k\in I_N}\mathfrak{a}^{i,j}\mathfrak{a}^{m,k}\big(\tilde{\sigma}^{m,j}_{b+1} - \tilde{\sigma}^{m,j}_b\big)\big( \tilde{\sigma}^{i,k}_{b+1} - \tilde{\sigma}^{i,k}_b\big)\\
&\leq  4  \sum_{i,m \in I_M}\big\lbrace \sum_{j\in I_N}\big|\mathfrak{a}^{i,j}\big|^2 \sum_{k\in I_N}\big|\mathfrak{a}^{m,k}\big|^2 U^i_b U^m_b\big\rbrace^{\frac{1}{2}} + 4\mathfrak{s} \sum_{i,m \in I_M}\big\lbrace U^i_b U^m_b \sum_{j\in I_N}\big(\mathfrak{a}^{i,j}\big)^2\sum_{k\in I_N}\big(\mathfrak{a}^{m,k}\big)^2 \big\rbrace^{\frac{1}{2}}
\end{align*}
using the Cauchy-Schwarz Inequality, and the fact that since $\big|\sigma^{i,l}_{b+1} - \sigma^{i,l}_b \big| \leq 2$,
\[
N^{-1}\sum_{l=1}^N \big(\sigma^{i,l}_{b+1} - \sigma^{i,l}_b\big)^2 \leq 4 U^{i}_b.
\]
 Now 
\begin{align*}
\sum_{i,m \in I_M}\big\lbrace \sum_{j\in I_N}\big(\mathfrak{a}^{i,j}\big)^2\sum_{k\in I_N}\big(\mathfrak{a}^{m,k}\big)^2 \big\rbrace^{\frac{1}{2}}& =
\big(\sum_{i \in I_M}\big\lbrace \sum_{j\in I_N}\big(\mathfrak{a}^{i,j}\big)^2\big\rbrace^{\frac{1}{2}}\big)^2 
\leq M\sum_{i \in I_M} \sum_{j\in I_N}\big(\mathfrak{a}^{i,j}\big)^2,
\end{align*}
by the (discrete) Jensen's Inequality. We thus find that
\begin{equation*}
\sum_{i,m \in I_M}\sum_{j,k \in I_N}\tilde{\mathcal{K}}_N(\tilde{\boldsymbol\sigma}_b,\tilde{\boldsymbol\sigma}_{b+1} )^{im}_{jk}\mathfrak{a}^{i,j}\mathfrak{a}^{m,k}
\leq 4M(1+\mathfrak{s})\sup_{i\in I_M}U^{i}_b\sum_{p \in I_M} \sum_{j\in I_N}\big(\mathfrak{a}^{p,j}\big)^2.
\end{equation*}
which implies (ii). The proofs of \eqref{eq: K covariance matrix 3} and \eqref{eq: K covariance matrix 4} are analogous to the proof of (ii) and are neglected.
\end{proof}
Recall that the  $M\times M$ matrices $\lbrace \tilde{\mathbf{K}}_b,\tilde{\mathbf{L}}_b,\tilde{\boldsymbol\kappa}_b,\tilde{\boldsymbol\upsilon}_b \rbrace$ were defined to have the following elements
\begin{align}
\tilde{K}^{ij}_b &= N^{-1}\sum_{l=1}^N \tilde{\sigma}^{i,l}_{b}\tilde{\sigma}^{j,l}_b \; \; , \; \; 
\tilde{L}^{ij}_b =  N^{-1}\sum_{k=1}^{N}\tilde{\sigma}^{j,k}_b\big(\tilde{\sigma}^{i,k}_{b}- \tilde{\sigma}^{i,k}_{b+1}\big)\label{eq: tilde K definition 2}  \\
\tilde{\kappa}^{ij}_b &= N^{-1}\sum_{k=1}^{N} \tilde{G}^{j,k}_b \big(\tilde{\sigma}^{i,k}_b-\tilde{\sigma}^{i,k}_{b+1}\big) \; \; , \; \;
\tilde{\upsilon}_b^{ij} = N^{-1}\sum_{k=1}^{N}\tilde{\sigma}^{i,k}_b \tilde{G}^{j,k}_b.\label{eq: tilde upsilon definition 2}
\end{align}
The following lemma is useful because it implies that the covariance matrix of the fields $\lbrace \tilde{G}^{p,j}_b \rbrace_{p\in I_M, j \in I_N}$ is nonsingular whenever $\tilde{\mathbf{K}}_b$ is nonsingular.
\begin{lemma}\label{Lemma NonSingular KN}
$\norm{\mathcal{K}_N(\tilde{\boldsymbol\sigma}_b)} \geq \norm{\tilde{\mathbf{K}}_b}$ and $\norm{\mathcal{K}_N(\tilde{\boldsymbol\sigma}_b)^{-1}} \leq \norm{\tilde{\mathbf{K}}^{-1}_b}$ 
\end{lemma}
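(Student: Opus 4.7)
The plan is to reduce the two spectral bounds to the variational characterisation of the extreme eigenvalues of the PSD matrix $\mathcal K_N(\tilde{\boldsymbol\sigma}_b)$, exploiting its origin as the covariance matrix of the Gaussian vector $(\tilde G^{i,j}_b)_{i\in I_M,j\in I_N}$. For any $\mathbf a = (a^{i,j})\in \mathbb R^{MN}$, unfolding $\tilde G^{i,j}_b = N^{-1/2}\sum_k J^{jk}\tilde\sigma^{i,k}_b$ and applying the covariance formula \eqref{eq: covariance gamma} gives the closed-form identity
\[
\mathbf a^\top\mathcal K_N\mathbf a = \mathrm{Var}^\gamma\Bigl(\sum_{i,j}a^{i,j}\tilde G^{i,j}_b\Bigr) = \frac{1}{N}\sum_{j,k}c_{jk}^2 + \frac{\mathfrak s}{N}\sum_{j,k}c_{jk}c_{kj},
\]
with $c_{jk}:=\sum_{i\in I_M}a^{i,j}\tilde\sigma^{i,k}_b$. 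A direct re-indexing of the first summand produces the key bridge $N^{-1}\|c\|_F^2 = \sum_{j\in I_N}b_j^\top\tilde{\mathbf K}_b b_j$, where $b_j:=(a^{1,j},\ldots,a^{M,j})^\top$, so the first term is sandwiched between $\lambda_{\min}(\tilde{\mathbf K}_b)\|\mathbf a\|^2$ and $\lambda_{\max}(\tilde{\mathbf K}_b)\|\mathbf a\|^2$ and links the $M\times M$ overlap spectrum to the $MN\times MN$ field-covariance spectrum.

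For the first inequality $\|\mathcal K_N\|\geq\|\tilde{\mathbf K}_b\|$, I would test on the rank-one configuration $a^{i,j}=v^i\,\delta_{j,j_0}$ where $v$ is a unit top eigenvector of $\tilde{\mathbf K}_b$ and $j_0\in I_N$ is arbitrary. With this choice $c$ has only one non-zero row, so the cross term collapses to the perfect square $(\mathfrak s/N)(\sum_i v^i\tilde\sigma^{i,j_0}_b)^2\geq 0$, while $N^{-1}\|c\|_F^2 = v^\top\tilde{\mathbf K}_b v = \lambda_{\max}(\tilde{\mathbf K}_b)$. Since $\|\mathbf a\|=1$, dividing by the squared norm delivers the bound immediately.

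For the lower bound $\lambda_{\min}(\mathcal K_N)\geq\lambda_{\min}(\tilde{\mathbf K}_b)$, equivalent to $\|\mathcal K_N^{-1}\|\leq\|\tilde{\mathbf K}_b^{-1}\|$, I would decompose $c = c_S + c_A$ into its symmetric and antisymmetric parts in $(j,k)$. Using $\mathrm{tr}(c^2) = \|c_S\|_F^2 - \|c_A\|_F^2$ and $\|c\|_F^2 = \|c_S\|_F^2 + \|c_A\|_F^2$, the quadratic form becomes the manifestly non-negative sum-of-squares
\[
\mathbf a^\top\mathcal K_N\mathbf a = \frac{1+\mathfrak s}{N}\|c_S\|_F^2 + \frac{1-\mathfrak s}{N}\|c_A\|_F^2,
\]
which, together with $N^{-1}(\|c_S\|_F^2+\|c_A\|_F^2)\geq\lambda_{\min}(\tilde{\mathbf K}_b)\|\mathbf a\|^2$, furnishes the required lower control of $\lambda_{\min}(\mathcal K_N)$ in terms of $\lambda_{\min}(\tilde{\mathbf K}_b)$. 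The main obstacle, and where I expect the bulk of the technical work, is the $(1-\mathfrak s)$ weighting of the antisymmetric contribution: the naive estimate in the previous display loses a factor $(1-\mathfrak s)^{-1}$, so sharpening it to the precise stated inequality requires a finer argument. I would approach this via the Woodbury identity applied to the rank-$M^2$ perturbation $\mathcal K_N - \tilde{\mathbf K}_b\otimes I_N = (\mathfrak s/N)P$ with $P_{(i,j),(m,k)} = \tilde\sigma^{m,j}_b\tilde\sigma^{i,k}_b$, which reduces the computation of $\mathcal K_N^{-1}$ to an explicit $M\times M$ linear-algebra problem in the overlap basis, where monotonicity of the matrix inverse on PSD operators combined with $\mathfrak s\geq 0$ should close the remaining gap.
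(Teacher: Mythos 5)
Your proof of the first inequality is complete and correct, and it takes a different route from the paper: you work with the Rayleigh quotient via the identity $\mathbf a^{\top}\mathcal K_N(\tilde{\boldsymbol\sigma}_b)\mathbf a=N^{-1}\norm{c}_F^2+\mathfrak s N^{-1}\mathrm{tr}(c^2)$, $c_{jk}=\sum_i a^{i,j}\tilde\sigma^{i,k}_b$, and the single-site test vector makes the $\mathfrak s$-term a perfect square, which gives $\norm{\mathcal K_N}\geq\norm{\tilde{\mathbf K}_b}$ cleanly; the paper instead asserts a decomposition $\mathcal K_N=\mathcal U_N+\mathfrak s Q^{T}Q$ with $\mathcal U_N$ block diagonal and reads off both bounds at once.

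The second inequality is where your proposal has a genuine gap, and the closing move you sketch cannot work. Your own sum-of-squares identity shows that the antisymmetric part of $c$ enters with weight $1-\mathfrak s$, and this loss is real, not an artifact of a crude estimate. Take $M\geq 2$ and the test vector $a^{1,j}=\tilde\sigma^{2,j}_b$, $a^{2,j}=-\tilde\sigma^{1,j}_b$, all other entries zero; then $c$ is antisymmetric, and for $M=2$ with two replicas of zero overlap the Rayleigh quotient equals $1-\mathfrak s$, strictly below $\lambda_{\min}(\tilde{\mathbf K}_b)=1$ for every $\mathfrak s>0$; at $\mathfrak s=1$ it vanishes, reflecting the exact linear relation $\sum_{j\in I_N}\bigl(\tilde\sigma^{2,j}_b\tilde G^{1,j}_b-\tilde\sigma^{1,j}_b\tilde G^{2,j}_b\bigr)=0$ valid for symmetric $\mathbf J$. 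Hence $\lambda_{\min}(\mathcal K_N)$ genuinely drops below $\lambda_{\min}(\tilde{\mathbf K}_b)$ when $\mathfrak s>0$ and $M\geq 2$; the bound your display actually yields, $\lambda_{\min}(\mathcal K_N)\geq(1-\mathfrak s)\lambda_{\min}(\tilde{\mathbf K}_b)$, is essentially sharp. The Woodbury-plus-monotonicity plan fails at its first premise: the rank-$M^2$ perturbation $P$ with $P_{(i,j),(m,k)}=\tilde\sigma^{m,j}_b\tilde\sigma^{i,k}_b$ is symmetric but indefinite (its quadratic form is $\mathrm{tr}(c^2)=\norm{c_S}_F^2-\norm{c_A}_F^2$), so it is not of the form $R^{T}R$ and operator monotonicity of the inverse is unavailable. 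For comparison, the paper's proof of this half relies on the claim that the $\mathfrak s$-part equals $\mathfrak s Q^{T}Q$ for the stated $Q$; but that $Q^{T}Q$ is diagonal in the replica indices and does not reproduce the replica-mixing cross term $\mathfrak s N^{-1}\tilde\sigma^{m,j}_b\tilde\sigma^{i,k}_b$, so the PSD factorization it invokes is not available either (except when $M=1$ or $\mathfrak s=0$). The obstruction you identified is therefore not removable by a finer argument: any use of $\norm{\mathcal K_N^{-1}}\leq\norm{\tilde{\mathbf K}_b^{-1}}$ (e.g.\ the invertibility step in the conditional-mean lemma) must either carry the $(1-\mathfrak s)$-weakened constant or restrict attention to the orthogonal complement of the deterministic null directions that appear at $\mathfrak s=1$.
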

\begin{proof}
Making use of double index notation, define the $M^2 \times NM$ matrix $Q$ to have elements, for $u,v,q \in I_M$ and $k\in I_N$,
\begin{equation}
Q_{uv, qk} =N^{-1/2} x^{v,k}\delta(q,u).
\end{equation}
Write $\mathcal{U}_N$ to be the $NM\times NM$ block diagonal matrix, with each $M\times M$ diagonal block equal to $\tilde{\mathbf{K}}_b$. We then find that
\begin{equation}
\mathcal{K}_N(\tilde{\boldsymbol\sigma}_b) = \mathcal{U}_N + \mathfrak{s}Q^T Q.
\end{equation}
It is thus clear that $\norm{\mathcal{K}_N(\tilde{\boldsymbol\sigma}_b)} \geq \norm{\mathcal{U}_N} = \norm{\tilde{\mathbf{K}}_b}$. The second identity in the lemma follows from the fact that the smallest eigenvalue of $\norm{\mathcal{K}_N(\tilde{\boldsymbol\sigma}_b)}$ must be greater than or equal to the smallest eigenvalue of $\tilde{\mathbf{K}}_b$.
\end{proof}

We now determine a precise expression for the conditional mean. It is fundamental to the entire paper that $\tilde{\mathbf{m}}^j $ can be written as a function purely of (i) `local variables' (i.e. $\tilde{\mathbf{G}}^j_b $, $\tilde{\boldsymbol\sigma}^j_b$ and $\tilde{\boldsymbol\sigma}^j_{b+1}$, and (ii) the empirical measure (i.e. via the definitions in \eqref{eq: tilde K definition 2}  - \eqref{eq: tilde upsilon definition 2}). 
\begin{lemma}\label{Lemma Conditional Mean}
Assume that $\tilde{\boldsymbol\sigma}_b \in \mathcal{X}^N$.
(i) $\tilde{\mathbf{K}}_b$ is invertible, and we write $\tilde{\mathbf{H}}_b = \tilde{\mathbf{K}}_b^{-1}$. $\norm{\tilde{\mathbf{H}}_b} \leq \mathfrak{c}^{-1}$.

(ii) Writing $\tilde{\boldsymbol\sigma}^j_b = \big(\sigma^{1,j}_b,\ldots,\sigma^{M,j}_b\big)^T$, $\tilde{\mathbf{G}}^j_b = \big(\tilde{G}^{1,j}_b,\ldots,\tilde{G}^{M,j}_b \big)^T$ and $\tilde{\mathbf{m}}^j_b = \big(\tilde{m}^{1,j}_b,\ldots,\tilde{m}_b^{M,j}\big)^T$, we have that
	\begin{align}
	\tilde{\mathbf{m}}^j_b =& - \tilde{\mathbf{L}}_b\tilde{\mathbf{H}}_b\tilde{\mathbf{G}}^j_b -\mathfrak{s} \tilde{\boldsymbol\kappa}_b\tilde{\mathbf{H}}_b\tilde{\boldsymbol\sigma}^j_b + \mathfrak{s}\tilde{\mathbf{L}}_b\tilde{\mathbf{H}}_b\tilde{\boldsymbol\upsilon}_b\tilde{\mathbf{H}}_b\tilde{\boldsymbol\sigma}^j_b .\label{eq: temp H 0 0}
	\end{align}
\end{lemma}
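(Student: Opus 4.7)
Part (i) will follow immediately from the hypothesis $\tilde{\boldsymbol\sigma}_b \in \mathcal{X}^N$: for any unit vector $\mathfrak{b}\in\mathbb{R}^M$ one checks that $\mathfrak{b}^T\tilde{\mathbf{K}}_b\mathfrak{b} = N^{-1}\sum_{p,q\in I_M,l\in I_N}\tilde{\sigma}^{p,l}_b\tilde{\sigma}^{q,l}_b\mathfrak{b}^p\mathfrak{b}^q$, which exceeds $\mathfrak{c}$ by the defining inequality of $\mathcal{X}^N$ in \eqref{eq: mathcal X N assumption}. Since $\tilde{\mathbf{K}}_b$ is symmetric positive definite with smallest eigenvalue greater than $\mathfrak{c}$, $\tilde{\mathbf{H}}_b := \tilde{\mathbf{K}}_b^{-1}$ exists and has operator norm at most $\mathfrak{c}^{-1}$.

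For Part (ii), the plan is to compute $\tilde{\mathbf{m}}_b = \grave{\mathcal{K}}_N \mathcal{K}_N^{-1}\tilde{\mathbf{G}}_b$ explicitly, exploiting the rank-structured perturbation of $\mathcal{K}_N$ in the spirit of the Woodbury identity. I will identify vectors in $\mathbb{R}^{NM}$ with $M\times N$ matrices indexed by (replica, spin), and introduce the $M\times N$ matrices $\bar{S}$, $\Delta\bar{S}$, $\mathbf{G}$ with entries $\bar{S}_{ik} = N^{-1/2}\tilde{\sigma}^{i,k}_b$, $\Delta\bar{S}_{ik} = N^{-1/2}(\tilde{\sigma}^{i,k}_{b+1}-\tilde{\sigma}^{i,k}_b)$ and $\mathbf{G}_{ij} = \tilde{G}^{i,j}_b$. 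A direct check against \eqref{eq: K xi defn}--\eqref{eq: upsilon xi defn} and \eqref{eq: tilde K definition 0 0}--\eqref{eq: tilde upsilon definition 0 0} gives the key bookkeeping identities $\bar{S}\bar{S}^T = \tilde{\mathbf{K}}_b$, $\bar{S}\mathbf{G}^T = N^{1/2}\tilde{\boldsymbol\upsilon}_b$, $\Delta\bar{S}\bar{S}^T = -\tilde{\mathbf{L}}_b$ and $\Delta\bar{S}\mathbf{G}^T = -N^{1/2}\tilde{\boldsymbol\kappa}_b$, while the covariance decompositions \eqref{eq: covariance 1} and \eqref{eq: covariance 3} take the compact operator form
\[
\mathcal{K}_N\mathbf{Y} = \tilde{\mathbf{K}}_b\mathbf{Y} + \mathfrak{s}\,\bar{S}\mathbf{Y}^T\bar{S}, \qquad \grave{\mathcal{K}}_N\mathbf{Y} = -\tilde{\mathbf{L}}_b\mathbf{Y} + \mathfrak{s}\,\Delta\bar{S}\,\mathbf{Y}^T\bar{S}
\]
for any $\mathbf{Y}\in\mathbb{R}^{M\times N}$.

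Setting $\mathbf{Y} := \mathcal{K}_N^{-1}\mathbf{G}$ and left-multiplying the first identity by $\tilde{\mathbf{H}}_b$, I obtain the fixed-point equation $\mathbf{Y} = \tilde{\mathbf{H}}_b\mathbf{G} - \mathfrak{s}\tilde{\mathbf{H}}_b\bar{S}\mathbf{Y}^T\bar{S}$. Extracting the $j$-th column (and using $\bar{S}^{\cdot,j} = N^{-1/2}\tilde{\boldsymbol\sigma}^j_b$) reduces the computation of $\tilde{\mathbf{m}}^j_b$ to that of the two $M\times M$ matrices $\bar{S}\mathbf{Y}^T$ and $\Delta\bar{S}\mathbf{Y}^T$. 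To extract these, I will right-multiply the fixed-point equation by $\bar{S}^T$, and separately take its transpose before right-multiplying, obtaining a coupled $M^2\times M^2$ linear system in $\bar{S}\mathbf{Y}^T$ and $\mathbf{Y}\bar{S}^T$ whose coefficients involve only the $M\times M$ matrices above. Substitution and use of $\tilde{\mathbf{K}}_b\tilde{\mathbf{H}}_b = I$ will give $(1-\mathfrak{s}^2)\bar{S}\mathbf{Y}^T = N^{1/2}(\tilde{\boldsymbol\upsilon}_b - \mathfrak{s}\tilde{\boldsymbol\upsilon}_b^T)\tilde{\mathbf{H}}_b$, and a parallel substitution using $\Delta\bar{S}\bar{S}^T = -\tilde{\mathbf{L}}_b$, $\Delta\bar{S}\mathbf{G}^T = -N^{1/2}\tilde{\boldsymbol\kappa}_b$ will yield an analogous formula for $\Delta\bar{S}\mathbf{Y}^T$. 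The forecast miracle is that, when the two are assembled into the combination $\tilde{\mathbf{L}}_b\tilde{\mathbf{H}}_b\bar{S}\mathbf{Y}^T + \Delta\bar{S}\mathbf{Y}^T$ that appears in the $j$-th column of $\grave{\mathcal{K}}_N\mathbf{Y}$, the $(1-\mathfrak{s}^2)^{-1}$ singularities cancel exactly to leave $N^{1/2}(\tilde{\mathbf{L}}_b\tilde{\mathbf{H}}_b\tilde{\boldsymbol\upsilon}_b\tilde{\mathbf{H}}_b - \tilde{\boldsymbol\kappa}_b\tilde{\mathbf{H}}_b)$; multiplying by $N^{-1/2}\tilde{\boldsymbol\sigma}^j_b$ and combining with $-\tilde{\mathbf{L}}_b\tilde{\mathbf{H}}_b\tilde{\mathbf{G}}^j_b$ then produces exactly \eqref{eq: temp H 0 0}.

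The hard part will be the algebraic bookkeeping in the coupled $M^2\times M^2$ system and the verification that the $(1-\mathfrak{s}^2)^{-1}$ terms conspire to cancel when forming $\tilde{\mathbf{L}}_b\tilde{\mathbf{H}}_b\bar{S}\mathbf{Y}^T + \Delta\bar{S}\mathbf{Y}^T$; as an expected secondary issue, the degenerate boundary case $\mathfrak{s}=1$ (where $J^{jk}=J^{kj}$ forces $\tilde{\boldsymbol\upsilon}_b$ to be symmetric and the linear system for $\bar{S}\mathbf{Y}^T$ becomes singular) will be handled by noting that the desired identity is polynomial in $\mathfrak{s}$ and so extends to $\mathfrak{s}=1$ by continuity from $\mathfrak{s}<1$.
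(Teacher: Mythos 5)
Your proposal is correct, and I checked the key claims: the bookkeeping identities $\bar{S}\bar{S}^T=\tilde{\mathbf{K}}_b$, $\bar{S}\mathbf{G}^T=N^{1/2}\tilde{\boldsymbol\upsilon}_b$, $\Delta\bar{S}\bar{S}^T=-\tilde{\mathbf{L}}_b$, $\Delta\bar{S}\mathbf{G}^T=-N^{1/2}\tilde{\boldsymbol\kappa}_b$, the fixed-point equation $\mathbf{Y}=\tilde{\mathbf{H}}_b\mathbf{G}-\mathfrak{s}\tilde{\mathbf{H}}_b(\bar{S}\mathbf{Y}^T)\bar{S}$, the solution $(1-\mathfrak{s}^2)\,\bar{S}\mathbf{Y}^T=N^{1/2}(\tilde{\boldsymbol\upsilon}_b-\mathfrak{s}\tilde{\boldsymbol\upsilon}_b^T)\tilde{\mathbf{H}}_b$, and the forecast cancellation of the $(1-\mathfrak{s}^2)^{-1}$ factors in $\tilde{\mathbf{L}}_b\tilde{\mathbf{H}}_b\bar{S}\mathbf{Y}^T+\Delta\bar{S}\mathbf{Y}^T$ all hold and do yield \eqref{eq: temp H 0 0}; the continuity argument at $\mathfrak{s}=1$ is also legitimate, since with the data fixed both sides are rational in $\mathfrak{s}$ and $\mathcal{K}_N(\mathfrak{s})$ stays invertible on all of $[0,1]$ when $\tilde{\boldsymbol\sigma}_b\in\mathcal{X}^N$. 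The paper's proof uses the same skeleton (set $\mathbf{V}=\mathcal{K}_N^{-1}\tilde{\mathbf{G}}_b$, exploit the block-diagonal-plus-low-rank form of \eqref{eq: covariance 1}, reduce everything to the $M\times M$ matrix $\mathbf{Q}=N^{-1}\sum_k\tilde{\sigma}^{m,k}_bV^{p,k}$, which is your $N^{-1/2}\bar{S}\mathbf{Y}^T$), but it never solves for $\mathbf{Q}$: multiplying the relation \eqref{eq: h} by $\tilde{\sigma}^{p,j}_b$ and summing over $j$ gives $\tilde{\boldsymbol\upsilon}_b=\mathbf{Q}\tilde{\mathbf{K}}_b+\mathfrak{s}\tilde{\mathbf{K}}_b\mathbf{Q}^T$, and sandwiching with $\tilde{\mathbf{H}}_b$ produces exactly the combination $\tilde{\mathbf{H}}_b\mathbf{Q}+\mathfrak{s}\mathbf{Q}^T\tilde{\mathbf{H}}_b=\tilde{\mathbf{H}}_b\tilde{\boldsymbol\upsilon}_b\tilde{\mathbf{H}}_b$ that enters $\tilde{\mathbf{m}}^j_b$. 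That one observation buys you what your explicit inversion plus cancellation plus limiting argument buys, with no division by $1-\mathfrak{s}^2$ and hence no separate treatment of the symmetric case $\mathfrak{s}=1$; if you keep your route, you should also state explicitly that the invertibility of $\mathcal{K}_N$ needed to define $\mathbf{Y}$ follows from Lemma \ref{Lemma NonSingular KN} together with part (i).
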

\begin{proof}
The fact that $\tilde{\boldsymbol\sigma}_b \in \mathcal{X}^N$ implies that the $M\times M$ square matrix $\tilde{\mathbf{K}}_b$ (with elements defined in \eqref{eq: tilde K definition 2}) has eigenvalues greater than $\mathfrak{c}$. Since it is co-diagonal with its inverse, it must be that $\norm{\tilde{\mathbf{H}}_b} \leq \mathfrak{c}^{-1}$. It then follows from Lemma \ref{Lemma NonSingular KN} that $\mathcal{K}_N(\tilde{\boldsymbol\sigma}_b)$ must also be invertible. Let $\mathbf{V} = \mathcal{K}_N(\tilde{\boldsymbol\sigma}_b)^{-1}\tilde{\mathbf{G}}_b$. Writing $\mathbf{V} = \big\lbrace V^{i,j} \big\rbrace_{i\in I_M, j \in I_N}$, it must be that
	\begin{equation}\label{eq: hja}
	\tilde{G}^{i,j}_b = \sum_{k\in I_N,m \in I_M}\Exp^{\gamma}\big[\tilde{G}^{i,j}_b \tilde{G}^{m,k}_b\big]V^{m,k}
	\end{equation}
	Substituting the identity in \eqref{eq: covariance 1} we find that
	\begin{equation}\label{eq: h}
	\tilde{G}^{i,j}_b =\sum_{m \in I_M} \big\lbrace \tilde{K}_b^{im} V^{m,j} + \mathfrak{s}N^{-1} \sum_{k\in I_N, m\in I_M} \tilde{\sigma}^{m,j}_b \tilde{\sigma}^{i,k}_{b}V^{m,k}\big\rbrace.
	\end{equation}
	Rearranging \eqref{eq: h}, we find that
	\begin{align}
	V^{i,j} &= \sum_{m \in I_M}\tilde{H}_b^{im}\big\lbrace G^{m,j}_b - \frac{\mathfrak{s}}{N}\sum_{k\in I_N, p\in I_M} \tilde{\sigma}^{p,j}_b \tilde{\sigma}^{m,k}_bV^{p,k} \big\rbrace\nonumber \\
	&= \sum_{m \in I_M}\tilde{H}_b^{im}\big\lbrace G^{m,j}_b -\mathfrak{s}\sum_{ p\in I_M} Q^{mp} \tilde{\sigma}^{p,j}_b \big\rbrace
		\end{align}
	where 
	\[
	Q^{mp} = N^{-1}\sum_{k\in I_N}\tilde{\sigma}^{m,k}_b V^{p,k}.
	\]
	In matrix / vector notation, this means that $\mathbf{V}^j =\tilde{\mathbf{H}}\tilde{\mathbf{G}}^j_b - \mathfrak{s}\tilde{\mathbf{H}}\mathbf{Q}\tilde{\boldsymbol\sigma}^j_b$.
	Now using the identities in \eqref{eq: covariance 3} and \eqref{eq: mean definition},
	\begin{align}
	\tilde{\mathbf{m}}^j_b =& - \tilde{\mathbf{L}}_b\big(\tilde{\mathbf{H}}_b\tilde{\mathbf{G}}^j_b - \mathfrak{s} \tilde{\mathbf{H}}_b\mathbf{Q}\tilde{\boldsymbol\sigma}^j_b\big) -\mathfrak{s} \tilde{\boldsymbol\kappa}_b\tilde{\mathbf{H}}\tilde{\boldsymbol\sigma}^j_b + \mathfrak{s}^2 \tilde{\mathbf{L}}_b\mathbf{Q}^T \tilde{\mathbf{H}}_b\tilde{\boldsymbol\sigma}^j_b \nonumber \\
	=& - \tilde{\mathbf{L}}_b\tilde{\mathbf{H}}_b\tilde{\mathbf{G}}^j_b -\mathfrak{s} \tilde{\boldsymbol\kappa}_b\tilde{\mathbf{H}}_b\tilde{\boldsymbol\sigma}^j_b +\mathfrak{s} \tilde{\mathbf{L}}_b\big( \tilde{\mathbf{H}}_b\mathbf{Q}+ \mathfrak{s}\mathbf{Q}^T \tilde{\mathbf{H}}_b\big)\tilde{\boldsymbol\sigma}^j_b\label{eq: temp H}
	\end{align}
	We add $\tilde{\sigma}_b^{p,j}$ to both sides of \eqref{eq: h}, and sum over $j$, obtaining that
	\begin{equation}
	\tilde{\boldsymbol\nu} = \mathbf{Q} \tilde{\mathbf{K}}_b + \mathfrak{s} \tilde{\mathbf{K}}_b\mathbf{Q}^T.
	\end{equation}
	Multiplying both sides of the above equation by $\tilde{\mathbf{H}}_b$, we find that
	\begin{equation}
	\tilde{\mathbf{H}}_b\tilde{\boldsymbol\nu}_b\tilde{\mathbf{H}}_b = \tilde{\mathbf{H}}_b\tilde{\mathbf{Q}} + \mathfrak{s}\tilde{\mathbf{Q}}^T \tilde{\mathbf{H}}_b.
	\end{equation}
	Substituting this into \eqref{eq: temp H}, we find that, as required,
	\begin{align}
	\tilde{\mathbf{m}}^j_b =& - \tilde{\mathbf{L}}_b\tilde{\mathbf{H}}_b\tilde{\mathbf{G}}^j_b -\mathfrak{s} \tilde{\boldsymbol\kappa}_b\tilde{\mathbf{H}}_b\tilde{\boldsymbol\sigma}^j_b + \mathfrak{s}\tilde{\mathbf{L}}_b\tilde{\mathbf{H}}_b\tilde{\boldsymbol\nu}_b\tilde{\mathbf{H}}_b\tilde{\boldsymbol\sigma}^j_b .\label{eq: temp H 2}
	\end{align}	
	\end{proof}

\subsection{Bounding $\beta^6,\beta^8,\beta^9$}\label{Section bounding beta 6 8 9}
These terms are defined in \eqref{eq: beta 6},  \eqref{eq: beta 8} and  \eqref{eq: beta 9}. $\beta^6$ and $\beta^8$ concern the linear increments in the fields $\tilde{G}^{q,j}_{b+1} - \tilde{G}^{q,j}_n$, and $\beta^9$ concerns the quadratic increments in the fields.
\begin{lemma}\label{Lemma Mean Gaussian Increments}
For any $\bar{\epsilon} > 0$, for all large enough $n$,
\begin{align}
\sup_{\boldsymbol\alpha \in \mathcal{E}^M}\sup_{0\leq b < n}\lsup{N}\sup_{1\leq \mathfrak{q} \leq C^N_{\mathfrak{n}}}N^{-1}\log \mathbb{P}\big(\mathcal{J}_N , \tilde{\mu}^N \in \mathcal{V}^N_{\mathfrak{q}},\tilde{\tau}_N > t^{(n)}_b,\big|\beta^6(\boldsymbol\alpha,\tilde{\boldsymbol\sigma},\tilde{\mathbf{G}})\big|  \geq \bar{\epsilon}\Delta \big)< 0\label{eq: cheby gamma 1 0} \\
\sup_{\boldsymbol\alpha \in \mathcal{E}^M}\sup_{0\leq b < n}\lsup{N}\sup_{1\leq \mathfrak{q} \leq C^N_{\mathfrak{n}}}N^{-1}\log  \mathbb{P}\big(\mathcal{J}_N, \tilde{\mu}^N \in \mathcal{V}^N_{\mathfrak{q}},\tilde{\tau}_N > t^{(n)}_b,\big|\beta^8(\boldsymbol\alpha,\tilde{\boldsymbol\sigma},\tilde{\mathbf{G}})\big|  \geq \bar{\epsilon}\Delta\big) <0 \label{eq: cheby gamma 3}.
\end{align}

\end{lemma}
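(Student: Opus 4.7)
The plan is to exploit the Gaussian structure of the field increments conditionally on the state at time $t^{(n)}_b$. Since the Poisson drivers of $\tilde{\boldsymbol\sigma}_t$ are independent of the connections $\mathbf{J}$, conditioning on $(\tilde{\boldsymbol\sigma}_b,\tilde{\boldsymbol\sigma}_{b+1},\tilde{\mathbf{G}}_b)$ leaves the increment vector $\tilde{\mathbf{F}}_b = \tilde{\mathbf{G}}_{b+1}-\tilde{\mathbf{G}}_b$ Gaussian with mean $\tilde{\mathbf{m}}_b$ and covariance $\mathcal{R}_N$, as computed in Section \ref{Section Gaussian}. Both $\beta^6$ and $\beta^8$ are, conditionally, linear functionals of the centered Gaussian vector $\tilde{\mathbf{F}}_b-\tilde{\mathbf{m}}_b$, hence each is itself a centered Gaussian random variable. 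The Gaussian tail estimate $\mathbb{P}(|Z|>t\mid\text{cond.})\le 2\exp(-t^2/(2\sigma^2))$ then reduces the problem to bounding the conditional variance.

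For $\beta^6$, write $\beta^6 = \sum_{i,j} a^{i,j}(\tilde{F}^{i,j}_b-\tilde{m}^{i,j}_b)$ with $a^{i,j}=N^{-1}\phi^a_i(\tilde{\mathbf{G}}^j_b)\chi\{\tilde{\boldsymbol\sigma}^j_b=\boldsymbol\alpha\}$, so $|a^{i,j}|\leq N^{-1}$ and $\|a\|^2\leq M/N$. By Lemma \ref{Lemma Bound Matrix Norms}, $\|\mathcal{R}_N\|\leq\|\tilde{\mathcal{K}}_N\|\leq 4M(1+\mathfrak{s})\sup_i U^i_b$, where $U^i_b := N^{-1}\sum_{l\in I_N}\chi\{\tilde{\sigma}^{i,l}_{b+1}\neq\tilde{\sigma}^{i,l}_b\}$. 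Hence the conditional variance of $\beta^6$ is at most $4M^2(1+\mathfrak{s})N^{-1}\sup_i U^i_b$. Restricted to the event $\{\sup_i U^i_b\leq K\Delta\}$ for a fixed constant $K>c_1$, the Gaussian tail gives $\mathbb{P}(|\beta^6|>\bar{\epsilon}\Delta\mid\text{cond.})\leq 2\exp(-cN\bar{\epsilon}^2\Delta/K)$. The complementary event $\{\sup_i U^i_b>K\Delta\}$ is exponentially small in $N$ by the Poisson concentration estimate in Lemma \ref{Lemma Upper Bound Jump Rate}, so a union-of-events bound yields the required decay.

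For $\beta^8$, the coefficients are $a^{p,j}=N^{-1}\phi^a_{ip}(\tilde{\mathbf{G}}^j_b)\chi\{\tilde{\boldsymbol\sigma}^j_b=\boldsymbol\alpha\}\tilde{m}^{i,j}_b$. On $\{\tilde{\tau}_N>t^{(n)}_b\}$ we have $\|\tilde{\mathbf{H}}_b\|\leq\mathfrak{c}^{-1}$ by Lemma \ref{Lemma Conditional Mean}, and on the high-probability event $\{\|\tilde{\mathbf{L}}_b\|,\|\tilde{\boldsymbol\kappa}_b\| = O(\Delta)\}$ (exponentially likely by Lemma \ref{Lemma Matrix Convergence}) combined with $\|\tilde{\boldsymbol\upsilon}_b\|=O(1)$ on $\mathcal{J}_N$, the argument already used in the proof of the $\beta^{10}$ bound gives $\|\tilde{\mathbf{m}}^j_b\|^2\leq C\Delta^2(\|\tilde{\mathbf{G}}^j_b\|^2+1)$. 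Using $\mathcal{J}_N$ to bound $N^{-1}\sum_j\|\tilde{\mathbf{G}}^j_b\|^2\leq 3M$, we obtain $\|a\|^2\leq CM\Delta^2/N$, and the conditional variance is $O(\Delta^3/N)$ on the same good events. The Gaussian tail then yields decay of order $\exp(-cN/\Delta)$, exponentially small in $N$ for fixed $\Delta$. The main technical obstacle is simply the orchestration: choosing the "good" events tightly enough that the conditional variance matches the target $\bar{\epsilon}\Delta$ scale, while ensuring that the complementary Poisson / matrix-concentration bad events remain exponentially rare — after which a final application of Lemma \ref{Lemma Many Events}, together with the polynomial bound on $C^N_{\mathfrak{n}}$ from Lemma \ref{eq: polynomial partition}, closes the argument.
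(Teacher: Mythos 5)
Your proposal is correct and follows essentially the same route as the paper: condition on the spin path and $\tilde{\mathbf{G}}_b$ (under which $\beta^6$ and $\beta^8$ are centered Gaussians by Lemma \ref{Lemma Conditional Mean}), bound the conditional variance via Lemma \ref{Lemma Bound Matrix Norms} (and, for the $\tilde{\mathbf{m}}^j_b$ coefficients in $\beta^8$, via $\tilde{\tau}_N > t^{(n)}_b$, Lemma \ref{Lemma Matrix Convergence} and $\mathcal{J}_N$, exactly as in the paper's $\beta^{10}$ argument), and finish with a Gaussian/Chernoff tail plus union bounds over the polynomially many $\mathcal{V}^N_{\mathfrak{q}}$. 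The only difference is cosmetic: you retain the factor $\sup_i U^i_b \le K\Delta$ (discarding its complement by Poisson concentration) to sharpen the variance, whereas the paper simply bounds that factor by $1$ and applies Chernoff with an explicit $\mathfrak{r}$, which already yields the required exponential decay.
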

\begin{proof}
The proofs of the above two terms are very similar, thus we only prove \eqref{eq: cheby gamma 1 0}.

Define $R^{N}_{\mathfrak{q},\tilde{\boldsymbol\sigma}_b} \in \mathcal{M}^+_1\big( \mathcal{D}([t^{(n)}_b , T],\mathcal{E}^M)^N \big)$ to be the law of the stochastic process $\tilde{\boldsymbol\sigma}_{\mathfrak{q},t}$, conditioned on its value $\tilde{\boldsymbol\sigma}_{b}$ at time $t^{(n)}_b$. (Recall the definition of this process in Section \ref{Subsection Girsanov}). As previously, we drop the subscript and write $\tilde{\boldsymbol\sigma}_{\mathfrak{q},t} := \tilde{\boldsymbol\sigma}_t$. Define $Q^N_{\tilde{\boldsymbol\sigma}_b,\tilde{\mathbf{G}}_b}$ to be the regular conditional probability distribution of $(\mathbf{J}, \tilde{\boldsymbol\sigma}_b)$, conditionally on both  $\tilde{\boldsymbol\sigma}_b$ and $\tilde{\mathbf{G}}_b$. Since $\tilde{\boldsymbol\sigma}_b$ and $\mathbf{J}$ are independent, we have that
\begin{equation}
Q^N_{\tilde{\boldsymbol\sigma}_b,\tilde{\mathbf{G}}_b} = \gamma_{\tilde{\boldsymbol\sigma}_b,\tilde{\mathbf{G}}_b} \otimes R^N_{\mathfrak{q},\tilde{\boldsymbol\sigma}_b}.
\end{equation}
Writing $\mathcal{Z}^N = \big\lbrace \mathbf{g} \in \mathbb{R}^{MN} \; : \sup_{i\in I_M}\sum_{j\in I_N}|g^{i,j}|^2 \leq 3N \big\rbrace$, this means that
\begin{equation}
\mathbb{P}\big(\mathcal{J}_N \text{ and }\big|\beta^6(\boldsymbol\alpha,\tilde{\boldsymbol\sigma},\tilde{\mathbf{G}})\big|  \geq \bar{\epsilon}\Delta \big) \leq  \sup_{\tilde{\boldsymbol\sigma}_b \in \mathcal{E}^{MN},\tilde{\mathbf{G}}_b \in \mathcal{Z}^N} Q^N_{\tilde{\boldsymbol\sigma}_b,\tilde{\mathbf{G}}_b} \big(\big|\beta^7(\boldsymbol\alpha,\tilde{\boldsymbol\sigma},\tilde{\mathbf{G}})\big|  \geq \bar{\epsilon}\Delta  \big).
\end{equation}
It thus suffices to prove that
\begin{equation}
\lsup{N}\sup_{1\leq \mathfrak{q} \leq C^N_{\mathfrak{n}}}N^{-1}\log \sup_{\tilde{\boldsymbol\sigma}_b \in \mathcal{E}^{MN},\tilde{\mathbf{G}}_b \in \mathcal{Z}^N} Q^N_{\tilde{\boldsymbol\sigma}_b,\tilde{\mathbf{G}}_b} \big(\big|\beta^7(\boldsymbol\alpha,\tilde{\boldsymbol\sigma},\tilde{\mathbf{G}})\big|  \geq \bar{\epsilon}\Delta \big)< 0\label{eq: cheby gamma 1 0 0}
\end{equation}
For a constant $\mathfrak{r} > 0$, by Chernoff's Inequality,
 \begin{align}
Q^N_{\tilde{\boldsymbol\sigma}_b,\tilde{\mathbf{G}}_b} &\big(|\beta^6(\boldsymbol\alpha,\tilde{\boldsymbol\sigma},\tilde{\mathbf{G}})\big| \geq \bar{\epsilon}\Delta  \big)
=Q^N_{\tilde{\boldsymbol\sigma}_b,\tilde{\mathbf{G}}_b} \big(\beta^6(\boldsymbol\alpha,\tilde{\boldsymbol\sigma},\tilde{\mathbf{G}})  \geq \bar{\epsilon}\Delta\big)+Q^N_{\tilde{\boldsymbol\sigma}_b,\tilde{\mathbf{G}}_b} \big(\beta^6(\boldsymbol\alpha,\tilde{\boldsymbol\sigma},\tilde{\mathbf{G}})   \leq -\bar{\epsilon}\Delta  \big)\nonumber \\
&\leq \mathbb{E}^{Q^N_{\tilde{\boldsymbol\sigma}_b,\tilde{\mathbf{G}}_b} }\big[ \exp\big(\mathfrak{r}N \beta^6(\boldsymbol\alpha,\tilde{\boldsymbol\sigma},\tilde{\mathbf{G}})  - N\mathfrak{r}\bar{\epsilon}\Delta\big) + \exp\big(-\mathfrak{r}N \beta^7(\boldsymbol\alpha,\tilde{\boldsymbol\sigma},\tilde{\mathbf{G}}) -N \mathfrak{r}\bar{\epsilon}\Delta\big) \big]\nonumber \\
&= \mathbb{E}^{R^N_{i,\tilde{\boldsymbol\sigma}_b} }\big[\mathbb{E}^{\gamma_{\tilde{\boldsymbol\sigma}_b,\tilde{\mathbf{G}}_b}}\big[ \exp\big(\mathfrak{r} N\beta^6(\boldsymbol\alpha,\tilde{\boldsymbol\sigma},\tilde{\mathbf{G}})- N\mathfrak{r}\bar{\epsilon}\Delta\big)+ \exp\big(-\mathfrak{r} N\beta^6(\boldsymbol\alpha,\tilde{\boldsymbol\sigma},\tilde{\mathbf{G}})- N\mathfrak{r}\bar{\epsilon}\Delta\big)  \big]\big].\label{eq: cheby 1 gaussian}
\end{align}
Under $Q^N_{\tilde{\boldsymbol\sigma}_b , \tilde{\mathbf{G}}_b}$, and conditionally on $\tilde{\boldsymbol\sigma}_{t}$,
\[
\beta^7(\boldsymbol\alpha,\tilde{\boldsymbol\sigma},\tilde{\mathbf{G}}) = \sum_{j\in \tilde{I}_N}\sum_{i\in I_M}\phi^a_i(\tilde{\mathbf{G}}^{j}_b)\chi\big\lbrace \tilde{\boldsymbol\sigma}^j_b=\boldsymbol\alpha \big\rbrace\big\lbrace G^{i,j}_{b+1} - G^{i,j}_b  -\tilde{m}^{i,j}_b\big\rbrace 
\]
is Gaussian and of zero mean, using the expression for the conditional mean in Lemma \ref{Lemma Conditional Mean}. The covariance can be upperbounded using (i) and (ii) in Lemma \ref{Lemma Bound Matrix Norms}, i.e.
\begin{align*}
\mathbb{E}^{\gamma_{\tilde{\boldsymbol\sigma}_b, \tilde{\mathbf{G}}_b}}\big[ \big(N\beta^6(\boldsymbol\alpha,\tilde{\boldsymbol\sigma},\tilde{\mathbf{G}})  \big)^2 \big] &\leq 4M(1+\mathfrak{s})\sum_{i \in I_M , j\in I_N}\big\lbrace \phi^a_i(\tilde{\mathbf{G}}^{j}_b)\chi\big\lbrace \tilde{\boldsymbol\sigma}^j_b=\boldsymbol\alpha \big\rbrace \big\rbrace^2 \\
&\leq 4M^2(1+\mathfrak{s})N,
\end{align*}
using the fact that $|\phi^a_i| \leq 1$. We thus find that, using the formula for the moment-generating function of a Gaussian distribution,
\begin{align}
\mathbb{E}^{Q^N_{\tilde{\boldsymbol\sigma}_b,\tilde{\mathbf{G}}_b}}\big[ \exp\big(\mathfrak{r} N\beta^6(\boldsymbol\alpha,\tilde{\boldsymbol\sigma},\tilde{\mathbf{G}}) - N\mathfrak{r}\bar{\epsilon}\Delta\big) \; | \; \tilde{\boldsymbol\sigma}_{b+1} \big] &\leq \exp\big(2M^2\mathfrak{r}^2(1+\mathfrak{s})N \big)\label{eq: cheby 2 gaussian 0}  \\
\mathbb{E}^{Q^N_{\tilde{\boldsymbol\sigma}_b,\tilde{\mathbf{G}}_b}}\big[ \exp\big(-\mathfrak{r} N\beta^6(\boldsymbol\alpha,\tilde{\boldsymbol\sigma},\tilde{\mathbf{G}}) - N\mathfrak{r}\bar{\epsilon}\Delta\big) \; | \;\tilde{\boldsymbol\sigma}_{b+1} \big] &\leq \exp\big(2M^2\mathfrak{r}^2(1+\mathfrak{s})N \big).\label{eq: cheby 2 gaussian}
\end{align}
We now choose $\mathfrak{r} = \bar{\epsilon} \Delta / \big(4M^2(1+\mathfrak{s})\big)$, which means that
\begin{equation}
- \mathfrak{r}\bar{\epsilon}\Delta + 2M^2\mathfrak{r}^2(1+\mathfrak{s})  = -\frac{\bar{\epsilon}^2 \Delta^2 }{8M^2(1+\mathfrak{s})^2}\label{eq: cheby 3 gaussian}
\end{equation}
We thus find from \eqref{eq: cheby 1 gaussian}, \eqref{eq: cheby 2 gaussian 0}, \eqref{eq: cheby 2 gaussian} and \eqref{eq: cheby 3 gaussian} that
\begin{equation}
Q^N_{\tilde{\boldsymbol\sigma}_b,\tilde{\mathbf{G}}_b}\big(\big|\beta^6(\boldsymbol\alpha,\tilde{\boldsymbol\sigma},\tilde{\mathbf{G}})\big|  \geq \bar{\epsilon}\Delta \big) \leq 2\exp\big(-N\frac{\bar{\epsilon}^2 \Delta^2 }{8M^2(1+\mathfrak{s})^2} \big).\label{Eq: first cheby bound gauss}
\end{equation}
This implies \eqref{eq: cheby gamma 1 0 0}. The proof of \eqref{eq: cheby gamma 3} is analogous to the proof of \eqref{eq: cheby gamma 1 0}.
\end{proof}
\begin{lemma}
For any $\bar{\epsilon} > 0$, for all sufficiently large $n$ 
\begin{equation}
\sup_{\boldsymbol\alpha \in \mathcal{E}^M}\sup_{0\leq b < n}\lsup{N}\sup_{1\leq \mathfrak{q} \leq C^N_{\mathfrak{n}}} N^{-1}\log\mathbb{P}\big(\tilde{\tau}_N > t^{(n)}_b, \mathcal{J}_N, \tilde{\mu}^N \in \mathcal{V}^N_{\mathfrak{q}},\big|\beta^9(\boldsymbol\alpha ,\tilde{\boldsymbol\sigma}, \tilde{\mathbf{G}}_b)\big|  \geq \bar{\epsilon}\Delta\big) <0\label{eq: cheby gamma 2} 
\end{equation}
\end{lemma}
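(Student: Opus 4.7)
The plan is to exploit the conditional Gaussian structure of the field increments $\tilde{G}^{i,j}_{b+1}-\tilde{G}^{i,j}_b-\tilde{m}^{i,j}_b$, just as in Lemma \ref{Lemma Mean Gaussian Increments} for $\beta^6$ and $\beta^8$, but now adapted to the \emph{quadratic} form that appears in $\beta^9$. First, I would condition on $\tilde{\boldsymbol\sigma}_{[0,T]}$ and $\tilde{\mathbf{G}}_b$. By the analysis of Section \ref{Section Gaussian}, in particular the density \eqref{eq: conditional gaussian density upsilon}, the vector $\mathbf{V}:=(V^{i,j})_{i\in I_M,j\in I_N}$ with $V^{i,j}:=\tilde{G}^{i,j}_{b+1}-\tilde{G}^{i,j}_b-\tilde{m}^{i,j}_b$ is, under this conditioning, a centered Gaussian with covariance $\mathcal{R}_N(\tilde{\boldsymbol\sigma}_b,\tilde{\boldsymbol\sigma}_{b+1})=\tilde{\mathcal{K}}_N-\mathcal{L}_N$. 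I would then write $\beta^9=\mathbf{V}^T\mathbf{A}\mathbf{V}-C$, where $\mathbf{A}$ is the block-diagonal $MN\times MN$ symmetric matrix whose $((i,j),(p,j))$ entry equals $\phi^a_{ip}(\tilde{\mathbf{G}}^j_b)\chi\{\tilde{\boldsymbol\sigma}^j_b=\boldsymbol\alpha\}/(2N)$ (and vanishes if the $j$-indices differ), and $C:=N^{-1}\sum_{j,i}\chi\{\tilde{\boldsymbol\sigma}^j_b=\boldsymbol\alpha\}\phi^a_{ii}(\tilde{\mathbf{G}}^j_b)\tilde{L}^{ii}_b$.

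Second, I would show that, off an event of exponentially small probability, the conditional mean $\text{tr}(\mathbf{A}\mathcal{R}_N)-C$ satisfies $|\text{tr}(\mathbf{A}\mathcal{R}_N)-C|\leq \bar{\epsilon}\Delta/2$. The three ingredients are: (a) for $i=p$, the formula \eqref{eq: covariance 2} gives $\tilde{\mathcal{K}}_N^{ii}_{jj}=2\tilde{L}^{ii}_b+O(N^{-1})$, which exactly cancels $C$ after accounting for the $(2N)^{-1}$ weight in $\mathbf{A}$; (b) $\mathcal{L}_N=\grave{\mathcal{K}}_N\mathcal{K}_N^{-1}\grave{\mathcal{K}}_N^T$ is positive semidefinite, so each diagonal entry is bounded by its operator norm, and by Lemma \ref{Lemma Bound Matrix Norms} together with Lemma \ref{Lemma Conditional Mean} (which supplies $\norm{\mathcal{K}_N^{-1}}\leq\mathfrak{c}^{-1}$ on the event $\tilde{\tau}_N>t^{(n)}_b$) we have $\norm{\mathcal{L}_N}\leq(\norm{\tilde{\mathbf{L}}_b}+\sqrt{2/N})^2\mathfrak{c}^{-1}=O(\Delta^2)+O(N^{-1})$; (c) for $i\neq p$ the cross term $\tilde{\mathcal{K}}_N^{ip}_{jj}$ is bounded by $4N^{-1}\sum_l\chi\{Y^{i,l}_b(c_1\Delta)\geq 1\,\text{and}\,Y^{p,l}_b(c_1\Delta)\geq 1\}+O(N^{-1})$, and since different-replica Poisson processes are independent, the probability each such indicator is $1$ is $O(\Delta^2)$; a Chernoff argument in the spirit of Lemma \ref{Lemma Upper Bound Jump Rate} shows the whole quantity is $O(\Delta^2)$ with exponentially high probability.

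Third, I would apply the standard Chernoff bound for Gaussian quadratic forms. Using
\[
\mathbb{E}\big[\exp(\lambda\mathbf{V}^T\mathbf{A}\mathbf{V})\big]=\det(I-2\lambda\mathcal{R}_N^{1/2}\mathbf{A}\mathcal{R}_N^{1/2})^{-1/2}
\]
together with $-\tfrac12\log(1-x)\leq\tfrac{x}{2}+x^2$ valid for $|x|\leq 1/2$, I obtain
\[
\log\mathbb{E}\big[\exp\big(\lambda\{\mathbf{V}^T\mathbf{A}\mathbf{V}-\text{tr}(\mathbf{A}\mathcal{R}_N)\}\big)\big]\leq 2\lambda^2\,\text{tr}\big((\mathbf{A}\mathcal{R}_N)^2\big),
\]
provided $|\lambda|\norm{\mathbf{A}}\norm{\mathcal{R}_N}\leq 1/4$. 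Since $\mathbf{A}$ is block diagonal with $M\times M$ blocks whose entries are at most $1/(2N)$ in absolute value, $\norm{\mathbf{A}}\leq M/(2N)$ and $\norm{\mathbf{A}}_F^2\leq M^2/(4N)$. Combined with $\norm{\mathcal{R}_N}\leq\norm{\tilde{\mathcal{K}}_N}=O(\Delta)$ (from Lemma \ref{Lemma Bound Matrix Norms} and the Poisson upper bound $U^i_b\leq 2c_1\Delta$ of Lemma \ref{Lemma Upper Bound Jump Rate}), this yields $\text{tr}((\mathbf{A}\mathcal{R}_N)^2)\leq\norm{\mathcal{R}_N}^2\norm{\mathbf{A}}_F^2=O(\Delta^2/N)$. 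Choosing $\lambda$ of order $N/\Delta$ within the admissible range, Chernoff gives $\mathbb{P}\big(|\mathbf{V}^T\mathbf{A}\mathbf{V}-\text{tr}(\mathbf{A}\mathcal{R}_N)|\geq\bar{\epsilon}\Delta/2\big)\leq 2\exp(-cN)$ for some $c>0$ depending on $\bar\epsilon,M,\mathfrak{s},\mathfrak{c}$.

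The main obstacle, and what makes the quadratic case $\beta^9$ strictly harder than the linear cases $\beta^6,\beta^8$, is Step 2: matching the conditional mean to the deterministic subtraction $C$ with an error that is $o(\Delta)$. This requires carefully tracking three distinct error sources — the $O(N^{-1})$ self-connection contribution in \eqref{eq: covariance 2}, the $O(\Delta^2)$ Schur-complement correction $\mathcal{L}_N$ (whose control relies crucially on $\tilde{\tau}_N>t^{(n)}_b$ via Lemma \ref{Lemma Conditional Mean}), and the off-diagonal replica-overlap terms $\tilde{\mathcal{K}}_N^{ip}_{jj}$ with $i\neq p$, which require a joint Poisson concentration argument across independent replicas. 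Once this identification of conditional mean is established, the Gaussian quadratic-form concentration in Step 3 is standard, and the remaining bookkeeping — supremising over the finite function class $\mathfrak{F}$, the state $\boldsymbol\alpha\in\mathcal{E}^M$, and the polynomially-many sets $\mathcal{V}^N_{\mathfrak{q}}$ via Lemma \ref{Lemma Many Events} and Lemma \ref{eq: polynomial partition} — proceeds exactly as in the proof of Lemma \ref{Lemma Mean Gaussian Increments}.
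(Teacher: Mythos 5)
Your proposal is correct and follows essentially the same route as the paper: condition on the spin path and $\tilde{\mathbf{G}}_b$ so the increments are centered Gaussian with covariance $\mathcal{R}_N=\tilde{\mathcal{K}}_N-\mathcal{L}_N$, control the quadratic form through the determinant formula for its moment generating function with a second-order expansion of $\log(1-x)$, match the compensator $\tilde{L}^{ii}_b$ against the diagonal of $\tilde{\mathcal{K}}_N$ while disposing of the $\mathcal{L}_N$ and cross-replica ($i\neq p$) terms exactly as the paper does via $\norm{\mathcal{K}_N^{-1}}\leq\mathfrak{c}^{-1}$, Lemma \ref{Lemma Bound Matrix Norms} and Poisson concentration. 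The only differences are organizational (you separate the mean identification from the fluctuation bound, and take a Chernoff parameter of order $N/\Delta$ rather than the paper's small constant $\mathfrak{r}$, with an immaterial constant slip of $4\lambda^2$ versus your stated $2\lambda^2$), which do not change the argument.
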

\begin{proof}
Taking conditional expectations (analogously to the proof of Lemma \ref{Lemma Mean Gaussian Increments}), it suffices to prove that
\begin{equation}
\lsup{N}N^{-1}\log  \sup_{\tilde{\boldsymbol\sigma}_b  \in \mathcal{E}^{MN}  , \boldsymbol\alpha\in\mathcal{E}^M ; , \tilde{\mathbf{G}}_b \in \mathcal{Z}^N} Q^N_{\tilde{\boldsymbol\sigma}_b,\tilde{\mathbf{G}}_b}\big(\big|\beta^9(\boldsymbol\alpha ,\tilde{\boldsymbol\sigma}, \tilde{\mathbf{G}}_b)\big|  \geq \bar{\epsilon}\Delta\big) <0\label{eq: cheby gamma 2 2} 
\end{equation}
Thanks to Lemma \ref{Lemma Upper Bound Jump Rate}, $\lsup{N}N^{-1}\log \mathbb{P}\big(\sup_{q\in I_M}N^{-1}\sum_{l\in I_N} \chi\lbrace \tilde{\sigma}^{q,l}_{b+1} \neq \tilde{\sigma}^{q,l}_b \rbrace > (c_1 + 1)\Delta \big) < 0$. We can thus assume henceforth that
\begin{equation}\label{eq: assumed jump rate}
N^{-1}\sup_{q\in I_M}\sum_{l\in I_N} \chi\lbrace \tilde{\sigma}^{q,l}_{b+1} \neq \tilde{\sigma}^{q,l}_b \rbrace \leq (c_1 + 1)\Delta .
\end{equation}
By Chernoff's Inequality,
\begin{multline}
 Q^N_{\tilde{\boldsymbol\sigma}_b,\tilde{\mathbf{G}}_b}\big(\big|\beta^9(\boldsymbol\alpha ,\tilde{\boldsymbol\sigma}, \tilde{\mathbf{G}}_b)\big|  \geq \bar{\epsilon}\Delta \; | \; \tilde{\boldsymbol\sigma} \big) \\
 \leq \mathbb{E}^{Q^N_{\tilde{\boldsymbol\sigma}_b,\tilde{\mathbf{G}}_b}}\big[\exp\big(N\mathfrak{r}\beta^9(\boldsymbol\alpha ,\tilde{\boldsymbol\sigma}, \tilde{\mathbf{G}}_b) - N\bar{\epsilon}\Delta \mathfrak{r}\big)+\exp\big(-N\mathfrak{r}\beta^9(\boldsymbol\alpha ,\tilde{\boldsymbol\sigma}, \tilde{\mathbf{G}}_b) - N\bar{\epsilon}\Delta \mathfrak{r}\big)\; | \; \tilde{\boldsymbol\sigma} \big] .
\end{multline}
We now bound the first of the expectations on the right hand side: the bound of the other is similar. Let $\mathcal{O}$ be an $NM\times NM$ square matrix (indexed using the following double-indexed notation). The element of $\mathcal{O}$ with indices $(i,j) , (p,k)$ (for $i,p\in I_M \; , \; j,k\in I_N$) is defined to be $\mathfrak{r}\delta(j,k)\phi^a_{ip}(\tilde{\mathbf{G}}^{j}_b)\chi \lbrace \tilde{\boldsymbol\sigma}^j_b= \boldsymbol\alpha \rbrace $. Define $\bar{\mathcal{O}} = \frac{1}{2}(\mathcal{O} + \mathcal{O}^T)$. Under $\gamma_{\tilde{\boldsymbol\sigma}_b,\tilde{\mathbf{G}}_b}$, $\big\lbrace \tilde{G}^{i,j}_{b+1} - \tilde{G}^{i,j}_b - \tilde{m}^{i,j}_b\big\rbrace_{i\in I_M,j\in I_N}$ are centered Gaussian variables, with their $NM \times NM$ covariance matrix equal to $\mathcal{R}_N\big(\tilde{\boldsymbol\sigma}_b , \tilde{\boldsymbol\sigma}_{b+1}\big)$ (as defined in \eqref{eq: LN definition 0}). Gaussian arithmetic thus implies that 
\begin{align}
\mathbb{E}^{Q^N_{\tilde{\boldsymbol\sigma}_b,\tilde{\mathbf{G}}_b}}\big[ &\exp\big(\mathfrak{r} N\tilde{\beta}^9(\boldsymbol\alpha ,\tilde{\boldsymbol\sigma}, \tilde{\mathbf{G}}_b) \big) \; | \; \tilde{\boldsymbol\sigma} \big]\nonumber\\ 
 &= \exp\big(-\mathfrak{r}\sum_{j\in I_{N}}\sum_{i \in I_M}\chi\big(\tilde{\boldsymbol\sigma}^{j}_b=\boldsymbol\alpha \big)\phi^a_{ii}(\tilde{\mathbf{G}}_b^{j})\tilde{L}^{ii}_b\big) \det\big(\mathcal{R}_N\big(\tilde{\boldsymbol\sigma}_b , \tilde{\boldsymbol\sigma}_{b+1}\big)\big)^{-1/2}\det\big(\mathcal{R}_N\big(\tilde{\boldsymbol\sigma}_b , \tilde{\boldsymbol\sigma}_{b+1}\big)^{-1} - \bar{\mathcal{O}} \big)^{-1/2}\nonumber\\
&= \exp\big(-\mathfrak{r}\sum_{j\in I_{N}}\sum_{i \in I_M}\chi\big(\tilde{\boldsymbol\sigma}^{j}_b=\boldsymbol\alpha \big)\phi^a_{ii}(\tilde{\mathbf{G}}_b^{j})\tilde{L}^{ii}_b\big) \det\big(\mathbf{Id} -\mathcal{R}_N(\tilde{\boldsymbol\sigma}_{b},\tilde{\boldsymbol\sigma}_{b+1})^{1/2}  \bar{\mathcal{O}}\mathcal{R}_N(\tilde{\boldsymbol\sigma}_{b},\tilde{\boldsymbol\sigma}_{b+1})^{1/2} \big)^{-\frac{1}{2}} \nonumber\\
&= \exp\big(-\mathfrak{r}\sum_{j\in I_{N}}\sum_{i \in I_M}\chi\big(\tilde{\boldsymbol\sigma}^{j}_b=\boldsymbol\alpha \big)\phi^a_{ii}(\tilde{\mathbf{G}}_b^{j})\tilde{L}^{ii}_b\big) \prod_{j=1}^{MN} (1-\lambda_j)^{-\frac{1}{2}},\label{eq: beta 9 bound tmpry}
\end{align}
where $\lbrace \lambda_j \rbrace_{j=1}^{MN}$ are the eigenvalues of the real symmetric matrix $\mathcal{R}_N(\tilde{\boldsymbol\sigma}_{b},\tilde{\boldsymbol\sigma}_{b+1})^{1/2}  \bar{\mathcal{O}}\mathcal{R}_N(\tilde{\boldsymbol\sigma}_{b},\tilde{\boldsymbol\sigma}_{b+1})^{1/2} $ (assuming for the moment that the modulus of each of these eigenvalues is strictly less than one). We thus find that
\begin{align}
 N^{-1}\log \det\big(\mathbf{Id} -\mathcal{R}_N(\tilde{\boldsymbol\sigma}_{b},\tilde{\boldsymbol\sigma}_{b+1})^{1/2}  \bar{\mathcal{O}}\mathcal{R}_N(\tilde{\boldsymbol\sigma}_{b},\tilde{\boldsymbol\sigma}_{b+1})^{1/2} \big)^{-\frac{1}{2}} 
=&-(2N)^{-1}\sum_{u=1}^{NM} \log (1-\lambda_u)\nonumber \\
=& -(2N)^{-1}\sum_{u=1}^{NM}\big\lbrace -\lambda_u - \lambda_u^2 / Z_u^2\big\rbrace, \label{eq: Taylor Expansion log u}
\end{align}
where $Z_u \in [1-\lambda_u, 1]$ if $\lambda_u > 0$, else $Z_u \in [1,1-\lambda_u]$ if $\lambda_u < 0$, using the second-order Taylor Expansion of $\log$ about $1$. Now
 \begin{multline}
\big|\lambda_j \big|\leq \norm{\mathcal{R}_N(\tilde{\boldsymbol\sigma}_{b},\tilde{\boldsymbol\sigma}_{b+1})^{1/2}  \bar{\mathcal{O}}\mathcal{R}_N(\tilde{\boldsymbol\sigma}_{b},\tilde{\boldsymbol\sigma}_{b+1})^{1/2}} \leq \norm{\bar{\mathcal{O}}}\norm{\mathcal{R}_N(\tilde{\boldsymbol\sigma}_{b},\tilde{\boldsymbol\sigma}_{b+1})} \leq \norm{\bar{\mathcal{O}}}\bar{C}N^{-1}\sup_{q\in I_M}\sum_{l \in I_N} \chi\big\lbrace \tilde{\sigma}^{q,l}_{b+1} \neq \tilde{\sigma}^{q,l}_b \big\rbrace .\label{eq: phi j bound}
\end{multline}
(using Lemma \ref{Lemma Bound Matrix Norms}, and writing $\bar{C} = 4M(1+\mathfrak{s}))$. It may be observed from the block diagonal structure of $\bar{\mathcal{O}}$ (i.e. $\bar{\mathcal{O}}$ is `diagonal' with respect to the $I_N$ indices) that
\begin{align}
\norm{\bar{\mathcal{O}}} &= \mathfrak{r}/2 \sup_{j\in I_N}\sup_{\mathfrak{a} \in\mathbb{R}^M \; : \; \norm{\mathfrak{a}}=1}\big| \mathfrak{a}^i \mathfrak{a}^p\big( \phi^a_{ip}(\tilde{\mathbf{G}}^{j}_b)\chi \lbrace \tilde{\boldsymbol\sigma}^j_b= \boldsymbol\alpha \rbrace + \phi^a_{pi}(\tilde{\mathbf{G}}^{j}_b)\chi \lbrace \tilde{\boldsymbol\sigma}^j_b= \boldsymbol\alpha \rbrace \big) \big|\nonumber \\
&\leq \mathfrak{r}\big( \sum_{i,p\in I_M} \big|\phi^a_{ip}(\tilde{\mathbf{G}}^{j}_b)\big|^2 \big)^{1/2} \leq M \mathfrak{r},\label{eq: norm O bound}
\end{align}
since $\big| \phi^a_{ip}(\tilde{\mathbf{G}}^{j}_b)\chi \lbrace \tilde{\boldsymbol\sigma}^j_b= \boldsymbol\alpha \rbrace \big| \leq 1$, and utilizing the fact that the operator norm is upper-bounded by the Frobenius matrix norm.

We thus find that $\lambda_j \leq \frac{1}{2}$, as long as $\frac{M\mathfrak{r}\bar{C}}{N}\sup_{q\in I_M}\sum_{l=1}^N \chi\big\lbrace \tilde{\sigma}^{q,l}_{b+1} \neq \tilde{\sigma}^{q,l}_b \big\rbrace \leq \frac{1}{2}$, and this follows from our earlier assumption \eqref{eq: assumed jump rate} as long as $\Delta$ is small enough. This means that $Z_j \geq 1/2$. Since
\[
\sum_{j=1}^{MN} \lambda_j = \rm{tr}\big(\mathcal{R}_N(\tilde{\boldsymbol\sigma}_{b},\tilde{\boldsymbol\sigma}_{b+1})^{1/2}  \bar{\mathcal{O}}\mathcal{R}_N(\tilde{\boldsymbol\sigma}_{b},\tilde{\boldsymbol\sigma}_{b+1})^{1/2} \big) =  \rm{tr}\big( \bar{\mathcal{O}}\mathcal{R}_N(\tilde{\boldsymbol\sigma}_{b},\tilde{\boldsymbol\sigma}_{b+1}) \big),
\]
we find that \eqref{eq: Taylor Expansion log u} implies that 
\begin{align}
N^{-1}\log\det\big(\mathbf{I} -\mathcal{R}_N(\tilde{\boldsymbol\sigma}_{b},\tilde{\boldsymbol\sigma}_{b+1})^{1/2} & \bar{\mathcal{O}}\mathcal{R}_N(\tilde{\boldsymbol\sigma}_{b},\tilde{\boldsymbol\sigma}_{b+1})^{1/2}  \big)^{-\frac{1}{2}}\leq   (2N)^{-1}\sum_{j\in I_N} \big(\lambda_j + 4\norm{\mathcal{R}_N(\tilde{\boldsymbol\sigma}_{b},\tilde{\boldsymbol\sigma}_{b+1})^{1/2}  \bar{\mathcal{O}}\mathcal{R}_N(\tilde{\boldsymbol\sigma}_{b},\tilde{\boldsymbol\sigma}_{b+1})^{1/2}}^2\big)\nonumber\\
=&  (2N)^{-1}\rm{tr}\big(\bar{\mathcal{O}}\mathcal{R}_N(\tilde{\boldsymbol\sigma}_{b},\tilde{\boldsymbol\sigma}_{b+1})\big) +2 \norm{\mathcal{R}_N(\tilde{\boldsymbol\sigma}_{b},\tilde{\boldsymbol\sigma}_{b+1})^{1/2}  \bar{\mathcal{O}}\mathcal{R}_N(\tilde{\boldsymbol\sigma}_{b},\tilde{\boldsymbol\sigma}_{b+1})^{1/2}}^2\nonumber \\
\leq &  (2N)^{-1}\rm{tr}\big(\bar{\mathcal{O}}\mathcal{R}_N(\tilde{\boldsymbol\sigma}_{b},\tilde{\boldsymbol\sigma}_{b+1})\big) + 2\bigg(\frac{M\mathfrak{r}\bar{C}}{N}\sup_{q\in I_M}\sum_{l \in I_N} \chi\big\lbrace \tilde{\sigma}^{q,l}_{b+1} \neq \tilde{\sigma}^{q,l}_b \big\rbrace\bigg)^2 ,\label{eq: beta 9 tmpry 2}
\end{align}
using \eqref{eq: phi j bound} and \eqref{eq: norm O bound}. Now, noting the definition of $\mathcal{R}_N(\tilde{\boldsymbol\sigma}_{b},\tilde{\boldsymbol\sigma}_{b+1})$ in \eqref{eq: LN definition 0},
\[
\rm{tr}\big(\bar{\mathcal{O}}\mathcal{R}_N(\tilde{\boldsymbol\sigma}_{b},\tilde{\boldsymbol\sigma}_{b+1})\big)=\rm{tr}\big(\bar{\mathcal{O}}\tilde{\mathcal{K}}_N(\tilde{\boldsymbol\sigma}_{b},\tilde{\boldsymbol\sigma}_{b+1})\big)+\rm{tr}\big(\bar{\mathcal{O}}\mathcal{L}_N(\tilde{\boldsymbol\sigma}_{b},\tilde{\boldsymbol\sigma}_{b+1})\big).
\]
We bound $\rm{tr}\big(\bar{\mathcal{O}}\mathcal{L}_N(\tilde{\boldsymbol\sigma}_{b},\tilde{\boldsymbol\sigma}_{b+1})\big)$ using Von Neumann's Trace Inequality. Thanks to \eqref{eq: norm O bound}, the singular values of $\bar{\mathcal{O}}$ are upperbounded by $\mathfrak{r}M$. We thus find that, since each matrix is $NM \times NM$,
\[
\rm{tr}\big(\bar{\mathcal{O}}\mathcal{L}_N(\tilde{\boldsymbol\sigma}_{b},\tilde{\boldsymbol\sigma}_{b+1})\big) \leq NM\mathfrak{r} \norm{\mathcal{L}_N(\tilde{\boldsymbol\sigma}_{b},\tilde{\boldsymbol\sigma}_{b+1}))}.
\]
Furthermore it is immediate from the definition that
\begin{align*}
\norm{\mathcal{L}_N(\tilde{\boldsymbol\sigma}_{b},\tilde{\boldsymbol\sigma}_{b+1}))} \leq \norm{\mathcal{K}_N(\tilde{\boldsymbol\sigma}_b,\tilde{\mathbf{G}}_b)^{-1}}\norm{\grave{\mathcal{K}}_N(\tilde{\boldsymbol\sigma}_b,\tilde{\mathbf{G}}_b)}^2.
\end{align*}
Thanks to Lemma \ref{Lemma NonSingular KN}, $\norm{\mathcal{K}_N(\tilde{\boldsymbol\sigma}_b,\tilde{\mathbf{G}}_b)^{-1}} \leq \norm{\tilde{\mathbf{K}}_b^{-1}} \leq \mathfrak{c}^{-1}$. Also Lemma \ref{Lemma Bound Matrix Norms}  implies that
\begin{align*}
\norm{\grave{\mathcal{K}}_N(\tilde{\boldsymbol\sigma}_b,\tilde{\mathbf{G}}_b)} \leq \norm{\tilde{\mathbf{L}}_b} + \sqrt{2}N^{-1/2} 
&\leq MN^{-1}\sup_{p\in I_M}\sum_{j\in I_N}\chi\big\lbrace \tilde{\sigma}^{p,j}_{b+1} \neq  \tilde{\sigma}^{p,j}_{b} \big\rbrace + \sqrt{2}N^{-1/2} \\
&\leq M(c_1+1)\Delta + \sqrt{2}N^{-1/2}
\end{align*}
using \eqref{eq: assumed jump rate}. We thus find that
\[
\rm{tr}\big(\bar{\mathcal{O}}\mathcal{L}_N(\tilde{\boldsymbol\sigma}_{b},\tilde{\boldsymbol\sigma}_{b+1})\big) = O\big(N\Delta^2\mathfrak{r} + \sqrt{N}\Delta \mathfrak{r}\big).
\]
Now substituting the definition of $\tilde{\mathcal{K}}_N$ in \eqref{eq: covariance 2},
\begin{multline*}
\rm{tr}\big(\bar{\mathcal{O}}\tilde{\mathcal{K}}_N(\tilde{\boldsymbol\sigma}_{b},\tilde{\boldsymbol\sigma}_{b+1})\big)  = \mathfrak{r}\sum_{j\in I_N,i,p\in I_M}
\chi\lbrace \tilde{\boldsymbol\sigma}^{j}_b=\boldsymbol\alpha \rbrace\phi^a_{ip}(\tilde{\mathbf{G}}_b^{j})\big( N^{-1}\sum_{k\in I_N}(\tilde{\sigma}^{p,k}_{b+1} - \tilde{\sigma}^{p,k}_{b})(\tilde{\sigma}^{i,k}_{b+1} - \tilde{\sigma}^{i,k}_{b}) + \mathfrak{s}N^{-1}(\tilde{\sigma}^{p,j}_{b+1} - \tilde{\sigma}^{i,j}_{b})(\tilde{\sigma}^{i,j}_{b+1} - \tilde{\sigma}^{i,j}_{b} )\big). 
\end{multline*}
One can easily demonstrate using Martingale concentration inequalities (similar to those in the Appendix) that there exists a constant $\tilde{C}$ such that for all $n\in \mathbb{Z}^+$, if $p\neq i$ then
\begin{equation}
N^{-1}\log\mathbb{P}\big( N^{-1}\big|\sum_{k\in I_N}(\tilde{\sigma}^{p,k}_{b+1} - \tilde{\sigma}^{p,k}_{b})(\tilde{\sigma}^{i,k}_{b+1} - \tilde{\sigma}^{i,k}_{b})\big| \geq \tilde{C}\Delta^2 \big) < 0.\label{eq: beta 9 exponentially decaying identity 0}
\end{equation}
Using the definition of $\tilde{L}$ in \eqref{eq: tilde K definition 0 0}, and the fact that $(\tilde{\sigma}^{i,k}_{b+1} - \tilde{\sigma}^{i,k}_{b})^2 = 2\tilde{\sigma}^{i,k}_b(\tilde{\sigma}^{i,k}_{b} - \tilde{\sigma}^{i,k}_{b+1})$ (since $\tilde{\sigma}^{i,k}_u \in \lbrace -1,1 \rbrace$), we obtain that the probability that the following event does not hold is exponentially decaying in $N$,
\begin{equation}
\rm{tr}\big(\bar{\mathcal{O}}\tilde{\mathcal{K}}_N(\tilde{\boldsymbol\sigma}_{b},\tilde{\boldsymbol\sigma}_{b+1})\big) = \mathfrak{r} \sum_{j\in I_{N}}\sum_{i \in I_M}\chi\big(\tilde{\boldsymbol\sigma}^{j}_b=\boldsymbol\alpha \big)\phi^a_{ii}(\tilde{\mathbf{G}}_b^{j})2\tilde{L}^{ii}_b + O\big(N\Delta^2\mathfrak{r}  + \sqrt{N}\Delta \mathfrak{r} \big).\label{eq: beta 9 exponentially decaying identity}
\end{equation}
In summary, we obtain from \eqref{eq: assumed jump rate}, \eqref{eq: beta 9 bound tmpry}, \eqref{eq: beta 9 tmpry 2}, \eqref{eq: beta 9 exponentially decaying identity 0} and \eqref{eq: beta 9 exponentially decaying identity} that
\begin{align*}
\lsup{N} N^{-1} \log Q^N_{\tilde{\boldsymbol\sigma}_b,\tilde{\mathbf{G}}_b}\big(\beta^9(\boldsymbol\alpha ,\tilde{\boldsymbol\sigma}, \tilde{\mathbf{G}}_b)   \geq \bar{\epsilon}\Delta  \; | \; \tilde{\boldsymbol\sigma} \big) \leq -\bar{\epsilon}\Delta \mathfrak{r} + \mathfrak{r}\text{Const}\Delta^2.
\end{align*}
This clearly implies \eqref{eq: cheby gamma 2 2} (and therefore the lemma) as long as $\Delta$ and $\mathfrak{r}$ are sufficiently small.
\end{proof}

\section{Appendix: Properties of Poisson Processes}
The following lemma contains some standard results concerning Poisson counting processes \cite{Ethier1986}. The first three can be demonstrated using Chernoff's Inequality, and the last is a standard formula.
\begin{lemma}\label{Lemma Upper Bound Jump Rate}
(i) For any $t \geq t^{(n)}_b$, and any $i\in I_M, j\in I_N$,
\begin{align}
\mathbb{P}\big( \tilde{\sigma}^{i,j}_t \neq \tilde{\sigma}^{i,j}_b \big) \leq c_1 (t - t^{(n)}_b).
\end{align}
(ii) For any $\epsilon > 0$, 
\begin{equation}
\lsup{N}N^{-1}\log \mathbb{P}\big(N^{-1}\sum_{j\in I_N} \chi\big\lbrace \sum_{i\in I_M} Y^{i,j}_b( c_1  t) > 0 \big\rbrace > (M c_1+\epsilon)  t \big) < 0\label{eq: number jumps 1} 
\end{equation}
(iii) For any $\epsilon \in (0, c_1)$, 
\begin{equation}
\lsup{N}N^{-1} \log \mathbb{P}\big(N^{-1}\sum_{j\in I_N} \chi\big\lbrace \sum_{i \in I_M} Y^{i,j}_b( c_1  t) > 0 \big\rbrace < (c_1-\epsilon)  t \big) < 0\label{eq: number jumps 2} 
\end{equation}
(iv) For any $u,x> 0$,
\begin{equation}\label{eq: exp moment Poisson}
\mathbb{E}\big[\exp\big(uY^{i,j}(x t) \big) \big] = \exp\big(x t \lbrace e^u - 1 \rbrace \big).
\end{equation}
\end{lemma}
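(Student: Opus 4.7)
My plan is to handle the four parts using only the standard Poisson toolkit; three of them reduce immediately to Chernoff's inequality applied to a sum of i.i.d.\ Bernoullis, and the last is a direct moment-generating function computation.

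Part (iv) is the classical formula: since $Y^{i,j}(xt)$ is Poisson with mean $xt$,
\[
\mathbb{E}\big[\exp(uY^{i,j}(xt))\big] = \sum_{k\geq 0}e^{uk}e^{-xt}\frac{(xt)^k}{k!} = \exp\big(xt(e^u-1)\big),
\]
and no further argument is needed. For (i), the representation \eqref{eq: tilde sigma alternative definition} shows that the event $\{\tilde{\sigma}^{i,j}_t \neq \tilde{\sigma}^{i,j}_b\}$ forces the Poisson clock $Y_b^{i,j}$ to register at least one jump on an interval whose length $\int_{t^{(n)}_b}^t c(\tilde{\sigma}^{i,j}_s,\tilde{\mathfrak{G}}^{i,j}_s)\,ds$ is at most $c_1(t-t^{(n)}_b)$, by the uniform bound \eqref{eq: uniform bound c sigma g}. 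Monotonicity of $u\mapsto Y_b^{i,j}(u)$ and Markov's inequality then yield
\[
\mathbb{P}(\tilde{\sigma}^{i,j}_t \neq \tilde{\sigma}^{i,j}_b) \leq \mathbb{P}\big(Y_b^{i,j}(c_1(t-t^{(n)}_b)) \geq 1\big) \leq c_1(t-t^{(n)}_b).
\]

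For (ii) and (iii), I would introduce the indicators $Z_j := \chi\{\sum_{i\in I_M} Y^{i,j}_b(c_1 t) > 0\}$. By the additivity of independent Poisson processes and the independence of the clocks across $(i,j)$, the sum $\sum_{i\in I_M} Y^{i,j}_b(c_1 t)$ is Poisson with mean $Mc_1 t$, so $\{Z_j\}_{j\in I_N}$ are i.i.d.\ Bernoulli$(p)$ with $p = 1-e^{-Mc_1 t}$. Since $1-e^{-x}\leq x$ gives $p\leq Mc_1 t$, the event in (ii) is a large-deviation upper-tail event for $N^{-1}\sum_j Z_j$ about a mean strictly below the threshold $(Mc_1+\epsilon)t$, and Chernoff's inequality applied to i.i.d.\ Bernoullis produces exponential decay in $N$ (equivalently, Cram\'er's theorem). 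For (iii), the Taylor bound $1-e^{-x}\geq x - x^2/2$ gives $p\geq Mc_1 t - (Mc_1 t)^2/2$, which exceeds $(c_1-\epsilon)t$ for $t$ sufficiently small; since Lemma \ref{Lemma Upper Bound Jump Rate} is only invoked at times $t\sim\Delta=T/n$ with $n$ large, the Chernoff lower-tail bound applies and again yields exponential decay.

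The only thing to be careful about is the direction of the inequality between the threshold $(c_1-\epsilon)t$ and the true mean $p=1-e^{-Mc_1 t}$ in (iii); as just noted, this is automatic in the small-$t$ regime in which the lemma is actually used throughout the paper. Everything else is a routine consequence of the standard i.i.d.\ Cram\'er theorem for Bernoulli variables, so I do not expect any essential difficulty.
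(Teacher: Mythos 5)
Your proposal is correct and takes essentially the same route as the paper, which offers no detailed argument beyond noting that parts (i)--(iii) follow from Chernoff's inequality and that (iv) is the standard Poisson moment formula; your write-up simply carries these steps out (with Markov's inequality sufficing for (i)). Your caveat on (iii) --- that one needs $1-e^{-Mc_1 t} > (c_1-\epsilon)t$, which holds only in the small-$t$ regime $t \lesssim \epsilon/c_1^2$ in which the lemma is actually invoked (with $t$ of order $\Delta$) --- is an accurate reading of how the statement is meant to be used.
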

The following general lemma yields a concentration inequality for compensated Poisson Processes.
\begin{lemma}\label{Lemma Concentration Poisson}
Suppose that $\lbrace u^{q,j}_s , v^{q,j}_t \rbrace_{j\in I_N}$ are adapted c\`{a}dl\`{a}g stochastic processes, with $u^{q,j}_t \geq 0$ and that
\begin{align}
Z^{q,j}_t =& Y^{q,j}\bigg( \int_0^t u^{q,j}_s ds \bigg) \\
X^{q,j}_t =& \int_0^t v^{q,j}(s)dZ^{q,j}_s - \int_0^t v^{q,j}_s u^{q,j}_s ds.
\end{align}
Assume that $ u^{q,j}_t  \leq u_{max}$ for some constant $u_{max}$.\\
(i) Suppose that $| v^{q,j}_t | \leq v_{max}$ for some constant $v_{max}$. Then there exists $z_0$ and a constant $C$ such that for all $z \in [0,z_0]$,
\begin{equation}
\mathbb{P}\big(\sup_{t\in [0,x]}\sum_{j\in I_N,q\in I_M}X^{q,j}_t \geq Nz \big) \leq \exp\big(-NCz^2 / x^2 \big)
\end{equation}
(ii) Suppose that $N^{-1}\sup_{t\in [0,T]}\sum_{j\in I_N}\chi\lbrace v^{q,j}_t > 0 \rbrace \exp(v^{q,j}_t) \leq C$. Then for all $z > 0$,
\begin{equation}
\sup_{q\in I_M}\mathbb{P}\big(\sup_{t\in [0,x]}\sum_{j\in I_N}X^{q,j}_t \geq Nz \big) \leq \exp\big(Nu_{max}xC-Nz\big).
\end{equation}
\end{lemma}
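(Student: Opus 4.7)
The strategy is the classical exponential-martingale plus Doob-maximal-inequality argument for stochastic integrals against compensated Poisson processes. For each $\theta \geq 0$ I will introduce
\[
M^\theta_t \;=\; \exp\!\Bigl(\theta \mathcal{S}_t - \Phi^\theta_t\Bigr), \quad \mathcal{S}_t := \sum_{q,j} X^{q,j}_t, \quad \Phi^\theta_t := \sum_{q,j}\int_0^t\!\bigl[e^{\theta v^{q,j}_s}-1-\theta v^{q,j}_s\bigr] u^{q,j}_s\, ds.
\]
A standard application of It\^o's formula for jump processes, together with the mutual independence of the driving Poisson processes $\lbrace Y^{q,j}\rbrace$, shows that $M^\theta$ is a nonnegative local martingale; under the respective boundedness hypotheses of (i) and (ii), combined with $u^{q,j}_t \leq u_{max}$, a routine localization argument upgrades $M^\theta$ to a true martingale with $\mathbb{E}[M^\theta_t] = 1$.

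If a deterministic upper bound $B$ for $\Phi^\theta_x$ is available almost surely, then since $e^{\theta \mathcal{S}_t} = M^\theta_t e^{\Phi^\theta_t} \leq e^B M^\theta_t$ on $[0,x]$, Doob's maximal inequality applied to the nonnegative martingale $M^\theta$ yields
\[
\mathbb{P}\Bigl(\sup_{t \leq x}\mathcal{S}_t \geq Nz\Bigr) \;\leq\; \mathbb{P}\Bigl(\sup_{t \leq x} e^{\theta \mathcal{S}_t} \geq e^{\theta Nz}\Bigr) \;\leq\; e^{B-\theta Nz}.
\]
Each part of the lemma then reduces to inserting an appropriate $B$ and, in (i), optimizing over $\theta$.

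For part (i), the uniform bounds $|v^{q,j}|\leq v_{max}$ and $u^{q,j}\leq u_{max}$ together with the elementary estimate $e^{\theta v} - 1 - \theta v \leq \tfrac12\theta^2 v^2 e^{|\theta|v_{max}}$ give $\Phi^\theta_x \leq \tfrac12 NM u_{max} v_{max}^2 x\,\theta^2 e^{\theta v_{max}}$. Restricting $\theta$ to a small interval $[0,\theta_0]$ on which $e^{\theta v_{max}} \leq 2$, and taking $\theta$ proportional to $z/x$ (valid for $z$ in a correspondingly small range $[0,z_0]$ so that $\theta \leq \theta_0$), the exponent $B-\theta Nz$ optimizes to produce the claimed rate $-NCz^2/x^2$ for an explicit constant $C>0$. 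For part (ii), fix $\theta=1$ and use $e^v - 1 - v \leq e^v$ on the support $\lbrace v>0\rbrace$ together with the hypothesis $N^{-1}\sum_j \chi\lbrace v^{q,j}_s>0\rbrace e^{v^{q,j}_s} \leq C$, yielding $\Phi^1_x \leq N u_{max} x C$; substituting $B = Nu_{max}xC$ into the Doob/Chernoff bound gives $\exp(Nu_{max}xC - Nz)$ as required.

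The main technical subtlety shared by both parts is the verification that $M^\theta$ is a true (not merely local) martingale so that $\mathbb{E}[M^\theta_t]=1$ is available for the Doob step; this will be handled by a standard localization argument exploiting the boundedness of $u^{q,j}$ and the respective controls on $v^{q,j}$ built into each hypothesis. The other delicate point, specific to (i), is tracking the constants through the optimization in $\theta$ to confirm the correct dependence on $x$ and on $v_{max}$ in the final exponent.
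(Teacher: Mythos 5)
Your proposal is correct and follows essentially the same route as the paper: an exponential Chernoff bound combined with Doob's maximal inequality, the Taylor estimate on $e^{\theta v}-1-\theta v$ under a constraint of the form $e^{\theta v_{max}}\leq 2$, and the choice of tilt $\theta \sim z/x$ in (i) and $\theta=1$ in (ii). The only difference is presentational: you subtract the exact compensator to form the exponential martingale and bound that compensator pathwise, whereas the paper applies Doob directly to $\exp\big(y\sum_{j\in I_N,q\in I_M}X^{q,j}_t\big)$ and controls its expectation through a differential inequality and Gronwall's lemma -- the two computations are equivalent.
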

\begin{proof}
Now since the exponential function is increasing, for a constant $y > 0$,
\begin{align}
\mathbb{P}\big(\sup_{t\in [0,x]}\sum_{j\in I_N,q\in I_M}X^{q,j}_t \geq Nz \big) &= \mathbb{P}\big(\sup_{t\in [0,x]}\exp(y\sum_{j\in I_N,q\in I_M}X^{q,j}_t - Nzy) \geq 1 \big) \\
&\leq \mathbb{E}\big[\exp\big(\exp(y\sum_{j\in I_N,q\in I_M}X^{q,j}_x - Nzy\big) \big],
\end{align}
by Doob's Submartingale Inequality, and using the fact that the compensated Poisson Process $X^{q,j}_t$ is a Martingale \cite{Anderson2015}. Choose $y$ to be such that $\exp(y v_{max}) \leq 2$. We now demonstrate that
\begin{equation}
\mathbb{E}\big[\exp\big(y\sum_{j\in I_N,q\in I_M}X^{q,j}_x \big) \big] \leq \exp\big( NM y^2 u_{max}v_{max}^2 x\big).\label{eq: to show exponential compensated}
\end{equation}
First notice that, since the functions are c\`{a}dl\`{a}g,
\[
\lim_{h\to 0}h^{-1}\big\lbrace \int_t^{t+h}y v^{q,j}_s u^{q,j}_s ds - hy v^{q,j}_t u^{q,j}_t\big\rbrace = 0.
\]
We then find that, for $t\in [0,x)$, 
\begin{align*}
\frac{d}{dt}\mathbb{E}\big[&\exp\big(y\sum_{j\in I_N,q\in I_M}X^{q,j}_t \big) \big] = \lim_{h \to 0}h^{-1}\big\lbrace \mathbb{E}\big[\exp\big(y\sum_{j\in I_N,q\in I_M}X^{q,j}_{t+h} \big) \big]-\mathbb{E}\big[\exp\big(y\sum_{j\in I_N,q\in I_M}X^{q,j}_{t} \big) \big]\big\rbrace \\
=& \lim_{h \to 0}h^{-1}\big\lbrace \mathbb{E}\big[\exp\big(y\sum_{j\in I_N,q\in I_M}X^{q,j}_{t}  + yv^{q,j}_t(Z^{q,j}_{t+h}-Z^{q,j}_t)- hy v^{q,j}_t u^{q,j}_t  \big) \big]-\mathbb{E}\big[\exp\big(y\sum_{j\in I_N,q\in I_M}X^{q,j}_{t} \big) \big]\big\rbrace \\
= & \lim_{h \to 0}h^{-1}\big\lbrace \mathbb{E}\big[\exp\big(\sum_{j\in I_N,q\in I_M}\big[ yX^{q,j}_{t} + h u^{q,j}_t\big\lbrace \exp(yv^{q,j}_t)-1 \big\rbrace  - hy v^{q,j}_t u^{q,j}_t \big]\big) \big]\\ &-\mathbb{E}\big[\exp\big(\exp(y\sum_{j\in I_N,q\in I_M}X^{q,j}_{t} \big) \big]\big\rbrace,
\end{align*}
using the expression for the Poisson moment in \eqref{eq: exp moment Poisson}. Now since $\exp(yv^{q,j}_t) \leq 2$, Taylor's Theorem implies that $\exp(yv^{q,j}_t)-1 \leq yv^{q,j}_t + (yv^{q,j}_t)^2$. On taking $h\to 0$, we thus obtain that
\begin{align*}
\frac{d}{dt}\mathbb{E}\big[\exp\big(y\sum_{j\in I_N,q\in I_M}X^{q,j}_t \big) \big] \leq \mathbb{E}\big[\exp\big(y\sum_{j\in I_N,q\in I_M}X^{q,j}_t \big) \big] y^2 NM u_{max}v_{max}^2.
\end{align*}
Gronwall's Inequality thus implies \eqref{eq: to show exponential compensated}. We now choose $y= \min\big\lbrace z / (Mxu_{max} v_{max}^2) ,( \log 2) / v_{max} \big\rbrace$ and we have obtained (i). (ii) follows analogously.
\end{proof}
\textbf{Acknowledgements:} Much thanks to Colin MacLaurin (U. Queensland) for obtaining some preliminary numerical results that were incorporated into the introduction. Much thanks also to Gerard Ben Arous (NYU), David Shirokoff (NJIT), Victor Matveev (NJIT), Bruno Cessac (INRIA) and Etienne Tanre (INRIA) for interesting discussions and very helpful feedback.

\bibliographystyle{plain}
\bibliography{SGbib2}

\end{document}